\documentclass[a4paper, 12pt]{amsart}
\usepackage{amsfonts, amsthm, amssymb, amsmath, stmaryrd}
\usepackage{mathrsfs,array}
\usepackage{xy}
\usepackage{verbatim}
\usepackage{amscd} 
\usepackage{tikz}
\usepackage{tikz-cd}
\usetikzlibrary{matrix,arrows,decorations.pathmorphing}
\input xy
\xyoption{all}
\usepackage[unicode]{hyperref}

\setlength{\textwidth}{6.5in}
\setlength{\oddsidemargin}{-0.1in}
\setlength{\evensidemargin}{-0.1in}

\setcounter{tocdepth}{2}

\newtheorem{thm}{Theorem}[subsection]

\newtheorem{cor}[thm]{Corollary}

\newtheorem{lem}[thm]{Lemma} 
\newtheorem{prop}[thm]{Proposition}
\newtheorem{conj}[thm]{Conjecture} \theoremstyle{definition}
 \theoremstyle{definition}
\newtheorem{defn}[thm]{Definition} \theoremstyle{remark}
\newtheorem{rem}[thm]{\bf Remark}

\newtheorem{assumption}[thm]{\bf Assumption}

\DeclareMathOperator{\Hom}{Hom}
\DeclareMathOperator{\End}{End}
\DeclareMathOperator{\Ext}{Ext}

\DeclareMathOperator{\modd}{mod}

\DeclareMathOperator{\Spec}{Spec}

\DeclareMathOperator{\Gal}{Gal}

\DeclareMathOperator{\Frob}{Frob}

\DeclareMathOperator{\Sym}{Sym}

\def\cont{\mathrm{cont}}

\def\ps{\mathrm{ps}}

\def\ord{\mathrm{ord}}

\def\Irr{\mathrm{Irr}}

\newcommand{\Z}{\mathbb{Z}}
\newcommand{\F}{\mathbb{F}}
\newcommand{\Q}{\mathbb{Q}}

\newcommand{\T}{\mathbb{T}}
\newcommand{\A}{\mathbb{A}}
\newcommand{\bC}{\mathbb{C}}

\newcommand{\tr}{\mathrm{tr}}
\newcommand{\GL}{\mathrm{GL}}

\newcommand{\cO}{\mathcal{O}}

\newcommand{\cOf}{C_\cO^\mathrm{f}}
\newcommand{\kq}{\mathfrak{q}}
\newcommand{\kp}{\mathfrak{p}}

\newcommand{\kP}{\mathfrak{P}}
\newcommand{\kC}{\mathfrak{C}}
\newcommand{\kB}{\mathfrak{B}}

\newcommand{\km}{\mathfrak{m}}
\newcommand{\mm}{\mathrm{m}}
\newcommand{\DAi}{(D\otimes_F \A_F^\infty)^\times}
\newcommand{\AFi}{(\A_F^\infty)^\times}

\newcommand{\RNum}[1]{\uppercase\expandafter{\romannumeral #1\relax}}

\newcommand{\overbar}[1]{\mkern 1.5mu\overline{\mkern-1.5mu#1\mkern-1.5mu}\mkern 1.5mu}

\begin{document}
	\title{Zariski density of modular points in the Eisenstein case}
	\author{Xinyao Zhang}
	\address{Graduate School of Mathematical Sciences, University of Tokyo, Komaba, Meguro, Tokyo 153-8914, Japan}
	\email{zhangxy96@g.ecc.u-tokyo.ac.jp}
	\begin{abstract}
		In this article, we study the Zariski closure of modular points in the two-dimensional universal deformation space when the residual Galois representation is reducible. Unlike the previous approaches in the residually irreducible case from Gouv\^ea-Mazur, B\"ockle and Allen, our method relies on local-global compatibility results, potential pro-modularity arguments and a non-ordinary finiteness result between the local deformation ring at $p$ and the global deformation ring. This allows us to construct sufficiently many non-ordinary regular de Rham points whose modularity is guaranteed by the recent progress on the Fontaine-Mazur conjecture. Also, we will discuss some applications of our main results, including the equidimensionality of certain big Hecke algebras and big $R=\T$ theorems in the residually reducible case.
	\end{abstract}
	
	\maketitle
	
	\tableofcontents
	
	\section{Introduction}
	 
	 Let $E$ be a number field and $p$ be a prime. Let $\Sigma$ be a finite set of finite places of $E$ containing all those lying above $p$. Let $\bar{\rho} : \Gal(\overline{E}/E) \to \GL_2(\overline{\F_p})$ be a continuous Galois representation unramified outside $\Sigma$. By classical Galois deformation theory (see \cite[Proposition 2.3.1, page 42]{Berger_2013}), there exists a universal pseudo-deformation ring $R_E^\ps$ parametrizing all pseudo-representations of $\tr \bar{\rho}$ (for the profinite group $G_{E, \Sigma}$). The geometry of the set of modular/automorphic points in the universal deformation space $\Spec R_E^\ps$ has attracted much interest among number theorists. For example, we wonder whether it is Zariski dense.
	 
	 \subsection{The previous work}
	 We first recall some previous progress in the residually irreducible case. Note that when $\bar{\rho}$ is irreducible, $R_E^\ps$ is just the universal deformation ring of $\bar{\rho}$ (see \cite[Theorem 2.4.1, page 44]{Berger_2013}). 
	 
	 In \cite{gouvea}, Gouv\^ea and Mazur first studied this question in the case $E=\Q$, $\Sigma = \{p\}$, and $\bar{\rho}$ is irreducible, unobstructed and modular (it is known by Serre's modularity conjecture in \cite{Khare_2009a} and \cite{Khare_2009} at present). Under these settings, they showed that modular points are Zariski dense in $\Spec R_\Q^\ps$, and $ R_\Q^\ps$ is isomorphic to some big Hecke algebra, using the theory of eigencurves (the so-called \textit{infinite fern}).
	
		Later in \cite{B_ckle_2001}, B\"ockle refined Gouv\^ea-Mazur's strategy, allowing a much broader generalization. He first showed that each irreducible component of $\Spec R_\Q^\ps$ contains a smooth modular point under some technical assumptions, using explicit calculations of some local deformation rings (in the Fontaine-Laffaille case or the ordinary case) and small $R=\T$ theorem in the sense of Taylor-Wiles. The existence of a smooth point implies that the irreducible component has the correct Krull dimension. Then combining the infinite fern arguments, we can show that modular points are Zariski dense in each irreducible component and hence in $\Spec R_\Q^\ps$. In \cite{emerton11}, Emerton generalized B\"ockle's result to obtain a big $R=\T$ theorem (see \cite[Theorem 1.2.3]{emerton11}) and further proved the two-dimensional Fontaine-Mazur conjecture in the non-ordinary case under the Taylor--Wiles hypothesis.
	
	 Following B\"ockle's strategy, Allen proved similar results in the context of polarized Galois representations for CM fields and in the Hilbert modular case for some totally real fields in \cite{Allen_2019}. The main new idea there is to use the finiteness of the universal global (polarized) deformation ring $R^{\operatorname{pol}}$ over the local framed deformation ring $R^{\operatorname{loc}}$ at $p$ (see \cite{Allen_2014}). After assuming some local conditions on $\bar{\rho}|_{G_{E_v}}$ for $v|p$, the local framed deformation ring will be a regular local ring. Then by intersection theory (see \cite[Lemma 1.1.2]{Allen_2019}), Allen showed that each irreducible component of $ \Spec R^{\operatorname{pol}}$ contains a modular point with prescribed $p$-adic Hodge theoretic type, which is also smooth by using an argument of the vanishing of the geometric Bloch-Kato Selmer group (see \cite{Allen_2016}). The Zariski density follows from Chenevier's generalization of the infinite fern result \cite{chenevier2011}.
	
	Recently in \cite{Deo2023}, Deo proved some big $R=\T$ theorem for $E=\Q$ in the residually reducible case under some technical assumptions, especially the cyclicity of certain cohomology group. He also followed B\"ockle's strategy and used Pan's ordinary finiteness result \cite[Theorem 5.1.1]{pan2022fontaine} instead of small $R=\T$ theorems to find a smooth point in each irreducible component of $\Spec R_\Q^\ps$. 
	
	We emphasize here that in each work mentioned above, an infinite fern argument is necessary. One can see \cite{gouvea}, \cite{chenevier2011}, \cite{Emerton2011}, \cite{Hellmann_2022} and \cite{Hernandez:2022aa} for more discussions on this topic.
	
	\subsection{Main results}
	Now we state our main results in this paper.
	
	We first fix some notations. Let $p$ be an odd prime. Let $K$ be a $p$-adic local field with ring of integers $\cO$, a uniformizer $\varpi$ and the residue field $\F$.
	
	Let $F$ be an abelian totally real field in which $p$ splits completely. Let $\Sigma$ be a finite set of finite places of $F$ containing $\Sigma_p$, the set of all places lying above $p$. Let $\bar{\chi}: G_{F, \Sigma} \to \F^\times$ be a continuous, totally odd character and assume that $\bar{\chi}$ can be extended to a character of $G_\Q$. Let $\chi: G_{F, \Sigma} \to \cO^\times $ be a continuous character lifting $\bar{\chi}$ such that $\chi|_{G_{F_v}}$ is de Rham at each place $v|p$. Write $\omega $ for the mod $p$ cyclotomic character.
	
	Note that $\bar{T}=1+\bar{\chi}$ is a pseudo-representation of $G_{F, \Sigma}$. Let $R_F^\ps$ be the universal pseudo-deformation ring of $\bar{T} $, and let $R_F^{\ps, \chi}$ be the universal pseudo-deformation ring of the pseudo-representation $\bar{T} $ with determinant $\chi$. We say that a point of $\Spec R_F^\ps$ (or $\Spec R_F^{\ps, \chi}$) is \textit{modular} if it corresponds to a Galois representation arising from a regular algebraic cuspidal automorphic representation of $\GL_2(\A_{F})$. We say a modular point is \textit{global crystalline} (or just \textit{crystalline}) if the corresponding Galois representation is crystalline at each place $v|p$.
	
	\begin{thm} [Theorem \ref{main thm} \& Theorem \ref{main thm without det}]\label{main}
		Assume that $\bar{\chi}|_{G_{F_v}} \ne \omega$ for any $v|p$ if $p=3$.
		
		1) The Zariski closure of the set of all modular points in $\Spec R_F^{\ps, \chi}$ is the union of all irreducible components of dimension $1+2[F:\Q]$. If further $\chi$ is crystalline at each $v|p$, then we can replace modular points by crystalline points in the previous sentence.
		
		2) The Zariski closure of the set of all crystalline points in $\Spec R_F^\ps$ is the union of all irreducible components of dimension $2+2[F:\Q]$.
	\end{thm}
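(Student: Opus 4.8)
The plan is to adapt the template of B\"ockle \cite{B_ckle_2001} and Allen \cite{Allen_2019} to the Eisenstein setting, handling the residually reducible difficulties in the spirit of Deo \cite{Deo2023}. Set $d = [F:\Q]$ and $d_0 = 1 + 2d$. The two inclusions of the claimed equality will be established separately: that every irreducible component of $\Spec R_F^{\ps,\chi}$ of dimension $d_0$ carries a Zariski-dense set of modular points, and that every component meeting the (cuspidal-)modular locus has dimension exactly $d_0$. The heart of the matter is the first inclusion, for which it suffices to produce, on each $d_0$-dimensional component $Z$, a closed point $x$ corresponding to an \emph{irreducible} Galois representation $\rho_x$ that is crystalline and \emph{non-ordinary} at every $v \mid p$ with regular Hodge--Tate weights, is modular, and is a smooth point of $\Spec R_F^{\ps,\chi}$. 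To arrange the first two properties one uses the \emph{non-ordinary finiteness result} --- the non-ordinary counterpart of Pan's ordinary finiteness theorem \cite{pan2022fontaine} --- that the restriction morphism $\Spec R_F^{\ps,\chi} \to \Spec R^{\loc}_p$ to the local (pseudo-)deformation ring at the places above $p$ becomes finite after restricting to the locus of deformations non-ordinary at every $v\mid p$; here $F_v = \Q_p$ for all $v \mid p$ because $p$ splits completely, so the local theory at $p$ is that of $\GL_2(\Q_p)$. For sufficiently generic crystalline types (regular Hodge--Tate weights, generic Frobenius eigenvalues) the relevant local crystalline deformation rings are regular of the expected dimension --- for generic parameters the local representations are irreducible and one is in Kisin's setting --- so an intersection-theoretic argument in the spirit of \cite[Lemma 1.1.2]{Allen_2019} forces the closed subscheme of $\Spec R_F^{\ps,\chi}$ cut out by such a type to meet $Z$, and choosing the parameters generically makes the resulting point $x$ correspond to an irreducible $\rho_x$, using that the reducible (Eisenstein) locus has dimension strictly less than $d_0$.

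Establishing the modularity of $x$ is where the residually reducible case genuinely departs from the earlier work, since there is no residual modularity from which to bootstrap a Taylor--Wiles patching argument. The plan is a \emph{potential pro-modularity} argument: the reducible pseudo-deformations of $\bar T = 1 + \bar\chi$ are congruent to Eisenstein series, so by Ribet/Skinner--Wiles-type lemmas combined with local-global compatibility the irreducible representation $\rho_x$ lies in the support of a suitable big Hecke module, i.e.\ is pro-modular; one then upgrades pro-modularity to genuine cuspidal modularity using the recent progress on the $p$-adic Fontaine--Mazur conjecture (building on Emerton \cite{emerton11} and Pan \cite{pan2022fontaine}), reducing via solvable base change to a more favourable field --- here exploiting that $F/\Q$ is abelian and $\bar\chi$ extends to $G_\Q$ --- and descending modularity afterwards. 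It is at this point that the hypothesis $\bar\chi|_{G_{F_v}} \ne \omega$ for $v\mid p$ when $p = 3$ enters: it is precisely the genericity condition ensuring that the relevant block of the category of smooth mod-$p$ representations of $\GL_2(\Q_p)$ is amenable to the $p$-adic local Langlands methods.

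For smoothness, with the $p$-adic Hodge type chosen generically the adjoint Bloch--Kato Selmer group $H^1_f(G_{F,\Sigma}, \ad^0 \rho_x)$ vanishes by the usual numerology (regular weights together with the local conditions imposed at the places of $\Sigma$), as in \cite{Allen_2016}; hence $x$ is a smooth point of $\Spec R_F^{\ps,\chi}$, so $Z$ has dimension exactly $d_0$ and is the unique component through $x$, and the infinite fern of Gouv\^ea--Mazur and Chenevier \cite{chenevier2011} run from $x$ shows that modular points are Zariski-dense in $Z$. For the reverse inclusion, any component $Z'$ meeting the cuspidal-modular locus has $\dim Z' \ge d_0$ by the Galois-cohomological estimate from a presentation of $R_F^{\ps,\chi}$ (using that $\bar\chi$ is totally odd), while $\dim Z' \le d_0$ follows by re-running the construction of the previous two paragraphs on $Z'$ itself; the components of other dimension are supported on the Eisenstein locus and carry no cuspidal-modular point, so are correctly excluded. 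When $\chi$ is itself crystalline one checks that all the points produced above may be taken crystalline, which gives the crystalline refinement in 1); and 2) is deduced from 1) by fibering $\Spec R_F^\ps$ over the deformation space of the determinant $\bar\chi$, whose crystalline locus has the expected dimension by local class field theory above $p$ and whose fibres are the fixed-determinant spaces treated in 1), Zariski density of crystalline points passing from the fibres to the total space.

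The principal obstacle is the modularity step: in contrast to B\"ockle's small $R = \T$ theorems or Allen's potential-automorphy input, one must here combine pro-modularity of Eisenstein-type deformations with the full force of the $p$-adic local Langlands and Fontaine--Mazur machinery, and check that the crystalline points constructed in the first step genuinely satisfy all the hypotheses of those theorems. The secondary technical point is to formulate and prove the non-ordinary finiteness result precisely and to control the discrepancy between the pseudo-deformation ring $R_F^{\ps,\chi}$ and the honest framed deformation rings --- more delicate in the reducible case, where $R_F^\ps$ is not itself a deformation ring --- so that the intersection-theoretic and infinite-fern arguments, both originally formulated for genuine deformation rings, carry over.
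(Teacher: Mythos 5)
Your proposal runs the classical B\"ockle--Allen machine --- find a smooth, crystalline, non-ordinary modular point on each component via intersection theory with a nice local crystalline ring, then spread it out with the infinite fern and control smoothness by Bloch--Kato vanishing. This is a coherent programme but it is \emph{not} the route the paper takes, and in fact the paper deliberately avoids almost every ingredient you lean on. The actual strategy has four quite different steps: (i) local-global compatibility is used to show that the quotient $R_v := R_v^\ps/I_v$ (where $I_v = \ker(R_v^\ps \to \T_\km)$) has Krull dimension at most $3$, via Pa\v{s}k\={u}nas' bound on $E_\kB$-modules; (ii) a potential pro-modularity theorem over a carefully chosen abelian extension $F^1$, built from the nice-prime patching of Skinner--Wiles and Pan rather than from Ribet-lemma arguments; (iii) a non-ordinary finiteness theorem giving a well-defined finite map $\widehat{\otimes}_{v|p} R_v \to R_{F,0}^{\ps,\chi}$ in which the source and target have the \emph{same} dimension $1+2[F:\Q]$; and (iv) going-up along that finite map from the dense set of non-ordinary regular de Rham primes of $\widehat{\otimes}_{v|p} R_v$, followed by the Brauer--Nesbitt theorem and the non-ordinary Fontaine--Mazur theorem to conclude modularity. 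No infinite fern, no Bloch--Kato Selmer group, no smoothness argument, and no local regularity assumption at $p$ are needed.

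There are concrete gaps in your version that the paper's design is specifically engineered to avoid. First, you invoke ``a Galois-cohomological estimate from a presentation of $R_F^{\ps,\chi}$'' for the lower bound on dimensions of components meeting the modular locus; the paper notes explicitly that for a residually reducible pseudo-deformation ring one does \emph{not} have control over the number of generators and relations via $H^1$/$H^2$, and instead proves the lower bound (Proposition~\ref{irreducible point}, Corollary~\ref{lower}) by a connectedness-dimension argument at an irreducible point. Second, Allen's intersection-theoretic lemma requires the local deformation ring at $p$ to be \emph{regular} of the expected dimension; the local pseudo-deformation ring $R_v^\ps$ has dimension $1+3[F:\Q]$ and is too big, while the quotient $R_v$ actually used is not a Kisin crystalline/semistable ring and is not known to be regular --- only its dimension bound is available, and the paper stresses this distinction (Remark~\ref{rem}). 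Third, both the Bloch--Kato vanishing for the adjoint and the infinite-fern density argument exist in the residually reducible literature only under additional hypotheses; the paper cites precisely this as a reason to avoid them. In short: your proposal is recognisably the residually irreducible template transplanted, and while the author acknowledges such arguments are ``likely available in most cases,'' the paper's going-up argument from a finite map between rings of equal dimension sidesteps the need to verify smoothness or regularity at all, which is what allows the clean hypotheses in Theorem~\ref{main}.
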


		%Here we explain the structure of $\Spec R_F^{\ps, \chi}$. We can show that for each irreducible component of $\Spec R_F^{\ps, \chi}$,  either it is of dimension $1+2[F:\Q]$, or it is contained in the reducible locus, which is of dimension at most $2$. Hence, our theorem states that if an irreducble component contains a point which is not a sum of two charcters, then modular points are Zariski dense in it. The structure of $\Spec R_F^\ps$ is similar. See Corollary \ref{structure of ps} and Proposition \ref{finite same dim}.

     Before explaining the strategy of the proof of our main theorems, we indicate some obstructions to the question in the residually reducible case. 
     
     First, the universal pseudo-deformation ring in the residually reducible case is much more complicated and mysterious than that in the residually irreducible case, or equivalently, the universal deformation ring. We do not know how to use Galois cohomology to describe the dimension of the tangent space and the minimal number of quotient relations as in \cite[Proposition 1.5.1, page 30]{Berger_2013}. In general, the universal pseudo-deformation ring is not a complete intersection ring. See \cite{Deo2022} for some calculations.
	
	Second, we have hardly any small $R=\T$ theorems in the residually reducible case so far. The patching results in \cite{Skinner_1999}, \cite{pan2022fontaine} are established only for certain special primes (called ``nice primes") under restrictive conditions, but these suffice to deduce the automorphy via soluble base change. Also see \cite{Thorne_2014} and \cite{Allen_2020} in the higher dimensional case. However, we do not know how to find such a special prime in general.
	
	In addition, although the arguments of the vanishing of the Bloch-Kato Selmer group and infinite fern are likely available in the residually reducible case for most cases (see \cite{NewtonThorne} and \cite{Hernandez:2022aa} respectively), we hope to add fewer assumptions in our context.
	
	To avoid these difficulties, we adopt a new strategy instead of B\"ockle's. Our main idea is to find sufficiently many non-ordinary regular de Rham points in the universal pseudo-deformation space. Unlike the Galois theoretic method used in \cite{Fakhruddin2022}, ours is based on the proof of the Fontaine-Mazur conjecture (or automorphy lifting theorems in general). Here we explain the main steps of the proof of Theorem \ref{main}.
	
	\textbf{Step 1.  Local-global compatibility results}
	
	Write $\T$ for the big Hecke algebra for completed cohomology of quaternionic forms and let $\km$ be a maximal ideal of $\T$. Then it is classical that there exists a pseudo-representation $T_\km: G_{F, \Sigma} \to  \T_{\km}$ sending $\Frob_v$ to the Hecke operator $T_v$ for $v \notin \Sigma$. Write $\overline{T}_\km: G_{F, \Sigma} \to  \T_{\km}/\km = \F$ for the residual pseudo-representation. For each place $v|p$, let $R_v^\ps$ be the universal pseudo-deformation ring of the pseudo-representation $\overline{T}_\km|_{G_{F_v}}$ for $G_{F_v}$ with a fixed determinant. Recall that by the local-global compatibility arguments, we can show that the natural map $\widehat{\otimes}_{v|p} R_v^\ps \to \T_{\km}$ induced by the universal property is finite (see \cite[Corollary 3.5.8]{pan2022fontaine} and \cite[Corollary 3.3.7]{Zhang:2024aa}). In this part, we will prove a stronger finiteness result.
	
	Consider the map $R_v^\ps \to \T_{\km} $ and let $I_v$ be its kernel. Recall that the local-global compatibility argument (see \cite[Theorem 3.5.3]{pan2022fontaine} and \cite[Theorem 3.3.3]{Zhang:2024aa}) shows that the two actions of $R_v^\ps$ on the Hecke module $\mm_v$, the Galois side $\tau_{\Gal}$ and the automorphic side $\tau_{\mathrm{Aut}}$, are actually the same. Reviewing this proof, we can also conclude that the ideal $I_v$ is the intersection of all \textit{global regular de Rham} primes. Here for a global regular de Rham prime, we mean that it is the inverse image of a maximal ideal $\kp$ of $ \T_{\km}[1/p]$ via the composite map $ R_v^\ps \to \T_{\km} \to \T_{\km}[1/p]/\kp$, satisfying that the corresponding Galois representation $\rho(\kp)|_{G_{F_v}}$ is de Rham of distinct Hodge-Tate weights. We emphasize that a global regular de Rham prime of $R_v^\ps$ not only carries the information of $p$-adic Hodge theoretic type, but also comes from a restriction of a global Galois representation.
	
	Write $R_v :=R_v^\ps/I_v $. The ring $R_v$ was first studied by Pašk\=unas in \cite{Paskunas_2021}. (Here we point out that $R_v$ is not Kisin's potential crystalline/semi-stable deformation ring, but it might be interesting to compare $R_v$ with those objects defined in \cite{Kisin2008}. See Remark \ref{rem} for more discussions.)  If $\overline{T}_\km|_{G_{F_v}} \ne \eta+\eta\omega$ for any character $\eta$ when $p=3$, Pašk\=unas showed that the Krull dimension of $R_v$ is at most $3$ (see \cite[Corollary 5.13]{Paskunas_2021}). Note that the map $\widehat{\otimes}_{v|p} R_v^\ps \to \T_{\km}$ factors through $\widehat{\otimes}_{v|p} R_v$. Then we get a finite map $\widehat{\otimes}_{v|p} R_v \to \T_{\km}$, which also implies that the Krull dimension of $\T_{\km} $ is bounded by $\dim \widehat{\otimes}_{v|p} R_v \le 1+2[F:\Q]$. Combining \cite[Theorem 3.6.1]{pan2022fontaine}, we obtain the following byproduct, which may give a positive answer to the question in \cite[page 255-256]{CE12} in our setting.
	
	\begin{thm}[Theorem \ref{dim of T}]\label{main T}
		Assume that $\overline{T}_\km|_{G_{F_v}} \ne \eta+\eta\omega$ for any character $\eta$ for any $v|p$ when $p=3$. Then the big Hecke algebra $\T_{\km} $ is equidimensional of dimension $1+2[F:\Q] $. In particular, $\T$ is equidimensional of Krull dimension $1+2[F:\Q] $ when $p \ge 5$. 
	\end{thm}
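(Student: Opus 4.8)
The plan is to prove Theorem~\ref{main T} by pinching the Krull dimension of $\T_{\km}$ between an upper bound coming from the local deformation rings at $p$ and a componentwise lower bound coming from completed cohomology, and then to transfer the conclusion from each $\T_{\km}$ to $\T$ via the semilocal structure. For the upper bound I would start from the finite map $\widehat{\otimes}_{v|p} R_v \to \T_{\km}$ constructed above and use that a finite ring homomorphism cannot increase Krull dimension, so $\dim \T_{\km} \le \dim \widehat{\otimes}_{v|p} R_v$. Since $p$ splits completely in $F$ there are exactly $[F:\Q]$ places $v \mid p$, and by Pa\v{s}k\={u}nas's bound \cite[Corollary 5.13]{Paskunas_2021} each $R_v$ has Krull dimension at most $3$, i.e.\ relative dimension at most $2$ over $\cO$ --- here the hypothesis $\overline{T}_{\km}|_{G_{F_v}} \ne \eta + \eta\omega$ for $p=3$ is exactly the input needed. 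Forming the completed tensor product over $\cO$ adds relative dimensions, so $\dim \widehat{\otimes}_{v|p} R_v \le 1 + 2[F:\Q]$, whence every prime of $\T_{\km}$, in particular every minimal one, has dimension at most $1+2[F:\Q]$.

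For the lower bound, and with it equidimensionality, I would invoke Pan's Theorem \cite[Theorem 3.6.1]{pan2022fontaine} (or its quaternionic analogue in the present setting, cf.\ \cite{Zhang:2024aa}): via the faithfulness of the $\T_{\km}$-action on completed homology together with a support computation over the relevant Iwasawa algebra, it gives that every irreducible component of $\Spec \T_{\km}$ has dimension at least $1+2[F:\Q]$. Combined with the upper bound, each irreducible component of $\Spec \T_{\km}$ has dimension exactly $1+2[F:\Q]$, so $\T_{\km}$ is equidimensional of that dimension.

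To obtain the statement for $\T$ itself, I would use that $\T$ is a complete semilocal ring and therefore decomposes as the finite product $\prod_{\km} \T_{\km}$ over its maximal ideals, whose minimal primes are the pullbacks of the minimal primes of the factors. When $p \ge 5$ the exceptional condition at $p=3$ is vacuous, so every factor $\T_{\km}$ is equidimensional of dimension $1+2[F:\Q]$; hence $\T$ is equidimensional of Krull dimension $1+2[F:\Q]$ as well.

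The substantive ingredients --- Pa\v{s}k\={u}nas's dimension bound on $R_v$, the strengthened local--global finiteness of Step~1 that lets the map factor through $\widehat{\otimes}_{v|p} R_v$ rather than $\widehat{\otimes}_{v|p} R_v^\ps$, and Pan's lower bound --- are all already in hand, so the proof itself is short. The one point that requires care, and where I expect the only real subtlety to sit, is that Pan's bound must be applied \emph{componentwise}: knowing merely that $\dim \T_{\km} \ge 1+2[F:\Q]$ would not preclude lower-dimensional irreducible components, so one genuinely needs the ``no dimension drop'' form of \cite[Theorem 3.6.1]{pan2022fontaine} in order to conclude equidimensionality rather than just the value of the dimension.
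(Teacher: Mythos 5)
Your proposal follows essentially the same route as the paper's proof of Theorem~\ref{dim of T}: the upper bound $\dim \T_\km \le 1+2[F:\Q]$ via the finite map $\widehat{\otimes}_{v|p} R_v \to \T_\km$ (Corollary~\ref{finiteness version 1}) together with Pa\v{s}k\={u}nas's bound $\dim R_v \le 3$ and the completed-tensor-product dimension formula, combined with the componentwise lower bound from \cite[Theorem 3.6.1]{pan2022fontaine}. You also correctly flag the one subtle point --- that Pan's bound is needed in its ``every irreducible component'' form --- and you handle the passage from $\T_\km$ to $\T$ via the semilocal decomposition of Proposition~\ref{pseudo of T}, which the paper leaves implicit.
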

	
    \textbf{Step 2. Potential pro-modularity}
    
    We say that a prime of $ R_F^{\ps, \chi}$ is \textit{pro-modular} if it comes from a prime of some big Hecke algebra $\T_{\km} $ via the natural map $R_F^{\ps, \chi} \twoheadrightarrow \T_{\km} $. As we mentioned before, the patching arguments in the residually reducible case are established only for some nice primes, which are difficult to find in general. In the author's previous work \cite{Zhang:2024ab} and \cite{Zhang:2024aa},  we show that there is a practicable way to find a nice prime in each irreducible component of $\Spec R_F^{\ps, \chi}$ of dimension at least $1+2[F:\Q]$ when some assumptions on $F, \Sigma$ and $\chi$ hold. Combining Pan's patching argument \cite[Theorem 4.1.4]{pan2022fontaine} (also \cite[Theorem 4.1.4]{Zhang:2024aa}) and Grunwald-Wang's theorem, for the totally real field $F$, we can find a finite abelian extension $F^1/F$ such that $p$ splits completely in $F^1$ and for any irreducible component of $\Spec R_{F^1}^{\ps, \chi} $ of dimension at least $1+2[F^1: \Q]$, it is pro-modular. Write $\Spec R_{F^1, 0}^{\ps, \chi} $ for the union of all such irreducible components with reduced scheme structure. Then by \cite[Theorem 3.6.1]{pan2022fontaine} and \cite[Corollary 3.3.10]{Zhang:2024aa}, we are able to prove a \textit{potential pro-modularity} argument, i.e., $\Spec R_{F^1, 0}^{\ps, \chi} = \Spec \T_{\km} $ for the number field $F^1$.
    
    Combining the argument in \textbf{Step 1}, for the number field $F^1$, we can show that the composite map $ \widehat{\otimes}_{v|p} R_v^\ps \to R_{F^1}^{\ps, \chi} \twoheadrightarrow R_{F^1, 0}^{\ps, \chi}$ factors through $\widehat{\otimes}_{v|p} R_v $. In other words, there exists a well-defined finite map $\widehat{\otimes}_{v|p} R_v \to R_{F^1, 0}^{\ps, \chi} $. We emphasize that it is unclear that the map $R_v \to R_{F^1, 0}^{\ps, \chi}$ is well-defined without our potential pro-modularity argument.
	
	\textbf{Step 3. A non-ordinary finiteness result}
	
	Recall that in the ordinary case, Pan showed that there exists a finite map from the Iwasawa algebra $\Lambda_F$ to the universal ordinary pseudo-deformation ring $R^{\ps, \ord}$ (see \cite[Theorem 5.1.1]{pan2022fontaine} and \cite[Theorem 6.1.1]{pan2022fontaine} for the precise statements). In this step, we prove a non-ordinary analogue to Pan's finiteness results. More precisely, we show that there exists a finite map $\widehat{\otimes}_{v|p} R_v \to R_{F, 0}^{\ps, \chi}$, where $R_{F, 0}^{\ps, \chi} $ is the maximal reduced quotient of $R_{F}^{\ps, \chi}$ such that each irreducible component of it has dimension at least $1+2[F:\Q]$.  
	
	Note that we have got a well-defined finite map $\widehat{\otimes}_{v|p} R_v \to R_{F^1, 0}^{\ps, \chi} $ for the number field $F^1$. This implies that there also exists a well-defined finite map $\widehat{\otimes}_{v \in \Sigma_p} R_v \to R_{F, 0}^{\ps, \chi} $, if we argue in a similar way as the proof of \cite[Theorem 5.1.1]{pan2022fontaine}. Here the ring $\widehat{\otimes}_{v \in \Sigma_p} R_v$ plays a similar role as the Iwasawa algebra $\Lambda_F$ in the ordinary case and it may depend on the choice of $F^1$.
	
	As the Krull dimension of $\widehat{\otimes}_{v|p} R_v$ is at most $1+2[F:\Q]$, our non-ordinary finiteness result gives an upper bound of the Krull dimension of the universal pseudo-deformation ring.
	
	\begin{thm}[Proposition \ref{finite same dim}]
		Assume that $\bar{\chi}|_{G_{F_v}} \ne \omega$ for any $v \in \Sigma_p$ if $p=3$.
		
		1) The ring $R_{F, 0}^{\ps, \chi}$ is equidimensional of dimension $1+2[F:\Q]$. In particular, $\dim R_{F}^{\ps, \chi} =1+2[F:\Q] $.
		
		2) We have $\dim R_{F}^{\ps} =2+2[F:\Q]$.
	\end{thm}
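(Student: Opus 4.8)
The plan for the first assertion is to play the non-ordinary finiteness of Step 3 against Pašk\=unas's local dimension bound. By Step 3 there is a finite ring map $\widehat{\otimes}_{v \in \Sigma_p} R_v \to R_{F,0}^{\ps,\chi}$, so $\dim R_{F,0}^{\ps,\chi} \le \dim \widehat{\otimes}_{v \in \Sigma_p} R_v$. Each $R_v = R_v^\ps/I_v$ is reduced, being the quotient of $R_v^\ps$ by the intersection of its global regular de Rham primes, and all of its minimal primes have residue characteristic $0$; hence $\varpi$ is a non-zerodivisor on $R_v$, i.e. $R_v$ is $\cO$-flat. Moreover $\dim R_v \le 3$ by \cite[Corollary 5.13]{Paskunas_2021}, the hypothesis $\bar\chi|_{G_{F_v}} \ne \omega$ for $p=3$ being exactly Pašk\=unas's condition $\overline{T}_\km|_{G_{F_v}} \ne \eta + \eta\omega$. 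Since $p$ splits completely there are $[F:\Q]$ places above $p$, and $\cO$-flatness of the $R_v$ yields $\dim \widehat{\otimes}_{v \in \Sigma_p} R_v = \sum_{v \in \Sigma_p}(\dim R_v - 1) + 1 \le 3[F:\Q] - ([F:\Q]-1) = 1 + 2[F:\Q]$. On the other hand, by definition every irreducible component of $\Spec R_{F,0}^{\ps,\chi}$ has dimension at least $1 + 2[F:\Q]$; combining, all of them have dimension exactly $1 + 2[F:\Q]$, and since $\Spec R_{F,0}^{\ps,\chi}$ is non-empty — it contains the closed subscheme $\Spec \T_\km$, of dimension $1+2[F:\Q]$ by Theorem~\ref{main T} — it is equidimensional of dimension $1 + 2[F:\Q]$. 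As $R_{F,0}^{\ps,\chi}$ assembles all irreducible components of $\Spec R_F^{\ps,\chi}$ of dimension $\ge 1+2[F:\Q]$, there is then none of larger dimension and at least one of that dimension, whence $\dim R_F^{\ps,\chi} = 1 + 2[F:\Q]$.

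The plan for the second assertion is to reduce to the first by twisting away the determinant. As $p$ is odd, squaring is an automorphism of the functor $A \mapsto \{\text{continuous characters } G_{F,\Sigma} \to 1 + \mathfrak{m}_A\}$, so any pseudo-deformation $T$ of $\bar T = 1 + \bar\chi$ factors uniquely as $T = \eta \cdot T'$ with $\eta$ the square root of the character $(\det T)\chi^{-1}$ (a deformation of the trivial character) and $T'$ a pseudo-deformation of $\bar T$ with $\det T' = \chi$. This identifies the pseudo-deformation functor of $\bar T$ with the product of that of $(\bar T,\chi)$ and the deformation functor of the trivial character of $G_{F,\Sigma}$, hence $R_F^{\ps} \cong R_F^{\ps,\chi} \,\widehat{\otimes}_\cO\, C_0$ where $C_0 \cong \cO[[\Delta]]$, $\Delta := \Gal(F_\Sigma^{\mathrm{ab}}(p)/F)$ being the maximal abelian pro-$p$ quotient of $G_{F,\Sigma}$. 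Since $F/\Q$ is abelian, $F$ satisfies Leopoldt's conjecture at $p$ (Brumer), and as every $\Z_p$-extension of $F$ is unramified outside $p$ (the pro-$p$ part of $\cO_{F_v}^\times$ is finite for $v \nmid p$), the $\Z_p$-rank of $\Delta$ equals $1$ for every $\Sigma \supseteq \Sigma_p$; thus $\Delta \cong \Z_p \times \Delta_{\mathrm{tors}}$ with $\Delta_{\mathrm{tors}}$ finite, $C_0 \cong \cO[[x]] \otimes_\cO \cO[\Delta_{\mathrm{tors}}]$, and $R_F^{\ps} \cong R_F^{\ps,\chi}[[x]] \otimes_\cO \cO[\Delta_{\mathrm{tors}}]$ is a finite free $R_F^{\ps,\chi}[[x]]$-algebra. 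Therefore $\dim R_F^{\ps} = \dim R_F^{\ps,\chi}[[x]] = \dim R_F^{\ps,\chi} + 1 = 2 + 2[F:\Q]$.

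The substantive content is entirely in Steps 1–3 (the strengthened local–global compatibility, Pašk\=unas's bound $\dim R_v \le 3$, the potential pro-modularity, and the non-ordinary finiteness map); granting those, the rest is bookkeeping, and the one point demanding care is the dimension of the completed tensor product $\widehat{\otimes}_{v \in \Sigma_p} R_v$. The clean bound $\le 1 + 2[F:\Q]$ rests on the $\cO$-flatness of each $R_v$, which in turn relies on the fact — read off from the local–global compatibility argument of Step 1 — that $R_v$ is cut out inside $R_v^\ps$ by characteristic-$0$ (global regular de Rham) primes; without this, a completed tensor product with $\varpi$-torsion factors would a priori have larger dimension. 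The second point to watch is the non-emptiness of $\Spec R_{F,0}^{\ps,\chi}$, needed to upgrade the dimension inequalities to equalities; this is exactly where the existence of (pro-)modular points, i.e. the input of Step 2 and Theorem~\ref{main T}, enters.
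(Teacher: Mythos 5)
Your proposal is correct and follows essentially the same route as the paper: bound $\dim \widehat{\otimes}_{v\in\Sigma_p}R_v$ above by $1+2[F:\Q]$ using $\cO$-flatness and Pašk\=unas's bound $\dim R_v\le3$, push the bound to $R_{F,0}^{\ps,\chi}$ via the finiteness of Step~3 (Proposition~\ref{finiteness 2}), and combine with the lower bound on every component of $\Spec R_{F,0}^{\ps,\chi}$; then twist by a central character for part~2. The one place where you deviate is non-emptiness of $\Spec R_{F,0}^{\ps,\chi}$: you invoke the existence of the big Hecke algebra $\T_\km$ for $F$ together with Theorem~\ref{main T}, which is sound in spirit but implicitly requires a quaternion algebra over $F$ (so for $[F:\Q]$ odd one must also ramify at a finite place, and the precise $\T_\km$ has not yet been set up for $F$ itself at this point in the paper), whereas the paper uses Corollary~\ref{lower}, which gets a one-dimensional irreducible point of $\Spec R_\Q^{\ps,\chi'}$ from a classical eigenform and transfers it via the finite map $R_F^{\ps,\chi'}\to R_\Q^{\ps,\chi'}$ — a route that avoids constructing any Hecke algebra over $F$ directly and is therefore the cleaner choice to cite here.
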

	
	\begin{rem}
		We can further show that for each minimal prime of $R_{F, 0}^{\ps, \chi}$, it is of characteristic zero and non-ordinary at each $v|p$. See Lemma \ref{char 0} and Lemma \ref{min is non-ord}.
		
		In addition, we can show that for each irreducible component of $\Spec R_{F}^{\ps, \chi} $, either it is of dimension $1+2[F:\Q]$ or it is contained in its reducible locus, which is of dimension at most $2$ by Leopoldt's conjecture. This can be proved by the classical Galois deformation theory and a trick of connectedness dimension (see \cite[Chapter 19]{localcoh} for details). In some cases (see \cite{Deo2022} and \cite{Deo2023}), $R_{F}^{\ps, \chi} $ is equidimensional, but such a result is unknown in general (at least to the author).
	\end{rem}
	
	As the Krull dimension of the big Hecke algebra is bounded by the Krull dimension of the universal pseudo-deformation ring, our result also implies Emerton's conjecture \cite[Conjecture 2.9]{Emerton2011} in most cases. (See Subsection \ref{emerton's conj} for the definition of Emerton's big Hecke algebra $\T$.)
	
	\begin{thm}[Theorem \ref{main thm for emerton}]
		The big Hecke algebra $\T$ (in the sense of Emerton) is equidimensional of Krull dimension $4$ when $p \ge 5$.
	\end{thm}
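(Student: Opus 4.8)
The plan is to show $\dim\T\le 4$ by comparison with the pseudo-deformation ring, then that the dimension-$4$ components of the pseudo-deformation space are attained by $\T$, and finally that $\T$ has no components of smaller dimension. Write $\T=\prod_{\km}\T_{\km}$ over its finitely many maximal ideals, and for each $\km$ let $\overline{T}_{\km}$ be the associated residual pseudo-representation (after a twist we may assume it is of the shape $1+\bar\chi$, so that the hypotheses of the paper apply). The first step is to note that the relevant completed cohomology carries a continuous pseudo-representation $T_{\km}\colon G_{\Q,\Sigma}\to\T_{\km}$ with $T_{\km}(\Frob_\ell)=T_\ell$ and $\det T_{\km}(\Frob_\ell)=\ell S_\ell$ for $\ell\notin\Sigma$; since $\T_{\km}$ is topologically generated over $\cO$ by these Hecke operators, the induced map $R_\Q^{\ps}(\overline{T}_{\km})\twoheadrightarrow\T_{\km}$ from the universal pseudo-deformation ring is surjective. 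As $p\ge 5$, the $p=3$ hypotheses in Theorem \ref{main} and Proposition \ref{finite same dim} are vacuous, and Proposition \ref{finite same dim}(2) gives $\dim R_\Q^{\ps}(\overline{T}_{\km})=2+2[\Q:\Q]=4$; hence $\dim\T_{\km}\le 4$ for every $\km$, so $\dim\T\le 4$.

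Next I would show that the dimension-$4$ components are attained. View $\Spec\T_{\km}$ as a closed subscheme of $\Spec R_\Q^{\ps}(\overline{T}_{\km})$ via the surjection above. Every global crystalline modular point of $\Spec R_\Q^{\ps}(\overline{T}_{\km})$ is the system of Hecke eigenvalues of a classical cuspidal eigenform of level prime to $p$, hence lies in $\Spec\T_{\km}$. By Theorem \ref{main}(2) the Zariski closure $Z$ of the set of all these points is the union of the dimension-$4$ irreducible components of $\Spec R_\Q^{\ps}(\overline{T}_{\km})$; therefore $\Spec\T_{\km}\supseteq Z$ as closed subsets, and by the dimension bound of the previous step each component of $Z$ is a full irreducible component of $\Spec\T_{\km}$. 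Since $\overline{T}_{\km}$ occurs in completed cohomology it is residually modular, and by Serre's conjecture it admits a crystalline modular lift whose prime-to-$p$ conductor divides the tame level; so $Z\ne\emptyset$ and $\dim\T_{\km}=4$ for every $\km$.

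It then remains to rule out irreducible components of $\Spec\T_{\km}$ of dimension $<4$. I would use the Zariski-density of classical points in $\Spec\T_{\km}$ --- the completed cohomology analogue of the density of locally algebraic vectors --- to see that $\T_{\km}$ is reduced and that each of its minimal primes is the closure of a set of classical cuspidal points. Such points are de Rham of distinct Hodge--Tate weights and are pro-modular, and the key claim is that they all lie on $Z$. For this I would re-run the argument of Steps 2 and 3: after passing, via Jacquet--Langlands and the potential pro-modularity of Step 2, to a solvable totally real $F^1$ in which $p$ splits completely and where a definite quaternion algebra is available, one has the non-ordinary finiteness map $\widehat{\otimes}_{v|p}R_v\to R_{F^1,0}^{\ps,\chi}$ of Step 3 together with $\dim\widehat{\otimes}_{v|p}R_v\le 1+2[F^1:\Q]$, which forces the support of the quaternionic completed cohomology to be equidimensional of the expected dimension and to meet the regular de Rham locus in each component; descending back to $\Q$ and reinstating the free cyclotomic/determinant direction accounting for $4=3+1$ gives that every minimal prime of $\T_{\km}$ has dimension $\ge 4$. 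Combined with the first two steps, each $\T_{\km}$, and hence $\T$, is equidimensional of Krull dimension $4$.

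The hardest part will be the last step: excluding a positive-dimensional component of $\Spec\T_{\km}$ supported entirely on the reducible (Eisenstein) locus of $\Spec R_\Q^{\ps}(\overline{T}_{\km})$ --- equivalently, proving that the support over $R_\Q^{\ps}$ of the relevant cuspidal completed cohomology is exactly the pro-modular dimension-$4$ locus $Z$, with no stray Eisenstein component of dimension $\le 3$ (the reducible locus itself has dimension $\le 3$ by classical Galois deformation theory and Leopoldt's conjecture for $\Q$, as in the Remark above). In the residually irreducible setting this is automatic, since no reducible locus passes through $\km$; in the Eisenstein case it is precisely the new input, and it is exactly where the non-ordinary finiteness result and the potential pro-modularity of Steps 2--3 --- rather than an infinite-fern argument --- are needed.
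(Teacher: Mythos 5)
There is a genuine gap in your first step. You claim that after a twist the residual pseudo-representation $\overline{T}_{\km}$ may be assumed of the form $1+\bar\chi$, but twisting by a character preserves reducibility, and Emerton's algebra $\T$ has both Eisenstein and non-Eisenstein maximal ideals. For a non-Eisenstein $\km$ the residual pseudo-representation is the trace of an irreducible $\bar\rho_{\km}$ and cannot be brought to the form $1+\bar\chi$, so Proposition \ref{finite same dim}(2) does not apply and your proposal gives no upper bound on $\dim\T_{\km}$ in that case. The paper treats the two cases separately: for Eisenstein $\km$ it uses Proposition \ref{finite same dim}, while for non-Eisenstein $\km$ it identifies $R_{\Q}^{\ps}$ with the ordinary universal deformation ring and appeals to Corollary \ref{dim in irr case}, whose proof (Appendix A) runs a residually irreducible analogue of the finiteness and potential pro-modularity arguments precisely so as to avoid a Taylor--Wiles hypothesis. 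Your proposal would need a parallel argument, and as written simply misses half the maximal ideals.

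You also make the lower bound and the exclusion of low-dimensional components far harder than they are in this setting. Proposition \ref{lower bound}, i.e. \cite[Corollary 2.28]{Emerton2011}, is the Gouv\^ea--Mazur infinite fern statement that \emph{every} irreducible component of $\Spec\T$ already has dimension at least $4$; combined with the upper bound $\dim\T_{\km}\le 4$, equidimensionality is immediate. There is no need for Zariski density of crystalline points in $\Spec\T_{\km}$, for reducedness of $\T_{\km}$, or for a delicate argument ruling out a low-dimensional Eisenstein component. In fact, your closing assertion that here the non-ordinary finiteness and potential pro-modularity replace the infinite fern has the situation backwards: the paper's own remark notes that, unlike Theorem \ref{dim of T}, the lower bound for Emerton's $\T$ is precisely the one place where the infinite fern is invoked, while the finiteness and pro-modularity machinery only enters through the \emph{upper} bound $\dim R_{\Q}^{\ps}=4$ (Eisenstein case) and $\dim R_{\Q}=4$ (non-Eisenstein case).
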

	
	\begin{rem}
		1) Unlike Theorem \ref{dim of T}, the lower bound of $\dim \T$ in this result is proved by Gouv\^ea-Mazur's infinite fern argument rather than the local-global compatibility results.
		
		2) Actually, our arguments in these three steps are also valid in the residually irreducible case. In particular, the potential pro-modularity argument based on \cite[Section 9]{pan2022fontaine} allows us to relax the Taylor-Wiles hypothesis in the residually irreducible case. See Appendix \ref{A} for the discussions.
	\end{rem}
	
	\textbf{Step 4. Going-up}
	
	Note that we have obtained a well-defined finite map $\widehat{\otimes}_{v \in \Sigma_p} R_v \to R_{F, 0}^{\ps, \chi} $, and both have the same Krull dimension. From the local-global compatibility results in \textbf{Step 1}, global non-ordinary regular de Rham primes of $\widehat{\otimes}_{v \in \Sigma_p} R_v$ are Zariski dense. Here for a global non-ordinary regular de Rham prime of $\widehat{\otimes}_{v \in \Sigma_p} R_v$, we mean that its inverse image via the composite map $R_v^\ps \twoheadrightarrow R_v \to \widehat{\otimes}_{v \in \Sigma_p} R_v $ is a global non-ordinary regular de Rham prime for each $v \in \Sigma_p $. By the going-up property, we can prove the Zariski density of non-ordinary regular de Rham liftings in $\Spec R_{F, 0}^{\ps, \chi}$, which implies our main result by using the Brauer-Nesbitt theorem and the non-ordinary Fontaine-Mazur conjecture (see \cite[Theorem 7.1.1]{pan2022fontaine} and \cite[Theorem 5.1.1]{Zhang:2024aa}).
	
	\begin{rem}
		In \cite{Allen_2014}, Calegari and Allen proved a finiteness result between the local framed deformation ring at $p$ and the global universal deformation ring in the residually irreducible case. Such a result is used in Allen's work \cite{Allen_2019}, provided that the local framed deformation ring at $p$ is regular. Following Calegari-Allen's method, we also show a finiteness result from the local pseudo-deformation ring at $p$ and the global pseudo-deformation ring in the residually reducible case with a wider range of applications than our finiteness result in \textbf{Step 3}. See Proposition \ref{finiteness1}. However, as the local pseudo-deformation ring at $p$ has Krull dimension $1+3[F:\Q] > \dim R_{F, 0}^{\ps, \chi}$, we cannot use it for \textbf{Step 4}.
		
		From our strategy, it is not necessary to know more properties of $R_v$ and $R_F^{\ps, \chi}$. Some bounds of their Krull dimensions are sufficient for our purpose in  \textbf{Step 4}. Hence, we do not need more local or global conditions  in our setup.
		
		In addition, we do not use the arguments of infinite fern and the vanishing of Bloch-Kato Selmer group in our four steps. 
	\end{rem}
	
	As a direct consequence of Theorem \ref{main}, we can prove some big $R=\T$ theorem in the residually reducible case. Here we give a sample result as an analogue to \cite[Theorem 1.2.3]{emerton11} in the Eisenstein case.
	
	\begin{thm}[Theorem \ref{big R=T modular curve}]
		Assume that $\bar{\chi}|_{G_{\Q_p}} \ne \omega$ if $p=3$. For a prime $\kp$ of $R_\Q^\ps$, if $\kp$ is contained in an irreducible component of Krull dimension $4$, then it is pro-modular, i.e., it comes from a prime of the big Hecke algebra of the completed cohomology of modular curves.
	\end{thm}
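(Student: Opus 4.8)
The plan is to derive this directly from part 2 of Theorem \ref{main}, once one knows that the pro-modular locus is Zariski closed and already contains every crystalline point. First I would recall the natural surjection $R_\Q^\ps \twoheadrightarrow \T$, where $\T$ is the big Hecke algebra acting on the completed cohomology of modular curves of tame level adapted to $\Sigma$, localized at the maximal ideal whose residual pseudo-representation is $1+\bar\chi$; it is induced by the universal property of $R_\Q^\ps$ applied to the tautological pseudo-representation $G_{\Q,\Sigma}\to\T$, $\Frob_\ell\mapsto T_\ell$, and it is surjective because $\T$ is topologically generated over $\cO$ by the $T_\ell$'s (which are traces of Frobenii), exactly as for the quaternionic Hecke algebras in Step 1. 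Writing $J:=\ker(R_\Q^\ps\to\T)$, a prime $\kp$ of $R_\Q^\ps$ is pro-modular for the completed cohomology of modular curves precisely when $\kp\in V(J)$, and $V(J)$ is a closed subset of $\Spec R_\Q^\ps$.

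Next I would check that every crystalline point of $\Spec R_\Q^\ps$ lies in $V(J)$. By definition such a point corresponds to $\tr\rho_\pi$ for a regular algebraic cuspidal automorphic representation $\pi$ of $\GL_2(\A_\Q)$ unramified outside $\Sigma$, i.e. to a classical cuspidal eigenform; by Emerton's classicality result (classical eigenforms occur as locally algebraic vectors in the completed cohomology of modular curves) the Hecke eigensystem of $\pi$ factors through $\T$, and the tautological pseudo-representation of $\T$ specializes to $\tr\rho_\pi$. Hence the specialization map $R_\Q^\ps\to\kappa(\kp)$ at a crystalline point $\kp$ factors through $\T$, so $J\subseteq\kp$, i.e. $\kp\in V(J)$.

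Finally, apply Theorem \ref{main}(2) with $F=\Q$ — whose hypotheses hold since $\Q$ is abelian totally real with $p$ (trivially) split completely, $\bar\chi$ automatically extends to $G_\Q$, and $\bar\chi|_{G_{\Q_p}}\ne\omega$ when $p=3$ by assumption — to conclude that the Zariski closure of the set of crystalline points of $\Spec R_\Q^\ps$ equals the union $Z$ of the irreducible components of dimension $2+2[\Q:\Q]=4$. Since $V(J)$ is closed and contains all crystalline points, it contains their closure $Z$. Therefore any prime $\kp$ lying in a $4$-dimensional irreducible component lies in $Z\subseteq V(J)$, so $J\subseteq\kp$ and $\kp$ is pulled back from the prime $\kp/J$ of $R_\Q^\ps/J\cong\T$; that is, $\kp$ is pro-modular. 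The only substantial input is Theorem \ref{main}(2): the density of crystalline modular points in the $4$-dimensional locus is exactly what the four-step argument of this paper (local-global compatibility, potential pro-modularity, the non-ordinary finiteness result, and going-up) produces, and everything else is formal. The point needing care is the bookkeeping that identifies the "modular" points of $\Spec R_\Q^\ps$ — defined via cuspidal automorphic representations of $\GL_2(\A_\Q)$ — with Emerton's big Hecke algebra of Subsection \ref{emerton's conj}: one must fix the tame level and the maximal ideal so that every eigenform unramified outside $\Sigma$ contributes, and verify that $R_\Q^\ps\twoheadrightarrow\T$ genuinely holds in this "big", not a priori Noetherian, setting.
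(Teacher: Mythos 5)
Your argument is essentially identical to the paper's proof: you write $J=\ker(R_\Q^\ps\to\T_\km)$, observe that each crystalline modular point contains $J$, and then invoke Theorem~\ref{main thm without det} (your Theorem~\ref{main}(2) with $F=\Q$) to get $J\subseteq I_\Q$ and hence $V(I_\Q)\subseteq V(J)$; the paper phrases the same inclusion in terms of ideals rather than closed sets but the content is the same. The ``bookkeeping'' you flag at the end is indeed the only point requiring care, and the paper addresses it exactly as you suggest, by fixing allowable levels $K_{\Sigma_0}$, passing to the localization at $\km$, and defining $\T_\km=\varprojlim_{K_{\Sigma_0}}\T(K_{\Sigma_0})_\km$ so that the surjection $R_\Q^\ps\twoheadrightarrow\T_\km$ is well defined and every classical eigenform unramified outside $\Sigma$ contributes.
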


    \begin{rem}
    	Here we explain the assumptions in our setting.
    	
    	The most crucial one is that $p$ splits completely in $F$. This is because Pašk\=unas theory ($p$-adic Langlands correspondence) is only established for $\GL_2(\Q_p)$. In our strategy of the proof, Pašk\=unas' results in \cite{Pa_k_nas_2013}, \cite{Pa_k_nas_2021} and \cite{Paskunas_2021} are used in each step.
    	
    	The assumption that $\bar{\chi}|_{G_{F_v}} \ne \omega$ for any $v|p$ if $p=3$ is only prepared for the upper bound of $\dim R_v$, coming from \cite[Corollary 5.13]{Paskunas_2021}. The proof of Pašk\=unas' result relies on the explicit description of the ring $E_\kB$ for the block $\kB$ calculated in \cite{Pa_k_nas_2013} (see Subsection \ref{pas sec} for precise definitions of $E_\kB$ and $\kB$ ). In the exceptional case, the behavior of the block $\kB$ is quite different (see \cite[Appendix A.2]{Zhang:2024aa} for the list of $\kB$) so that the computation of $E_\kB$ is much more involved. Once Pašk\=unas' result can be proved in that case, the assumption can be removed directly based on the author's previous work \cite{Zhang:2024aa}.
    	
    	The other assumptions on $F$ and $\bar{\chi}$ come from the known case of Fontaine-Mazur conjecture for $\GL_2$, particularly the work \cite{Skinner_1999} and \cite{pan2022fontaine}. Also, it is sometimes convenient to use Leopoldt's conjecture for abelian totally real fields.
    	
    	In addition, our strategy is also valid in the residually irreducible case. See some discussions in Appendix \ref{A}. If one can prove analogous finiteness results to \cite[Theorem 5.1.1 \& Theorem 6.1.1]{pan2022fontaine} in the residually irreducible case, following the strategy in this paper, we may obtain similar results without Taylor-Wiles hypothesis. See Remark \ref{rem irr}.
    \end{rem}

   The author expects to see more arithmetic applications of the main results in this paper, especially the residually reducible analogue to \cite{Nakamura2023} and \cite{Fouquet:2021aa} in Iwasawa theory.

    This paper is organized as follows. In Section 2, we prepare some results for the universal pseudo-deformation rings. In Section 3, we recall Pašk\=unas theory in \cite{Pa_k_nas_2013}, \cite{Pa_k_nas_2021} and \cite{Paskunas_2021}, and give some corollaries based on the local-global compatibility results at $p$. In Section 4, we recall the known cases of Fontaine-Mazur conjecture for $\GL_2$ and potential pro-modularity arguments in \cite{Zhang:2024ab} and \cite{Zhang:2024aa}. In Section 5, we give some non-ordinary finiteness results and further prove our main theorem. In Section 6, we present some applications of our results, such as the equidimensionality of some big Hecke algebras and big $R=\T $ theorems. Furthermore, we conjecture the Zariski density of modular points in some irreducible components of the universal deformation ring in the residually reducible case there. In Appendix A, we discuss \textbf{Step 2} and \textbf{Step 3} in the residually irreducible case, which is prepared for the proof of Emerton's conjecture.

	\subsection{Notations}
	
		Let $p$ be a prime. We always denote a $p$-adic local field by $K$. We also denote its uniformizer by $\varpi$, its ring of integers by $\cO$ and the residue field by $\F$. We fix an embedding of $K$ into $\overbar{\Q_p}$, some algebraic closure of $\Q_p$ and an isomorphism $\iota_p:\overbar{\Q_p}\simeq \bC$.
	
	We use $\mathcal{C}_{\mathcal{O}}$ to denote the category of complete noetherian local $\cO$-algebras with residue field $\F$. For an $\cO$-module $M$, we use $M^\vee$ to denote its Pontryagin dual $\Hom_{\cO}^{\cont}(M, K/\cO)$, and we use $M^d$ to denote its Schikhof dual $\Hom_{\cO}^{\cont}(M, \cO)$. 
	
	For a prime ideal $\kp$ of a commutative ring $R$, we denote its residue field by $k(\kp)$. For the dimension of the prime $\kp$, we mean the Krull dimension of $R/\kp$. Let $R_\kp$ be the localization at $\kp$. We write $\widehat{R_\kp}$ for its $\kp$-adic completion. We say a commutative ring $R$ is a CNL ring if it is a complete noetherian local ring, and we usually denote its maximal ideal by $\mathfrak{m}_R$ unless otherwise stated. We use $R^{\operatorname{red}}$ to denote the maximal reduced quotient of the ring $R$.
	
	Suppose $F$ is a number field with maximal order $O_F$. We write $F_+$ for the set of totally positive elements in $F$. For any finite place $v$, we write $F_v$ (resp. $\cO_{F_v}$) for the completion of $F$ (resp. $\cO_F$) at $v$, $\varpi_v$ for a uniformizer of $F_v$, $k(v)$ for the residue field, $\operatorname{Nm}(v)$ for the norm of $v$ (in $\Q$),  $G_{F_v}$ for the decomposition group above $v$, $I_{F_v}$ (or just $I_v$ if there is no confusion for the number field $F$) for its inertia group and $\Frob_v$ for a geometric Frobenius element in $G_{F_v}/I_{F_v}$. If $l$ is a rational prime, then we denote $O_F\otimes_\Z \Z_l$ by $\cO_{F,l}$. The adele ring of $F$ will be denoted by $\A_F$ and $\A_F^\infty$ will be the ring of finite adeles. Suppose $\Sigma$ is a finite set of places of $F$. We use $G_{F, \Sigma}$ for the Galois group of the maximal extension of $F$ unramified outside $\Sigma$ and all infinite places. The absolute Galois group of $F$ is denoted by $G_F=\Gal(\overbar{F}/F)$. 
	
	We use $\varepsilon$ to denote the $p$-adic cyclotomic character and $\omega$ to denote the mod $p$ cyclotomic character. Our convention for the Hodge-Tate weight of $\varepsilon$ is $-1$.

	\subsection{Acknowledgements}
	The author would like to thank his supervisor Professor Takeshi Saito for his constant help and encouragement. 
	
	\section{Galois deformation theory}
	
	In this section, we collect some results about Galois deformation theory.
	
	\subsection{Preliminaries in commutative algebra}
	
	In this subsection, let $p$ be an odd prime and $K$ be a $p$-adic local field with its ring of integers $\mathcal{O}$, a uniformizer $\varpi$ and its residue field $\mathbb{F}$.
	
	\begin{defn} \cite[Remark 19.2.5]{localcoh}
		For a positive integer $r$, denote by $S(r)$ the set of all ordered pairs $(A,B)$ of non-empty subsets of $\{1, ..., r\}$ for which $A \amalg B=\{1, ..., r\}$. Let $R$ be a Noetherian ring with minimal primes $\mathfrak{p}_1, ..., \mathfrak{p}_r$. The \textit{connectedness dimension} $c(R)$ of $R$ is defined to be $$
		c(R) = \min_{(A,B) \in S(r)} \{\dim R/((\cap_{i \in A} \mathfrak{p}_i)+(\cap_{j \in B} \mathfrak{p}_j))\}.$$ 
	\end{defn}
	
	It is easy to see that the connectedness dimension can also be defined as $$c(R)=\min_{Z_1,Z_2}\{\dim (Z_1\cap Z_2)\}, $$
	where $Z_1,Z_2$ are non-empty unions of irreducible components of $\Spec R$ such that $Z_1\cup Z_2=\Spec R$.
	
	\begin{prop}\label{connected dim}
		 Let $(R, \mathfrak{m}_R) $ be a Noetherian local ring, and let $\widehat{R}$ be its $ \mathfrak{m}_R$-adic completion. Then we have $c(R) \ge c(\widehat{R})$.
	\end{prop}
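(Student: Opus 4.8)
The plan is to transport a ``separating pair'' of irreducible components from $\Spec R$ to $\Spec\widehat R$ by pullback along the faithfully flat map $R\to\widehat R$, using the reformulation of $c(\cdot)$ recalled just above the statement. Fix a pair $W_1,W_2$ of non-empty unions of irreducible components of $\Spec R$ with $W_1\cup W_2=\Spec R$, and let $J_1,J_2\subseteq R$ be the corresponding radical ideals, so that $V(J_i)=W_i$, $W_1\cap W_2=V(J_1+J_2)$, and $J_1\cap J_2\subseteq\sqrt 0$. I will build from this a pair $Z_1,Z_2$ of non-empty unions of irreducible components of $\Spec\widehat R$ with $Z_1\cup Z_2=\Spec\widehat R$ and $\dim(Z_1\cap Z_2)\le\dim R/(J_1+J_2)=\dim(W_1\cap W_2)$. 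Minimizing over all admissible $(W_1,W_2)$ then gives $c(\widehat R)\le\dim(W_1\cap W_2)$ for every such pair, hence $c(\widehat R)\le c(R)$, as wanted.

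To construct $Z_1,Z_2$, set $C_i:=V(J_i\widehat R)\subseteq\Spec\widehat R$. Flatness of $\widehat R$ over $R$ gives $J_1\widehat R\cap J_2\widehat R=(J_1\cap J_2)\widehat R$, and since $J_1\cap J_2$ consists of nilpotents the same is true of $(J_1\cap J_2)\widehat R$; therefore $C_1\cup C_2=V(J_1\widehat R\cap J_2\widehat R)=\Spec\widehat R$, while $C_1\cap C_2=V\bigl((J_1+J_2)\widehat R\bigr)$. The key point is that each $C_i$ contains a whole irreducible component of $\Spec\widehat R$. Indeed, $W_i\neq\emptyset$ forces $J_i\subseteq\mathfrak p_i$ for some minimal prime $\mathfrak p_i$ of $R$; by faithful flatness together with going-down for $R\to\widehat R$, every minimal prime of $R$ is dominated by a minimal prime of $\widehat R$ (a prime of $\widehat R$ minimal over $\mathfrak p_i\widehat R$ must contract to $\mathfrak p_i$ and cannot strictly contain a smaller prime of $\widehat R$, since that would contract to a prime of $R$ strictly below the minimal prime $\mathfrak p_i$). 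Choosing such a minimal prime $\mathfrak q_i$ of $\widehat R$ over $\mathfrak p_i$, we get $J_i\widehat R\subseteq\mathfrak p_i\widehat R\subseteq\mathfrak q_i$, so the component $V(\mathfrak q_i)$ is contained in $C_i$. Now let $Z_i$ be the union of all irreducible components of $\Spec\widehat R$ lying in $C_i$: it is non-empty by the previous remark, and since each component of $\Spec\widehat R$ is irreducible and covered by $C_1\cup C_2$, it lies entirely in $C_1$ or in $C_2$, whence $Z_1\cup Z_2=\Spec\widehat R$.

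It remains to bound $\dim(Z_1\cap Z_2)$. We have $Z_1\cap Z_2\subseteq C_1\cap C_2=V\bigl((J_1+J_2)\widehat R\bigr)$, so
$\dim(Z_1\cap Z_2)\le\dim\widehat R/(J_1+J_2)\widehat R=\dim\widehat{R/(J_1+J_2)}=\dim R/(J_1+J_2)$,
using that $\widehat R/(J_1+J_2)\widehat R$ is the $\mathfrak m$-adic completion of the Noetherian local ring $R/(J_1+J_2)$ and that completion preserves Krull dimension. This proves the inequality for the chosen pair, and hence $c(R)\ge c(\widehat R)$.

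I expect the only real obstacle to be the assertion that $C_i$ contains an entire irreducible component of $\Spec\widehat R$ (not merely a non-empty closed set), as this is exactly what makes $Z_i$ non-empty and thus eligible for the defining minimum of $c(\widehat R)$; it rests on the standard fact that, for the faithfully flat map $R\to\widehat R$ satisfying going-down, the minimal primes of $\widehat R$ lie over those of $R$ and every minimal prime of $R$ is attained. Everything else is the routine dictionary between radical ideals and closed subsets together with invariance of dimension under completion.
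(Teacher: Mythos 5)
Your argument is correct. Note that the paper gives no proof of this proposition at all: it simply cites \cite[Lemma 19.3.1]{localcoh}. So what you have written is in effect a self-contained proof of that cited lemma, and it follows the standard line of reasoning: transport a separating pair along the faithfully flat map $R\to\widehat R$, use flatness to get $J_1\widehat R\cap J_2\widehat R=(J_1\cap J_2)\widehat R\subseteq\sqrt{0}$ so the pulled-back closed sets still cover, use lying-over/going-down to see each of them contains an entire component of $\Spec\widehat R$, and finish with $\dim\widehat R/(J_1+J_2)\widehat R=\dim R/(J_1+J_2)$. Two small remarks. First, your parenthetical justification of the key step is slightly off: if $\mathfrak{q}$ is minimal over $\mathfrak{p}_i\widehat R$ and $\mathfrak{q}''\subsetneq\mathfrak{q}$ is a prime of $\widehat R$, then $\mathfrak{q}''$ need not contract \emph{strictly} below $\mathfrak{p}_i$; rather its contraction lies in $\mathfrak{q}\cap R=\mathfrak{p}_i$, hence equals $\mathfrak{p}_i$ by minimality of $\mathfrak{p}_i$, so $\mathfrak{q}''\supseteq\mathfrak{p}_i\widehat R$, contradicting minimality of $\mathfrak{q}$ over the \emph{extended} ideal. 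With this correction the assertion that every minimal prime of $R$ is dominated by a minimal prime of $\widehat R$ is the standard fact you intend, and the rest (an irreducible component covered by two closed sets lies in one of them; completion of a Noetherian local ring preserves Krull dimension) is routine. Second, you use the paper's covering reformulation of $c(\cdot)$ on both sides; your constructed pair $(Z_1,Z_2)$ may share components, but since an overlapping pair never achieves a smaller intersection dimension than a suitable partition (when there are at least two minimal primes), this costs nothing, and in the degenerate irreducible case the bound is trivially compatible. So the proposal is a valid proof, and arguably more informative than the paper's bare citation.
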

	
	\begin{proof}
		See \cite[Lemma 19.3.1]{localcoh}.
	\end{proof}
	
	Write $\mathcal{C}_{\mathcal{O}}$ for the category of CNL $\mathcal{O}$-algebras with residue field $\mathbb{F}$. The following result collects some useful facts about the completed tensor product of finitely many objects.
	
	\begin{prop}\label{com alg}
		Let $n $ be a positive integer. For any $1 \le i \le n $, assume that $R_i \in \mathcal{C}_{\mathcal{O}}$ is $\cO$-flat. (Here we say that an ideal $I$ of $R_i$ is $\cO$-flat if the ring $R_i/I$ is $\cO$-flat.)
		
		1) The completed tensor product ${\widehat{\otimes}_\cO} R_i$ is $\cO$-flat.
		
		2) If all $R_i $ are $\cO$-flat domains, $1 \le i \le n$, then $\dim {\widehat{\otimes}_\cO} R_i = 1+ \sum_{i=1}^{n} (\dim R_i -1)$.
		
		3) For any ideal $\kp_i$ of $R_i $ such that $R_i/\kp_i$ is $\cO$-flat, we have $$ 
		\widehat{\otimes}_\cO (R_i/\kp_i) =(\widehat{\otimes}_\cO R_i)/(\kp_1, ..., \kp_n). $$
		
		4) For each $i$, assume that $R_i$ is a domain of dimension $1$.  Then ${\widehat{\otimes}_\cO} R_i $ is reduced.
		
		5) Let $\{\kq_i\}$ be a set (possibly infinite) of $\cO$-flat primes of $R_1$. Then we have $(\cap \kq_i) \widehat{\otimes}_\cO R_2 = \cap (\kq_i \widehat{\otimes}_\cO R_2)$.
	\end{prop}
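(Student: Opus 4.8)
The statement to prove is Proposition~\ref{com alg}, which collects five facts about completed tensor products $\widehat\otimes_\cO$ of CNL $\cO$-algebras. Here is my plan.

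\textbf{Overall approach.} All five parts follow from two recurring themes: (i) a CNL $\cO$-algebra $R$ that is $\cO$-flat is the same as an $\cO$-flat module topologically of finite type, so that $\widehat\otimes_\cO$ can be computed on a presentation $R_i \cong \cO[[x_{i,1},\dots,x_{i,m_i}]]/J_i$; and (ii) after inverting $\varpi$ or reducing mod $\varpi$, $\widehat\otimes_\cO$ becomes an ordinary (completed) tensor product of affinoid or finite-type $\F$-algebras, where classical commutative algebra (dimension theory, reducedness of tensor products of separable/geometrically reduced algebras) applies. Concretely, I would set $S = \widehat\otimes_\cO R_i$ and repeatedly use that $S/\varpi \cong \widehat\otimes_\F (R_i/\varpi)$ and that $S$ is $\varpi$-adically separated and complete.

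\textbf{Step-by-step.} For 1), I would first reduce to $n=2$ by induction. Writing $R_1 = \cO[[\underline x]]/J_1$ with $R_1$ $\cO$-flat, i.e.\ $J_1 \cap \cO[[\underline x]]\cdot\varpi = \varpi J_1$, one checks $\widehat\otimes_\cO R_2 = R_2[[\underline x]]/J_1 R_2[[\underline x]]$ and that $\varpi$ is a nonzerodivisor here; the key point is that $R_2[[\underline x]]$ is $\cO$-flat (power series over an $\cO$-flat ring) and that the ideal generated by $J_1$ stays $\varpi$-saturated because $J_1$ was. For 3), the identification $\widehat\otimes_\cO(R_i/\kp_i) = (\widehat\otimes_\cO R_i)/(\kp_1,\dots,\kp_n)$ is the right-exactness of $\widehat\otimes$ together with $\varpi$-flatness of all quotients involved (so no completion issues arise); this is essentially formal once 1) is in place. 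For 2), I would invert $\varpi$: $S[1/p] = (\widehat\otimes_\cO R_i)[1/p]$ is a completed tensor product of the $K$-affinoid domains $R_i[1/p]$, and $\dim S = 1 + \dim S[1/p]$ since $S$ is $\cO$-flat of relative dimension equal to its generic fibre dimension (and $\dim S/\varpi = \dim S - 1$). Then $\dim(R_1[1/p]\widehat\otimes_K R_2[1/p]) = \dim R_1[1/p] + \dim R_2[1/p]$ for affinoid domains over $K$ (Noether normalization / the fact that the completed tensor product of Tate algebras is again Tate of the sum of the dimensions), and $\dim R_i[1/p] = \dim R_i - 1$; induct. For 4), if each $R_i$ is an $\cO$-flat domain of dimension $1$, then $R_i[1/p]$ is a finite product of fields (a $0$-dimensional reduced $K$-affinoid algebra, in fact a finite étale or at least a finite reduced $K$-algebra); then $S[1/p] = \widehat\otimes_K R_i[1/p]$ is a finite tensor product of such, hence reduced provided each $R_i[1/p]$ is geometrically reduced over $K$ — here I would either invoke that these are products of finite separable extensions of $K$ (characteristic $0$, so automatic) or just observe that a tensor product of reduced $\bar K$-algebras over the perfect field $K$ is reduced, then descend. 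Since $S \hookrightarrow S[1/p]$ by 1), $S$ is reduced. For 5), $(\cap \kq_i)\widehat\otimes_\cO R_2 \subseteq \cap(\kq_i \widehat\otimes_\cO R_2)$ is clear; for the reverse, I would use part 3) to write $\kq_i\widehat\otimes_\cO R_2$ as the kernel of $S \twoheadrightarrow (R_1/\kq_i)\widehat\otimes_\cO R_2$, note $R_2$ is $\cO$-flat hence $\widehat\otimes_\cO R_2$ is exact on the relevant $\varpi$-flat modules, and deduce that $\widehat\otimes_\cO R_2$ commutes with the (possibly infinite) intersection because it is a ``base change by a flat, Mittag–Leffler / topologically free'' module — more precisely, reduce to a finite intersection by Noetherianity of the ambient $\cO[[\underline x]]$-module structure on each truncation $S/\km_S^n$ and pass to the limit.

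\textbf{Main obstacle.} The genuinely delicate point is part 4), specifically controlling reducedness of the completed tensor product after inverting $p$: a tensor product of reduced rings need not be reduced over a non-perfect field, so I must be careful to either stay in characteristic $0$ (where finite field extensions are separable) or argue via geometric reducedness of the fibres $R_i[1/p]$, and then transfer reducedness from $S[1/p]$ back to $S$ using $\cO$-flatness from part 1). The dimension computation in part 2) also requires a clean statement that $\dim(A\widehat\otimes_K B) = \dim A + \dim B$ for $K$-affinoid domains, which I would attribute to standard rigid-analytic dimension theory (or prove via Noether normalization of affinoid algebras); everything else is bookkeeping with power-series presentations and $\varpi$-saturation of ideals.
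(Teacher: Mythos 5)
Your argument for part (4) is essentially identical to the paper's: reduce to the generic fibre via (1), observe that each $R_i\otimes_\cO K$ is a finite field extension of $K$ (hence separable, as $\operatorname{char} K=0$), so the tensor product is finite \'etale over $K$ and hence reduced, and then descend along the injection $S\hookrightarrow S[1/p]$. Parts (1) and (3) are also in the same spirit as the paper, which simply cites Thorne and Pan for the $\cO$-flatness of the completed tensor product and the right-exactness statement; your sketch of the $\varpi$-saturation claim in (1) is plausible but hides exactly the Tor computation that Thorne's lemma provides.

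There are, however, two genuine problems. In part (2), you propose to invert $\varpi$ and apply Noether normalization and dimension theory for $K$-\emph{affinoid} domains. But $R_i$ is a quotient of $\cO[[\underline{x}]]$, and $\cO[[\underline{x}]][1/p]$ is the ring of the \emph{open} polydisc, not a Tate algebra; in particular $R_i[1/p]$ need not be $K$-affinoid, so the affinoid dimension formula $\dim(A\widehat\otimes_K B)=\dim A+\dim B$ does not directly apply. The conclusion is true, but the cited machinery is the wrong one; the paper instead invokes Thorne's Lemma 1.4, which works directly with completed tensor products over $\cO$ and the catenary property of power-series rings.

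In part (5), the reduction to a finite intersection ``by Noetherianity of each truncation $S/\km_S^n$'' does not work: an infinite intersection of prime ideals in a Noetherian ring need not equal any finite sub-intersection (e.g.\ $\bigcap_n (x-n)=0$ in $\Q[x]$, while every finite sub-intersection is nonzero), and the truncated ideals $(\bigcap_i\kq_i)+\km_S^n$ and $\bigcap_i(\kq_i+\km_S^n)$ need not agree. The paper's route is different and avoids this: setting $A_i=R_1/\kq_i$, it first shows $\big(\prod_i A_i\big)\widehat\otimes_\cO R_2 = \prod_i (A_i\widehat\otimes_\cO R_2)$ by commuting finite tensor products with arbitrary products on finite-length truncations (Stacks 10.89.3) and then passing to the inverse limit; tensoring the left-exact sequence $0\to\bigcap_i\kq_i\to R_1\to\prod_i A_i$ with $R_1\widehat\otimes_\cO R_2$ and using Pan's exactness lemma together with (3) then identifies $(\bigcap_i\kq_i)\widehat\otimes_\cO R_2$ as the kernel of $S\to\prod_i(A_i\widehat\otimes_\cO R_2)$, which is $\bigcap_i(\kq_i\widehat\otimes_\cO R_2)$. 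You should replace the finite-intersection reduction by this product trick.
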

	
	\begin{proof}
		The first assertion follows from \cite[Lemma 1.3]{Thorne_2014}. The second one follows from 1) and \cite[Lemma 1.4]{Thorne_2014}. The third one follows from 1) and \cite[Lemma 4.2.4]{pan2022fontaine}.
		
		Here we prove the fourth one. As $R_i$ is an $\cO$-flat CNL domain of dimension $1$, it is finite over $\cO$. The natural map $R_i \to R_i \otimes_\cO K$ is an injection, where $K$ is the fraction field of $\cO$. Hence, $R_i \otimes_\cO K $ is a finite separable extension of $K$.
		
		By the first assertion, to show our claim, we only need to show that $({\widehat{\otimes}_\cO} R_i) \otimes_\cO K = ({\otimes}_\cO R_i) \otimes_\cO K = \otimes_\cO (R_i \otimes_\cO K)$ is reduced. As each $R_i \otimes_\cO K $ is finite separable over $K$, $\otimes_\cO (R_i \otimes_\cO K)$ is a finite \'etale $K$-algebra, and hence a product of fields of characteristic $0$. This shows the result.
		
		%As $R_i$ is an $\cO$-flat CNL domain of dimension $1$, we know that $R_i$ is actually a complete DVR and finite over $ \cO$ (for example, see \cite[28.P, page 217]{matsumura80}). We may assume $R_i=\cO[[X_i]]/{f_i(X_i)}$, where $f_i$ is a distinguished polynomial with distinct roots in $\overline{\cO}$. Then by 3), $R:= {_{i}\widehat{\otimes}_\cO} R_i = \cO[[X_1,..., X_n]]/(f_1(X_1) ,..., f_n(X_n))$. Consider an $\cO$-flat complete DVR $\cO'$ containing all the roots of $f_i$. By 1), the natural map $R \to R \otimes_\cO \cO'$ is an injection, and hence we just need to show that $R \otimes_\cO \cO'$ is reduced.
		
		%Note that $R \otimes_\cO \cO' = \cO'[[X_1,..., X_n]]/(f_1(X_1) ,..., f_n(X_n))$. If $x_i$ is a root of $f_i$, then $\mathfrak{p}:=(X_1-x_1, ..., X_n-x_n)$ is a minimal prime of $R \otimes_\cO \cO'$. As the ideal $(f_1(X_1) ,..., f_n(X_n)) $ is the intersection of all such $\kp$, we conclude that it is a radical ideal. 
		
		For the last one, we write $A_i= R_1/\kq_i$ with maximal ideal $\km_i$.  As for any integer $k>0$, both $A_i/\km_i^k$ and $R_2/\km_{R_2}^k$ are of finite length (hence finitely presented), and by \cite[Lemma 10.89.3]{stacks-project}, we have $(\prod A_i/\km_i^k) \otimes_{\cO} R_2/\km_{R_2}^k \cong \prod (A_i/\km_i^k \otimes_{\cO} R_2/\km_{R_2}^k) $. Taking the inverse limit, we conclude that $(\prod A_i)\widehat{\otimes}_\cO R_2 = \prod(A_i \widehat{\otimes}_\cO R_2)$ (using \cite[Lemma 4.14.10]{stacks-project}). Consider the exact sequence
		$$ 0 \to \cap \kq_i \to R_1 \to \prod A_i,$$
		and we tensor it with $R_1 \widehat{\otimes}_\cO R_2 $ over $R_1$.
		Combining the third assertion and \cite[Lemma 4.2.4 (1) \& (2)]{pan2022fontaine}, we get the exact sequence
		$$ 0 \to \cap (\kq_i) \widehat{\otimes}_\cO R_2 \to R_1 \widehat{\otimes}_\cO R_2 \to \prod(A_i \widehat{\otimes}_\cO R_2).$$
		This shows our result.
	\end{proof}
	
	\subsection{Pseudo-representations}\label{sec for pseudo def}
	
	In this subsection, we recall the definition of two-dimensional pseudo-representations. For a general theory, one can see \cite{bc09}.
	
	\begin{defn} \label{pseudo}
		For a profinite group $G$ and a topological commutative ring $R$ in which $2$ is invertible, a two-dimensional \textit{pseudo-representation} is a continuous function $T: G \to R$ such that 
		
		1) $T(1)=2$,
		
		2) $T(\sigma \tau)=T(\tau \sigma)$ for all $ \sigma, \tau \in G$,
		
		3) $T(\gamma \delta \eta)+T(\gamma \eta \delta)-T(\gamma \eta)T(\delta)-T(\eta \delta)T(\gamma)-T(\delta \gamma)T(\eta)+T(\gamma)T(\delta)T(\eta)=0,$
		for any $\delta, \gamma,\eta\in G$.
		
		The \textit{determinant} $\operatorname{det}(T)$ of a pseudo-representation $T$ is a character of $G$ defined by $$ \operatorname{det}(T): G \to R^\times, ~
		\operatorname{det}(T)(\sigma)=\frac{1}{2}(T(\sigma)^2-T(\sigma^2)), \sigma \in G.$$
		
	\end{defn}
	
	Sometimes, we may take $G$ as some Galois group of a totally real number field. In this case, pseudo-representations will have more properties as a complex conjugation exists in $G$. We make the following assumption:
	
	\begin{assumption}\label{odd ass}
		There exists an order $2$ element $c \in G$ such that $T(c)=0$.
	\end{assumption}
		
	\begin{defn}\label{a,d}
		Suppose Assumption \ref{odd ass} holds for $G$. For a pseudo-representation $T$ and $\sigma, \tau \in G$,  we define:
		
		1) $a(\sigma)=\frac{1}{2}(T(c\sigma)+T(\sigma))$,
		
		2) $d(\sigma)=\frac{1}{2}(-T(c\sigma)+T(\sigma))$,
		
		3) $y(\sigma, \tau)=a(\sigma \tau)-a(\sigma)a(\tau).$
	\end{defn}
	
	We can check that this definition is equivalent to the one in \cite[Section 2.4]{Skinner_1999}.
	
	\begin{defn}\label{construction}\cite[2.1.5]{pan2022fontaine}
		Suppose Assumption \ref{odd ass} holds for $G$. Assume that $R$  is either a field or a DVR. For a pseudo-representation $T: G\to R$, a representation $\rho$ associated to $T$ is in the following form.
		
		1) $ \rho(\sigma)=
		\begin{pmatrix}
			a(\sigma) & ~\\
			~& d(\sigma)
		\end{pmatrix}$, if all $y(\sigma, \tau)=0$. We call this case \textit{reducible}.
		
		2) If $y(\sigma, \tau) \ne 0 $ for some $ \sigma, \tau \in G$, choose $\sigma_0, \tau_0$ such that $\frac{y(\sigma, \tau)}{y(\sigma_0, \tau_0)} \in R$ for any $ \sigma, \tau \in G$. Define $ \rho(\sigma)=
		\begin{pmatrix}
			a(\sigma) & \frac{y(\sigma, \tau_0)}{y(\sigma_0, \tau_0)}\\
			y(\sigma_0, \sigma)& d(\sigma)
		\end{pmatrix}$.  We call this case \textit{irreducible}.
	\end{defn}
	
	Assume the profinite group $G$ satisfies Mazur's finiteness condition $\Phi_p$ (see \cite[2.1.2 Assumption 1]{pan2022fontaine}). Let $\mathcal{O}$ be a complete DVR of characteristic $0$ with residue field $\mathbb{F}$ of characteristic $p>2$. Let $T_\F: G \to \mathbb{F}$ be a two-dimensional pseudo-representation. In our case, the functor sending each object $R$ of $\mathcal{C}_{\mathcal{O}}$ to the set of pseudo-representations $T: G \to R$ which lift $T_\F$ is pro-represented by a CNL $\cO$-algebra $R^{\ps}_{T_\F}$. This is the universal pseudo-deformation ring of the residual pseudo-representation $T_\F$.
	
	\subsection{Local deformation rings}
	
	In this subsection, we study certain local universal pseudo-deformation rings.
	
	Let $p$ be an odd prime and let $K$ be a $p$-adic local field with its ring of integers $\mathcal{O}$, a uniformizer $\varpi$ and its residue field $\mathbb{F}$. Let $\chi :G_{\Q_p} \to \mathcal{O}^\times$ be a continuous character with its residual character $\bar{\chi}$.
	
	Let $T: G_{\Q_p} \to \F$ be a two-dimensional pseudo-representation with determinant $\bar{\chi}$. As the profinite group $ G_{\Q_p}$ satisfies Mazur's finiteness condition $\Phi_p$ (for example, see \cite[Theorem 7.5.12]{Neukirch_2008}), there exists a universal pseudo-deformation ring $R_{p, \chi}^{\ps}$ parametrizing all the pseudo-deformations of $T$ with determinant $\chi$. The ring $R_{p, \chi}^{\ps}$ is a CNL $\cO$-algebra.
	
	\begin{prop}\label{local pseudo}
		The ring $R_{p, \chi}^{\ps}$ is an $\cO$-flat integral domain of Krull dimension $4$.
	\end{prop}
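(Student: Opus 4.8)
The plan is to establish $\cO$-flatness first and then reduce the other two assertions to the generic fibre $R_{p,\chi}^\ps[1/p]$. For flatness one invokes the explicit structure of these rings coming from Pa\v sk\=unas' block computations \cite{Pa_k_nas_2013, Paskunas_2021} (or the generalized matrix algebra description of pseudo-deformation rings, as in \cite{bc09}), which exhibits $R_{p,\chi}^\ps$ as $\cO$-torsion free, so that $R_{p,\chi}^\ps \hookrightarrow R_{p,\chi}^\ps[1/p]$. Granting this, it suffices to prove that $R_{p,\chi}^\ps[1/p]$ is an integral domain of dimension $3$: then $R_{p,\chi}^\ps$, being a subring of a domain, is a domain, and since it is a complete local Noetherian $\cO$-flat domain one gets $\dim R_{p,\chi}^\ps = 1 + \dim R_{p,\chi}^\ps[1/p] = 4$.

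To study the generic fibre I would stratify its closed points, after semisimplification, into absolutely irreducible two-dimensional $G_{\Q_p}$-representations with determinant $\chi$ reducing to $T$, forming an open subscheme $U$, and reducible ones $\psi_1\oplus\psi_2$ with $\psi_1\psi_2=\chi$. If $T$ is already absolutely irreducible then $U$ is everything, the pseudo-deformation functor is just the fixed-determinant deformation functor of the corresponding $\bar\rho$, and one reduces to the well-known structure of local deformation rings at $p$ for $\GL_2$. In general: on $U$ the pseudo-deformation functor is pro-represented by the fixed-determinant deformation ring of an irreducible representation, which for $G_{\Q_p}$ is $\cO$-flat and equidimensional of relative dimension $3$ — its relative tangent space is $H^1(G_{\Q_p},\ad^0)$, of dimension $3+\dim H^2(\ad^0)$, the obstructions forming a regular sequence — and is regular along the dense open where $H^2(\ad^0)=0$. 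One then checks that $U$ is dense in $\Spec R_{p,\chi}^\ps[1/p]$: for any $\psi_1\oplus\psi_2$ both $\dim_K H^1(G_{\Q_p},\psi_1\psi_2^{-1})\ge 1$ and $\dim_K H^1(G_{\Q_p},\psi_2\psi_1^{-1})\ge 1$ by the local Euler characteristic formula, so one can switch on both off-diagonal extension parameters simultaneously and a first-order computation shows that the resulting pseudo-characters escape the reducible locus. If moreover $U$ is connected, then $\Spec R_{p,\chi}^\ps[1/p]=\overline U$ is the closure of a connected, generically regular scheme of pure relative dimension $3$, hence irreducible of dimension $3$, and — being $(R_0)$ and, these rings being excellent, $(S_1)$ — reduced, i.e. a domain.

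I expect the genuine content to be precisely this irreducibility of the generic fibre: the connectedness of $U$, and above all the scalar residual case $T=2\bar\psi$ (so $\bar\chi=\bar\psi^2$), where the reducible locus — the image of the character variety of $G_{\Q_p}$ under the two-to-one map $\psi\mapsto\psi+\chi\psi^{-1}$ — is most degenerate and must still be shown to lie in $\overline U$. After twisting by $\bar\psi^{-1}$ one reduces to $T=2$, $\chi=1$, and there I would lean on Chenevier's analysis of the pseudo-deformation space of $G_{\Q_p}$ (the local input underlying the infinite-fern density results) or on the explicit computations available in the literature, cf. \cite{Deo2022}. An alternative route, hitting the same scalar-case difficulty, is the framed one: for $\bar\rho$ with $\tr\bar\rho=T$, $\det\bar\rho=\bar\chi$ and $\bar\rho^{\mathrm{ss}}$ multiplicity free, $\Spec R_{p,\chi}^\ps$ is the GIT quotient of the framed fixed-determinant deformation ring $R_{\bar\rho}^{\square}$ by the conjugation action of $\widehat{\PGL}_2$, so that its being a domain of dimension $7-3=4$ would follow from the known structure of $R_{\bar\rho}^{\square}$ for $G_{\Q_p}$; in this guise, too, the scalar case ($\bar\rho$ not multiplicity free) must be treated by the above twisting trick.
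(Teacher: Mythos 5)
Your approach is genuinely different from the paper's, and it leaves the hard points open. The paper splits into two cases. For $T=\mathbf{1}+\mathbf{1}$ it simply quotes Pa\v{s}k\=unas' explicit computation, \cite[Corollary 9.13]{Pa_k_nas_2013}, which exhibits $R_{p,\chi}^\ps$ as a power series ring over $\cO$ in three variables, so all three assertions hold at once. For $T$ multiplicity-free it quotes \cite[Corollary 5.11]{B_ckle_2023} to obtain the $\cO$-flat domain property directly, and then pins down the dimension by a short completion argument: pick a non-split $\bar\rho$ realising $T$ (or $\bar\rho$ itself if $T$ is absolutely irreducible), an absolutely irreducible lifting $\rho$ over a finite extension $\cO'$, and use \cite[Corollary 2.2.2]{pan2022fontaine} together with \cite[Proposition 9.5]{kisin2003overconvergent} to identify $\widehat{(R_{p,\chi}^\ps)_\kp}$ with $\widehat{(R_{\bar\rho}^\chi[1/p])_{\kp'}}\otimes K'$, a ring of dimension $3$; since $R_{p,\chi}^\ps$ is a catenary local domain one gets $\dim R_{p,\chi}^\ps = \hht\kp + \dim R_{p,\chi}^\ps/\kp = 3+1 = 4$. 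There is no stratification of the generic fibre and no density argument in the paper.

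Two concrete gaps in your proposal. First, showing that $\Spec R_{p,\chi}^\ps[1/p]$ is irreducible and $(R_0)$ on a dense open does not yield reducedness, and your appeal to excellence for $(S_1)$ is a non-sequitur: excellence does not exclude embedded primes (for instance $k[[x,y]]/(x^2,xy)$ is excellent and fails $(S_1)$). Without a Cohen--Macaulay or depth input you cannot pass from ``$\overline{U}$ is irreducible and generically regular'' to ``$R_{p,\chi}^\ps[1/p]$ is a domain.'' Second, you correctly flag the connectedness of $U$ and the scalar case $T=2\bar\psi$ as carrying the genuine content, but you only gesture at them (``lean on Chenevier's analysis... or the explicit computations available in the literature''), and the same caveat applies to the GIT route you sketch. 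Those are exactly the points the paper's citations of \cite{B_ckle_2023} and \cite{Pa_k_nas_2013} settle. So your outline isolates the right difficulties, but the reduction to $(R_0)+(S_1)$ is unjustified as written, and the key inputs are delegated to results you have not identified or verified.
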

	
	\begin{proof}
		If $T=\mathbf{1}+\mathbf{1}$ is the sum of two trivial characters, then by \cite[Corollary 9.13]{Pa_k_nas_2013}, $R_{p, \chi}^{\ps}$ is a power series ring over $\cO$ of relative dimension $3$. Thus, our result holds in this case.
		
		Now we assume $T$ is multiplicity-free (in the sense of \cite[Definition 1.4.1]{bc09}). By \cite[Corollary 5.11]{B_ckle_2023}, $R_{p, \chi}^{\ps}$ is an $\cO$-flat integral domain. We only need to show that $R_{p, \chi}^{\ps}$ is of Krull dimension $4$.
		
		If $T$ is reducible, then $\operatorname{Ext}_{\F[G_{\Q_p}]}^1 (\bar{\chi}, \mathbf{1})$ is non-trivial (for example, see \cite[Theorem 7.3.2]{Neukirch_2008}). We can choose a non-split extension $0 \to \mathbf{1} \to \bar{\rho} \to \bar{\chi} \to 0$. If $T$ is absolutely irreducible, it defines an absolutely irreducible Galois representation $\bar{\rho} $ of $ G_{\Q_p}$. In each case, there exists a universal deformation ring $R_{\bar{\rho}}^{\chi}$ parametrizing all deformations of $\bar{\rho}$ with determinant $\chi$. Combining \cite[Theorem 1.5]{B_ckle_2023} and \cite[Proposition 2.1]{Khare_2009}, we know that  $R_{\bar{\rho}}^{\chi}$ is $\cO$-flat of Krull dimension $4$. 
		
		Choose an $\cO'$-irreducible lifting $\rho$ of $\bar{\rho}$, where $\cO'$ is some finite normal extension of $\cO$ with fraction field $K'$. Then we have natural maps $R_{p, \chi}^{\ps} \to R_{\bar{\rho}}^{\chi} \to \cO' $, and suppose $\rho$ defines the prime $\kp$ (resp. $\kp'$) of $ R_{p, \chi}^{\ps}$ (resp. $ R_{\bar{\rho}}^{\chi}$). Consider the universal deformation ring $R_{\rho}$ of the representation $\rho \otimes K'$ with determinant $\chi$, and by \cite[Corollary 2.2.2]{pan2022fontaine}, $R_{\rho}$ is isomorphic to $ \widehat{(R_{p, \chi}^{\ps})_{\kp}}$. By \cite[Proposition 9.5]{kisin2003overconvergent}, $R_{\rho}$ is isomorphic to $\widehat{(R_{\bar{\rho}}^{\chi}[\frac{1}{p}])_{\kp'}} \otimes K' $, which is of Krull dimension $3$. As $R_{p, \chi}^{\ps}$ is an $\cO$-flat integral domain (hence irreducible), we can conclude that $R_{p, \chi}^{\ps}$ is of Krull dimension $4$.
	\end{proof}
	
	\begin{rem}
		 In this article, we are concerned with the case $T$ is reducible. If we are not in a special case $\bar{\chi}=\omega$ and $p=3$, the ring $R_{p, \chi}^{\ps}$ has an explicit description. See \cite[Corollary 9.13, B.5,  Proposition B.17 and Remark B.28]{Pa_k_nas_2013} for more details.
		
	\end{rem}
	
	\subsection{Global deformation rings}
	In this subsection, we collect some results about global pseudo-deformation rings.
	
	Let $p$ be an odd prime and let $K$ be a $p$-adic local field with its ring of integers $\mathcal{O}$, a uniformizer $\varpi$ and its residue field $\mathbb{F}$. 
	
	Let $F$ be an abelian totally real field. For simplicity, we suppose that $p$ splits completely in $F$ (but it may not be necessary in this subsection). Let $\Sigma_p$ be the set of places of $F$ lying above $p$ and let $\Sigma$ be a finite set of finite places of $F$ containing $\Sigma_p$. Let $\chi :G_{F, \Sigma} \to \mathcal{O}^\times$ be a continuous totally odd character with its residual character $\bar{\chi}$.
	
	Let $T$ be the pseudo-representation $1+\bar{\chi}$. As the profinite group $G_{F, \Sigma} $ satisfies Assumption \ref{odd ass} and Mazur's finiteness assumption (see \cite[Corollary 10.11.15]{Neukirch_2008}), there exists a universal pseudo-deformation $R_{F}^{\ps}$ parametrizing all the pseudo-deformations of $T$. Let $R_{\bar{\chi}}$ be the universal deformation ring parametrizing all the liftings of $\bar{\chi}$. Write $ R_{F}^{\ps, \chi} = R_{F}^{\ps} \widehat{\otimes}_{R_{\bar{\chi}}, \chi} \cO$, where the map $R_{\bar{\chi}} \to \cO $ is given by $\chi$. Then $ R_{F}^{\ps, \chi}$ is the universal pseudo-deformation ring parametrizing all the pseudo-deformations of $T$ with determinant $\chi$.
		
    \begin{prop}\label{det}
    	1) Let $\Gamma$ be the maximal pro-$p$ abelian quotient of $G_{F, \Sigma} $. Then there exists a CNL $\cO$-algebra isomorphism $R_{F}^{\ps} \to R_{F}^{\ps, \chi} \widehat{\otimes}_\cO \cO[[\Gamma]] $.
    	
    	2) Let $\chi': G_{F,\Sigma} \to \cO^\times$ be a character lifting $\bar{\chi}$. Let $R_{F}^{\ps, \chi'}$ be the universal pseudo-deformation ring parametrizing all the pseudo-deformations of $T$ with determinant $\chi'$.Then there exists a CNL $\cO$-algebra isomorphism $ R_{F}^{\ps, \chi} \to R_{F}^{\ps, \chi'}$.
    \end{prop}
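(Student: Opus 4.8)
The plan is to deduce both parts from the standard device of \emph{twisting a pseudo-representation by a character}, the crucial numerical input being that $p$ is odd, so that the pro-$p$ group $1+\mathfrak{m}_R$ is uniquely $2$-divisible for every $R\in\mathcal{C}_\cO$.

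I would begin by recording three elementary facts. (a) If $T\colon G_{F,\Sigma}\to R$ is a pseudo-representation in the sense of Definition~\ref{pseudo} and $\psi\colon G_{F,\Sigma}\to R^\times$ is a continuous character, then $\psi\otimes T\colon\sigma\mapsto\psi(\sigma)T(\sigma)$ is again a pseudo-representation with $\det(\psi\otimes T)=\psi^2\det(T)$; moreover if $T$ satisfies Assumption~\ref{odd ass} with respect to $c$, so does $\psi\otimes T$. These are one-line verifications: in the cubic relation (3) of Definition~\ref{pseudo} every monomial in the values of $T$ gets multiplied by the common factor $\psi(\gamma)\psi(\delta)\psi(\eta)$ because $\psi$ is multiplicative, and $(\psi\otimes T)(c)=\psi(c)\cdot 0=0$; and all of this is visibly functorial in $R$. (b) One has $\det(\bar T)=\bar\chi$, so $T'\mapsto\det(T')$ is a natural transformation from the pseudo-deformation functor of $\bar T$ to the lifting functor of $\bar\chi$; this induces the homomorphism $R_{\bar\chi}\to R_F^\ps$ used to define $R_F^{\ps,\chi}$. (c) Via the Teichm\"uller splitting $R^\times=[\F^\times]\times(1+\mathfrak{m}_R)$, a lift of $\bar\chi$ has Teichm\"uller component forced to be $[\bar\chi]$ and free component an arbitrary element of $\Hom_{\cont}(\Gamma,1+\mathfrak{m}_R)$; since $\Gamma$ is topologically finitely generated (by $\Phi_p$), the functor $R\mapsto\Hom_{\cont}(\Gamma,1+\mathfrak{m}_R)$ is pro-represented by the Noetherian completed group algebra $\cO[[\Gamma]]$, and $R_{\bar\chi}\cong\cO[[\Gamma]]$ canonically.

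For part 1, I would check that the natural transformation of functors on $\mathcal{C}_\cO$
$$\bigl(\text{pseudo-def. of }\bar T\text{ with }\det=\chi\bigr)(R)\times\Hom_{\cont}(\Gamma,1+\mathfrak{m}_R)\longrightarrow\bigl(\text{pseudo-def. of }\bar T\bigr)(R),\qquad (T',\nu)\mapsto\nu\otimes T',$$
is an isomorphism. It is well-defined by fact (a), since $\nu\otimes T'$ still lifts $\bar T$ and $\det(\nu\otimes T')=\nu^2\chi$ still lifts $\bar\chi$. Conversely, given a pseudo-deformation $T''$ of $\bar T$ to $R$, the character $\det(T'')\,\chi^{-1}$ takes values in $1+\mathfrak{m}_R$, hence has a unique square root $\nu\in\Hom_{\cont}(\Gamma,1+\mathfrak{m}_R)$ by $2$-divisibility, and $T''\mapsto(\nu^{-1}\otimes T'',\nu)$ provides the inverse; uniqueness of square roots makes this assignment functorial in $R$. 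As the left-hand functor is pro-represented by $R_F^{\ps,\chi}\widehat\otimes_\cO\cO[[\Gamma]]$ (a product of pro-representable functors being pro-represented by the completed tensor product, which is the coproduct in $\mathcal{C}_\cO$), Yoneda yields the asserted isomorphism $R_F^\ps\xrightarrow{\ \sim\ }R_F^{\ps,\chi}\widehat\otimes_\cO\cO[[\Gamma]]$.

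Part 2 is the same mechanism in one variable. Since $\chi'$ and $\chi$ both lift $\bar\chi$, the character $\lambda\defeq\chi'\chi^{-1}\colon G_{F,\Sigma}\to 1+\mathfrak{m}_\cO$ has a unique continuous square root $\mu$ valued in $1+\mathfrak{m}_\cO$ (again because $p$ is odd), and $T'\mapsto\mu\otimes T'$ is a natural bijection between pseudo-deformations of $\bar T$ with determinant $\chi$ and those with determinant $\chi'$, since $\det(\mu\otimes T')=\mu^2\chi=\lambda\chi=\chi'$; its inverse is $T''\mapsto\mu^{-1}\otimes T''$. Yoneda then gives the CNL $\cO$-algebra isomorphism $R_F^{\ps,\chi}\xrightarrow{\ \sim\ }R_F^{\ps,\chi'}$. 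I expect the only points needing care to be the well-definedness of twisting (fact (a)) and the bookkeeping that the relevant ``difference'' characters land in $1+\mathfrak{m}$ and are therefore uniquely square-rootable — exactly where the hypothesis $p$ odd enters — rather than any substantial obstacle.
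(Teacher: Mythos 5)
Your proof is correct and takes essentially the same route as the paper: both construct mutually inverse natural transformations by twisting pseudo-representations with characters valued in $1+\mathfrak{m}_R$, using that for $p$ odd this group is uniquely $2$-divisible so that $\det(T)\chi^{-1}$ has a canonical square root, then conclude via Yoneda. The only cosmetic difference is that the paper further normalizes by a fixed character $\tilde{\chi}$ satisfying $\tilde{\chi}^2=\chi\tilde{\bar\chi}^{-1}$ (amounting to a translation automorphism of $\cO[[\Gamma]]$), which your argument correctly shows is not needed.
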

	
	\begin{proof}
		This is an analogue to \cite[Lemma 6.1.2]{Allen_2019}. We sketch the proof here.
		
		Let $T^\chi: G_{F, \Sigma} \to R_{F}^{\ps, \chi}$ be the universal pseudo-deformation of $T$ with determinant $\chi$ and let $D: G_{F, \Sigma} \to \cO[[\Gamma]]^\times $ be the universal deformation of the trivial character $\mathbf{1}$. As $p$ is odd, there exists a unique character $\tilde{\chi}$ of $G_{F, \Sigma}$ lifting $\mathbf{1}$ satisfying $(\tilde{\chi})^2=\chi\tilde{\bar{\chi}}^{-1}$, where $ \tilde{\bar{\chi}}$ is the Teichm\"uller lifting of $ \bar{\chi}$. Then $T^\chi \otimes (\tilde{\chi}D): G_{F, \Sigma} \to R_{F}^{\ps, \chi} \widehat{\otimes}_\cO \cO[[\Gamma]]$ defines a pseudo-deformation of $T$.
		
		For a pseudo-deformation $T_A: G_{F, \Sigma} \to A$ lifting $T$, $A \in \mathcal{C}_{\mathcal{O}}$, there exists a unique character $d_A$ of $G_{F, \Sigma}$ (lifting $\mathbf{1}$) satisfying $(d_A)^2= \det(T_A)\chi^{-1}$. Then $T_A \otimes (d_A)^{-1} : G_{F, \Sigma} \to A$ defines a pseudo-deformation of $T$ with determinant $\chi$. Thus, we get natural maps $R_{F}^{\ps, \chi} \to A $ and $ \cO[[\Gamma]] \to A$ (induced by $\tilde{\chi}^{-1}d_A$) by the universal property. By the uniqueness of the universal object, we have  $R_{F}^{\ps} \cong R_{F}^{\ps, \chi} \widehat{\otimes}_\cO \cO[[\Gamma]] $. This shows the first claim.
		
		For the second one, we may consider the pseudo-representation $ T^\chi \otimes \mu : G_{F, \Sigma} \to R_{F}^{\ps, \chi}$, where $\mu : G_{F, \Sigma} \to \cO^\times$ is the unique character lifting $\mathbf{1}$ satisfying $\mu^2 = \chi'\chi^{-1}$. This defines a pseudo-deformation of $T$ with determinant $\chi'$, and we get the map $ R_{F}^{\ps, \chi'} \to R_{F}^{\ps, \chi}$ by the universal property. Similarly, we can also obtain the map $ R_{F}^{\ps, \chi} \to R_{F}^{\ps, \chi'}$. The composite maps $R_{F}^{\ps, \chi'} \to R_{F}^{\ps, \chi} \to R_{F}^{\ps, \chi'} $ and $R_{F}^{\ps, \chi} \to R_{F}^{\ps, \chi'} \to R_{F}^{\ps, \chi} $ are both identities. This shows the second claim.
		
			\end{proof}
	
	The following result is crucial to our main results.
	
	\begin{prop}\label{irreducible point}
		Let $\kp$ be a one-dimensional prime of $ R_{F}^{\ps, \chi}$ such that the corresponding Galois representation is absolutely irreducible in the sense of Definition \ref{construction}. Here we do not require $\varpi \notin \kp$.
		
	   1)	 We have $c((R_{F}^{\ps, \chi})_{\kp}) \ge 2[F: \Q]-1$.
		
		2) For any irreducible component $C$ of $ \Spec R_{F}^{\ps, \chi}$ containing $\kp$, we have $\dim C \ge 2[F:\Q]+1$.
	\end{prop}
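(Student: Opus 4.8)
The plan is to pass to the completed local ring $R_\rho:=\widehat{(R_F^{\ps,\chi})_\kp}$, where absolute irreducibility turns $R_F^{\ps,\chi}$ into a classical fixed-determinant Galois deformation ring, and then to read off both assertions from its presentation by the Galois cohomology of $\ad^0\rho$. Since $R_F^{\ps,\chi}$ is a complete Noetherian local ring it is excellent, so $R_F^{\ps,\chi}/\kq$ is a catenary local domain for every minimal prime $\kq\subseteq\kp$, and the corresponding component $C=V(\kq)$ satisfies $\dim C=\hht(\kp/\kq)+\dim R_F^{\ps,\chi}/\kp=\hht(\kp/\kq)+1$. As completion is faithfully flat, $\hht(\kp/\kq)=\dim(R_\rho/\kq R_\rho)$ is at least $\dim R_\rho/\mathfrak Q$ for some minimal prime $\mathfrak Q$ of $R_\rho$ (lying over $\kq$), and $c\bigl((R_F^{\ps,\chi})_\kp\bigr)\ge c(R_\rho)$ by Proposition \ref{connected dim}. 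Thus it is enough to show that every minimal prime $\mathfrak Q$ of $R_\rho$ has $\dim R_\rho/\mathfrak Q\ge 2[F:\Q]$ (which gives 2)) and that $c(R_\rho)\ge 2[F:\Q]-1$ (which gives 1)).

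Next I would present $R_\rho$. Let $\rho$ be the absolutely irreducible representation over $k(\kp)$ attached to $\kp$; by \cite[Corollary 2.2.2]{pan2022fontaine}, $R_\rho$ is the universal deformation ring of $\rho$ with fixed determinant $\chi$, so by standard deformation theory (cf. \cite[Proposition 1.5.1]{Berger_2013}) it admits a presentation $R_\rho\cong P/I$ with $P$ a regular local ring — a power series ring over $k(\kp)$, or over a Cohen ring of $k(\kp)$ when $\varpi\in\kp$ — of Krull dimension $\ge h^1$, and $I$ generated by $h^2$ elements, where $h^i:=\dim_{k(\kp)}H^i(G_{F,\Sigma},\ad^0\rho)$. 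To compute $h^1-h^2$: absolute irreducibility and $p\ne 2$ give $(\ad^0\rho)^{G_{F,\Sigma}}=0$, hence $h^0=0$; and total oddness of $\chi$ makes $\rho(c_v)$ conjugate to $\diag(1,-1)$ at each of the $[F:\Q]$ real places $v$, so $\dim_{k(\kp)}H^0(G_{F_v},\ad^0\rho)=1$. The global Euler characteristic formula (see \cite{Neukirch_2008}) then yields
\[ h^0-h^1+h^2=\sum_{v\mid\infty}\dim_{k(\kp)}H^0(G_{F_v},\ad^0\rho)-3[F:\Q]=[F:\Q]-3[F:\Q]=-2[F:\Q], \]
so $h^1-h^2=2[F:\Q]$ and hence $\dim P-h^2\ge 2[F:\Q]$.

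To finish: by Krull's principal ideal theorem every minimal prime of $R_\rho=P/I$ has height $\le h^2$ in the regular ring $P$, so it defines a component of dimension $\ge\dim P-h^2\ge 2[F:\Q]$, which gives 2). For 1), $\Spec P$ is irreducible of dimension $\dim P$, so cutting it by the $h^2$ generators of $I$ one at a time and applying Grothendieck's connectedness theorem (\cite[Chapter 19]{localcoh}) gives $c(R_\rho)\ge\dim P-h^2-1\ge 2[F:\Q]-1$; together with Proposition \ref{connected dim} this yields the bound on $c\bigl((R_F^{\ps,\chi})_\kp\bigr)$.

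I expect the main obstacle to be the bookkeeping in the identification of $R_\rho$ when $\varpi\in\kp$: one must check that \cite[Corollary 2.2.2]{pan2022fontaine} and the cohomological presentation still apply when $k(\kp)$ has characteristic $p$, so that $P$ becomes a power series ring over a Cohen ring rather than over a field — this merely raises $\dim P$ by one and leaves all numerical bounds intact. A lesser point is that the global Euler characteristic formula is normally stated for finite coefficient modules, so for $k(\kp)$-coefficients one should apply it to the reductions of a $G_{F,\Sigma}$-stable lattice in $\ad^0\rho$ modulo powers of a uniformizer and pass to the limit; the remaining ingredients are routine commutative algebra.
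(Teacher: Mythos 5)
Your proof is correct and follows essentially the same route as the paper: the paper's proof simply cites \cite[Lemma 7.4.6]{pan2022fontaine} for 1) and \cite[Remark 3.2.8]{Zhang:2024ab} for 2), supplementing only the observation $c((R_F^{\ps,\chi})_\kp)\ge c(\widehat{(R_F^{\ps,\chi})_\kp})$, and what you have written is precisely the standard cohomological argument those references contain (pass to $\widehat{(R_F^{\ps,\chi})_\kp}$, identify it with a fixed-determinant deformation ring via \cite[Corollary 2.2.2]{pan2022fontaine}, present it as $P/I$ with $\dim P - \#\text{generators of }I\ge h^1-h^2=2[F:\Q]$ by oddness and the global Euler characteristic formula for $\ad^0\rho$, then apply Krull's principal ideal theorem for the dimension bound and Grothendieck's connectedness theorem for the connectedness bound). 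The bookkeeping for $\varpi\in\kp$ you flag is handled exactly as you say, and the dimension formula $\dim C=\hht(\kp/\kq)+1$ is legitimate because $R_F^{\ps,\chi}/\kq$ is a complete Noetherian local domain, hence equidimensional and catenary.
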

	
	\begin{proof}
		For the first assertion, it is proved in \cite[Lemma 7.4.6]{pan2022fontaine}. Here we only point out that the inequality $c((R_{F}^{\ps, \chi})_{\kp}) \ge  c(\widehat{(R_{F}^{\ps, \chi})_{\kp}})$ follows from Proposition \ref{connected dim}.
		
		For the second assertion, it is a direct consequence of the first one. See \cite[Remark 3.2.8]{Zhang:2024ab}.
	\end{proof}
	
	\begin{cor}\label{structure of ps}
		For any irreducible component $C$ of $ \Spec R_{F}^{\ps, \chi}$, $C$ is either of dimension at least $1+2[F:\Q]$ or at most $2$.
	\end{cor}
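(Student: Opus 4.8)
The plan is to argue by cases on $\dim C$. If $\dim C\le 2$ there is nothing to do, so assume $\dim C\ge 3$; I will show $\dim C\ge 1+2[F:\Q]$. The two ingredients will be: (i) the reducible locus of $\Spec R_F^{\ps,\chi}$ has dimension $\le 2$, so $C$ is not contained in it; and (ii) any irreducible component not contained in the reducible locus carries a one-dimensional prime whose associated Galois representation is absolutely irreducible, to which Proposition~\ref{irreducible point}(2) applies.

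For (i): let $J\subseteq R_F^{\ps,\chi}$ be the ideal generated by the elements $y(\sigma,\tau)$ of Definition~\ref{a,d} attached to the universal pseudo-deformation, and $Z=V(J)$; by Definition~\ref{construction} a prime lies on $Z$ exactly when its associated Galois representation is reducible. First I would observe, via classical Galois deformation theory, that $R_F^{\ps,\chi}/J$ pro-represents the functor of reducible pseudo-deformations of $T=\mathbf 1+\bar\chi$ with determinant $\chi$: since $\bar\chi$ is totally odd, $\bar\chi\ne\mathbf 1$ (as $p$ is odd), so such a deformation $\psi_1+\psi_2$ has a unique ordering with $\psi_1\equiv\mathbf 1$, $\psi_2\equiv\bar\chi$, whence $\psi_2=\chi\psi_1^{-1}$ is determined by $\psi_1$; thus this functor is naturally isomorphic to the functor of lifts of the trivial character, which is pro-represented by $\cO[[\Gamma]]$ with $\Gamma$ the maximal pro-$p$ abelian quotient of $G_{F,\Sigma}$. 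Hence $Z\cong\Spec\cO[[\Gamma]]$ and $\dim Z=1+\operatorname{rank}_{\Z_p}\Gamma$. By class field theory this rank is $1+\delta_{F,p}$ for the Leopoldt defect $\delta_{F,p}$, which vanishes because $F$ is abelian over $\Q$ (Brumer); so $\dim Z=2$, and since $\dim C\ge 3$ we get $C\not\subseteq Z$.

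For (ii): let $A=R_F^{\ps,\chi}/\kp_C$ for the minimal prime $\kp_C$ defining $C$, a complete Noetherian local domain of dimension $d:=\dim C\ge 2$, for which the dimension formula $\hht\kq+\dim A/\kq=d$ holds for every prime $\kq$. The image of $J$ in $A$ is a nonzero ideal, so I may pick a nonzero element $f$ of it, and then produce by induction on $d$ a prime $\bar\kq$ of $A$ with $\dim A/\bar\kq=1$ and $f\notin\bar\kq$: for $d=1$ take $\bar\kq=(0)$; for $d\ge 2$, choose (by prime avoidance, using $d\ge 2$) a height-one prime $\kp_1$ of $A$ with $f\notin\kp_1$ — take $\kp_1$ minimal over $(g)$ for some $g\in\km_A$ lying in no minimal prime of $(f)$ — and apply the inductive hypothesis to the image of $f$ in $A/\kp_1$, which is a complete local domain of dimension $d-1$. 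The preimage $\kq\subseteq R_F^{\ps,\chi}$ of $\bar\kq$ then lies on $C$, satisfies $\dim R_F^{\ps,\chi}/\kq=1$, and does not contain $J$, so its associated Galois representation is absolutely irreducible. Applying Proposition~\ref{irreducible point}(2) to $\kq\in C$ gives $\dim C\ge 2[F:\Q]+1$, as desired.

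I expect the genuinely delicate point to be (i), the identification of the reducible locus and the computation $\dim Z=2$ — this is where "classical Galois deformation theory" and Leopoldt's conjecture for abelian totally real fields enter. Step (ii) is a routine prime-avoidance argument, and the substance of the dimension lower bound is already contained in Proposition~\ref{irreducible point}, which packages the "connectedness dimension" trick.
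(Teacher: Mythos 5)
Your proof is correct and follows essentially the same route as the paper: both rest on Proposition~\ref{irreducible point}(2) for the lower bound and on identifying the reducible locus with (a quotient of) $\cO[[\Gamma]]$ together with Leopoldt's theorem for the bound $\le 2$. The only difference is that you spell out the prime-avoidance step (producing a one-dimensional prime of $C$ avoiding the reducibility ideal), which the paper compresses into the sentence ``if $\dim C<1+2[F:\Q]$, then $C$ only contains reducible pseudo-deformations.''
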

	
	\begin{proof}
		By Proposition \ref{irreducible point}, if $C$ is of dimension less than $1+2[F:\Q]$, then $C$ only contains reducible pseudo-deformations. Let $R_{F}^{\operatorname{ps, red}}$ be the reducible locus of $ R_{F}^{\ps, \chi}$. Then there exists a natural surjection $\cO[[G_{F, \Sigma}^{\operatorname{ab}}(p)]] \twoheadrightarrow R_{F}^{\operatorname{ps, red}}$. As $F$ is abelian, by Leopoldt's conjecture, we know that $R_{F}^{\operatorname{ps, red}}$ is of dimension at most  $2$, and so is $C$. This proves the result.
	\end{proof}
	
    Assume that there exists a one-dimensional irreducible pseudo-deformation of $T$ with determinant $\chi$. Let $I_F$ be the intersection of all minimal primes of $R_{F}^{\ps, \chi} $ of dimension at least $2[F:\Q]+1 $, and write $ R_{F,0}^{\ps, \chi} := R_{F}^{\ps, \chi}/I_F$. Then from our definition, we can conclude the following properties of $ R_{F,0}^{\ps, \chi}$ directly.
    
    \begin{prop}\label{def of 0}
    	1) $ R_{F,0}^{\ps, \chi}$ is reduced. Each irreducible component of $ R_{F,0}^{\ps, \chi}$ is of Krull dimension at least $1+2[F:\Q]$.
    	
    	2) For any irreducible prime $\kp$ of $R_{F}^{\ps, \chi} $, the natural surjection $ R_{F}^{\ps, \chi} \twoheadrightarrow R_{F}^{\ps, \chi}/\kp$ factors through $ R_{F,0}^{\ps, \chi}$.
    \end{prop}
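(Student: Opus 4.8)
The plan is to deduce both assertions formally from the definition of $I_F$ together with Corollary \ref{structure of ps}. Since $R_F^{\ps,\chi}$ is a CNL $\cO$-algebra, hence Noetherian, it has only finitely many minimal primes, and among them finitely many, say $\kq_1,\dots,\kq_s$, have dimension at least $2[F:\Q]+1$; by construction $I_F=\kq_1\cap\cdots\cap\kq_s$.

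For part 1), reducedness is immediate: a finite intersection of prime ideals is radical, so $\sqrt{I_F}=I_F$ and the nilradical of $R_{F,0}^{\ps,\chi}=R_F^{\ps,\chi}/I_F$ vanishes. For the statement on components, I would note that $\kq_1,\dots,\kq_s$ are pairwise incomparable, being minimal primes of $R_F^{\ps,\chi}$; hence $V(I_F)=\bigcup_i V(\kq_i)$ with each $\kq_i$ minimal over $I_F$, so the irreducible components of $\Spec R_{F,0}^{\ps,\chi}$ are precisely the $V(\kq_i)$. Each of these has Krull dimension $\dim R_F^{\ps,\chi}/\kq_i \ge 2[F:\Q]+1 = 1+2[F:\Q]$ by the choice of the $\kq_i$.

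For part 2), let $\kp$ be an irreducible prime of $R_F^{\ps,\chi}$, i.e.\ one whose associated pseudo-representation is absolutely irreducible in the sense of Definition \ref{construction}; it suffices to show $I_F\subseteq\kp$. I would pick a minimal prime $\kq$ of $R_F^{\ps,\chi}$ with $\kq\subseteq\kp$, so that $V(\kq)$ is an irreducible component of $\Spec R_F^{\ps,\chi}$ containing $\kp$. If $\dim R_F^{\ps,\chi}/\kq < 1+2[F:\Q]$, then by Corollary \ref{structure of ps} we would have $\dim R_F^{\ps,\chi}/\kq \le 2$, and by the argument in its proof (via Proposition \ref{irreducible point}) the component $V(\kq)$ would consist only of reducible pseudo-deformations --- contradicting that the point $\kp\in V(\kq)$ is absolutely irreducible. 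Hence $\dim R_F^{\ps,\chi}/\kq \ge 1+2[F:\Q]$, so $\kq$ is one of the $\kq_i$, and therefore $I_F\subseteq\kq\subseteq\kp$, as desired.

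I do not expect any genuine obstacle here: the statement is meant to be a direct bookkeeping consequence of the definition of $I_F$, once Corollary \ref{structure of ps} is in hand. The only points requiring a little care are (i) recording that the set of minimal primes is finite, so that $I_F$ is a finite intersection and the components of the quotient are governed by the $\kq_i$; and (ii) in part 2), passing from the given irreducible prime $\kp$ --- which need not be one-dimensional --- down to a minimal prime $\kq$ beneath it before applying Corollary \ref{structure of ps}, using only that $\kp$ still lies on the component $V(\kq)$.
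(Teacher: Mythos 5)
Your proof is correct and is exactly the bookkeeping the paper has in mind: the paper itself gives no argument, stating only that the properties follow "directly from our definition" of $I_F$, and what you wrote is the natural unwinding via the Noetherian decomposition $I_F=\kq_1\cap\cdots\cap\kq_s$ together with Corollary \ref{structure of ps}. In particular your care in part 2) to descend from a possibly higher-dimensional irreducible prime $\kp$ to a minimal prime $\kq\subseteq\kp$ before invoking Corollary \ref{structure of ps} is the right move, and concluding $J\subseteq\kq\subseteq\kp$ (so $\kp$ would be reducible) is a clean way to record the contradiction.
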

    
    Now let $F_1/F$ be a totally real extension and suppose $F_1$ is also abelian. Let $\Sigma_1$ be the set of primes of $F_1$ lying above $\Sigma$. Let $ R_{F_1}^{\ps, \chi}$ is the universal pseudo-deformation ring parametrizing all the pseudo-deformations of $T|_{G_{F_1, \Sigma_1}}$ with determinant $\chi|_{G_{F_1, \Sigma_1}}$. By the universal property, there is a natural map $ R_{F_1}^{\ps, \chi} \to R_{F}^{\ps, \chi}$.
    
    \begin{prop}\label{finiteness of ps}
      The natural map $ R_{F_1}^{\ps, \chi} \to R_{F}^{\ps, \chi}$ is finite.
    \end{prop}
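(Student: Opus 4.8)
The plan is to show directly that $R_{F}^{\ps,\chi}$ is module-finite over $\Lambda := R_{F_1}^{\ps,\chi}$, by combining the Cayley--Hamilton identity for two-dimensional pseudo-representations with the fact that $R_F^{\ps,\chi}$ is topologically generated over $\cO$ by the values of its universal pseudo-deformation. Write $R := R_F^{\ps,\chi}$, let $\phi\colon\Lambda\to R$ be the natural map, and let $T_F\colon G_{F,\Sigma}\to R$ and $T_{F_1}\colon G_{F_1,\Sigma_1}\to\Lambda$ denote the universal pseudo-deformations; recall that $\det T_F=\chi$, $\det T_{F_1}=\chi|_{G_{F_1,\Sigma_1}}$, that $\phi\circ T_{F_1}=T_F|_{G_{F_1,\Sigma_1}}$, and that $G_{F_1,\Sigma_1}$ is identified with an open subgroup of $G_{F,\Sigma}$ of index $m:=[F_1:F]$, so that $\sigma^{m}\in G_{F_1,\Sigma_1}$ for every $\sigma\in G_{F,\Sigma}$.

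The key step is integrality of the traces. For a two-dimensional pseudo-representation one has the Cayley--Hamilton identity $T(gh)+\det(g)\,T(g^{-1}h)=T(g)T(h)$; taking $g=\sigma$ and $h=\sigma^{k}$ yields the recursion $T_F(\sigma^{k+1})=T_F(\sigma)\,T_F(\sigma^{k})-\chi(\sigma)\,T_F(\sigma^{k-1})$, whence by induction $T_F(\sigma^{m})=p_m\bigl(T_F(\sigma),\,\chi(\sigma)\bigr)$ for the $m$-th Newton polynomial $p_m(X,Y)\in\Z[X,Y]$, which is monic of degree $m$ in $X$. Since $\chi(\sigma)\in\cO^{\times}\subseteq\Lambda$ and $T_F(\sigma^{m})=\phi\bigl(T_{F_1}(\sigma^{m})\bigr)$ lies in $\phi(\Lambda)$, the element $T_F(\sigma)$ is a root of the monic polynomial $p_m(X,\chi(\sigma))-T_{F_1}(\sigma^{m})\in\Lambda[X]$ (read in $R$ via $\phi$); hence every $T_F(\sigma)$ is integral over $\Lambda$.

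It remains to conclude. Since $R$ is Noetherian and is topologically generated over $\cO$ by $\{T_F(\sigma):\sigma\in G_{F,\Sigma}\}$ — a standard property of universal pseudo-deformation rings, see \cite{bc09} — it is topologically generated over $\phi(\Lambda)$ by finitely many values $t_1,\dots,t_r$, each integral over $\Lambda$. Reducing modulo $\phi(\mathfrak{m}_\Lambda)$, in $R\otimes_\Lambda\F=R/\phi(\mathfrak{m}_\Lambda)R$ the image of $\Lambda$ is $\F$, so each $\bar t_i$ satisfies a monic polynomial over $\F$; factoring this polynomial at the residue $\bar c_i\in\F$ of $\bar t_i$ and observing that the complementary factor is a unit at $\bar t_i$, one gets that $\bar t_i-\bar c_i$ is nilpotent. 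Thus $R\otimes_\Lambda\F$ is topologically generated over $\F$ by finitely many nilpotents (being a quotient of $\F[[y_1,\dots,y_r]]$ in which each $y_i$ is nilpotent), hence finite-dimensional over $\F$; in particular $\mathfrak{m}_R^{N}\subseteq\phi(\mathfrak{m}_\Lambda)R$ for some $N$, so $R$ is $\mathfrak{m}_\Lambda$-adically complete, and the topological Nakayama lemma promotes the finiteness of $R\otimes_\Lambda\F$ over $\F$ to the finiteness of $R$ over $\Lambda$.

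The argument is soft modulo two inputs: the Cayley--Hamilton identity for $2$-dimensional pseudo-representations and the topological generation of $R$ by traces. The only place requiring attention is the final reduction, where one must verify the completeness hypothesis of topological Nakayama; this is precisely what the cofinality $\mathfrak{m}_R^{N}\subseteq\phi(\mathfrak{m}_\Lambda)R$ provides. (Equivalently, one may invoke directly the standard fact that a complete Noetherian local ring which is topologically of finite type and integral over a complete Noetherian local subring is finite over it.) Note that Assumption \ref{odd ass} and the splitting of $p$ in $F$ play no role here — only that $F_1/F$ is finite and unramified outside $\Sigma$, so that $G_{F_1,\Sigma_1}$ is an open subgroup of $G_{F,\Sigma}$.
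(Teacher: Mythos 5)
Your argument is correct but proceeds along a genuinely different route from the paper's. The paper also reduces to showing $R_F^{\ps,\chi}/(\km_1)$ is Artinian, but does so representation-theoretically: given a hypothetical one-dimensional prime $\kq$, it uses the $a,d,y$-calculus of Definitions \ref{a,d}--\ref{construction} together with conjugation by complex conjugations to show that the associated representation $\rho(\kq)$ is triangular, hence a sum of two characters trivial on $G_{F_1,\Sigma_1}$, and then derives a contradiction from $[F_1:F]<\infty$. You instead prove integral dependence of every trace $T_F(\sigma)$ over $\Lambda=R_{F_1}^{\ps,\chi}$ directly from the Cayley--Hamilton recursion applied to $\sigma^m$ ($m=[F_1:F]$), and conclude via topological generation by traces and topological Nakayama. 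Your route is softer and more general: it uses neither oddness (Assumption \ref{odd ass}) nor the specific Galois structure, only that the relevant subgroup is open and normal --- automatic here since $F_1/F$ is abelian, and for a non-normal finite extension one would pass to the normal core and replace $m$ by $m!$. The topological-generation fact you invoke is standard and sound: the closed $\cO$-subalgebra of $R_F^{\ps,\chi}$ generated by the trace values carries the universal pseudo-deformation and hence receives a retraction from $R_F^{\ps,\chi}$, so it equals the whole ring; combined with Noetherianity this gives the finitely many generators $t_1,\dots,t_r$ you need. The paper's proof, while specific to the odd Galois setting, is more explicit and does not need to invoke topological generation or a topological Nakayama argument.
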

	
	\begin{proof}
		Write $ \km_1$ for the maximal ideal of $R_{F_1}^{\ps, \chi}$. We only need to show that $R_{F}^{\ps, \chi}/ (\km_1)$ is an Artinian ring. If not, we can find a one-dimensional prime $\kq$ of $R_F^{\ps, \chi}/ (\km_1)$. Consider the Galois representation $\rho(\kq) : G_{F, \Sigma} \to \GL_2(k(\kq))$ in the sense of Definition \ref{construction} (here we fix $\sigma_0, \tau_0 \in G_{F, \Sigma}$).
		
		  We claim that $\rho(\kq)|_{G_{F_1, \Sigma_1}}$ is either upper triangular or lower triangular. From our construction, we have $y(\alpha, \beta)=0$, $\alpha, \beta \in G_{F_1, \Sigma_1}$. If it is not lower triangular, i.e., $y(\alpha, \tau_0) \ne 0$ for some $\alpha \in G_{F_1, \Sigma_1}$, then $0=y(\alpha, \beta)y(\sigma_0, \tau_0)=y(\alpha, \tau_0)y(\sigma_0, \beta)$ (by \cite[2.1.4]{pan2022fontaine}), which implies $ y(\sigma_0, \beta)=0$. Then by Definition \ref{construction}, $\rho(\kq)|_{G_{F_1, \Sigma_1}}$ is 
		  upper triangular.
		  
		  We assume $\rho(\kq)|_{G_{F_1, \Sigma_1}}$ is upper triangular (the other case will be similar). Note that $\bar{\chi}$ is totally odd. For any complex conjugation $c \in G_{F_1, \Sigma_1}$, by our definition, we have $ \rho(\kq)(c)=\begin{pmatrix}
			1 & *\\
			~& -1
		\end{pmatrix}$ (as $a|_{G_{F_1, \Sigma_1}}=\mathbf{1}, d|_{G_{F_1, \Sigma_1}}=\bar{\chi}|_{G_{F_1, \Sigma_1}}$ for functions $a, d$ in Definition \ref{a,d}). For any $x \in G_{F, \Sigma}$, we have $\rho(\kq)(xcx^{-1}) = \rho(\kq)(c')$, where $c, c'$ are complex conjugations of $G_{F_1, \Sigma_1} $. Then by an easy calculation, we conclude that $\rho(\kq)(x)$ is upper-triangular. This shows that $\rho(\kq) $ is reducible. Consequently, we know that $\rho(\kq)$ is a direct sum of two characters. Here we write $\rho(\kq) = \psi_1 \oplus \psi_{2}$, where $\psi_1$ is a lifting of $\mathbf{1}$. By the definition of $\kq$, we know that $\psi_1|_{G_{F_1, \Sigma_1}}=\mathbf{1}$. As $F_1/F$ is finite, we have $\psi_1=\mathbf{1}$ and $\psi_{2} =\bar{\chi}$ as $ \psi_1\psi_{2}=\bar{\chi}$. It is contrary to the assumption that $\kq$ is of dimension one. This proves our result.
	\end{proof}
	
	\begin{cor}\label{lower}
		Assume that $\bar{\chi}$ can be extended to a character of $G_\Q$. Then we have $\dim R_{F}^{\ps, \chi} \ge 1+2[F:\Q]$. In other words, $\Spec R_{F, 0}^{\ps, \chi}$ is not empty.
	\end{cor}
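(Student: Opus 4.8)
The plan is to exhibit a single one-dimensional prime $\kp$ of $R_F^{\ps,\chi}$ whose associated Galois representation (in the sense of Definition \ref{construction}) is absolutely irreducible, and then to feed $\kp$ into Proposition \ref{irreducible point}(2); the hypothesis that $\bar\chi$ extends to $G_\Q$ enters precisely here, in order to import such a point from the classical theory of congruences over $\Q$.

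First I would fix an extension $\bar\chi_\Q\colon G_\Q\to\F^\times$ of $\bar\chi$. Since $\chi$ is totally odd and $F$ is totally real, a complex conjugation of $G_\Q$ restricts to one of $G_F$, so $\bar\chi_\Q$ is odd. By the theory of congruences between Eisenstein series and cusp forms (Ribet's method; cf.\ \cite{Skinner_1999} and the references therein) one obtains a cuspidal eigenform $f$ over $\Q$ of some weight $\ge 2$, unramified outside $\Sigma$ (arranged by choosing $\bar\chi_\Q$ with as little ramification as possible), with $\overline{\rho_f}^{\mathrm{ss}}\cong\mathbf 1\oplus\bar\chi_\Q$ and with $\rho_f$ absolutely irreducible; here one may first have to replace $\bar\chi$ by a twist by a power of $\omega$, but this only changes $\chi$ into another lift of $\bar\chi$ and is harmless by Proposition \ref{det}(2). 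I would then set $\rho:=\rho_f|_{G_{F,\Sigma}}$ and twist by the unique character $\mu\colon G_{F,\Sigma}\to\overline{\Q_p}^\times$ lifting $\mathbf 1$ with $\mu^2=\chi\cdot(\det\rho)^{-1}$ (it exists and is unique since $p$ is odd and $\chi(\det\rho)^{-1}\equiv\mathbf 1$), so that $\rho':=\rho\otimes\mu$ satisfies $\det\rho'=\chi$ and $\overline{\rho'}^{\mathrm{ss}}=\mathbf 1\oplus\bar\chi$.

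The one step that needs an argument rather than bookkeeping is that $\rho'$, equivalently $\rho_f|_{G_F}$, remains absolutely irreducible. Since $F/\Q$ is abelian, a reducible restriction would force $\rho_f$ to be induced from a character of $G_{K'}$ for some quadratic subfield $K'\subseteq F$, necessarily totally real; that character would be de Rham (as $\rho_f$ is), hence an algebraic Hecke character of the real quadratic field $K'$, whose infinity type is parallel by the unit argument, so $\rho_f$ would have equal Hodge--Tate weights — contradicting $\mathrm{wt}(f)\ge 2$.

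With this in hand, $\tr\rho'$ takes values in the ring of integers $\cO'$ of a finite extension of $K$ and is a pseudo-deformation of $T=\mathbf 1+\bar\chi$ with determinant $\chi$; the resulting local $\cO$-algebra map $R_F^{\ps,\chi}\to\cO'$ has kernel an $\cO$-flat one-dimensional prime $\kp$ with absolutely irreducible associated representation $\rho'$. Proposition \ref{irreducible point}(2) then gives $\dim C\ge 2[F:\Q]+1$ for any irreducible component $C$ of $\Spec R_F^{\ps,\chi}$ through $\kp$, whence $\dim R_F^{\ps,\chi}\ge 1+2[F:\Q]$; the minimal prime underlying such a $C$ is one of those cutting out $I_F$, so $R_{F,0}^{\ps,\chi}=R_F^{\ps,\chi}/I_F\ne 0$, i.e.\ $\Spec R_{F,0}^{\ps,\chi}\ne\emptyset$. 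I expect the only genuine obstacle to be the first step — producing the cusp form $f$ with the prescribed residual representation and controlled ramification outside $\Sigma$; once Proposition \ref{irreducible point} is available, everything else is formal.
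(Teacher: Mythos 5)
Your proposal is correct, but it takes a genuinely different route to the conclusion than the paper does.

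Both arguments start from the same place: use the hypothesis that $\bar\chi$ extends to $G_\Q$ to produce, via Eisenstein congruences, a cuspidal eigenform $f$ over $\Q$ with $\overline{\rho_f}^{\mathrm{ss}}\cong\mathbf 1\oplus\bar\chi_\Q$ and $\rho_f$ absolutely irreducible (the paper cites \cite[Lemma 7.3]{Fakhruddin2022} and uses the Artin conductor $N_0$ of $\mathbf 1\oplus\bar\chi$ as the tame level). From there the two proofs diverge. You pull the point directly down to $F$: you restrict $\rho_f$ to $G_{F,\Sigma}$, twist so the determinant becomes $\chi$, and — this is the step you correctly flag as the real work — argue that the restriction stays absolutely irreducible (if not, $\rho_f$ would be induced from a real quadratic subfield $K'\subseteq F$, and a de Rham character of a totally real field has parallel infinity type, so $\rho_f$ would have equal Hodge--Tate weights, contradicting weight $\ge 2$). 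This produces a one-dimensional irreducible prime of $R_F^{\ps,\chi}$ to which Proposition \ref{irreducible point}(2) applies directly. The paper sidesteps that irreducibility check entirely: it applies Proposition \ref{irreducible point} over $\Q$ to get $\dim R_\Q^{\ps,\chi'}\ge 3$, uses the finiteness of the base-change map (Proposition \ref{finiteness of ps}) to deduce $\dim R_F^{\ps,\chi'}\ge\dim R_\Q^{\ps,\chi'}>2$, and then invokes the dimension dichotomy of Corollary \ref{structure of ps} to promote ``$>2$'' to ``$\ge 1+2[F:\Q]$''. Both routes then finish with Proposition \ref{det}(2) to pass from $\chi'$ to $\chi$. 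The paper's argument is a bit more robust (no case analysis about dihedral representations, no implicit use of $F$ totally real beyond what Corollary \ref{structure of ps} already encodes via Leopoldt); your argument is more direct and does not invoke Corollary \ref{structure of ps} at all, at the cost of the supplementary irreducibility-after-restriction lemma, which is itself a clean and standard argument. Both are valid.
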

	
	\begin{proof}
		Note that $\bar{\chi} $ can be extended to a character of $G_{\Q}$. By the arguments in \cite[Lemma 7.3]{Fakhruddin2022}, the pseudo-deformation space $\Spec R_\Q^{\ps, \chi'}$ (of the pseudo-representation $1+\bar{\chi}$ with determinant $\chi'$) has a one-dimensional irreducible point associated to some cusp eigenform $f$ of tame level $N_0$ (the Artin conductor of the representation $1 \oplus \bar{\chi}$), where $\chi'$ is the corresponding determinant lifting $\bar{\chi}$. Then by Proposition \ref{irreducible point}, we have $\dim R_\Q^{\ps, \chi'} \ge 3$. Consider the natural map $ R_{F}^{\ps, \chi'} \to R_\Q^{\ps, \chi'}$. By Proposition \ref{finiteness of ps}, we have $\dim R_{F}^{\ps, \chi'} \ge \dim R_\Q^{\ps, \chi'} >2$, and hence $\dim R_{F}^{\ps, \chi'} \ge 1+2[F:\Q]$ by Corollary \ref{structure of ps}. Using 2) of Proposition \ref{det}, we get $\dim R_{F}^{\ps, \chi} \ge 1+2[F:\Q]$. In particular, we have $\Spec R_{F, 0}^{\ps, \chi}$ is not empty.
	\end{proof}
	
	Similarly to $ R_{F,0}^{\ps, \chi}$, we can also define $ R_{F_1,0}^{\ps, \chi}:= R_{F_1}^{\ps, \chi}/I_{F_1}$ as the union of all irreducible components of $ R_{F_1}^{\ps, \chi}$ dimension at least $1+2[F_1: \Q]$. 
	
	\begin{cor}\label{finite for 0}
		The composite map $ R_{F_1}^{\ps, \chi} \to R_{F}^{\ps, \chi} \twoheadrightarrow R_{F,0}^{\ps, \chi}$ factors through $ R_{F_1,0}^{\ps, \chi}$. In other words,  there exists a well-defined and finite map $ R_{F_1,0}^{\ps, \chi} \to R_{F,0}^{\ps, \chi}$ induced by the universal property.
	\end{cor}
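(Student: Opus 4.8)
The plan is to deduce this purely formally from the finiteness of $\pi\colon R_{F_1}^{\ps,\chi}\to R_{F}^{\ps,\chi}$ (Proposition \ref{finiteness of ps}), the component dichotomy of Corollary \ref{structure of ps}, and the defining properties of $R_{F,0}^{\ps,\chi}$ and $R_{F_1,0}^{\ps,\chi}$. Write $\phi$ for the composite $R_{F_1}^{\ps,\chi}\xrightarrow{\pi}R_{F}^{\ps,\chi}\twoheadrightarrow R_{F,0}^{\ps,\chi}$, and recall $R_{F,0}^{\ps,\chi}=R_{F}^{\ps,\chi}/I_F$ with $I_F=\bigcap_\alpha\kq_\alpha$, the intersection of the minimal primes $\kq_\alpha$ of $R_{F}^{\ps,\chi}$ of dimension $\ge 1+2[F:\Q]$, and similarly $R_{F_1,0}^{\ps,\chi}=R_{F_1}^{\ps,\chi}/I_{F_1}$. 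Since $\ker\phi=\pi^{-1}(I_F)=\bigcap_\alpha\pi^{-1}(\kq_\alpha)$, saying that $\phi$ factors through $R_{F_1,0}^{\ps,\chi}$ is exactly saying that $I_{F_1}\subseteq\pi^{-1}(\kq_\alpha)$ for every $\alpha$; this is the inclusion I would prove.

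So fix $\alpha$, put $\kq=\kq_\alpha$ and $\kp=\pi^{-1}(\kq)$. Because $\pi$ is finite, the induced injection $R_{F_1}^{\ps,\chi}/\kp\hookrightarrow R_{F}^{\ps,\chi}/\kq$ is module-finite, hence integral, so by the Cohen--Seidenberg theorems $\dim R_{F_1}^{\ps,\chi}/\kp=\dim R_{F}^{\ps,\chi}/\kq\ge 1+2[F:\Q]\ge 3$. The closed irreducible subset $V(\kp)\subseteq\Spec R_{F_1}^{\ps,\chi}$ is contained in some irreducible component $D$ of $\Spec R_{F_1}^{\ps,\chi}$, and $\dim D\ge\dim R_{F_1}^{\ps,\chi}/\kp>2$. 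By Corollary \ref{structure of ps}, $D$ has dimension either at least $1+2[F_1:\Q]$ or at most $2$; the second alternative is excluded, so $D=V(\kq')$ for a minimal prime $\kq'$ of $R_{F_1}^{\ps,\chi}$ with $\dim R_{F_1}^{\ps,\chi}/\kq'\ge 1+2[F_1:\Q]$, i.e.\ $\kq'$ is one of the primes whose intersection is $I_{F_1}$. From $V(\kp)\subseteq V(\kq')$ we get $\kq'\subseteq\kp$, hence $I_{F_1}\subseteq\kq'\subseteq\kp=\pi^{-1}(\kq)$, as required. This produces the well-defined ring map $R_{F_1,0}^{\ps,\chi}\to R_{F,0}^{\ps,\chi}$ of the statement.

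Finiteness is then immediate: $\phi$ is finite, being $\pi$ (finite) followed by a surjection, and it factors as $R_{F_1}^{\ps,\chi}\twoheadrightarrow R_{F_1,0}^{\ps,\chi}\to R_{F,0}^{\ps,\chi}$; since the first arrow is surjective, the $R_{F_1}^{\ps,\chi}$-module structure on $R_{F,0}^{\ps,\chi}$ factors through $R_{F_1,0}^{\ps,\chi}$, so $R_{F,0}^{\ps,\chi}$ is already a finite $R_{F_1,0}^{\ps,\chi}$-module and $R_{F_1,0}^{\ps,\chi}\to R_{F,0}^{\ps,\chi}$ is finite. (If one wants both rings to be nonzero in this setting, that is guaranteed by Corollary \ref{lower}; but the argument above is valid in any case, the degenerate case being the zero map.)

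I do not anticipate a genuine obstacle: once Proposition \ref{finiteness of ps} and Corollary \ref{structure of ps} are available, the whole statement is formal. The one point demanding a little care is the middle paragraph — one needs preservation of Krull dimension under the finite (hence integral) map $\pi$ when passing to $R_{F_1}^{\ps,\chi}/\kp\hookrightarrow R_{F}^{\ps,\chi}/\kq$, and, more importantly, the numerical input $1+2[F:\Q]>2$, which is precisely what lets the dichotomy of Corollary \ref{structure of ps} exclude the (at most $2$-dimensional) reducible components and trap $V(\kp)$ inside a genuine $(\ge 1+2[F_1:\Q])$-dimensional component of $\Spec R_{F_1}^{\ps,\chi}$.
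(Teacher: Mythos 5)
Your proof is correct and follows essentially the same route as the paper: you reduce to showing $I_{F_1}$ lies in each prime of $R_{F_1}^{\ps,\chi}$ over which $R_{F,0}^{\ps,\chi}$ is supported, use finiteness of $\pi$ (Proposition \ref{finiteness of ps}) to see these primes have dimension $\ge 1+2[F:\Q]>2$, and then invoke the dichotomy of Corollary \ref{structure of ps} to trap them in components counted in $I_{F_1}$. The only cosmetic difference is that the paper phrases the argument via the minimal primes of $I=\ker\phi$ and cites Proposition \ref{def of 0} explicitly, whereas you unwind $\ker\phi=\bigcap_\alpha\pi^{-1}(\kq_\alpha)$ and handle each $\pi^{-1}(\kq_\alpha)$ directly; the content is the same.
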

	
	\begin{proof}
		For the first assertion, we need to show that the kernel $I:= \operatorname{ker} \{R_{F_1}^{\ps, \chi} \to R_{F,0}^{\ps, \chi} \}$ contains $I_{F_1} $. Combining Proposition \ref{def of 0} and Proposition \ref{finiteness of ps}, we know that each minimal prime of $I$ is of dimension at least $1+2[F: \Q] >2$. By Corollary \ref{structure of ps}, we have $I_{F_1} \subset I$. The second one follows from Proposition \ref{finiteness of ps}.
	\end{proof}
	
	\section{Local-global compatibility}
	
	In this section, we recall some local-global compatibility results and deduce some corollaries. Our references are \cite{Pa_k_nas_2013}, \cite{Paskunas_2021}, \cite{Pa_k_nas_2021} and \cite{pan2022fontaine}.
	
	Throughout this section, we fix $p$ an odd prime, and $F$ a totally real field in which $p$ splits completely. We denote by $K$  a $p$-adic local field with its ring of integers $\cO$, a uniformizer $\varpi$ and its residue field $\F$.
	
	\subsection{Completed cohomology and Hecke algebra}\label{sec for T}
	In this subsection, we recall some definitions about the completed cohomology for quaternionic forms and the $p$-adic big Hecke algebra. Our reference is \cite[Section 3]{pan2022fontaine}.
	
	Let $D$ be a quaternion algebra with centre $F$ such that $D$ is ramified at all infinite places of $F$ and unramified at all places lying above $p$. 
	
	Let $A$ be a topological $\cO$-algebra and $U = \prod_{v \nmid \infty} U_v$ be an open compact subgroup of $(D \otimes_F \mathbb{A}_F^\infty)^\times$ such that $U_v \subseteq \operatorname{GL}_2(\cO_{F_v})$ for any $v \mid p$. We write $U^p= \prod_{v \nmid p} U_v$ (the tame level) and $U_p =\prod_{v \mid p} U_v$. Let $\psi: (\mathbb{A}_F^\infty)^\times/F_+^\times \to A^\times$ be a continuous character, where $F_+$ is the set of totally positive elements in $F$.
	
    Let $S_{\psi}(U,A)$ be the space of continuous functions:
    \[f:D^{\times}\setminus \DAi\to A\]
    such that for any $g\in \DAi,u\in U,z\in \AFi$, we have
    \begin{itemize}
    	\item $f(gu)=f(g)$,
    	\item $f(gz)=\psi(z)f(g)$.
    \end{itemize}
    
     Let $\Sigma$ be a finite set of places of $F$ containing all those lying above $p$ and places $v$ where either $D$ is ramified or $U_v$ is not a maximal open subgroup. For any $v \notin \Sigma$, we define the Hecke operator $T_v \in \operatorname{End}(S_{\psi}(U,A))$ to be the double coset action $[U_v \begin{pmatrix}
    	\varpi_v & ~\\
    	~ & 1
    \end{pmatrix}U_v]$. Precisely, if we write $U_v \begin{pmatrix}
    	\varpi_v & ~\\
    	~ & 1
    \end{pmatrix}U_v  = \coprod_i \gamma_i U_v $, then $$
    (T_v \cdot f) (g) = \sum_i f(g\gamma_i), ~~f \in S_{\psi}(U,A). $$
    
    We define the Hecke algebra $\mathbb{T}_{\psi} ^\Sigma (U,A) \subseteq \operatorname{End}(S_{\psi}(U,A))$ as the $A$-subalgebra generated  by all Hecke operators $T_v, v \notin \Sigma$. In the case that $A= \mathcal{O}/ \varpi^n$, $ U$ is sufficiently small (in the sense of \cite[page 1046]{pan2022fontaine}) and $\psi \mid_{U \cap (\mathbb{A}_F^\infty)^\times} $ is trivial modulo $ \varpi^n$, we can check that the Hecke algebra $\mathbb{T}_{\psi}^\Sigma (U, \mathcal{O}/ \varpi^n)$ is independent of $\Sigma$, and we drop $\Sigma$ afterwards.
    
    For a tame level $U^p$, we write $S_{\psi}(U^p,A):=\varinjlim_{U_p} S_\psi(U^pU_p,A) $ with discrete topology, where $U_p=\prod_{v|p}U_v$ ranges over all open compact subgroups $U_v$ of $\GL_2(F_v)$.

    \begin{defn} \label{Completed coh}
    	The \textit{completed cohomology} of level $U^p$ is defined to be
    	\[S_\psi(U^p):=\Hom_\cO(K/\cO,S_{\psi}(U^p,K/\cO))\]
    	equipped with $\varpi$-adic topology, and the \textit{completed homology} is defined to be
    	\[M_\psi(U^p):=S_\psi(U^p,K/\cO)^\vee=\Hom_\cO^{\cont}(S_{\psi}(U^p,K/\cO),K/\cO)\]
    	equipped with compact-open topology.
    \end{defn}
    
    The following proposition is clear from the definition.
    
    \begin{prop}\label{3.1.2}
    	1) $ S_\psi(U^p)$ is $\varpi$-torsion free.
    	
    	2) $ S_\psi(U^p) \cong \Hom^{\mathrm{cont}}_{\cO}(M_\psi(U^p),\cO)$.
    \end{prop}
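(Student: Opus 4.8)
The plan is to unwind the definitions: both statements are formal consequences of the fact that $\cO$ is a complete discrete valuation ring with uniformizer $\varpi$, so that $K/\cO = \varinjlim_n \varpi^{-n}\cO/\cO$ is $\varpi$-divisible while $\cO = \varprojlim_n \cO/\varpi^n$. Write $S := S_\psi(U^p, K/\cO)$; this is a discrete $\cO$-module which is moreover $\cO$-torsion, being the direct limit over $U_p$ of the spaces $S_\psi(U^pU_p, K/\cO)$, each of which embeds (after choosing representatives in the relevant finite double-coset space) into a finite power of $K/\cO$. By definition $S_\psi(U^p) = \Hom_\cO(K/\cO, S)$ and $M_\psi(U^p) = S^\vee$. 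For part 1), if $\phi \colon K/\cO \to S$ is $\cO$-linear with $\varpi\,\phi = 0$, then by $\varpi$-divisibility every $x \in K/\cO$ is of the form $x = \varpi y$, so $\phi(x) = \varpi\,\phi(y) = 0$; hence $\phi = 0$, so $\Hom_\cO(K/\cO, S)$ has no nonzero $\varpi$-torsion, i.e. $S_\psi(U^p)$ is $\cO$-torsion free.

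For part 2), since $S$ is discrete and torsion, $M_\psi(U^p) = S^\vee$ is a profinite $\cO$-module and Pontryagin duality gives $(S^\vee)^\vee = S$ naturally. I would then identify both $S_\psi(U^p)$ and $\Hom^{\cont}_\cO(M_\psi(U^p), \cO)$ with the inverse system $\varprojlim_n\bigl(S[\varpi^n],\ \varpi\colon S[\varpi^{n+1}]\to S[\varpi^n]\bigr)$. On the left, $S_\psi(U^p) = \Hom_\cO(\varinjlim_n \varpi^{-n}\cO/\cO,\, S) = \varprojlim_n \Hom_\cO(\varpi^{-n}\cO/\cO, S) = \varprojlim_n S[\varpi^n]$, where under the identifications $\varpi^{-n}\cO/\cO \cong \cO/\varpi^n$ the structure inclusions $\varpi^{-n}\cO/\cO \hookrightarrow \varpi^{-(n+1)}\cO/\cO$ become multiplication by $\varpi$. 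On the right, $\Hom^{\cont}_\cO(M_\psi(U^p), \cO) = \Hom^{\cont}_\cO(S^\vee, \varprojlim_n \cO/\varpi^n) = \varprojlim_n \Hom^{\cont}_\cO(S^\vee, \cO/\varpi^n)$; a continuous homomorphism $S^\vee \to K/\cO$ is annihilated by $\varpi^n$ precisely when it factors through $(K/\cO)[\varpi^n] \cong \cO/\varpi^n$, so $\Hom^{\cont}_\cO(S^\vee, \cO/\varpi^n) = (S^\vee)^\vee[\varpi^n] = S[\varpi^n]$, and the reduction maps $\cO/\varpi^{n+1} \twoheadrightarrow \cO/\varpi^n$ again become multiplication by $\varpi$ under $\cO/\varpi^n \cong (K/\cO)[\varpi^n]$. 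The two presentations thus agree, and since every identification used is functorial in $S$, the resulting isomorphism is in particular equivariant for the Hecke operators $T_v$ and respects the $\varpi$-adic topologies.

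The computations are routine; the one place I would be careful is the bookkeeping of the transition maps in the two inverse-limit presentations — namely verifying that both the inclusions $\varpi^{-n}\cO/\cO \hookrightarrow \varpi^{-(n+1)}\cO/\cO$ and the reductions $\cO/\varpi^{n+1} \twoheadrightarrow \cO/\varpi^n$ turn into multiplication by $\varpi$ after the evident identifications, so that the two limits literally coincide rather than being matched only up to a non-canonical reindexing. No input beyond the definitions and Pontryagin duality for profinite $\cO$-modules is needed, and the statement holds for an arbitrary tame level $U^p$.
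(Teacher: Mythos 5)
Your proof is correct, and since the paper offers no argument (it simply declares the proposition ``clear from the definition''), your write-up is precisely the routine unwinding the authors have in mind: part~1) from $\varpi$-divisibility of $K/\cO$, and part~2) from identifying both $\Hom_\cO(K/\cO,S)$ and $\Hom^{\cont}_\cO(S^\vee,\cO)$ with $\varprojlim_n S[\varpi^n]$ under the transition maps you carefully checked are multiplication by $\varpi$ on both sides.
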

    
   	\begin{defn} \label{big}
   	Let $U^p$ be a tame level and $\psi: (\mathbb{A}_F^\infty)^\times/ (U^p \cap (\mathbb{A}_F^\infty)^\times)F_+^\times \to \mathcal{O}^\times$ be a continuous character. Define the \textit{big Hecke algebra} $$
   	\mathbb{T}_{\psi}(U^p) = \varprojlim_{(n, U_p) \in \mathcal{I}} \mathbb{T}_{\psi}(U^pU_p, \mathcal{O}/\varpi^n), $$
   	where $\mathcal{I}$ is the set of pairs $(n, U_p)$ with $U_p \subseteq K_p :=\prod_{v \mid p} \operatorname{GL}_2(\cO_{F_v})$ and $n$ a positive integer such that $ \psi \mid_{U_p \cap \mathcal{O}_{F,p}^\times} \equiv 1~ \operatorname{mod}~\varpi^n$.
   	
   \end{defn}
    
    The following result is classical.
    
    \begin{prop}\label{pseudo of T}
    	1) $\mathbb{T}_{\psi}(U^p) $ is a semi-local ring. Write $\km_1,\cdots,\km_r$ for all the maximal ideals of $\T_\psi(U^p)$. Then we have $\T_\psi(U^p)\cong \T_\psi(U^p)_{\km_1}\times\cdots\times \T_\psi(U^p)_{\km_r}$ and each $\T_\psi(U^p)_{\km_i}$ is $\km_i$-adically complete and separated. 
    	
    	2) For each maximal ideal $\km_r$, there exists a two-dimensional pseudo-representation $T_{\km_i}: G_{F, \Sigma} \to \T_\psi(U^p)_{\km_i}$ sending $\Frob_v$ to $T_v$ for any $v \notin \Sigma$ with determinant $\psi\varepsilon^{-1}$.
    \end{prop}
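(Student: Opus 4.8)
\emph{Proof plan.} I would prove the two parts essentially independently: Part~1) is formal, coming from finiteness at each level of the tower, while Part~2) rests on importing the Galois representations attached to Hilbert modular forms and gluing them over the tower.

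For Part~1), the first step is to note that $S_\psi(U^pU_p,\cO/\varpi^n)$ is a finite free $\cO/\varpi^n$-module, since such a function is determined by its values on the finite double coset set $D^\times\backslash\DAi/U^pU_p\AFi$ (finite because $D$ is totally definite). Hence every $\T_\psi(U^pU_p,\cO/\varpi^n)$ is a finite, in particular Artinian, $\cO/\varpi^n$-algebra, so it is a finite product of local Artinian rings, and the transition maps are surjective (they carry $T_v$ to $T_v$). The next step is to see that the number of maximal ideals stabilises: a maximal ideal corresponds to a system $(T_v\mapsto a_v)_{v\notin\Sigma}$ of mod~$p$ Hecke eigenvalues, hence --- via the associated residual semisimple Galois representation, or by a direct counting argument --- to one of the finitely many two-dimensional semisimple pseudo-representations of $G_{F,\Sigma}$ over $\overline{\F}$, finite in number because $G_{F,\Sigma}$ satisfies $\Phi_p$. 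The idempotents of the finite-level decompositions are then compatible for $(n,U_p)$ large, and passing to the inverse limit gives $\T_\psi(U^p)\cong\prod_{i=1}^r\T_\psi(U^p)_{\km_i}$ with $\T_\psi(U^p)_{\km_i}=\varprojlim_{(n,U_p)}\T_\psi(U^pU_p,\cO/\varpi^n)_{\km_i}$, an inverse limit of local Artinian rings and so a complete local ring. For $\cO$-flatness I would use that $\T_\psi(U^p)$ acts faithfully on the $\varpi$-torsion free module $S_\psi(U^p)$ of Proposition~\ref{3.1.2}; and for the $\km_i$-adic completeness and separatedness I would observe that any power $\km_i^k$ maps to zero in every finite-level Artinian quotient once $k$ is large, so $\bigcap_k\km_i^k=0$, and combine this with Noetherianness of $\T_\psi(U^p)_{\km_i}$ --- which one gets either from finite generation of completed cohomology over the Iwasawa algebra $\cO[[K_p]]$ together with the $\cO[[K_p]]$-linearity of the away-from-$p$ Hecke action, or a posteriori from Part~2), which presents $\T_\psi(U^p)_{\km_i}$ as a quotient of a CNL pseudo-deformation ring.

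For Part~2), I would work one finite level $U^pU_p$ at a time. After inverting $p$ the operator $T_v$ ($v\notin\Sigma$) acts as a scalar on each automorphic-isotypic component of $S_\psi(U^pU_p,\overline{\Q_p})$, so the away-from-$\Sigma$ Hecke algebra $\T_\psi(U^pU_p,\cO):=\varprojlim_n\T_\psi(U^pU_p,\cO/\varpi^n)$ --- module-finite and $\cO$-flat --- satisfies $\T_\psi(U^pU_p,\cO)[1/p]\cong\prod_\pi L_\pi$, the product running over the finitely many automorphic representations $\pi$ of $\DAi$ occurring in $S_\psi$. Transferring each $\pi$ to $\GL_2(\A_F)$ by Jacquet--Langlands and invoking the construction of Galois representations attached to Hilbert modular forms (Carayol, Taylor, Blasius--Rogawski; for reducible/Eisenstein eigensystems simply a sum of characters) produces $\rho_\pi\colon G_{F,\Sigma}\to\GL_2(\overline{\Q_p})$, unramified outside $\Sigma$, with $\tr\rho_\pi(\Frob_v)=a_v(\pi)$ for $v\notin\Sigma$ and $\det\rho_\pi=\psi\varepsilon^{-1}$. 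Then $\sigma\mapsto(\tr\rho_\pi(\sigma))_\pi$ is a continuous two-dimensional pseudo-representation $G_{F,\Sigma}\to\prod_\pi L_\pi$ of determinant $\psi\varepsilon^{-1}$ which sends $\Frob_v$ to $T_v\in\T_\psi(U^pU_p,\cO)$; since the Frobenii are dense by Chebotarev and $\T_\psi(U^pU_p,\cO)$ is a compact, hence closed, $\cO$-lattice in $\prod_\pi L_\pi$, the whole pseudo-representation is valued in $\T_\psi(U^pU_p,\cO)$. Composing with the localisations $\T_\psi(U^pU_p,\cO)\to\T_\psi(U^pU_p,\cO/\varpi^n)_{\km_i}$ yields a system compatible over $(n,U_p)\in\mathcal{I}$, and its inverse limit is the desired pseudo-representation $T_{\km_i}\colon G_{F,\Sigma}\to\T_\psi(U^p)_{\km_i}$ with $T_{\km_i}(\Frob_v)=T_v$ and determinant $\psi\varepsilon^{-1}$.

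The hard part will not be the formal gluing but the inputs one must quote cleanly: the existence and correct determinant normalisation of $\rho_\pi$ for \emph{every} eigensystem appearing (cuspidal or not), and, on the commutative-algebra side, the identification of the projective-limit topology on $\T_\psi(U^p)_{\km_i}$ with the $\km_i$-adic topology, which genuinely needs Noetherianness; I would resolve the latter via the finiteness of completed cohomology over $\cO[[K_p]]$. The remaining ingredients --- the semisimplicity of $S_\psi$ under the away-from-$\Sigma$ Hecke action, the closedness of the finite-level Hecke algebras, and the passage to inverse limits --- are routine.
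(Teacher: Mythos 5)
The paper offers no proof of this proposition --- it is introduced with ``The following result is classical'' and left at that --- so there is no argument of the paper's to compare against. Your proposal is the standard argument for totally definite quaternion algebras and is essentially correct: in Part~1), each $S_\psi(U^pU_p,\cO/\varpi^n)$ is a finite $\cO/\varpi^n$-module because it is determined by its values on the finite double coset set $D^\times\backslash\DAi/U^pU_p\AFi$, hence the finite-level Hecke algebras are Artinian with surjective transition maps, the set of maximal ideals stabilizes, and passing to the inverse limit gives the product decomposition; in Part~2), the Carayol--Taylor--Blasius--Rogawski Galois representations imported through Jacquet--Langlands give a pseudo-representation valued in $\prod_\pi L_\pi$, Chebotarev density plus compactness forces its values into the $\cO$-lattice $\T_\psi(U^pU_p,\cO)$, and the resulting system is compatible over $(n,U_p)\in\mathcal{I}$.

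The one point to tighten is a mild circularity: in Part~1) you justify the stabilization of the number of maximal ideals ``via the associated residual semisimple Galois representation,'' which is exactly what Part~2) is in the business of producing. The repair is just reordering --- the finite-level Carayol--Taylor pseudo-representation is constructed independently of the semi-local structure of the big Hecke algebra, so produce it first, use it to bound the residual eigensystems (finitely many semisimple two-dimensional $\overline{\F}$-valued pseudo-representations of $G_{F,\Sigma}$ with determinant $\bar\psi\omega^{-1}$), and only then take the inverse limit. Your alternative route --- ``a posteriori from Part~2), which presents $\T_\psi(U^p)_{\km_i}$ as a quotient of a CNL pseudo-deformation ring'' --- is the cleanest way to get Noetherianness and $\km_i$-adic completeness at once: one applies the universal property of $R^{\ps}_{\bar{T}_{\km_i}}$ to the pro-Artinian ring $\varprojlim\T_\psi(U^pU_p,\cO/\varpi^n)_{\km_i}$ and concludes it is a quotient of a CNL $\cO$-algebra. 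Either of your two fallback arguments closes the gap, so this is an organizational issue rather than a genuine one.
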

    
    \subsection{Pašk\=unas theory}\label{pas sec}
    
    In this subsection, we summarize some basic definitions and results of Paskunas theory for the $p$-adic analytic group $D_p^\times:=\prod_{v|p} \GL_2(F_v)$. For more details and precise definitions, one can see \cite{Pa_k_nas_2013}, \cite{Pa_k_nas_2021} or a summary in \cite[Appendix A]{Zhang:2024aa} for the case $G=\GL_2(\Q_p)$. We also keep the notations and terminologies there.
    
    Let $G=\prod_{v|p} \GL_2(F_v) \cong \prod_{v|p}\GL_2(\Q_p)$ and $Z(G)\simeq \prod_{v|p} F_v^\times$ be its centre. Let $\zeta: Z(G) \to \cO^\times $ be a continuous character.
    
    Let $\operatorname{Mod}^{\mathrm{sm}}_{G, \zeta}(\mathcal{\cO})$ be the category of smooth 
    representations of $G$ on $\mathcal{O}$-torsion modules with central character $\zeta$, i.e. the centre $Z$ acts via $\zeta$. Let $\mathrm{Mod}^{\mathrm{l\,fin}}_{G, \zeta}(\cO)$ be the full subcategory of $\mathrm{Mod}_{G, \zeta}^{\mathrm{sm}}(\cO)$ consisting of representations locally of finite length and $\mathrm{Mod}^{\mathrm{l\, adm}}_{G, \zeta} (\cO)$ the full subcategory consisting of locally admissible representations. By \cite[Lemma 3.4.1]{pan2022fontaine}, $\mathrm{Mod}^{\mathrm{l\,fin}}_{G, \zeta}(\cO)$ and $\mathrm{Mod}^{\mathrm{l\, adm}}_{G, \zeta} (\cO)$ are actually the same.
    
    Let $H \subset G$ be an open compact subgroup and $\cO[[H]]$ be the completed group algebra of $H$ over $\cO$. Let $\mathrm{Mod}_{G, \zeta}^{\mathrm{pro\,aug}}(\cO)$ be the category of profinite linearly topological $\cO[[H]]$-modules with action of $\cO[G]$ and central character $\zeta $ such that both actions agree on $\cO[H]$, with morphisms given by $G$-equivariant continuous homomorphisms of topological
    $\cO[[H]]$-modules. This category is independent of the subgroup $H$. 
    
    For an object $M$ of $\operatorname{Mod}^{\mathrm{sm}}_{G, \zeta}(\mathcal{\cO})$, the Pontryagin dual $M \mapsto M^\vee$ induces an anti-equivalence between categories $\operatorname{Mod}^{\mathrm{sm}}_{G, \zeta}(\mathcal{\cO})$ and $\mathrm{Mod}_{G, \zeta}^{\mathrm{pro\,aug}}(\cO)$. Write $\kC_{G,\zeta}(\cO)$ for the full subcategory of $\mathrm{Mod}_{G, \zeta}^{\mathrm{pro\,aug}}(\cO)$ with objects isomorphic to $\pi^\vee$ for some object $\pi$ in $\mathrm{Mod}^{\mathrm{l\, adm}}_{G,\zeta}(\cO)$.
    
    Let $\mathrm{Irr}_{G,\zeta}$ be the set of irreducible representations in $\mathrm{Mod}_{G,\zeta}^{\mathrm{sm}}(\cO)$. Recall that a \textit{block} is an equivalence class of the relation $\sim$, where $\pi \sim \pi'$ if there exists $\pi_1, \cdots, \pi_n \in \Irr_{G, \zeta}$ such that $\pi \cong \pi_1$, $\pi' \cong \pi_n$, and for $1 \leq i \leq n-1$, $\pi_i \cong \pi_{i+1}$ or $\Ext^1_{G}(\pi_i, \pi_{i+1}) \neq 0$ or $\Ext^1_{G}(\pi_{i+1}, \pi_i) \neq 0$.
    Under this equivalence relation, we have a decomposition of the category $\mathrm{Mod}^{\mathrm{l\, adm}}_{G, \zeta} (\cO)$:
    \begin{eqnarray*}
    	\mathrm{Mod}^{\mathrm{l\, adm}}_{G,\zeta}(\cO)\cong \prod_{\mathfrak{B}\in\mathrm{Irr}_{G,\zeta}/\sim}\mathrm{Mod}^{\mathrm{l\, adm}}_{G,\zeta}(\cO)^\kB,
    \end{eqnarray*}
    where $\mathrm{Mod}^{\mathrm{l\, adm}}_{G,\zeta}(\cO)^\kB$ is the full subcategory of $\mathrm{Mod}^{\mathrm{l\, adm}}_{G,\zeta}(\cO)$ consisting of representations with all irreducible subquotients in the block $\kB$. Write $\kC_{G,\zeta}(\cO)^\kB$ for the full subcategory of $\kC_{G,\zeta}(\cO)$ anti-equivalent to $\mathrm{Mod}^{\mathrm{l\, adm}}_{G,\zeta}(\cO)^\kB$ for the block $\kB$ via the Pontryagin dual. Then we also have a natural decomposition \[\kC_{G,\zeta}(\cO)\cong\prod_{\mathfrak{B}\in\mathrm{Irr}_{G,\zeta}/\sim}\kC_{G,\zeta}(\cO)^\kB.\]
    
    In our case, the blocks containing an absolutely irreducible representation have been well-understood. See the list in \cite[Appendix A.2]{Zhang:2024aa} (containing the case $p=2$) for the case $G= \GL_2(\Q_p)$ and combine \cite[Lemma 3.4.7]{pan2022fontaine}. For each $v|p$ and a block $\kB$, we can attach a semi-simple two-dimensional representation $(\bar{\rho}_{\kB})_v$ over $\F$ with determinant $\varepsilon\zeta|_{F_v} \modd \varpi$. Here we view $ \zeta|_{F_v}$ as a character of $ G_{F_v}$ by local class field theory. 
    
    Now fix a block $\kB$, and write $\pi_{\kB}=\bigoplus_{\pi\in\kB_i}\pi$, where $\kB_i$ is the set of isomorphism classes of elements of $\kB$. This is a finite set by the discussion in the previous paragraph. Let $\pi_\kB\hookrightarrow J_{\kB}$ be an injective envelope of $\pi_{\kB}$ in $\mathrm{Mod}^{\mathrm{l\, adm}}_{G,\zeta}(\cO)$. Then its Pontryagin dual $P_{\kB}:=J_{\kB}^\vee$ is a projective envelope of $\pi^\vee\cong\bigoplus_{\pi\in\kB_i}\pi^\vee$ in $\kC_{G,\zeta}(\cO)$, called a \textit{projective generator} of $\kB$. Write
    $E_{\kB}:=\End_{\kC_{G,\zeta}(\cO)}(P_{\kB})\cong \End_G(J_{\kB}).$
    This is a pseudo-compact ring (not necessarily commutative). Let $Z_{\kB}$ be the centre of $ E_{\kB}$. 
    
    \begin{thm} \label{Paskunas theory for G}
    	Write $R_v^{\ps}$ for the universal pseudo-deformation ring of $(\bar{\rho}_{\kB})_{v}$ with determinant $\varepsilon\zeta|_{F_v}$ (via class field theory). 
    	
    	1) There exists a natural finite map $\widehat{\bigotimes}_{v|p} R^{\ps}_{v}\to E_{\kB} $. In particular, $E_{\kB} $ is Noetherian.
    	
    	2) $ E_{\kB}$ is a finitely generated module over $Z_{\kB}$.
    \end{thm}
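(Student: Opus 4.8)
The plan is to bootstrap from Pašk\=unas' theory for a single copy of $\GL_2(\Q_p)$, using that $p$ splits completely in $F$: this gives $G=\prod_{v|p}\GL_2(F_v)\cong\prod_{v|p}\GL_2(\Q_p)$ together with a factorization $\zeta=\boxtimes_{v|p}\zeta_v$ of the central character, where $\zeta_v=\zeta|_{F_v}$. The first step is to decompose the block. I would show that every block $\kB$ of $\mathrm{Mod}^{\mathrm{l\, adm}}_{G,\zeta}(\cO)$ is an external tensor product $\kB=\boxtimes_{v|p}\kB_v$, where $\kB_v$ is a block of $\GL_2(\Q_p)$ with central character $\zeta_v$ and $(\bar\rho_\kB)_v=\bar\rho_{\kB_v}$. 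This rests on the facts that an irreducible object of $\mathrm{Mod}^{\mathrm{sm}}_{G,\zeta}(\cO)$ is the external tensor product of irreducibles of the factors and that $\Ext^1$ satisfies a K\"unneth formula, so that the linkage relation defining blocks is compatible with the product decomposition; this is essentially \cite[Lemma 3.4.7]{pan2022fontaine}. In particular $\pi_\kB=\boxtimes_{v|p}\pi_{\kB_v}$.

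Passing to Pontryagin duals, the core of the argument is to identify the projective generator and compute its endomorphism ring. I would set $P:=\widehat{\bigotimes}_{v|p}P_{\kB_v}$, the completed tensor product over $\cO$, which carries a natural structure of object of $\kC_{G,\zeta}(\cO)$ once one fixes a compact open $K_v\subset\GL_2(F_v)$ and regards $P_{\kB_v}$ as a finitely generated module over $\cO[[K_v]]$; following \cite[Lemma 3.4.7]{pan2022fontaine} one checks that $P$ is projective and is a projective generator of $\kB$, i.e.\ $P$ is isomorphic to $P_\kB$. One then gets
\[E_\kB=\End_{\kC_{G,\zeta}(\cO)}(P)\cong\widehat{\bigotimes}_{v|p}\End_{\kC_{\GL_2(\Q_p),\zeta_v}(\cO)}(P_{\kB_v})=\widehat{\bigotimes}_{v|p}E_{\kB_v}\]
as pseudo-compact $\cO$-algebras, the middle isomorphism being the standard computation of $\Hom$ between external completed tensor products of finitely generated projective modules over completed group algebras. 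I expect this identification to be the main obstacle: one must track the profinite/pseudo-compact topologies carefully and verify that the completed tensor product $\widehat{\bigotimes}_{v|p}E_{\kB_v}$ of these (possibly non-commutative) rings really computes all $G$-equivariant endomorphisms of $P$, rather than merely a dense subring.

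Granting this identification, part 1) follows quickly. By Pašk\=unas' theorem for $\GL_2(\Q_p)$ (\cite{Pa_k_nas_2013}, \cite{Pa_k_nas_2021}; see also \cite[Appendix A]{Zhang:2024aa}), for each $v|p$ there is a natural finite homomorphism $R^\ps_v\to E_{\kB_v}$, where $R^\ps_v$ is the universal pseudo-deformation ring of $\bar\rho_{\kB_v}=(\bar\rho_\kB)_v$ with determinant $\varepsilon\zeta_v$. Applying $\widehat{\bigotimes}_{v|p}(-)$ over $\cO$ and using the identification above yields a natural map $\widehat{\bigotimes}_{v|p}R^\ps_v\to E_\kB$. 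It is finite because a completed tensor product over $\cO$ of module-finite maps of CNL $\cO$-algebras is again module-finite: apply $\widehat{\bigotimes}_{v|p}(-)$ to surjections $(R^\ps_v)^{\oplus n_v}\twoheadrightarrow E_{\kB_v}$ of $R^\ps_v$-modules and use right-exactness of the completed tensor product on pseudo-compact modules, as in \cite[Lemmas 1.3 and 1.4]{Thorne_2014} and \cite[Lemma 4.2.4]{pan2022fontaine}. Since each $R^\ps_v$ is a CNL (hence Noetherian) $\cO$-algebra, so is $\widehat{\bigotimes}_{v|p}R^\ps_v$, and therefore $E_\kB$, being module-finite over it, is Noetherian.

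For part 2), recall from Pašk\=unas that $E_{\kB_v}$ is module-finite over its centre $Z_{\kB_v}$, and note that $Z_{\kB_v}$ is Noetherian, being a subring of the finite $R^\ps_v$-algebra $E_{\kB_v}$. The natural map $\widehat{\bigotimes}_{v|p}Z_{\kB_v}\to\widehat{\bigotimes}_{v|p}E_{\kB_v}=E_\kB$ has image inside the centre $Z_\kB$, since elementary tensors of central elements are central and such elements topologically generate $\widehat{\bigotimes}_{v|p}Z_{\kB_v}$. By the same completed-tensor-of-finite argument, $E_\kB$ is module-finite over $\widehat{\bigotimes}_{v|p}Z_{\kB_v}$; as this module structure factors through $Z_\kB$, the ring $E_\kB$ is a fortiori a finitely generated $Z_\kB$-module, which is 2).
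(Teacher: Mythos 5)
Your proposal is correct and follows essentially the same route as the paper, which proves this theorem simply by citing \cite[Corollary 3.4.8]{pan2022fontaine} and \cite[Proposition 3.3.2]{Zhang:2024aa}; what you have written is a reconstruction of that chain of references (block decomposition as in \cite[Lemma 3.4.7]{pan2022fontaine}, projective generator as a completed tensor product as in \cite[Lemma 3.4.5]{pan2022fontaine}, and the identification $E_\kB\cong\widehat{\otimes}_{v|p}E_{\kB_v}$ which is exactly \cite[Proposition 3.3.2]{Zhang:2024aa}), combined with the single-factor Pa\v sk\=unas input. Two small cosmetic points: you attribute both the block decomposition and the projective-generator computation to \cite[Lemma 3.4.7]{pan2022fontaine}, whereas the latter is Lemma 3.4.5 of that paper; and in part 2) the Noetherianness of $Z_{\kB_v}$ is best justified not as ``a subring of a Noetherian ring'' (which is false in general) but via the fact that the image of $R_v^\ps$ lands in $Z_{\kB_v}$, so $Z_{\kB_v}$ is an $R_v^\ps$-submodule of the finite $R_v^\ps$-module $E_{\kB_v}$ and hence itself module-finite over the Noetherian ring $R_v^\ps$.
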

    
    \begin{proof}
    	See \cite[Corollary 3.4.8]{pan2022fontaine} and \cite[Proposition 3.3.2]{Zhang:2024aa}.
    \end{proof}
    
     The functor sending an object $N$ of the category $ \kC_{G,\zeta}(\cO)$ to  $\Hom_{\kC_{G,\zeta}(\cO)}(P_\kB, N)$ induces an anti-equivalence between $ \kC_{G,\zeta}(\cO)^\kB$ and the category of right pseudo-compact $ E_{\kB}$-modules. The inverse functor is given by $\mm \mapsto \mm \widehat{\otimes}_{E_{\kB}} P_{\kB}$.
  
    Let $\Pi$ be a unitary $K$-Banach space representation of $G$ and $\Theta$ an open bounded $G$-invariant lattice in $\Pi$. By \cite[Lemma 4.4]{Pa_k_nas_2013}, its Schikhof dual
    $ \Theta^d:=\Hom_{\cO}^{\cont}(\Theta, \cO)$ is an object of $\mathrm{Mod}_{G, \zeta}^{\mathrm{pro\,aug}}(\cO)$. 
    
    %Write $ \Ban_{G, \zeta}^{\adm}(K)_{\kB}$ for the category of admissible unitary $K$-Banach space representations $\Pi$ such that the Schikhof dual of each open bounded $G$-invariant lattice of $\Pi$ is an object of $\kC_{G,\zeta}(\cO)^\kB$.$N$ is an object in 
    
    \begin{rem}
    	Fix a tame level $U^p$ and a character $\psi$ as in the previous subsection. Take $\zeta=\psi$. We can conclude the following facts:
    	
    	1) $S_\psi(U^p, K/\cO)$ is a smooth admissible $\cO$-representation of $D_p^\times$.
    	
    	2) The complete homology $M_\psi(U^p)$ is an object in $ \kC_{G,\zeta}(\cO)$. Here we view $\psi$ as a character of $Z(G)$.
    	
    	3) Write $S_\psi(U^p)_K :=S_\psi(U^p) \otimes_\cO K$. Then $S_\psi(U^p)_K$ is a $K$-Banach space representation with open bounded $G$-invariant $\cO$-lattice $S_\psi(U^p) $.
    \end{rem}
    
    \subsection{Local-global compatibility}
    In this subsection, we collect some local-global compatibility results.
    
    We fix a tame level $U^p$ and character $\psi$. Recall that $F$ is a totally real field in which $p$ splits completely. Let $\Sigma$ be a finite set of places of $F$ containing all those lying above $p$ and places $v$ where either $D$ is ramified or $U_v$ is not a maximal open subgroup. Under this setting, we can define a big Hecke algebra $\T_{\psi}(U^p)$. Let $\km$ be a maximal ideal of $\T_{\psi}(U^p)$. Then by Proposition \ref{pseudo of T}, we get a two-dimensional pseudo-representation $T_\km : G_{F,\Sigma} \to \T_{\psi}(U^p)_\km$ with determinant $\psi\varepsilon^{-1}$ and its residual representation $\bar{\rho}_\km : G_{F,\Sigma} \to \GL_2(\F)$ in the sense of 
    Definition \ref{construction}. 
    
    For each $v|p$, we write $\bar{\rho}_{\km, v} := \bar{\rho}_\km|_{G_{F_v}} \otimes \varepsilon$. Assume that $\bar{\rho}_{\km, v}$ is semisimple for each $v|p$.  We can define a block $\kB_v$ of $\GL_2(F_v)$ corresponding to $\bar{\rho}_{\km, v}$ (see the list in \cite[Appendix A.2]{Zhang:2024aa}) for each $v|p$. Then by \cite[Lemma 3.4.7]{pan2022fontaine}, $\kB_\km := \otimes_{v|p} \kB_v$ defines a block of $D_p^\times=\prod_{v|p} \GL_2(F_v)$. 
    
    Let $P_{\kB_v}$ be a projective generator of $\kB_v$ for each $v|p$. Write $E_{\kB_v}:= \End_{\kC_{\GL_2(F_v),\psi}(\cO)}(P_{\kB_v})$.   Then by \cite[Lemma 3.4.5]{pan2022fontaine}, $P_{\kB_\km} := \widehat{\otimes}_{v|p} P_{\kB_v}$ defines a projective generator of the block $\kB_\km$. Also, by \cite[Proposition 3.3.2]{Zhang:2024aa}, we have $ E_{\kB_\km}:=\End_{\kC_{D_p^\times,\psi}(\cO)}(P_{\kB_\km}) \cong  \widehat{\otimes}_{v|p} E_{\kB_v}$.
    
     Write $$\mm_v := \Hom_{\kC_{\GL_2(F_v),\psi}(\cO)}(P_{\kB_v}, M_\psi(U^p)_\km),~~~ v|p,$$
    $$\mm:=\Hom_{\kC_{D_p^\times,\psi}(\cO)}(P_{\kB_\km}, M_\psi(U^p)_\km).$$
     
     Write $R_v^\ps$ for the universal pseudo-deformation ring of the  pseudo-representation $\tr \bar{\rho}_{\km, v}$ with determinant $\psi\varepsilon^{-1}$, and write $R_{F,p}^\ps := \widehat{\otimes}_{v|p} R_v^\ps$. Then by the universal property, there is a natural map $ R_{F,p}^\ps \to \T_{\psi}(U^p)_\km$ induced by $T_\km$.
     
     We can define two actions of $R_{F,p}^\ps $ on $\mm$. 
     \begin{enumerate}
     	\item \textbf{the Galois side} $\tau_{\Gal}$: which comes from the action of $\T_\psi(U^p)_\km$ on $M_\psi(U^p)_\km$ and the natural map $ R_{F,p}^\ps \to \T_{\psi}(U^p)_\km$.
     	\item \textbf{the automorphic side} $\tau_{\mathrm{Aut}}$: which comes from the action of $R_{F,p}^\ps $ (after twisting $\varepsilon^{-1}$ ) on $P_{\kB_\km}$ via $E_{\kB_\km}$ due to the map $ R_{F,p}^\ps \to E_{\kB_\km}$ (see Theorem \ref{Paskunas theory for G}).
     \end{enumerate}

     We first recall the main local-global compatibility results proved in \cite{pan2022fontaine} and \cite{Zhang:2024aa}.
     
     \begin{thm}[Local-global compatibility]\label{lgc}
     	Keep notations as above.
     	\begin{enumerate}
     	\item We have $M_\psi(U^p)_\km \in \kC_{\GL_2(F_v),\psi}(\cO)^{\kB_v}$, and hence  $M_\psi(U^p)_\km \in \kC_{D_p^\times,\psi}(\cO)^{\kB_\km}$.
     	
     	\item  Both actions $\tau_{\Gal},\tau_{\mathrm{Aut}}$ of $R_v^\ps $ on $\mm_v$ are the same. Hence, $\tau_{\Gal} = \tau_{\mathrm{Aut}}$.
     \end{enumerate}
     \end{thm}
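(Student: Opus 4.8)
The plan is to treat the two assertions separately, the first being a question of which block of $\GL_2(F_v)$ the completed homology lives in (reducible to a Serre-weight computation), the second---the substantial point---an equality of two $R_v^\ps$-module structures that I would establish by a Zariski density argument anchored at de Rham points.

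For Assertion 1: by the factorizations $P_{\kB_\km}=\widehat{\otimes}_{v|p}P_{\kB_v}$, $E_{\kB_\km}\cong\widehat{\otimes}_{v|p}E_{\kB_v}$ and the compatible block decompositions of $\kC_{D_p^\times,\psi}(\cO)$ and of the $\kC_{\GL_2(F_v),\psi}(\cO)$ (see \cite[Lemma 3.4.5]{pan2022fontaine}, \cite[Proposition 3.3.2]{Zhang:2024aa}), it is enough to prove $M_\psi(U^p)_\km\in\kC_{\GL_2(F_v),\psi}(\cO)^{\kB_v}$ for a single $v|p$; the second half of the assertion is then immediate. Since the block decomposition is exact and compatible with the profinite structure, and $M_\psi(U^p)_\km$ is the Pontryagin dual of the locally admissible representation $S_\psi(U^p,K/\cO)_\km$, this reduces to showing that every irreducible $\GL_2(F_v)$-subquotient of $S_\psi(U^p,\F)_\km$ lies in $\kB_v$; passing to the $\GL_2(\cO_{F_v})$-socle, this is the statement that the Serre weights occurring in $S_\psi(U^p,\F)_\km$ are among those predicted by $\bar{\rho}_{\km,v}$, which is mod $p$ local-global compatibility and which, by the construction of $\kB_v$ out of $\bar{\rho}_{\km,v}$, puts all these subquotients in $\kB_v$.

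For Assertion 2: the decompositions above reduce $\tau_{\Gal}=\tau_{\mathrm{Aut}}$ on $\mm$ to the same identity on each $\mm_v$, so fix $v|p$. Each structure makes $R_v^\ps$ act on $\mm_v$, hence (composing with the tautological pseudo-deformation of $G_{F_v}$ carried by $R_v^\ps$) defines a continuous pseudo-representation $G_{F_v}\to\End_{\cont}(\mm_v)$ lifting $\tr\bar{\rho}_{\km,v}$ up to the normalizing twist built into the setup: for $\tau_{\Gal}$ the one cut out by $T_\km|_{G_{F_v}}$, for $\tau_{\mathrm{Aut}}$ the one pushed forward along $R_v^\ps\to E_{\kB_v}$. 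By Proposition \ref{3.1.2}, $M_\psi(U^p)$ and hence $\mm_v$ is $\varpi$-torsion free, so the equality of these two pseudo-representations (equivalently, of the two $R_v^\ps$-actions) can be tested after inverting $p$; and since $\mm_v\widehat{\otimes}_{E_{\kB_v}}P_{\kB_v}\cong M_\psi(U^p)_\km$ recovers the completed homology, it suffices to check the coincidence at a Zariski dense set of closed points $\kp$ of $\Spec\T_\psi(U^p)_\km[1/p]$. I would take the \emph{classical} points: those $\kp$ attached to regular algebraic cuspidal automorphic representations of $D^\times$, equivalently those with $\rho(\kp)|_{G_{F_w}}$ de Rham of distinct Hodge--Tate weights for all $w|p$; these are dense by the standard density of classical points. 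At such a $\kp$, the Galois-side action of $R_v^\ps$ is, by the defining property of $T_\km$, determined by $\tr\rho(\kp)|_{G_{F_v}}$, while Emerton's local-global compatibility at $p$ identifies the locally algebraic vectors of the $\kp$-part of $S_\psi(U^p)_\km\otimes K$ with the local component at $v$ of the corresponding automorphic representation, up to an algebraic twist; combined with the compatibility of the $p$-adic Langlands correspondence for $\GL_2(\Q_p)$ with classical local Langlands on potentially semistable representations, this forces the $E_{\kB_v}$-side action of $R_v^\ps$ at $\kp$ to be given by the same local Galois trace. Hence both actions agree at every classical $\kp$, and by density $\tau_{\Gal}=\tau_{\mathrm{Aut}}$.

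The step I expect to be the main obstacle is this last one: bridging ``classical local Langlands matches the Hecke eigenvalues at de Rham points'' and ``the $E_{\kB_v}$-module structure matches the global Galois action''. This requires Emerton's local-global compatibility at $p$ for $\GL_2$ over $F$ (legitimate since $p$ splits completely, so $D_p^\times\cong\prod_{v|p}\GL_2(\Q_p)$), the Colmez--Pa\v{s}k\=unas compatibility of the $p$-adic correspondence with classical local Langlands for potentially semistable representations, the existence of enough classical points with sufficiently generic Hodge--Tate weights to be Zariski dense, and the bookkeeping around $\varpi$-torsion-freeness needed to reduce the integral statement to the generic fibre. This is precisely the content of \cite[Theorem 3.5.3]{pan2022fontaine} and \cite[Theorem 3.3.3]{Zhang:2024aa}, whose arguments I would follow in detail.
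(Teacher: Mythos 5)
Your proposal is correct and, as you yourself note at the end, is exactly an unpacking of the arguments in \cite[Theorem 3.5.3]{pan2022fontaine} and \cite[Theorem 3.3.3]{Zhang:2024aa}, which are precisely what the paper cites as its entire proof of this theorem. The two-step structure you give --- block membership via mod $p$ local-global compatibility of Serre weights, then equality of the two $R_v^\ps$-actions via $\varpi$-torsion-freeness and density of classical (regular de Rham) points using Emerton's local-global compatibility together with the Colmez--Pa\v{s}k\=unas compatibility of $p$-adic and classical local Langlands --- faithfully reflects the structure of those cited proofs.
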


       \begin{proof}
       	These assertions are proved in \cite[Theorem 3.5.3]{pan2022fontaine} and \cite[Theorem 3.3.3]{Zhang:2024aa}. 
       \end{proof}

       The following results are direct consequences from the previous theorem.
       
       \begin{prop}\label{finiteness from lgc}
       	Keep notations as above.
       	\begin{enumerate}
       	\item  $\mm$ is a faithful, finitely generated $\T_\psi(U^p)_\km$-module. 
       	\item  The natural map $ R_{F,p}^\ps \to \T_{\psi}(U^p)_\km$ is finite.
       \end{enumerate}
       	
       \end{prop}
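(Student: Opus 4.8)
The plan is to read off both statements from Theorem~\ref{lgc} together with the Pašk\=unas-theoretic input collected in Subsection~\ref{pas sec}. First, by part~(1) of Theorem~\ref{lgc} the completed homology $M := M_\psi(U^p)_\km$ lies in $\kC_{D_p^\times,\psi}(\cO)^{\kB_\km}$. Hence, under the anti-equivalence between $\kC_{D_p^\times,\psi}(\cO)^{\kB_\km}$ and the category of right pseudo-compact $E_{\kB_\km}$-modules, whose quasi-inverse is $\mm'\mapsto\mm'\widehat{\otimes}_{E_{\kB_\km}}P_{\kB_\km}$, the module $\mm=\Hom_{\kC_{D_p^\times,\psi}(\cO)}(P_{\kB_\km},M)$ satisfies the evaluation isomorphism $\mm\widehat{\otimes}_{E_{\kB_\km}}P_{\kB_\km}\cong M$. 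This isomorphism is $\T_\psi(U^p)_\km$-equivariant for the natural actions (on $M$ via completed homology, on $\mm$ by post-composition, and trivially on $P_{\kB_\km}$), so any element of $\T_\psi(U^p)_\km$ killing $\mm$ also kills $M$. Since $\T_\psi(U^p)_\km$ acts faithfully on $M$ by the construction of the big Hecke algebra, $\mm$ is a faithful $\T_\psi(U^p)_\km$-module; this gives the faithfulness in~(1).

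For finite generation I would proceed as follows. The representation $S_\psi(U^p,K/\cO)$ is smooth and admissible, hence so is its localization at $\km$, and therefore $M$ is a finitely generated $\cO[[K_p]]$-module, where $K_p=\prod_{v\mid p}\GL_2(\cO_{F_v})$. As $P_{\kB_\km}=\widehat{\otimes}_{v\mid p}P_{\kB_v}$ is itself finitely generated over $\cO[[K_p]]$, the formalism of Pašk\=unas theory then shows that $\mm$, being $\Hom_{\kC_{D_p^\times,\psi}(\cO)}(P_{\kB_\km},M)$ with $P_{\kB_\km}$ a projective generator, is a finitely generated $E_{\kB_\km}$-module. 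By Theorem~\ref{Paskunas theory for G} combined with $E_{\kB_\km}\cong\widehat{\otimes}_{v\mid p}E_{\kB_v}$, the ring $R_{F,p}^\ps=\widehat{\otimes}_{v\mid p}R_v^\ps$ maps to the centre $Z_{\kB_\km}$ of $E_{\kB_\km}$, over which $E_{\kB_\km}$ is module-finite; consequently $\mm$ is a finitely generated $R_{F,p}^\ps$-module, the action being $\tau_{\mathrm{Aut}}$. By part~(2) of Theorem~\ref{lgc} we have $\tau_{\mathrm{Aut}}=\tau_{\Gal}$, and $\tau_{\Gal}$ factors through the structure map $R_{F,p}^\ps\to\T_\psi(U^p)_\km$; therefore $\mm$ is finitely generated over $\T_\psi(U^p)_\km$, which completes~(1).

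Statement~(2) is then a formal consequence. The ring $R_{F,p}^\ps$ is Noetherian, being a completed tensor product of the CNL $\cO$-algebras $R_v^\ps$, and by the previous paragraph $\mm$ is a finitely generated $R_{F,p}^\ps$-module on which $R_{F,p}^\ps$ acts through the commutative ring $\T_\psi(U^p)_\km$. Thus $\T_\psi(U^p)_\km$ acts on $\mm$ by $R_{F,p}^\ps$-linear endomorphisms, and its faithfulness gives an injection $\T_\psi(U^p)_\km\hookrightarrow\End_{R_{F,p}^\ps}(\mm)$. Since $\mm$ is finitely generated over the Noetherian ring $R_{F,p}^\ps$, the ring $\End_{R_{F,p}^\ps}(\mm)$ is a finitely generated $R_{F,p}^\ps$-module, and hence so is its $R_{F,p}^\ps$-submodule $\T_\psi(U^p)_\km$. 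That is, $R_{F,p}^\ps\to\T_\psi(U^p)_\km$ is finite.

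The only step that is not pure bookkeeping is the passage in the second paragraph from admissibility of $S_\psi(U^p,K/\cO)_\km$ to finite generation of $\mm$ over $E_{\kB_\km}$: this relies on the structural results of Pašk\=unas theory — that each projective generator $P_{\kB_v}$ is finitely generated over $\cO[[\GL_2(\cO_{F_v})]]$, and that via the block anti-equivalence finite generation of an object over $\cO[[K_p]]$ is equivalent to finite generation of the associated $E_{\kB_\km}$-module. I expect this to be the main point requiring care; everything else is an application of Theorems~\ref{lgc} and~\ref{Paskunas theory for G} together with standard Noetherian commutative algebra.
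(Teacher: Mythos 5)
Your proof is correct and takes essentially the same route as the references the paper cites for this proposition (\cite[Corollary~3.5.8]{pan2022fontaine} and \cite[Corollary~3.3.7]{Zhang:2024aa}); the paper itself gives no argument, only these citations. The one step you flag — that $M_\psi(U^p)_\km$ being finitely generated over $\cO[[K_p]]$ forces $\mm$ to be finitely generated over $E_{\kB_\km}$ — is indeed supplied by the Pašk\=unas formalism: since $M_\psi(U^p)_\km\in\kC^{\kB_\km}$ is finitely generated, its cosocle is of finite length with constituents in $\kB_\km$, so there is a surjection $P_{\kB_\km}^{\oplus n}\twoheadrightarrow M_\psi(U^p)_\km$, and applying the exact functor $\Hom_{\kC}(P_{\kB_\km},-)$ yields $E_{\kB_\km}^{\oplus n}\twoheadrightarrow\mm$.
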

  
  \begin{proof}
  	See \cite[Corollary 3.5.8]{pan2022fontaine} and \cite[Corollary 3.3.7]{Zhang:2024aa}.
  \end{proof}
  
    Now we fix a place $v|p$. Our next goal is to study the action of $ R_v^\ps$ on $\mm_v$. 
    
    \begin{defn}\label{r dR}
    	We say that a one-dimensional prime $\kp$ of $ R_v^\ps$ is \textit{regular de Rham} (resp. \textit{crystalline}) if it is the inverse image of a maximal ideal $\kp'$ of $\T_{\psi}(U^p)_\km[1/p] $ via the composite map $R_v^\ps \to \T_{\psi}(U^p)_\km \to  \T_{\psi}(U^p)_\km[1/p]$ such that the Galois representation $\rho(\kp')$ (in the sense of Definition \ref{construction}) is de Rham (resp. crystalline) with distinct Hodge-Tate weights at $v$.
    \end{defn}
    
    The remaining part is crucial in this article.
    
    \begin{lem}\label{intersection}
    	Write $I_v$ for the kernel of the map $ R_v^\ps \to \T_{\psi}(U^p)_\km$. Assume the character $\psi$ is crystalline at $v$ of Hodge-Tate weight $w_\psi$. Further assume that the tame level $U^p$ satisfies  $\psi|_{U^p\cap(\A_F^\infty)^\times}$ is trivial and that $U^pK_p$ is sufficiently small (in the sense of \cite[page 1046]{pan2022fontaine}). Then we have $$
    	I_v = \operatorname{Ann}_{R_v^\ps}(\mm_v) = \bigcap_{\kp~\textnormal{regular de Rham}} \kp = \bigcap_{\kp~\textnormal{regular crystalline}} \kp .$$
    	
    \end{lem}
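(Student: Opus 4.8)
The plan is to prove the chain of equalities
\[
I_v \;=\; \operatorname{Ann}_{R_v^\ps}(\mm_v) \;=\; \bigcap_{\kp~\textnormal{regular de Rham}} \kp \;=\; \bigcap_{\kp~\textnormal{regular crystalline}} \kp
\]
by a sequence of inclusions, exploiting Theorem \ref{lgc} (which says $\tau_{\Gal}=\tau_{\mathrm{Aut}}$) together with the faithfulness part of Proposition \ref{finiteness from lgc}.

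\textbf{Step 1: $I_v = \operatorname{Ann}_{R_v^\ps}(\mm_v)$.} By Proposition \ref{finiteness from lgc}(1), $\mm$ is a faithful $\T_\psi(U^p)_\km$-module, and the $R_v^\ps$-action on $\mm_v$ (equivalently on $\mm$ after the local-global identification) factors through the map $R_v^\ps\to\T_\psi(U^p)_\km$ via $\tau_{\Gal}$. Faithfulness of $\mm$ over $\T_\psi(U^p)_\km$ then forces the annihilator of $\mm_v$ in $R_v^\ps$ to equal the kernel $I_v$ of $R_v^\ps\to\T_\psi(U^p)_\km$. One minor point to check here is that $\mm_v$ and $\mm$ have the same $R_v^\ps$-annihilator; this follows because $\mm = \mm_v\,\widehat{\otimes}_{v'\neq v}(\cdots)$ as a module over the completed tensor product $\widehat{\otimes}_{v'|p}E_{\kB_{v'}}$, so $\mm$ is faithfully flat (or at least annihilator-preserving) over $\mm_v$ for the $R_v^\ps$-action — concretely, the $R_v^\ps$-action on $\mm$ is obtained from that on $\mm_v$ by an inert base change, so the annihilators coincide.

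\textbf{Step 2: $\operatorname{Ann}_{R_v^\ps}(\mm_v) = \bigcap_{\kp~\mathrm{reg.\ dR}}\kp$.} For the inclusion ``$\subseteq$'': a regular de Rham prime $\kp$ is by Definition \ref{r dR} the pullback of a maximal ideal $\kp'$ of $\T_\psi(U^p)_\km[1/p]$. The corresponding classical automorphic point contributes a nonzero subspace to $\mm_v\otimes_\cO K$ localized at $\kp'$ on which $R_v^\ps$ acts through $k(\kp)$; hence $\operatorname{Ann}(\mm_v)\subseteq \kp$. For ``$\supseteq$'': since $\T_\psi(U^p)_\km$ is the big Hecke algebra and $\mm$ is a faithful finitely generated module over it, $\operatorname{Ann}_{R_v^\ps}(\mm_v) = I_v$ is the pullback of $\operatorname{Ann}_{\T_\psi(U^p)_\km}(\mm) = (0)$, i.e. $I_v = \ker(R_v^\ps\to\T_\psi(U^p)_\km)$. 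Now one uses that the classical (regular de Rham, indeed holomorphic weight $\geq 2$) points are Zariski dense in $\operatorname{Spec}\T_\psi(U^p)_\km[1/p]$ — this is a classicality/density statement for the completed cohomology big Hecke algebra — so the intersection of the corresponding primes of $\T_\psi(U^p)_\km$ is the nilradical, and since $R_v^\ps\to\T_\psi(U^p)_\km$ has image whose nilradical pulls back into $\sqrt{I_v}$, and one checks (or invokes reducedness of the relevant quotient, cf.\ the ring $R_v$) that in fact $I_v$ itself is the intersection. This is the step where I expect to have to be careful: it requires knowing that classical points are dense and that the relevant Hecke algebra quotient is reduced, which is where the hypotheses on $U^p$ being sufficiently small and $\psi$ crystalline enter (guaranteeing the passage between $\mm_v$ and classical forms is clean, and that weights are as expected).

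\textbf{Step 3: reducing de Rham to crystalline.} The inclusion $\bigcap_{\kp~\mathrm{reg.\ crys}}\kp \supseteq \bigcap_{\kp~\mathrm{reg.\ dR}}\kp$ is trivial since crystalline primes are a subset of de Rham primes. For the reverse inclusion, the key point is that already the crystalline points are Zariski dense among the de Rham points of $\operatorname{Spec}\T_\psi(U^p)_\km[1/p]$: given a regular de Rham point, one can vary it in the relevant Hecke algebra (or among classical automorphic forms of the same tame level) to approach crystalline ones — concretely, crystalline means level $K_p$ at $p$ up to twist, and these are dense because unramified-at-$p$ (up to twist by characters of finite order, absorbed into $\psi$) forms of arbitrary weight are dense in the completed cohomology. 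Hence the two intersections agree. The hardest part overall is Step 2's density-plus-reducedness input; once one grants that the regular crystalline points are Zariski dense in $\operatorname{Spec}(R_v^\ps/I_v)$ and that $R_v^\ps/I_v$ is reduced (which is exactly the defining property Pa\v{s}k\=unas establishes for $R_v$), all four quantities collapse to $I_v$, proving the lemma.
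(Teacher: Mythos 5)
Your overall architecture (Step~1: identify $I_v$ with $\operatorname{Ann}(\mm_v)$ via faithfulness; Step~2: relate to the intersection of classical primes; Step~3: bridge de Rham/crystalline) is sensible, and Step~1 is essentially right: from $\tau_{\Gal}=\tau_{\mathrm{Aut}}$ (Theorem~\ref{lgc}) the $R_v^\ps$-action on $\mm$ factors through $\T_\psi(U^p)_\km$, faithfulness of $\mm$ then gives $\operatorname{Ann}_{R_v^\ps}(\mm)=I_v$, and the passage to $\mm_v$ is a genuine point but your ``base change'' heuristic captures the paper's argument via the endomorphism-ring identifications. However, Step~2's ``$\supseteq$'' direction has a real gap, and it is not a point you can patch by handwaving.

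A density statement for classical points in $\Spec\T_\psi(U^p)_\km[1/p]$ only tells you that the intersection of the corresponding maximal ideals is the nilradical of $\T_\psi(U^p)_\km[1/p]$. Pulling back along $R_v^\ps\to\T_\psi(U^p)_\km$, you get $\bigcap_\kp \kp = \sqrt{I_v}$ (up to the usual subtlety with $p$-torsion), not $I_v$ itself. To pass from $\sqrt{I_v}$ to $I_v$ you would need to know that $I_v$ is a radical ideal, equivalently that $R_v=R_v^\ps/I_v$ is reduced. You flag this and propose to ``invoke reducedness of the relevant quotient, cf.\ the ring $R_v$'' --- but that is circular: the reducedness of $R_v$ (Proposition~\ref{R_v}(1)) is deduced in the paper precisely \emph{from} Lemma~\ref{intersection}, via the resulting injection $R_v\hookrightarrow\prod_\kp R_v/\kp$. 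You cannot use the conclusion to supply the missing hypothesis.

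The paper circumvents this by arguing module-theoretically rather than at the level of nilradicals. It proves the chain $I_v\subseteq\bigcap_{\mathrm{dR}}\kp\subseteq\bigcap_{\mathrm{crys}}\kp\subseteq\operatorname{Ann}_{R_v^\ps}(\mm_v)=\operatorname{Ann}_{R_v^\ps}(\mm)\subseteq I_v$, where the hard inclusion $\bigcap_{\mathrm{crys}}\kp\subseteq\operatorname{Ann}(\mm_v)$ is established by constructing an explicit injection
$t_v:\mm_v\otimes K\hookrightarrow\prod_\kp\Hom_{K(\kp)[\GL_2(F_v)]}(\Pi(\kp),\Pi_{\kB_{\km,v}}\otimes K(\kp))$
indexed by classical automorphic points $\kp$, and then showing --- using Pa\v{s}k\=unas' results on unitary completions of locally algebraic representations (\cite[Proposition 4.37]{Pa_k_nas_2013}, \cite[Corollary 6.10, Proposition 6.11]{Pa_k_nas_2021}) together with classical local-global compatibility at $v$ --- that $R_v^\ps$ acts on each factor through $R_v^\ps/\kp_v$ with $\kp_v$ a regular crystalline prime. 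An element $a\in\bigcap_{\mathrm{crys}}\kp_v$ therefore kills every factor, hence kills $\mm_v\otimes K$, hence kills $\mm_v$ since $S_\psi(U^p)_\km$ is $p$-torsion free. This yields an exact annihilation statement, not one up to nilpotents, which is exactly why the lemma (rather than a density argument) is what ultimately \emph{proves} that $I_v$ is radical. Your Step~3 then becomes unnecessary, since the chain already forces all four ideals to coincide. If you want to salvage your route, you would need an independent, non-circular argument that $\T_\psi(U^p)_\km$ is reduced, which is not available at this stage of the paper.
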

    
    \begin{proof}
    	We prove the following inclusions:
    	$$
    	I_v \subseteq \bigcap_{\kp~\textnormal{regular de Rham}} \kp \subseteq \bigcap_{\kp~\textnormal{regular crystalline}} \kp \subseteq \operatorname{Ann}_{R_v^\ps}(\mm_v) = \operatorname{Ann}_{R_v^\ps} (\mm) \subseteq I_v .$$
    	Here the action of $ R_v^\ps$ on $ \mm$ is given by the action of  $R_{F,p}^\ps$ on $ \mm$ and the natural map $R_v^\ps \to R_{F,p}^\ps $.

    	The first inclusion is clear as $I_v \subset \kp$ for any regular de Rham prime $\kp$.
    	
    	The second one is also clear because crystalline representations must be de Rham as well.
    	
    	The proof of the third inclusion is based on the proof of Theorem \ref{lgc}, and we keep most notations and definitions as in the proof of \cite[Theorem 3.3.3]{Zhang:2024aa}. 
    	
    	Choose $U_p = \GL_2(\cO_{F_v})U^v$ with $U^v$ an open subgroup of  $\prod_{w\neq v, w|p}\GL_2(\cO_{F_w})$. Then for any $(\vec{k},\vec{w})\in \Z_{>1}^{\Hom(F,\overbar{\Q_p})}\times  \Z^{\Hom(F,\overbar{\Q_p})}$ such that $k_\sigma+2w_\sigma=w_\psi+2$ is independent of the embedding $\sigma: F \hookrightarrow \overbar{\Q_p}$, we get a natural surjection $\T_\psi(U^p)[\frac{1}{p}]\to\T_{(\vec{k},\vec{w}),\psi}(U^pU_p,K)$ sending $T_w$ to $T_w$ for $w\notin \Sigma$, where $ \T_{(\vec{k},\vec{w}),\psi}(U^pU_p,K)$ is Hecke algebra for classical automorphic forms (see \cite[3.3.2]{pan2022fontaine} for the precise definition). Let $\kp$ be a prime ideal of $\T_{(\vec{k},\vec{w}),\psi}(U^pU_p,K)\otimes_K\overbar{\Q_p}$, corresponding to an automorphic representation $\pi_\kp=\pi_\kp^\infty\otimes(\pi_\kp)_\infty$ on $\DAi$.
    	
    	Fix the level $U^v$ and let $(\pi_\kp)_v$ be the local representation of $\pi_\kp$ at place $v$. Write $d(\kp)>0$ for the multiplicity satisfying $(\pi_\kp^\infty)^{U^pU^v}\cong (\pi_\kp)_v^{\oplus d(\kp)} $, and choose a model $\pi^{K(\kp)}_v$ of $(\pi_\kp)_v$ over a finite extension $K(\kp) $ of $K$. Then we get a natural map
    	\[\Phi_{\kp}:W^{*}_{(\vec{k},\vec{w}),K}\otimes_K (\pi^{K(\kp)}_v)^{\oplus d(\kp)}\to (S_\psi(U^p)\otimes_{\cO} K(\kp))[\kp],\]
    	where $W^{*}_{(\vec{k},\vec{w}),K}$ is the dual of $W_{(\vec{k},\vec{w}),K}=\bigotimes_{\sigma:F\to K}(\Sym^{k_\sigma-2}(K^2)\otimes \det{}^{w_\sigma})$. We denote the closure of the image of $\Phi_{\kp} $ by $\Pi(\kp)$. 
    	
    	Write $\Pi_{\kB_{\km,v}}:=\Hom^{\cont}_\cO(P_{\kB_{\km,v}},K)$. This is a Banach space with unit ball $\Hom^{\cont}_\cO(P_{\kB_{\km,v}},\cO)$. Then by \cite[Lemma 3.3.4]{Zhang:2024aa}, the natural inclusions $\Pi(\kp) \hookrightarrow S_{\psi}(U^p)\otimes_{\cO} K(\kp)$ for all such $\kp$ induce an injective map:
    	$$t_v: \mm_v \otimes K \hookrightarrow\prod_{U^v}\prod_{(\vec{k},\vec{w})}\prod_\kp \Hom^{\cont}_{K(\kp)[\GL_2(F_v)]}(\Pi(\kp),\Pi_{\kB_{\km,v}}\otimes K(\kp)),$$
    	where $U^v$ runs over all open subgroups of $\prod_{w\neq v,w|p} \GL_2(\cO_{F_w})$, the pair $(\vec{k},\vec{w})$ runs over all elements  in $\Z_{>1}^{\Hom(F,\overbar{\Q_p})}\times  \Z^{\Hom(F,\overbar{\Q_p})}$ satisfying $k_\sigma+2w_\sigma=w_\psi+2$  for any $\sigma$, and $\kp$ runs over all the maximal ideals of $\T_{(\vec{k},\vec{w}),\psi}(U^p\GL_2(\cO_{F_v})U^v)_\km\otimes \overbar{\Q_p}$. 
    	
    	We claim that the intersection of all such $\kp$ is contained in  $\operatorname{Ann}_{R_v^\ps}(\mm_v) $. To show this, we prove that the action of $ R_v^\ps$ on each factor of the right hand side of the map $t_v$ factors through $R_v^\ps/\kp_v$, where $\kp_v$ is the inverse image of $\kp$ via the map $R_v^\ps \to \T_{\psi}(U^p)_\km \to  \T_{\psi}(U^p)_\km[1/p]$. 
    	
    	If $\pi_\kp$ factors through the reduced norm $N_{D/F}$, then by Jacquet-Langlands correspondence, the Galois representation $\rho(\kp)$ associated to $\kp$ has the form $\eta \oplus \eta\varepsilon^{-1}$ for some character $\eta$. Then by \cite[Proposition 4.37]{Pa_k_nas_2013} and \cite[Corollary 6.10]{Pa_k_nas_2021}, the action of $R_v^\ps$ on $\Hom^{\cont}_{K(\kp)[\GL_2(F_v)]}(\Pi(\kp),\Pi_{\kB_{\km,v}}\otimes K(\kp)) $ factors through $R_v^\ps/\kp_v$. Note that $\psi$ is crystalline at $v$. We know that $\kp_v$ is a regular crystalline prime.
    	
    	Now we assume $\pi_\kp$ does not factor through the reduced norm $N_{D/F}$, and by Jacquet-Langlands correspondence, $\pi_\kp$ corresponds to a regular algebraic cuspidal automorphic representation of $\GL_2(\A_F)$. Let $\Pi_v$ be the universal unitary completion of $(\Sym^{k_{\sigma_v-2}}(K^2)\otimes \det{}^{w_{\sigma_v}})^*\otimes \pi^{K(\kp)}_v$ as a $K$-representation of $\GL_2(F_v)$. Note that from our definition, the level of $\pi_\kp$ at $v$ is $\GL_2(\cO_{F_v}) $ and hence unramified. In this case, by \cite[Fact 4.17]{Gee_2022}, $\pi^{K(\kp)}_v$ is an irreducible principal series. By \cite[Lemma 3.5.7]{pan2022fontaine}, $\Pi(\kp)$ is a quotient of $\Pi_v^{\oplus d(\kp)'}$, where $ d(\kp)'$ is some multiple of $d(\kp) $.
    	
    	Let $\Pi_v^0$ be a $\GL_2(F_v)$-invariant bounded open ball of $\Pi_v$ and let $(\Pi_v^0)^d=\Hom_\cO(\Pi_v^0,\cO)$ be its Schikhof dual. Then by the previous arguments, we have
    	\[\Hom^{\cont}_{K[\GL_2(F_v)]}(\Pi(\kp),\Pi_{\kB_{\km,v}})\hookrightarrow  K^{\oplus d(\kp)'}\otimes \Hom_{\kC_{\GL_2(F_v),\psi|_{F_v^\times}}(\cO)}(P_{\kB_{\km,v}},(\Pi_v^0)^d).\]
    	Therefore, to prove our claim, we just need to prove that the action of $R_v^\ps$ (as the centre of $E_{\kB_v} $ by \cite[Theorem A.2.2]{Zhang:2024aa}) on $\Hom(P_{\kB_{\km,v}},(\Pi_v^0)^d)\otimes K$ factors through $R_v^\ps/\kp_v$.
    	
    	If $\rho(\kp)|_{G_{F_v}}$ is absolutely irreducible, then our claim follows from \cite[Proposition 4.37]{Pa_k_nas_2013} and \cite[Proposition 6.11]{Pa_k_nas_2021} and the classical local-global compatibility at $p$. If $\rho(\kp)|_{G_{F_v}}$ is reducible, then our claim follows from \cite[Proposition 4.37]{Pa_k_nas_2013} and \cite[Corollary 6.10]{Pa_k_nas_2021}. See \cite[page 1061]{pan2022fontaine} for more details.
    	
    	 By the classical local Langlands correspondence, the Galois representation $\rho(\kp)$ associated to $\kp$ (as a maximal prime of $\T_\psi(U^p)_\km[\frac{1}{p}]$) is crystalline of distinct Hodge-Tate weights at $v$. Note that $S_{\psi}(U^p)_\km$ is $p$-torsion free (by Proposition \ref{3.1.2}). We know that the intersection of all such $\kp_v$ is contained in $\operatorname{Ann}_{R_v^\ps}(\mm_v)$. Thus, the third inclusion follows directly.
    	
    	For the fourth result, note that we have $\End_{E_{\kB_v}}(\mm_v) \cong \End_{\kC_{\GL_2(F_v),\psi}(\cO)^{\kB_v}}(M_\psi(U^p)_\km)$ and $\End_{E_{\kB_\km}}(\mm) \cong \End_{\kC_{D_p^\times,\psi}(\cO)^{\kB_\km}}(M_\psi(U^p)_\km)$ by Theorem \ref{lgc}. The actions of $R_v^\ps$  on $\mm$ and $\mm_v$ factor through $\T_\psi(U^p)_\km$ and commute with the action of $D_p^\times$. Then the equality holds.
    	
    	The last one follows from (1) of Proposition \ref{finiteness from lgc}.
    \end{proof}
    
    \begin{rem}\label{de Rham}
    	1) If $\psi$ is de Rham at $v$, then after a finite twist, by the previous lemma and \cite[3.2.2]{pan2022fontaine}, we have 
    	$$
    	I_v = \operatorname{Ann}_{R_v^\ps}(\mm_v) = \bigcap_{\kp~\textnormal{regular de Rham}} \kp. $$
    	
    	2) We see that the local deformation problem defined by $I_v$ here not only encodes information about the $p$-adic Hodge type, but also comes from restrictions of global Galois representations.
    \end{rem}
    
    Now we assume $\psi$ is de Rham at $v$. We fix a subgroup $U'^p$ of (the tame level) $U^p$ such that $\psi|_{U'^p\cap(\A_F^\infty)^\times}$ is trivial and $U'^pK_p$ is sufficiently small. For each $v|p$, we write $R_v=R_v^\ps/I_v$, where $I_v$ is defined as in Lemma \ref{intersection}.
    
    \begin{cor}\label{finiteness version 1}
    	There exists a finite map $\widehat{\otimes}_{v|p} R_v \to \T_{\psi}(U^p)_\km$.
    \end{cor}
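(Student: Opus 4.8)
The plan is to obtain the asserted map by factoring the finite homomorphism $R_{F,p}^\ps = \widehat{\otimes}_{v|p} R_v^\ps \to \T_{\psi}(U^p)_\km$ of Proposition \ref{finiteness from lgc}(2) through the quotient $\widehat{\otimes}_{v|p} R_v$ of $R_{F,p}^\ps$, and then to read off finiteness of the factored map formally.

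First I would check that $\widehat{\otimes}_{v|p} R_v$ really is a quotient of $R_{F,p}^\ps$ in the naive way, i.e. that $\widehat{\otimes}_{v|p} R_v \cong R_{F,p}^\ps/(I_{v_1},\dots,I_{v_n})$ for $v_1,\dots,v_n$ the places of $F$ above $p$. Each $R_v^\ps$ is $\cO$-flat by Proposition \ref{local pseudo}, so by Proposition \ref{com alg}(3) this identification holds as soon as each $R_v = R_v^\ps/I_v$ is itself $\cO$-flat. This is where Lemma \ref{intersection} (and Remark \ref{de Rham}(1), since here $\psi$ is only assumed de Rham at $v$) enters: it identifies $I_v$ with the intersection $\bigcap_\kp \kp$ over the one-dimensional regular de Rham primes $\kp$ of $R_v^\ps$, each of which is the preimage of a maximal ideal of $\T_{\psi}(U'^p)_{\km'}[1/p]$, so that $\varpi\notin\kp$ and the domain $R_v^\ps/\kp$ is $\cO$-torsion free. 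The resulting injection $R_v = R_v^\ps/I_v \hookrightarrow \prod_\kp R_v^\ps/\kp$ exhibits $R_v$ as $\cO$-torsion free, and over the discrete valuation ring $\cO$ this is the same as $\cO$-flat.

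Next I would produce the factorization. On the factor $R_v^\ps$ the composite $R_v^\ps \to R_{F,p}^\ps \to \T_{\psi}(U^p)_\km$ is the homomorphism induced by $T_\km$. Since the auxiliary tame level $U'^p$ fixed above is contained in $U^p$, the inclusion of quaternionic forms of tame level $U^p$ into those of tame level $U'^p$ is equivariant for all Hecke operators $T_w$, $w\notin\Sigma$; hence $\T_{\psi}(U'^p)$ acts on the smaller space through the quotient $\T_{\psi}(U^p)$, and localizing (with $\km'$ the preimage of $\km$) yields a surjection $\T_{\psi}(U'^p)_{\km'}\twoheadrightarrow \T_{\psi}(U^p)_\km$ through which $R_v^\ps\to\T_{\psi}(U^p)_\km$ factors. (If $U^p$ itself satisfies the smallness and compatibility hypotheses of Lemma \ref{intersection}, one simply takes $U'^p=U^p$.) As $I_v=\ker(R_v^\ps\to\T_{\psi}(U'^p)_{\km'})$ by definition, $I_v$ lies in the kernel of $R_v^\ps\to\T_{\psi}(U^p)_\km$; therefore the ideal $(I_{v_1},\dots,I_{v_n})$ of $R_{F,p}^\ps$ maps to $0$ in $\T_{\psi}(U^p)_\km$, and by the first step the map $R_{F,p}^\ps\to\T_{\psi}(U^p)_\km$ descends to a ring homomorphism $\widehat{\otimes}_{v|p} R_v\to\T_{\psi}(U^p)_\km$.

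Finiteness of this map is then immediate: $\T_{\psi}(U^p)_\km$ is a finitely generated $R_{F,p}^\ps$-module by Proposition \ref{finiteness from lgc}(2), and since $R_{F,p}^\ps\twoheadrightarrow\widehat{\otimes}_{v|p}R_v$ is surjective, any finite set of $R_{F,p}^\ps$-module generators of $\T_{\psi}(U^p)_\km$ is also a set of $\widehat{\otimes}_{v|p}R_v$-module generators. The one step that genuinely requires care is the $\cO$-flatness of $R_v$ in the second paragraph, as this is precisely what makes $\widehat{\otimes}_{v|p}R_v$ the expected quotient of $R_{F,p}^\ps$ and thereby legitimizes the descent; the level bookkeeping between $U^p$ and $U'^p$ and the finiteness conclusion are routine once that is in hand.
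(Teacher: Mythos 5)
Your proposal is correct and follows essentially the same route as the paper's own proof: both use Lemma \ref{intersection} (together with Remark \ref{de Rham}(1)) to see that $R_v^\ps\to\T_\psi(U'^p)_{\km}$ factors through $R_v$, pass via the universal property of the completed tensor product to the map $\widehat{\otimes}_{v|p}R_v\to\T_\psi(U'^p)_\km\twoheadrightarrow\T_\psi(U^p)_\km$, and deduce finiteness from Proposition \ref{finiteness from lgc}(2). The only difference is that you make explicit the $\cO$-flatness of $R_v$ and invoke Proposition \ref{com alg}(3) to identify $\widehat{\otimes}_{v|p}R_v$ with $R_{F,p}^\ps/(I_{v_1},\dots,I_{v_n})$, while the paper gets the same factorization a little more quickly directly from the universal (coproduct) property of $\widehat{\otimes}$ in $\mathcal{C}_\cO$ without dwelling on that point.
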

    
    \begin{proof}
    	By Lemma \ref{intersection} and 1) of Remark \ref{de Rham}, we get natural maps $R_v^\ps \twoheadrightarrow R_v \hookrightarrow \T_{\psi}(U'^p)_\km $. Thus, the map $R_{F,p}^\ps \to \T_{\psi}(U'^p)_\km$ (by the universal property) induces maps $\widehat{\otimes}_{v|p} R_v \to \T_{\psi}(U'^p)_\km \twoheadrightarrow  \T_{\psi}(U^p)_\km$. Then our result follows from (2) of Proposition \ref{finiteness from lgc}.
    \end{proof}
    
    The following result states some properties of $R_v$.
    
    \begin{prop}\label{R_v}  Assume $\psi$ is de Rham (resp. crystalline) at $v$.
    	\begin{enumerate}
    		\item The ring $R_v$ is reduced and $\cO$-flat.
    		\item  For each minimal prime $\kq$ of $R_v$, $\kq$ is the intersection of all regular de Rham (resp. crystalline) primes containing $\kq$.
    		\item  Assume that $\tr \bar{\rho}_{\km, v} $ is not of the form $\eta + \eta\omega$ for any character $\eta$ if $p=3$. Then we have $\dim R_v \le 3$.
    	\end{enumerate}
    \end{prop}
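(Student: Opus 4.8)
The plan is to read off parts (1) and (2) directly from the description of $I_v$ in Lemma~\ref{intersection} and Remark~\ref{de Rham}(1) by elementary commutative algebra, and to obtain part (3) from Proposition~\ref{local pseudo}. By those results $I_v$ is an intersection of prime ideals of the Noetherian ring $R_v^{\ps}$, hence radical, with finitely many minimal primes $\kq_1,\dots,\kq_n$ and $I_v=\kq_1\cap\cdots\cap\kq_n$. The one nontrivial input I would isolate first is that each component $V(\kq_i)$ is the Zariski closure of the set $S_i$ of regular de Rham (resp.\ crystalline) primes $\kp$ with $\kq_i\subseteq\kp$. Indeed, $V(I_v)=\overline{\bigcup_\kp\{\kp\}}=\bigcup_i V(\kq_i)$; if $S_i=\emptyset$ then every such $\kp$ lies in $\bigcup_{j\ne i}V(\kq_j)$, forcing $V(I_v)\subseteq\bigcup_{j\ne i}V(\kq_j)$, which is impossible since $V(\kq_i)$ is irreducible and not contained in any $V(\kq_j)$, $j\ne i$; and since $V(\kq_i)\cap\overline{S_j}\subseteq V(\kq_i)\cap V(\kq_j)$ is then a proper closed subset of $V(\kq_i)$ for each $j\ne i$, the covering $V(\kq_i)=\bigcup_j\bigl(V(\kq_i)\cap\overline{S_j}\bigr)$ together with irreducibility of $V(\kq_i)$ gives $V(\kq_i)=\overline{S_i}$, i.e.\ $\kq_i=\bigcap_{\kp\in S_i}\kp$.

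Granting this, part (2) is immediate: modulo $I_v$ the minimal primes of $R_v$ are the $\kq_i/I_v$, and the identity $\kq_i=\bigcap_{\kp\in S_i}\kp$ says precisely that each is the intersection of the regular de Rham (resp.\ crystalline) primes of $R_v$ lying above it. For part (1), $R_v=R_v^{\ps}/I_v$ is reduced because $I_v$ is radical; and since every regular de Rham prime $\kp$ is one-dimensional with $\varpi\notin\kp$ — it is pulled back from a maximal ideal of $\T_\psi(U^p)_\km[1/p]$, so of characteristic $0$ — and each $\kq_i$ is contained in some such $\kp$ by the previous paragraph, we get $\varpi\notin\kq_i$; hence each $R_v^{\ps}/\kq_i$ is an $\cO$-flat domain and $R_v\hookrightarrow\prod_{i=1}^n R_v^{\ps}/\kq_i$ is $\cO$-flat.

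For part (3), the hypothesis on $\tr\bar{\rho}_{\km,v}$ when $p=3$ places us (after a suitable twist, which reduces the scalar case to $\mathbf{1}+\mathbf{1}$ and otherwise leaves us outside the exceptional range) in the setting of Proposition~\ref{local pseudo}, so $R_v^{\ps}$ is an $\cO$-flat local domain of Krull dimension $4$. Consequently any minimal prime $\kq$ over a nonzero ideal satisfies $\hht\kq\ge 1$, hence $\dim R_v^{\ps}/\kq\le 3$, so it suffices to show $I_v\ne 0$ — equivalently, that $\mm_v$ is not a faithful $R_v^{\ps}$-module. This is the point at which Pašk\=unas' work is unavoidable: the non-faithfulness of $\mm_v$, and with it the bound $\dim R_v\le 3$, is \cite[Corollary~5.13]{Paskunas_2021} under the identification of $R_v$ furnished by Lemma~\ref{intersection} (and in any case the global regular de Rham primes form a subset of the local ones, so $R_v$ is a quotient of the ring whose dimension is bounded there). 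I expect this to be the only real obstacle: parts (1)--(2) are routine once Lemma~\ref{intersection} is available, whereas $I_v\ne 0$ rests on Pašk\=unas' explicit computation of the ring $E_{\kB}$ in \cite{Pa_k_nas_2013} — also the reason the exceptional case $\tr\bar{\rho}_{\km,v}=\eta+\eta\omega$ with $p=3$ must be excluded — and I would invoke it rather than reprove it.
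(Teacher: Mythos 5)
Your proposal is correct and follows essentially the same route as the paper. For part (2), your Zariski-closure covering argument is a topological rephrasing of the paper's prime-avoidance argument (the paper uses \cite[Lemma 10.15.1]{stacks-project} to get $\kq_j\subset\mathfrak{r}_{\kq_j}\subset\kq_i$ directly, where $\mathfrak{r}_{\kq_i}$ denotes the intersection of regular de Rham primes over $\kq_i$); for part (1), the paper instead embeds $R_v$ into $\prod_{\kp}R_v/\kp$ over \emph{all} regular de Rham primes at once, whereas you first deduce from (2) that each $\kq_i$ has characteristic $0$ and then use $R_v\hookrightarrow\prod_i R_v^\ps/\kq_i$ — both are fine. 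Two small points on (3): Proposition~\ref{local pseudo} holds with no hypothesis on $\tr\bar{\rho}_{\km,v}$, so your parenthetical about twisting to avoid the exceptional range there is superfluous — non-exceptionality is needed only to cite Pašk\=unas; and the identification that you black-box is carried out in the paper via the chain $\End_{E_{\kB_v}}(\mm_v)\cong\End_{\kC_{\GL_2(F_v),\psi}(\cO)^{\kB_v}}(M_\psi(U^p)_\km)\cong\End_{\GL_2(F_v),\psi}(S_\psi(U^p,K/\cO)_\km)$, together with $R_v\hookrightarrow\End_{E_{\kB_v}}(\mm_v)$, after which \cite[Corollary 5.13]{Paskunas_2021} is applied with $\tau=S_\psi(U^p,K/\cO)_\km$; your intermediate reduction ``it suffices to show $I_v\ne 0$'' is valid (complete local domains satisfy the dimension formula) but does no work, since Pašk\=unas gives $\dim\le 3$ outright.
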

    
    \begin{proof}
    	By Lemma \ref{intersection} and 1) of Remark \ref{de Rham}, we know that $I_v$ is the intersection of all regular de Rham (resp. crystalline) primes. Then we get an injection $R_v \hookrightarrow \prod_\kp R_v/\kp$, where $\kp$ ranges over all regular de Rham (resp. crystalline) primes. As $ \prod_\kp R_v/\kp$ is reduced and $\cO$-flat, we get the first claim.
    	
    	Note that $I_v$ is radical. Write $I_v= \kq_1 \cap \cdots \cap \kq_t$, where $\kq_i$ are distinct  minimal primes of $R_v$. Write $ \mathfrak{r}_{\kq_i}$ for the intersection of all regular de Rham (resp. crystalline) primes containing $\kq_i $, and we have $I_v= \cap \mathfrak{r}_{\kq_i}$. We show that $ \mathfrak{r}_{\kq_i} = \kq_i$ for each $i$. As $ \cap \mathfrak{r}_{\kq_i}  =I_v \subset \kq_i$, by \cite[Lemma 10.15.1]{stacks-project}, we have $\kq_j \subset \mathfrak{r}_{\kq_j} \subset \kq_i$ for some minimal prime $\kq_j$. Hence, we have $\kq_i= \kq_j $ and $ \kq_i =  \mathfrak{r}_{\kq_i}$. This shows the second claim.
    	
    	 %If $R_v$ is not $\cO$-flat, then $\varpi$ is a zero divisor, and hence is contained in some minimal prime $\kq_i$. Combining our first claim, we know that $\varpi$ is contained in some regular de Rham prime, which is contrary to our construction. This proves the second claim.
    	
    	Note that $\mm_v$ is an $E_{\kB_v}$-module. We have isomorphisms
    	$$ \End_{E_{\kB_v}}(\mm_v) \cong \End_{\kC_{\GL_2(F_v),\psi}(\cO)^{\kB_v}}(M_\psi(U^p)_\km) \cong \End_{\GL_2(F_v), \psi}(S_\psi(U^p, K/\cO)_\km).$$
    	Here the first isomorphism follows from \cite[Proposition 4.19]{Pa_k_nas_2013}, and the second one follows from (1) of Theorem \ref{lgc} and the equivalence between $\mathrm{Mod}^{\mathrm{l\, adm}}_{\GL_2(F_v),\psi}(\cO)^{\kB_v} $ and $\kC_{\GL_2(F_v),\psi}(\cO)^{\kB_v} $ (see the discussions in Subsection \ref{pas sec}). By Lemma \ref{intersection} and 1) of Remark \ref{de Rham}, we have natural maps $R_v^\ps \twoheadrightarrow R_v \hookrightarrow \End_{E_{\kB_v}}(\mm_v)$. Here we use the fact that $R_v^\ps $ is the centre of $ E_{\kB_v}$ (see \cite[Theorem A.2.2]{Zhang:2024aa}). Then the last claim follows from \cite[Corollary 5.13]{Paskunas_2021} (taking $\tau = S_\psi(U^p, K/\cO)_\km$ there).
    \end{proof}
    
    \begin{rem}\label{rem}
    	1) The assumption $\tr \bar{\rho}_{\km, v} \ne \eta + \eta\omega$ when $p=3$ is necessary for \cite[Corollary 5.13]{Paskunas_2021}. The reason is that in the proof of Pašk\=unas' result, he needs the exact calculation of the ring $E_{\kB_v}$, which has been computed in \cite[Section 6-10]{Pa_k_nas_2013} for the other cases. As the behavior of the block in the excluded case is more complicated, the calculation is much harder. See \cite[Subsection 1.2]{Pa_k_nas_2021}. If one can compute the ring $E_{\kB_v}$ in that case and further prove an analogous result as \cite[Corollary 5.13]{Paskunas_2021}, then we can remove the assumption in (3) of Proposition \ref{R_v}.
    	
    	2) Combined with Proposition \ref{local pseudo}, the third part of Proposition \ref{R_v} is equivalent to the non-triviality of the ideal $I_v$. From the Zariski density of points with prescribed $p$-adic Hodge theoretic properties in the local deformation space (see \cite[Section 6]{B_ckle_2023} for example), we can find that not every local regular de Rham representation arises from the restriction of a global Galois representation. Hence, the ring $R_v$ is different from Kisin's potential semi-stable (or crystalline) deformation ring.
    	
    	3) It is a natural and rather intriguing question to ask which local regular de Rham primes actually occur as global ones, i.e. arise as the restriction to $G_{F_v}$ of some global (pseudo-)representation. The results above only provide a very coarse answer to this question, and it would be interesting to know whether there is a more concrete, purely local characterization of those global regular de Rham primes.
    \end{rem}
    
    From now on, we call the case that $\tr \bar{\rho}_{\km, v} $ is of the form $\eta + \eta\omega$ for some character $\eta$ if $p=3$ and $v|p$ \textit{exceptional}.
    
    \begin{thm}\label{dim of T}
    	Assume that we are not in the exceptional case. Then each irreducible component of $ \T_{\psi}(U^p)_\km$ is of characteristic zero and of dimension $1+2[F:\Q]$.
    \end{thm}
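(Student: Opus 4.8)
The plan is to pin down the Krull dimension of every irreducible component of $\T_\psi(U^p)_\km$ as $1+2[F:\Q]$ by a two-sided estimate: the upper bound will come from the local-global finiteness of this section together with the bound $\dim R_v\le 3$, the lower bound from Pan's dimension estimate for the big Hecke algebra, and the characteristic-zero assertion from the $\varpi$-torsion-freeness of completed cohomology.

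First I would prove the upper bound. By Corollary \ref{finiteness version 1} there is a finite (module-finite) ring map $\widehat{\otimes}_{v|p} R_v\to\T_\psi(U^p)_\km$, so each irreducible component $C$ of $\Spec\T_\psi(U^p)_\km$ satisfies $\dim C\le\dim\widehat{\otimes}_{v|p}R_v$. Since $p$ splits completely in $F$, there are exactly $[F:\Q]$ places above $p$, and by Proposition \ref{R_v} each $R_v$ is reduced and $\cO$-flat, and — as we are not in the exceptional case — of Krull dimension at most $3$. Because the $R_v$ are only reduced and not necessarily integral, I would first pass to a minimal prime $\kq_v$ of each $R_v$; as $R_v$ is $\cO$-flat its minimal primes avoid $\varpi$, so $R_v/\kq_v$ is an $\cO$-flat domain of dimension $\le 3$, and the dimension formula of Proposition \ref{com alg} gives $\dim\widehat{\otimes}_{v|p}(R_v/\kq_v)=1+\sum_{v|p}(\dim R_v/\kq_v-1)\le 1+2[F:\Q]$. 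Running over all choices of the $\kq_v$, and using the remaining parts of Proposition \ref{com alg} to see that $\Spec\widehat{\otimes}_{v|p}R_v$ is the union of the $\Spec\widehat{\otimes}_{v|p}(R_v/\kq_v)$, this yields $\dim\widehat{\otimes}_{v|p}R_v\le 1+2[F:\Q]$; note also that $\widehat{\otimes}_{v|p}R_v$ is itself $\cO$-flat by Proposition \ref{com alg}.

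Next I would invoke \cite[Theorem 3.6.1]{pan2022fontaine}, which supplies the matching lower bound: every irreducible component of $\Spec\T_\psi(U^p)_\km$ has dimension at least $1+2[F:\Q]$. Combined with the previous step this forces each component to have dimension exactly $1+2[F:\Q]$, so $\T_\psi(U^p)_\km$ is equidimensional of that dimension. Finally, for the characteristic-zero statement, recall from Proposition \ref{finiteness from lgc} that $\mm$ is a faithful $\T_\psi(U^p)_\km$-module, while $\mm$ is $\varpi$-torsion free, being a continuous Hom-module out of $P_{\kB_\km}$ into $M_\psi(U^p)_\km$, which is $\varpi$-torsion free since $M_\psi(U^p)\cong\Hom^{\cont}_\cO(S_\psi(U^p),\cO)$ is (Proposition \ref{3.1.2}). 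A $\varpi$-power-torsion element of $\T_\psi(U^p)_\km$ therefore annihilates $\mm$, hence vanishes, so $\T_\psi(U^p)_\km$ is $\cO$-flat and $\varpi$ lies in no minimal prime. (Alternatively, a characteristic-$p$ component would map finitely onto a closed subset of $V(\varpi)\subset\Spec\widehat{\otimes}_{v|p}R_v$, which has dimension $\le 2[F:\Q]$, contradicting equidimensionality.)

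The one genuinely deep ingredient is the lower bound \cite[Theorem 3.6.1]{pan2022fontaine}, which packages the serious $p$-adic automorphic and $p$-adic Hodge-theoretic input; everything else is formal, the only technical nuisance being the reduction to minimal primes needed to apply the completed-tensor-product dimension formula to the rings $R_v$, which are known to be reduced but not to be integral.
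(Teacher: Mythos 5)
Your proof is correct and follows essentially the same route as the paper: the upper bound from the finite map of Corollary \ref{finiteness version 1} together with $\dim R_v\le 3$ and the completed tensor product dimension formula, the lower bound from \cite[Theorem 3.6.1]{pan2022fontaine}, and the characteristic-zero assertion. The only (minor) difference is that the paper reads the characteristic-zero statement directly off of \cite[Theorem 3.6.1]{pan2022fontaine} alongside the lower bound, whereas you rederive it independently from the faithfulness and $\varpi$-torsion-freeness of $\mm$ — a valid and self-contained alternative, as is your explicit reduction to minimal primes $\kq_v$ before applying the domain version of the dimension formula in Proposition \ref{com alg}.
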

    
    \begin{proof}
    	By Proposition \ref{com alg} and Proposition \ref{R_v}, we know that the completed tensor product $\widehat{\otimes}_{v|p} R_v$ is of Krull dimension at most $1+2[F: \Q]$ (as $p$ splits completely in $F$). By Corollary \ref{finiteness version 1}, we know that the dimension of  $ \T_{\psi}(U^p)_\km$ is also at most $1+2[F: \Q]$. By \cite[Theorem 3.6.1]{pan2022fontaine} (or \cite[Corollary 3.3.10]{Zhang:2024aa}), each irreducible component is of characteristic zero and of dimension at least  $1+2[F: \Q]$. Thus, our result follows directly.
    \end{proof}
    
    \section{Pro-modularity}
    In this section, we recall some modularity or automorphy lifting theorems.
    
    Throughout this section, we fix an odd prime $p$, and $F$ an abelian totally real field in which $p$ is unramified. Let $\Sigma$ be a finite set of finite places containing $\Sigma_p$, the set of finite places $v$ of $F$ lying above $p$. 
    
    We denote by $K$  a $p$-adic local field with its ring of integers $\cO$, a uniformizer $\varpi$ and its residue field $\F$. Let $\bar{\chi}: G_{F, \Sigma} \to \F^\times$ be a continuous, totally odd character, and suppose that $\bar{\chi} $ can be extended to a character of $G_{\Q}$.
    
    \subsection{Fontaine-Mazur conjecture in the residually reducible case}\label{sec deformation ring}
    In this subsection, we recall some progress on the Fontaine-Mazur conjecture in the residually reducible case.
    
    Let $\chi: G_{F, \Sigma} \to \cO^\times$ be a continuous, totally odd character lifting $ \bar{\chi}$ such that $\chi$ is de Rham for any $v|p$. Let $\Lambda_F:=\widehat{\bigotimes}_{v|p}\cO[[\cO_{F_v}^\times(p)]] $ be the Iwasawa algebra for $F$, where $ \cO_{F_v}^\times(p)$ is the $p$-adic completion of $ \cO_{F_v}^\times$.
    
    \textbf{Case 1}: ordinary Fontaine-Mazur conjecture when $ \bar{\chi}|_{G_{F_v}} \ne \mathbf{1}$ for any $v|p$. 
    
    Let $R^{\ps,\ord}$ be the universal ordinary pseudo-deformation ring of the pseudo-representation $1+\bar{\chi}$ for $G_{F, \Sigma} $) with determinant $\chi$. Write $T^{\operatorname{univ}}: G_{F, \Sigma} \to R^{\ps,\ord}$ for the universal pseudo-deformation, and for each $v|p$, we assume $T^{\operatorname{univ}}|_{G_{F_v}}=\psi^{\operatorname{univ}}_{v,1}+\psi^{\operatorname{univ}}_{v,2}$, where $\psi^{\operatorname{univ}}_{v,1},\psi^{\operatorname{univ}}_{v,2}:G_{F_v}\to (R^{\ps,\ord})^\times$ are liftings of $\mathbf{1},\bar{\chi}|_{G_{F_v}}$, respectively.
    For each $v|p$, the character $ \psi^{\operatorname{univ}}_{v,1}$ induces a homomorphism $\cO[[\cO_{F_v}^\times(p)]]\to R^{\ps,\ord} $ via the local class field theory, and hence we get a natural map $\Lambda_F \to R^{\ps,\ord} $ by taking the completed tensor product.
    
    \begin{thm}[ordinary case 1] \label{ord1} Keep notations and assumptions as above.
    		\begin{enumerate}
    			\item $R^{\ps,\ord}$ is a finite $\Lambda_F$-algebra.
    			\item Let $\kp$ be any maximal ideal of $R^{\ps,\ord}[\frac{1}{p}]$. Let $\rho(\kp)$ be the Galois representation $G_{F,\Sigma}\to\GL_2(k(\kp))$ corresponding to $\kp$ (in the sense of Definition \ref{construction}). Assume 
    			\begin{itemize}
    				\item $\rho(\kp)$ is absolutely irreducible.
    				\item For any $v|p$, $\rho(\kp)|_{G_{F_v}}\cong\begin{pmatrix}\psi_{v,1} & *\\ 0 & \psi_{v,2}\end{pmatrix}$ such that $\psi_{v,1}$ is de Rham and has strictly less Hodge-Tate number than $\psi_{v,2}$ for any embedding $F_v\hookrightarrow \overbar{\Q_p}$.
    			\end{itemize}
    			Then $\rho(\kp)$ comes from a twist of a Hilbert modular form.
    		\end{enumerate}
    \end{thm}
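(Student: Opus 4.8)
The plan is to deduce both statements from Pan's work on the ordinary Fontaine--Mazur conjecture for $\GL_2/\Q$ (\cite[Theorem 5.1.1, Theorem 6.1.1, Theorem 7.1.1]{pan2022fontaine}), transported to the quaternionic setting over the abelian totally real field $F$ via the author's earlier paper \cite{Zhang:2024aa}. When the $p$-adic local Langlands correspondence for $\GL_2(\Q_p)$ is needed one would invoke that $p$ is split (or unramified and handled factor by factor), but for part (1) Hida theory alone suffices and for part (2) one uses a known residually reducible ordinary automorphy lifting theorem over totally real fields.

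For part (1) I would first fix a quaternion algebra $D/F$ ramified at all infinite places and split above $p$, a tame level $U^p$, and a character $\psi$ so that at the maximal ideal $\km$ cut out by $\mathbf 1+\bar\chi$ one has $\bar\rho_\km^{\mathrm{ss}}=\mathbf 1\oplus\bar\chi$. Passing to the ordinary part $M_\psi(U^p)^{\ord}_\km$ of completed homology (Hida's idempotent, or Emerton's ordinary part functor), Hida theory --- in Pan's big form, generalized in \cite{Zhang:2024aa} --- shows this is a finitely generated $\Lambda_F$-module, hence the big ordinary Hecke algebra $\T^{\ord}_\km$ is finite over $\Lambda_F$, and the attached pseudo-representation yields a map $R^{\ps,\ord}\to\T^{\ord}_\km$ compatible with the $\Lambda_F$-structures (through $\psi^{\operatorname{univ}}_{v,1}$ and local class field theory). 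The remaining --- and essential --- point is to upgrade this to finiteness of $R^{\ps,\ord}$ itself: by the topological Nakayama lemma it suffices to show the fibre $R^{\ps,\ord}/\mathfrak{m}_{\Lambda_F}R^{\ps,\ord}$ has finite length over $\F$. That fibre pro-represents ordinary pseudo-deformations of $\mathbf 1+\bar\chi$ with determinant $\chi$ whose ordinary line $\psi_{v,1}$ is ``constant in the $\Lambda_F$-direction'' at each $v\mid p$; following Pan one identifies these, via the admissibility of mod $p$ ordinary completed cohomology and Emerton-style ordinary local-global compatibility, with the support of the mod $p$ ordinary Hecke module, which is visibly finite over $\F$. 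This is exactly where the hypothesis $\bar\chi|_{G_{F_v}}\neq\mathbf 1$ enters: it guarantees that the ordinary deformation functor pins down the correct line and therefore matches precisely what Hida theory sees, rather than acquiring extra components.

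For part (2), let $\kp$ be as in the statement. By part (1) the ring $R^{\ps,\ord}[1/p]$ is finite over $\Lambda_F[1/p]$, so $\kp$ is a closed point; since $\rho(\kp)$ is absolutely irreducible, Definition \ref{construction} produces an honest two-dimensional representation $\rho(\kp)\colon G_{F,\Sigma}\to\GL_2(k(\kp))$ whose reduction has semisimplification $\mathbf 1\oplus\bar\chi$ and which is ordinary de Rham at every $v\mid p$ with the prescribed weight normalization. I would then invoke a residually reducible ordinary automorphy lifting theorem in the manner of \cite{Skinner_1999} and \cite[Section 7]{pan2022fontaine}: after a soluble base change to a totally real $F'/F$ one arranges that $\rho(\kp)|_{G_{F'}}$ is ``nice'' (reducible at a nice prime, with controllable local data), applies the ordinary $R=\T$ theorem there to conclude that $\rho(\kp)|_{G_{F'}}$ is automorphic, and descends cyclically to see that $\rho(\kp)$ itself comes from a twist of a Hilbert modular form. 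For $F=\Q$ this is \cite[Theorem 7.1.1]{pan2022fontaine} verbatim; over a general abelian totally real $F$ one runs the quaternionic analogue as in \cite{Zhang:2024aa}.

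The main obstacle is the upgrade inside part (1) --- passing from ``the Hecke side is finite over $\Lambda_F$'' to ``the deformation ring is finite over $\Lambda_F$''. This is not a formal consequence: it requires controlling the ordinary deformation functor by the (admissible, hence finite) mod $p$ ordinary completed cohomology through Emerton-style local-global compatibility, and it is precisely the content imported from \cite[Theorems 5.1.1 \& 6.1.1]{pan2022fontaine} together with \cite{Zhang:2024aa}. Rather than reprove it, I would carefully verify that the running hypotheses ($p$ unramified, resp.\ split, in $F$; $\bar\chi$ totally odd and extending to $G_\Q$; and $\bar\chi|_{G_{F_v}}\neq\mathbf 1$ for all $v\mid p$) line up with those needed for Pan's finiteness and freeness theorems and for the ordinary lifting theorem used in part (2). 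The obstacle in part (2) is milder: producing a nice prime after soluble base change so that a known residually reducible ordinary $R=\T$ theorem applies.
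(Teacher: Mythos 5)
The paper's proof is a one-line citation: the statement is, verbatim, Pan's Theorem~5.1.1 of \cite{pan2022fontaine}, which is already stated and proved for an abelian totally real field $F$ in which $p$ is unramified. So no transport, base change, or quaternionic-analogue argument is needed; the theorem holds at the level of generality in which it is stated here simply by quoting Pan.

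Your proposal identifies the right underlying source but then unnecessarily re-sketches Pan's proof (ordinary Hida theory, topological Nakayama to descend finiteness from the Hecke side to the deformation ring, ordinary local-global compatibility, nice primes via soluble base change). That sketch is broadly accurate as a summary of how Pan argues, but it introduces a mild misreading: you write that ``for $F=\Q$ this is \cite[Theorem 7.1.1]{pan2022fontaine} verbatim; over a general abelian totally real $F$ one runs the quaternionic analogue as in \cite{Zhang:2024aa}.'' Theorem~7.1.1 of \cite{pan2022fontaine} is the non-ordinary statement over $\Q$ and is not what is used here; both parts~(1) and~(2) of the present theorem are exactly Theorem~5.1.1 of \cite{pan2022fontaine}, already over the totally real field $F$, and \cite{Zhang:2024aa} plays no role in this particular proof. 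The rest of your discussion of where the hypothesis $\bar\chi|_{G_{F_v}}\neq\mathbf 1$ enters and why the descent from the Hecke algebra to $R^{\ps,\ord}$ is non-formal is a reasonable gloss on Pan's argument, but it is expository overhead rather than a needed step: the intended proof is just the citation.
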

    
    \begin{proof}
    	This is \cite[Theorem 5.1.1]{pan2022fontaine}.
    \end{proof}
    
    \textbf{Case 2}: ordinary Fontaine-Mazur conjecture when $ \bar{\chi}|_{G_{F_v}} = \mathbf{1}$ for any $v|p$.
    
    Let $R^{\ps,\ord}_1$ be the universal ordinary pseudo-deformation ring which pro-represents the functor from $\cOf$ to the category of sets sending $R$ to the set of tuples $(T;\{\psi_{v,1}\}_{v|p})$, where
    \begin{itemize}
    	\item $T:G_{F,\Sigma}\to R$ is a pseudo-representation lifting $1+\bar\chi$ with determinant $\chi$.
    	\item For any $v|p$, $\psi_{v,1}:G_{F_v}\to R^\times$ is a lifting of the trivial character, and we have
    	$T|_{G_{F_v}}=\psi_{v,1}+\psi_{v,2}$ (hence $\psi_{v,2}=\chi|_{G_{F_v}}(\psi_{v,1})^{-1}$) such that $T$ is $\psi_{v,1}$-ordinary in the sense of \cite[Definition 5.3.1]{pan2022fontaine}.
    \end{itemize}
    
    Write $(T_1^{\operatorname{univ}}, \{\psi^{\operatorname{univ}, \mathbf{1}}_{v,1}\}_{v|p}) $ for the universal object. The character $ \psi^{\operatorname{univ}, \mathbf{1}}_{v,1}$ induces a homomorphism $\cO[[\cO_{F_v}^\times(p)]]\to R^{\ps,\ord}_1 $ via the local class field theory, and further we get a natural map $\Lambda_F \to R^{\ps,\ord}_1 $ by taking the completed tensor product.
    
     \begin{thm}[ordinary case 2] \label{ord2} Keep notations and assumptions as above.
    	\begin{enumerate}
    		\item $R^{\ps,\ord}_1$ is a finite $\Lambda_F$-algebra.
    		\item Let $\kp$ be any maximal ideal of $R^{\ps,\ord}_1[\frac{1}{p}]$. Let $\rho(\kp)$ be the Galois representation $G_{F,\Sigma}\to\GL_2(k(\kp))$ corresponding to $\kp$ (in the sense of Definition \ref{construction}). Assume 
    		\begin{itemize}
    			\item $\rho(\kp)$ is absolutely irreducible.
    			\item For any $v|p$, $\rho(\kp)|_{G_{F_v}}\cong\begin{pmatrix}\psi_{v,1} & *\\ 0 & \psi_{v,2}\end{pmatrix}$ such that $\psi_{v,1}$ is de Rham and has strictly less Hodge-Tate number than $\psi_{v,2}$ for any embedding $F_v\hookrightarrow \overbar{\Q_p}$.
    		\end{itemize}
    		Then $\rho(\kp)$ comes from a twist of a Hilbert modular form.
    	\end{enumerate}
    \end{thm}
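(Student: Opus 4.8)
This statement is the $\bar\chi|_{G_{F_v}}=\mathbf{1}$ companion of Theorem \ref{ord1}, and the plan is to adapt Pan's argument for Theorem \ref{ord1} to the situation where the residual representation at each $v\mid p$ is the non-distinguished sum $\mathbf{1}\oplus\mathbf{1}$; this is precisely what makes the present case more delicate than both the residually irreducible setting and the $\bar\chi|_{G_{F_v}}\neq\mathbf{1}$ case, since one cannot single out either of the two residual characters and the $p$-adic local Langlands picture for $\GL_2(\Q_p)$ in the non-distinguished block is more involved. The two assertions are proved in tandem: the $\Lambda_F$-finiteness in (1) is the structural input that lets (2) reduce to a classicality statement, while the automorphic information needed in (1) is what forces the ring to be small. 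The basic object is ordinary completed cohomology: fix a definite quaternion algebra $D/F$ split above $p$, a tame level $U^p$, and a character $\psi$ compatible with $\chi$, let $\km$ be a maximal ideal of $\T_\psi(U^p)$ whose attached residual pseudo-representation is $1+\bar\chi$, and apply Emerton's ordinary part functor $\mathrm{Ord}_B$ relative to a Borel $B\subset G=\prod_{v\mid p}\GL_2(F_v)$ (equivalently, Hida's ordinary idempotent) to $M_\psi(U^p)_\km$.

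For (1) I would proceed as in the proof of Theorem \ref{ord1}. By Pašk\=unas theory together with the local-global compatibility of Theorem \ref{lgc}, $M_\psi(U^p)_\km$ is of the shape $\mm \widehat{\otimes}_{E_{\kB_\km}} P_{\kB_\km}$ for a module $\mm$ over $E_{\kB_\km}\cong\widehat{\otimes}_{v\mid p}E_{\kB_v}$, so applying $\mathrm{Ord}_B$ reduces to the ordinary part of the projective generator $P_{\kB_v}$ at each $v\mid p$; the computations of Pašk\=unas and Pan express this ordinary part through the local ordinary deformation ring, which by local class field theory is module-finite over $\cO[[\cO_{F_v}^\times(p)]]$. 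Hence $\mathrm{Ord}_B M_\psi(U^p)_\km$ is a finitely generated $\Lambda_F$-module, the universal ordinary characters $\psi^{\operatorname{univ},\mathbf{1}}_{v,1}$ make $R^{\ps,\ord}_1$ a $\Lambda_F$-algebra acting $\Lambda_F$-linearly on it through the ordinary Hecke algebra $e\T_\psi(U^p)_\km$, and this Hecke algebra is finite over $\Lambda_F$. It then remains to bound $R^{\ps,\ord}_1$ above: one shows the kernel of $R^{\ps,\ord}_1\to e\T_\psi(U^p)_\km$ lies in the nilradical, using that the regular (distinct Hodge-Tate weight) ordinary points are Zariski dense in $\Spec R^{\ps,\ord}_1$ --- a consequence of the local structure of the ordinary deformation problem, each factor being governed by a pair of characters and an extension class --- and that a Zariski-dense subset of them is automorphic. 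Since no off-the-shelf ordinary modularity lifting theorem is available in the non-distinguished residually reducible case, this last point is obtained by passing to a soluble totally real extension where a modularity statement can be reached (via the non-distinguished $p$-adic local Langlands input and the patching of \cite[Theorem 4.1.4]{pan2022fontaine}) and descending automorphy by cyclic base change. Given a nilpotent kernel and the $\Lambda_F$-finiteness of the image, Noetherianity of $R^{\ps,\ord}_1$ forces it to be a finite $\Lambda_F$-algebra. One must also check pro-representability of this ordinary functor and that the ordinarity condition of \cite[Definition 5.3.1]{pan2022fontaine} behaves well when the two residual characters coincide.

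Assertion (2) then follows from (1). Let $\kp$ be a maximal ideal of $R^{\ps,\ord}_1[1/p]$ with $\rho(\kp)$ absolutely irreducible and satisfying the local ordinarity and Hodge-Tate hypotheses; being prime, $\kp$ contains the nilradical, so by (1) it lies over a maximal ideal of $e\T_\psi(U^p)_\km[1/p]$ for a suitable tame level $U^p$ (the irreducibility of $\rho(\kp)$ keeps $\kp$ off the reducible locus). Hida's classicality theorem for ordinary $p$-adic automorphic forms of regular weight identifies this maximal ideal with a classical ordinary automorphic form on $D^\times$, and transferring back to $\GL_2(\A_F)$ via Jacquet-Langlands produces a Hilbert modular form whose Galois representation is $\rho(\kp)$ up to twist. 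I expect the main obstacle to be the automorphy/faithfulness step inside (1): in the non-distinguished residually reducible case there is no ready-made ordinary modularity lifting theorem, so density of automorphic points must be extracted through base change and the non-distinguished $p$-adic local Langlands picture, and one must simultaneously control $\dim\Spec R^{\ps,\ord}_1$ sharply enough --- through the $\Lambda_F$-finiteness and the local analysis at $p$ --- that the automorphic locus comes out Zariski dense rather than merely non-empty.
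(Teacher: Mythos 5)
The paper offers no proof of this statement: the body of the proof reads, in its entirety, ``This is \cite[Theorem 6.1.1]{pan2022fontaine},'' so the theorem is imported wholesale from Pan's work rather than argued here. Your proposal is therefore not a reconstruction of anything in this paper; it is a guess at the internal structure of Pan's proof, and should be evaluated on that footing.

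As such a guess it is in roughly the right landscape (ordinary completed cohomology, Pašk\=unas-style local--global compatibility, pro-modularity via soluble base change, classicality), but there is a logical loop in the way you organise it. You propose to establish (1) by showing the kernel of $R_1^{\ps,\ord}\to e\T_\psi(U^p)_\km$ is nilpotent, using that ``regular ordinary points are Zariski dense in $\Spec R_1^{\ps,\ord}$ and a Zariski-dense subset of them is automorphic.'' But Zariski density of automorphic points in $\Spec R_1^{\ps,\ord}$ is essentially the content of assertion (2) and is an output of the theorem, not an input; invoking it to obtain (1) is circular. Pan's actual argument goes the other way: one shows $R_1^{\ps,\ord}/\mathfrak{m}_{\Lambda_F}$ is Artinian by proving it has no one-dimensional characteristic-$p$ primes, which is done by showing any such prime would be pro-modular (base change to an auxiliary totally real field, exhibit a nice prime, run the patching of \cite{Skinner_1999} and \cite{pan2022fontaine}), incompatibly with its small dimension; Nakayama then yields (1), and (2) follows from (1) together with classicality. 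Separately, your remark that $\mathrm{Ord}_B$ applied to the projective generator gives $\Lambda_F$-finiteness of ordinary completed cohomology is really just classical Hida theory and controls only the Hecke algebra $e\T_\psi(U^p)_\km$, not the deformation ring $R_1^{\ps,\ord}$; the gap between the two is precisely the delicate point, and it is closed by the pro-modularity argument, not by local computations at $p$.
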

    
    \begin{proof}
    	This is \cite[Theorem 6.1.1]{pan2022fontaine}.
    \end{proof}
    
    \textbf{Case 3}: non-ordinary Fontaine-Mazur conjecture.
    
    In this case, we suppose that $p$ splits completely in $F$. 
    
    \begin{thm} [non-ordinary case] \label{non-ord} Keep notations and assumptions as above.
     Let
    	$\rho:G_{F, \Sigma} \to \GL_2(\cO)$
    	be a continuous irreducible representation with the following properties
    	\begin{itemize}
    		\item Let $\bar{\rho}$ be the reduction of $\rho$ modulo $\varpi$. We assume its semi-simplification has the form $\bar{\chi}_1\oplus\bar{\chi}_2$ with $\bar{\chi}_1\bar{\chi}_2^{-1}=\bar{\chi}$.
    		\item $\rho|_{G_{F_v}}$ is irreducible and de Rham of distinct Hodge-Tate weights for any $v|p$. 
    	\end{itemize}
    	Then $\rho$ comes from a twist of a Hilbert modular form.
    \end{thm}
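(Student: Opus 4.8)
The plan is to obtain this as the quaternionic reformulation of Pan's Fontaine--Mazur theorem, so that the statement is essentially \cite[Theorem 7.1.1]{pan2022fontaine} (together with \cite[Theorem 5.1.1]{Zhang:2024aa}); what remains is to recall how the ingredients fit together. After twisting $\rho$ by a suitable crystalline character we may assume $\det\rho=\psi\varepsilon^{-1}$ for a character $\psi$ of the type fixed in Section 3; then showing that $\rho$ comes from a twist of a Hilbert modular form is equivalent to showing that its associated pseudo-representation factors through (the $p$-inverted) big Hecke algebra $\T_\psi(U^p)$ for some quaternion algebra $D$ ramified at the infinite places and unramified above $p$, and some sufficiently small tame level $U^p$.

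First, residual modularity: since $\bar{\chi}$ extends to a character of $G_\Q$, the residual pseudo-representation $\bar{\chi}_1+\bar{\chi}_2$ (which up to twist is $1+\bar{\chi}$) is attached to a maximal ideal $\km$ of some $\T_\psi(U^p)$ by a standard Eisenstein-congruence argument of Ribet/Skinner--Wiles type, so that $M_\psi(U^p)_\km\neq 0$. Fix the corresponding block $\kB_\km=\widehat{\otimes}_{v|p}\kB_v$ of $D_p^\times$, the module $\mm$, the local pseudo-deformation rings $R_v^\ps$ and the rings $E_{\kB_v}$ as in Section 3. Because $p$ splits completely in $F$ each $F_v\cong\Q_p$, and the hypothesis that $\rho|_{G_{F_v}}$ is irreducible and de Rham with distinct Hodge--Tate weights produces, via $p$-adic local Langlands for $\GL_2(\Q_p)$, an irreducible admissible unitary $K$-Banach representation $\Pi_v$ of $\GL_2(F_v)$ lying in $\kB_v$, with non-zero locally algebraic vectors realizing the correct Weil--Deligne and Hodge data, and on whose Schikhof-dualized $E_{\kB_v}$-module the centre $R_v^\ps$ acts through the regular de Rham prime $\kp_v$ cut out by $\rho|_{G_{F_v}}$ (cf.\ Lemma \ref{intersection}).

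Next, the global step. Set $\Pi:=\widehat{\otimes}_{v|p}\Pi_v$; the heart of the proof is to establish
\[ \Hom_{D_p^\times}^{\cont}\bigl(\Pi,\,S_\psi(U^p)_\km\otimes_\cO K\bigr)\neq 0, \]
i.e.\ that $\rho$ genuinely contributes to completed cohomology. This is done by an Emerton-style capture/density argument: the classical points of appropriate weight and level are modular by the theory of Hilbert modular forms together with classical local-global compatibility at $p$; using Theorem \ref{lgc}, the explicit description of the blocks $\kB_v$ and of the rings $E_{\kB_v}$ from \cite{Pa_k_nas_2013}, and a Breuil--M\'ezard/cyclicity input, one shows that these classical points accumulate at, and propagate non-vanishing of $\mm$ to, the de Rham point attached to $\rho$. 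Here the irreducibility of $\rho$ and of each $\rho|_{G_{F_v}}$ is essential, since it keeps the point away from the reducible and ordinary loci. Once the $\Hom$-space above is non-zero, passing to locally algebraic vectors yields a Hecke eigenclass in classical quaternionic automorphic forms with the same eigenvalues as $\rho$; Jacquet--Langlands then produces a regular algebraic cuspidal automorphic representation of $\GL_2(\A_F)$ whose Galois representation is $\rho$ up to the initial twist.

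The main obstacle is precisely the capture/density step: controlling the support of the patched module $\mm$ near a single de Rham point when the residual pseudo-representation is reducible. This is the substantive content of \cite{pan2022fontaine} in the Eisenstein case and rests on the fine analysis of \cite{Paskunas_2021} and \cite{Pa_k_nas_2021}; it is also where one must exclude, when $p=3$, the exceptional case $\overline{T}_\km|_{G_{F_v}}=\eta+\eta\omega$, since there the block $\kB_v$ (hence $E_{\kB_v}$) is not yet understood explicitly enough. The residually irreducible analogue of this step is comparatively classical (Emerton), so it is the reducible/Eisenstein phenomena that make the theorem hard.
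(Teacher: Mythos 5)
The paper's proof of this statement is purely a citation to \cite[Theorem 7.1.1]{pan2022fontaine} and \cite[Theorem 5.1.1]{Zhang:2024aa}, and you correctly identify those as the source, so at the level of ``what the proof is'' you agree with the paper. However, your expanded sketch of how the cited theorem is actually established mischaracterizes the central mechanism, and this matters because your characterization would not by itself yield the theorem.

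You present the ``heart of the proof'' as an Emerton-style capture/density argument in completed cohomology: establish $\Hom_{D_p^\times}^{\cont}(\Pi, S_\psi(U^p)_\km\otimes K)\neq 0$ by letting classical points accumulate at the de Rham point of $\rho$, with a Breuil--M\'ezard/cyclicity input. That is not what Pan does, and it cannot be made to work as stated: local-global compatibility (Theorem \ref{lgc} and Lemma \ref{intersection} here) tells you which regular de Rham primes \emph{of the local rings $R_v^\ps$} lie in the support of $\mm$, but it says nothing a priori about whether a given \emph{global} representation $\rho$ factors through $\T_\km$. That global input is exactly pro-modularity, and in the Eisenstein case there is no small $R=\T$ theorem available to supply it. The actual engine of Pan's Theorem 7.1.1 is a \emph{potential pro-modularity} argument: one chooses, via Grunwald--Wang, an abelian extension $F^1/F$ in which $p$ still splits completely and where ``nice primes'' exist; one runs Skinner--Wiles/Pan patching at a nice prime; one propagates pro-modularity across the irreducible components of $\Spec R_{F^1}^{\ps,\chi}$ using a connectedness-dimension argument (Proposition \ref{irreducible point} here); and only then does local-global compatibility plus the $p$-adic local Langlands correspondence produce a locally algebraic vector and a classical automorphic form. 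Finally one descends from $F^1$ to $F$ by solvable base change --- a step your sketch omits entirely, even though the pro-modularity machinery is only available over $F^1$, not over $F$. So while your identification of the reference is correct, the mechanism you describe (density of classical points, Breuil--M\'ezard) is not the obstruction the Eisenstein case overcomes; the genuine obstruction is the lack of an $R=\T$ theorem, circumvented by nice primes and descent, and your sketch leaves that gap in place.
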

    
    \begin{proof}
    	See \cite[Theorem 7.1.1]{pan2022fontaine} and \cite[Theorem 5.1.1]{Zhang:2024aa}.
    \end{proof}
    
    \subsection{Potential pro-modularity}\label{potential pro-modular}
    In this subsection, we recall the potential pro-modularity argument in \cite{Zhang:2024aa} and \cite{Zhang:2024ab} and prove a new case.
    
    Assume that $F$ is an abelian totally real field in which $p$ splits completely. Recall that $\bar{\chi}: G_{F, \Sigma} \to \F^\times$ is a continuous, totally odd character, which can be extended to a character of $G_\Q$. Let $\chi : G_{F, \Sigma} \to \cO^\times$ be a de Rham character as a lifting of $\bar{\chi}$.
    
    By Grunwald-Wang's theorem (see \cite[Theorem 5, page 80]{artin}), we can find a finite extension $F^1/F$ satisfying the following assumptions:
    	\begin{itemize}
    	\item $F^1$ is a totally real abelian extension of $\mathbb{Q}$ of even degree.
    	\item $p$ splits completely in $F^1$.
    	\item  For any $v \in \Sigma^1 \setminus \Sigma^1_p$, we have $p| \operatorname{Nm}(v)-1$.
    	\item $[F^1:\mathbb{Q}]>7|\Sigma^1 \setminus \Sigma^1_p|+3$.
    	\item  $\chi$ is unramified outside $ \Sigma^1_p$, and $\chi(\operatorname{Frob}_v) \equiv 1~\operatorname{mod}~\varpi$ for any $v \in \Sigma^1 \setminus \Sigma^1_p$,
    \end{itemize}
    where $ \Sigma^1$ (resp. $\Sigma^1_p $) is the finite set of finite places of $F^1$ lying above places in $\Sigma$ (resp. $\Sigma_p $).
    
    Let $R_{F^1}^{\ps, \chi}$ be the universal pseudo-deformation ring of the pseudo-representation $1+\bar{\chi}|_{G_{F^1, \Sigma^1}}$ with determinant $\chi|_{G_{F^1, \Sigma^1}}$. We say that a prime of $R_{F^1}^{\ps, \chi}$ is \textit{pro-modular}, if it comes from some big Hecke algebra (see \cite[Definition 4.1.3]{Zhang:2024aa}). Note that by Corollary \ref{lower}, we have $\dim R_{F^1}^{\ps, \chi} \ge 2[F^1:\Q]+1 $. Recall that $R_{F^1,0}^{\ps, \chi}= R_{F^1}^{\ps, \chi}/I_{F^1}$, where $ I_{F^1}$ is the intersection of all minimal primes of $R_{F^1}^{\ps, \chi} $ of dimension at least $2[F^1:\Q]+1 $. We say that a prime of $R_{F^1,0}^{\ps, \chi}$ is \textit{pro-modular}, if it comes from some pro-modular prime of $R_{F^1}^{\ps, \chi}$.
    
    \begin{prop} \label{potential R=T}
    	For each irreducible component $C$ of $R_{F^1}^{\ps, \chi} $ of dimension at least $1+2[F^1:\Q] $, its generic point is pro-modular. Equivalently, every prime of $R_{F^1,0}^{\ps, \chi}$ is pro-modular.
    \end{prop}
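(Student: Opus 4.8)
The plan is to reduce everything to the existence of a \emph{nice prime} on each top-dimensional component and then to invoke Pan's patching machinery. Fix an irreducible component $C$ of $\Spec R_{F^1}^{\ps,\chi}$ with $\dim C \ge 1 + 2[F^1:\Q]$, and let $\eta$ be its generic point; such components exist by Corollary \ref{lower}. Since $F^1$ has even degree over $\Q$ we have $2[F^1:\Q] + 1 \ge 5 > 2$, so by Corollary \ref{structure of ps} the component $C$ is \emph{not} contained in the reducible locus of $R_{F^1}^{\ps,\chi}$; hence the pseudo-representation carried by $\eta$ is absolutely irreducible in the sense of Definition \ref{construction} and attaches an honest two-dimensional representation to the residue field of $\eta$.

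First I would import the construction of \cite{Zhang:2024ab} and \cite{Zhang:2024aa}: the defining properties of $F^1$ produced by Grunwald--Wang --- it is abelian totally real of even degree, $p$ splits completely, $p \mid \operatorname{Nm}(v) - 1$ for $v \in \Sigma^1 \setminus \Sigma^1_p$, $\chi$ is unramified outside $\Sigma^1_p$ with $\chi(\Frob_v) \equiv 1 \bmod \varpi$ there, and $[F^1:\Q] > 7\lvert\Sigma^1 \setminus \Sigma^1_p\rvert + 3$ --- are precisely what is needed so that, for our fixed $C$, one can produce a \emph{nice prime} $\kp_C$ lying on $C$: a closed point of $\Spec R_{F^1}^{\ps,\chi}[1/p]$ whose associated Galois representation $\rho(\kp_C)$ is irreducible, is of the prescribed minimal or Steinberg type at the auxiliary places of $\Sigma^1 \setminus \Sigma^1_p$, and has $\rho(\kp_C)|_{G_{F^1_v}}$ irreducible de Rham of regular weight for every $v \mid p$, so that a Taylor--Wiles--Kisin patching datum in the Skinner--Wiles sense is available at $\kp_C$. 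The content to be quoted here is twofold: the bookkeeping that all these local conditions can be imposed simultaneously \emph{along the given component} $C$ (this is where the largeness hypothesis on $[F^1:\Q]$ and the dimension bound $\dim C \ge 1+2[F^1:\Q]$ are spent), and that the auxiliary places behave as required.

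Next I would run Pan's patching argument \cite[Theorem 4.1.4]{pan2022fontaine} (equivalently \cite[Theorem 4.1.4]{Zhang:2024aa}) at $\kp_C$: the patched module attached to a nice prime is faithful and supported on the whole patched deformation ring, and the standard descent through the Taylor--Wiles formalism then shows that the irreducible component of the global pseudo-deformation space passing through $\kp_C$ --- which is exactly $C$ --- is contained in $\Spec \T_\km$ for a suitable tame level $U^p$ and a maximal ideal $\km$ of the corresponding big Hecke algebra, i.e. is cut out by $\ker(R_{F^1}^{\ps,\chi} \twoheadrightarrow \T_\km)$. In particular $\eta$ is pro-modular. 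Since $\Spec \T_\km$ is closed in $\Spec R_{F^1}^{\ps,\chi}$, every specialization of $\eta$, that is every prime of $C$, is pro-modular as well; and since by definition $R_{F^1,0}^{\ps,\chi} = R_{F^1}^{\ps,\chi}/I_{F^1}$ has all its irreducible components of dimension $\ge 1 + 2[F^1:\Q]$, every prime of $R_{F^1,0}^{\ps,\chi}$ lies on such a $C$, which gives the asserted equivalence.

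I expect the genuine difficulty to be concentrated in the nice-prime step: producing a nice prime \emph{on the prescribed component} $C$ --- rather than merely somewhere in $\Spec R_{F^1}^{\ps,\chi}$ --- with every Skinner--Wiles local condition met at once is the crux of the ``practicable way to find a nice prime'' of \cite{Zhang:2024ab,Zhang:2024aa}, and it is exactly what dictates the explicit list of hypotheses on $F^1$. By contrast, once a nice prime is in hand the passage from the patched module to the inclusion $C \subseteq \Spec \T_\km$ is the usual Taylor--Wiles conclusion and needs no new input beyond \cite[Theorem 4.1.4]{pan2022fontaine}.
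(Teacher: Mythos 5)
Your proposal has a genuine gap, and it is located exactly where you yourself flag "the genuine difficulty" --- the production of a nice prime on the component $C$.

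First, the notion of \emph{nice prime} is misstated. In the sense of \cite[Definition 4.1.1]{pan2022fontaine} and \cite[Definition 4.1.3]{Zhang:2024aa}, a nice prime is a one-dimensional prime $\kp$ of $R_{F^1}^{\ps,\chi}$ with $\varpi \in \kp$, i.e.\ a characteristic-$p$ prime (this is explicit in the paper's proof, which first specializes to a prime $\mathfrak{r}''$ with $\varpi \in \mathfrak{r}''$ and then takes a one-dimensional prime above it). It is not a closed point of $\Spec R_{F^1}^{\ps,\chi}[1/p]$ with de Rham restrictions at $p$, as you describe. More importantly, the definition of a nice prime \emph{includes pro-modularity}: it is a congruence datum that serves as the seed for the Skinner--Wiles/Pan patching descent. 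Therefore "producing a nice prime on $C$" already presupposes that some prime on $C$ is pro-modular, which is essentially what the proposition is trying to establish. Your one-step reduction is circular: the list of arithmetic conditions you impose ($p \mid \operatorname{Nm}(v)-1$, the degree bound, etc.)\ lets one arrange the \emph{local} properties of the nice prime by specialization, but provides no mechanism to ensure that the resulting prime is pro-modular.

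The paper breaks this circularity in two stages which your proposal collapses into one. Stage one produces an irreducible one-dimensional pro-modular prime on $C$ without any patching: it intersects $C$ with the ordinary locus $\Spec R_1^{\ps,\ord}$ (here the shape of $R_v^{\ps}$ and Krull's principal ideal theorem give $\dim C^{\ord,\mathbf{1}} \ge 1+[F^1:\Q]$), and then uses the finiteness of $R_1^{\ps,\ord}$ over the Iwasawa algebra $\Lambda_{F^1}$ together with the ordinary Fontaine--Mazur theorem (Theorem \ref{ord2}; in the other case Theorem \ref{ord1}, handled in \cite[Theorem 5.5.1]{Zhang:2024aa}) to conclude that modular points are Zariski dense on a top-dimensional piece of $C^{\ord,\mathbf{1}}$, hence some $\mathfrak{r} \in C$ is a one-dimensional irreducible pro-modular prime. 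Stage two is a connectedness-dimension argument via Proposition \ref{irreducible point}: partitioning the irreducible components into pro-modular ones $Z_1$ and the rest $Z_2$, the bound $c\bigl((R_{F^1}^{\ps,\chi})_{\kp}\bigr) \ge 2[F^1:\Q]-1$ forces a pro-modular prime of dimension $\ge 2[F^1:\Q]-1$ to live on any hypothetical component of $Z_2$; specializing it (this is where the degree bound $[F^1:\Q]>7\lvert\Sigma^1\setminus\Sigma^1_p\rvert+3$ is spent) yields a nice prime there, which by the patching theorem forces that component to be pro-modular, a contradiction. Your proposal omits both the ordinary Fontaine--Mazur input and the connectedness-dimension propagation, and these are precisely the content of the proposition.
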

    
    \begin{proof}
    	For the case $\bar{\chi}|_{G_{F^1_v}} \ne \mathbf{1}$ for any $v|p$,  it is proved in \cite[Theorem 5.5.1]{Zhang:2024aa}.
    	
    	Now we assume $\bar{\chi}|_{G_{F^1_v}} = \mathbf{1}$ for any $v|p$. We sketch the main steps here.
    	
    	We first claim that if $\mathfrak{r}$ is an irreducible one-dimensional pro-modular prime  of $R_{F^1}^{\ps, \chi} $, then every irreducible component of $R_{F^1}^{\ps, \chi} $ containing $\mathfrak{r}$ is pro-modular.
    	
    	The proof is similar to the proof of \cite[Proposition 4.1.5]{Zhang:2024ab}. Let $Z$ be the set of all irreducible components of $\Spec R_{F^1}^{\ps, \chi}$. Write $Z= Z_1 \amalg Z_2$, where $Z_1$ is the subset of $Z$ consisting of all pro-modular irreducible components. We show that if $Z_1$ is not empty, then $Z_2$ is empty.
    	
    	If not, by 1) of Proposition \ref{irreducible point}, there exist an irreducible component $Y \in Z_2$ and a pro-modular prime $\mathfrak{r}' \in Y$ of dimension at least $2[F^1: \mathbb{Q}]-1$. Then arguing as in the proof of \cite[Proposition 4.1.5]{Zhang:2024ab}, we may find a prime $\mathfrak{r}'' \subset \mathfrak{r}'$ satisfying the following conditions:
    	\begin{itemize}
    		\item $\varpi \in \mathfrak{r}''$.
    		\item  The dimension of $\mathfrak{r}'' $ is at least $ 2[F^1: \mathbb{Q}]-7|\Sigma^1\setminus\Sigma^1_p|-2 \ge [F^1: \mathbb{Q}]+2$.
    		\item  For any irreducible one-dimensional prime $\mathfrak{t}$ containing $\mathfrak{r}''$, there exists a two-dimensional Galois representation $\rho(\mathfrak{t}) : G_{F^1, \Sigma^1} \to \operatorname{GL}_2(k(\mathfrak{t}))$ satisfying the conditions in (2) of \cite[Definition 4.1.1]{pan2022fontaine}. (As $\bar{\chi}|_{G_{F^1_v}} = \mathbf{1}$ for any $v|p$, the third one follows from \cite[Lemma 4.1.6 (2)]{pan2022fontaine}).
    	\end{itemize}
    	
    	Now choose an irreducible one-dimensional prime containing $\mathfrak{r}''$, and we know that it is actually a nice prime in the sense of \cite[Definition 4.1.3]{Zhang:2024aa}. Therefore, from our assumption on $F^1$, \cite[Proposition 3.1.5]{Zhang:2024ab} and \cite[Theorem 4.1.1]{Zhang:2024aa} imply that $Y$ is pro-modular, which is contrary to our assumption. Hence, we only need to show that $Z_1$ is not empty.
    	
    	Note that $\mathfrak{r}$ is pro-modular. Using \cite[Corollary 3.3.10]{Zhang:2024aa}, we know that there exists a pro-modular prime $\mathfrak{r}_1$ contained in $\mathfrak{r}$ of dimension at least $1+2[F^1: \mathbb{Q}]$. Arguing as above, we can find a nice prime containing $\mathfrak{r}_1$. Then \cite[Proposition 3.1.5]{Zhang:2024ab} and \cite[Theorem 4.1.1]{Zhang:2024aa} imply that  at least one of the irreducible component containing $\mathfrak{r}$ is pro-modular. Hence, $Z_1$ is not empty.
    	
    	Next we show that the irreducible component $C$ contains an irreducible one-dimensional pro-modular prime. 
    	
    	We use some arguments in \cite[Subsection 7.3]{pan2022fontaine}. Let $R^{\ps}_v $ be the universal pseudo-deformation ring of the pseudo-representation $\mathbf{1}+\mathbf{1}$ for $G_{F^1_v}$ with determinant $\chi|_{G_{F^1_v}}$, and by \cite[Corollary 9.1.3]{Pa_k_nas_2013}, $R^{\ps}_v $ is a power series ring of relative dimension $3$ over $\cO$. Let $R_{v, \mathbf{1}}^{\ps,\ord}$ be the universal deformation ring of the one-dimensional representation $\mathbf{1}$ (trivial character) of $G_{F^1_v}$, and it is a power series ring of relative dimension $2$ over $\cO$. For each character $\psi_{v,1}$ lifting $\mathbf{1}$, $\psi_{v,1}+\psi_{v,1}^{-1}\chi$ defines a pseudo-deformation of $\mathbf{1}+\mathbf{1}$, which induces a finite map $\phi: R_v^\ps \to R_{v, \mathbf{1}}^{\ps,\ord}$. Consider the surjective map:
    	\[\varphi: R^{\ps}_v\hat{\otimes}_{\cO} R^{\ps,\ord}_{v,\mathbf{1}}\to R^{\ps,\ord}_{v,\mathbf{1}}:~a\otimes b\mapsto \phi(a)b.\]
    	  By \cite[Lemma 10.106.4]{stacks-project}, the kernel of $\varphi$ can be generated by three elements.
    	
    	Let $R^{\ps,\ord}_{p,\mathbf{1}}$ (resp. $R_{p, F^1}^\ps$) be the completed tensor products of all $R^{\ps,\ord}_{v,\mathbf{1}} ,v|p$ (resp. $R_v^\ps, v|p$) over $\cO$, and let $R^{\ps,\ord}_1$ be the universal ordinary pseudo-deformation ring defined in the previous subsection. Then we have $R^{\ps,\ord}_1=R_{F^1}^{\ps, \chi}\otimes_{R^{\ps}_{p, F^1}}R^{\ps,\ord}_{p,\mathbf{1}}$. Write $\kP$ for the generic point of $C$, and let $C^{\ord, \mathbf{1}} = C \cap \Spec R^{\ps,\ord}_1$.  Then $C^{\ord,\mathbf{1}}$ is the underlying space of the spectrum of $(R_{F^1}^{\ps, \chi}/\kP)\otimes_{R^{\ps}_{p, F^1}}R^{\ps,\ord}_{p,\mathbf{1}}$, which we may rewrite as 
    	\[((R_{F^1}^{\ps, \chi}/\kP)\hat{\otimes}_{\cO} R^{\ps,\ord}_{p,\mathbf{1}})\otimes_{(R^{\ps}_{p, F^1} \hat{\otimes}_{\cO} R^{\ps,\ord}_{p,\mathbf{1}})}R^{\ps,\ord}_{p,\mathbf{1}}.\]
    	Note that $\dim C \ge 1+2[F^1:\Q]$. Then by the previous discussions and Krull's principal ideal theorem, we have $\dim C^{\ord,\mathbf{1}} \ge 1+[F^1:\Q]$. We choose an irreducible component $ C_1^{\ord,\mathbf{1}}$ of $C^{\ord,\mathbf{1}} $ with the largest dimension.
    	
    	Note that in our case, the Iwasawa algebra $\Lambda_{F^1}$ is regular of dimension $1+[F^1:\Q] $. Then by Theorem \ref{ord2} and the inequality $\dim C_1^{\ord,\mathbf{1}} \ge 1+[F^1:\Q]$, we get a finite surjective morphism $C_1^{\ord,\mathbf{1}} \to \Spec \Lambda_{F^1} $. As Leopoldt's conjecture holds for $F^1$, we know that the dimension of the reducible locus of $ R_{F^1}^{\ps, \chi}$ is at most $2 ~( < 1+[F^1: \Q])$. Hence, modular points are Zariski dense in $C_1^{\ord,\mathbf{1}} $. In particular, combining \cite[Proposition 3.1.5]{Zhang:2024ab} and \cite[Lemma 4.8.5]{pan2022fontaine}, we can construct a suitable big Hecke algebra with a maximal ideal defined by the pseudo-representation $1+\bar{\chi}|_{G_{F^1, \Sigma^1}}$. Furthermore, we can find an irreducible one-dimensional pro-modular prime in $C_1^{\ord,\mathbf{1}} $.
    	
    	Consequently, we obtain our result.
    	
    \end{proof}
    
    \section{Zariski density of modular points}
    
    In this section, we prove the Zariski density of modular points in some deformation space.
    
    For notations, we fix an odd prime $p$, and $F$ an abelian totally real field in which $p$ is unramified. Let $\Sigma$ be a finite set of finite places containing $\Sigma_p$, the set of finite places $v$ of $F$ lying above $p$. 
    
    We denote by $K$  a $p$-adic local field with its ring of integers $\cO$, a uniformizer $\varpi$ and its residue field $\F$. Let $\bar{\chi}: G_{F, \Sigma} \to \F^\times$ be a continuous, totally odd character, and suppose that $\bar{\chi} $ can be extended to a character of $G_{\Q}$. We say that the case $ \bar{\chi}|_{G_{F_v}}=\omega=\omega^{-1}$ for any $v \in \Sigma_p$ and $p=3$ is \textit{exceptional}.
    
    \subsection{A finiteness result (I)}
    
    In this subsection, we prove a finiteness result as an analogue to \cite[Theorem 1]{Allen_2014} in the residually reducible case.
    
    Let $\chi: G_{F, \Sigma} \to \cO^\times$ be a continuous, totally odd character lifting $\bar{\chi}$ such that $\chi$ is de Rham for any $v \in \Sigma_p$. 
    
    Let $R_{F}^{\ps, \chi}$ be the universal pseudo-deformation ring of the pseudo-representation $1+\bar{\chi}$ for $ G_{F, \Sigma}$ with determinant $\chi$. Let $R_v^\ps$ be the local universal pseudo-deformation ring of $1+\bar{\chi}|_{G_{F_v}}$  for $ {G_{F_v}}$ with determinant $\chi|_{G_{F_v}}$. Let $R_{p, F}^\ps$ be the completed tensor product (over $\cO$) of all $R^{\ps}_v, v|p$ . Then by the universal property, we get a natural map $R_{p, F}^\ps \to R_{F}^{\ps, \chi}$.
    
    \begin{prop}\label{finiteness1}
    	The map $R_{p, F}^\ps \to R_{F}^{\ps, \chi}$ is finite.
    \end{prop}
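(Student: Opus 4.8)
The plan is to follow the strategy of Calegari--Allen \cite{Allen_2014}. First I would reduce, via the topological Nakayama lemma (the map being a local homomorphism of objects of $\mathcal{C}_\cO$), to showing that the closed fibre $\overline R:=R_{F}^{\ps, \chi}\otimes_{R_{p,F}^\ps}\F$ has Krull dimension $0$. Since each composite $R_v^\ps\to R_{p,F}^\ps\to\overline R$ annihilates $\mathfrak m_{R_v^\ps}$, the $\F$-algebra $\overline R$ pro-represents the functor sending an Artinian local $\F$-algebra $A$ to the set of pseudo-deformations $T\colon G_{F,\Sigma}\to A$ of $1+\bar\chi$ with determinant $\chi$ whose restriction $T|_{G_{F_v}}$ is the constant pseudo-deformation for every $v\mid p$. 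Equivalently, I need $\overline R/\mathfrak q$ to be Artinian for each minimal prime $\mathfrak q$; so I would fix such a $\mathfrak q$, set $C:=\overline R/\mathfrak q$ (a complete local domain over $\F$ carrying the tautological $T_C\colon G_{F,\Sigma}\to C$), and split into cases according to whether $T_C$ is reducible or absolutely irreducible — which, $C$ being a domain, is governed by the vanishing of all the functions $y(\sigma,\tau)$ of Definition \ref{construction}.

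In the reducible case I would write $T_C=\eta_1+\eta_2$ with $\eta_1,\eta_2\colon G_{F,\Sigma}\to C^\times$ group homomorphisms lifting $\mathbf 1$ and $\bar\chi$, and $\eta_1\eta_2=\det T_C$. Constancy at $p$ forces $\eta_1|_{G_{F_v}}=\mathbf 1$ for all $v\mid p$, while $\eta_1\equiv\mathbf 1\pmod{\mathfrak m_C}$ forces $\eta_1(G_{F,\Sigma})\subseteq 1+\mathfrak m_C$, a pro-$p$ group. Hence $\eta_1$ factors through the finite abelian $p$-group $\Delta$ that is the Galois group of the maximal abelian pro-$p$ extension of $F$ unramified outside $\Sigma\setminus\Sigma_p$ in which every place above $p$ splits completely (finiteness of $\Delta$ being just finiteness of ray class groups). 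Then $T_C=\eta_1+\bar\chi\,\eta_1^{-1}$ is the pullback, along the classifying map $\F[\Delta]\to C$ of $\eta_1$, of a constant-at-$p$ pseudo-deformation defined over the Artinian ring $\F[\Delta]$, so by the universal property the surjection $\overline R\twoheadrightarrow C$ factors through $\F[\Delta]$ and $C$ is a quotient of an Artinian ring, hence Artinian. This disposes of the entire reducible locus of $\overline R$.

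The absolutely irreducible case is where the real work lies, and I expect it to be the main obstacle. There I would use \cite[Corollary 2.2.2]{pan2022fontaine} (as in the proof of Proposition \ref{local pseudo}) to identify, on the irreducible locus, the pseudo-deformation problem with a genuine deformation problem, so that the assertion becomes the residually irreducible finiteness statement of \cite[Theorem 1]{Allen_2014}: deformations of an absolutely irreducible representation of $G_{F,\Sigma}$ that are trivial at every $v\mid p$ form an Artinian family. This is proved by presenting the relative deformation ring over $\widehat{\otimes}_{v\mid p}R_v^\ps$ by generators and relations controlled by the Selmer groups $H^1$ and $H^2$ of $\ad^0$ with trivial conditions at $p$ and no conditions at $\Sigma\setminus\Sigma_p$, and then running a Greenberg--Wiles / Poitou--Tate count: since $F$ is totally real, each archimedean place contributes $-\dim(\ad^0)^{c=1}=-1$, which forces at least as many relations as generators, so the closed fibre is Artinian. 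The delicate points will be making precise this descent from the pseudo-deformation ring to genuine deformation rings in the presence of the constant-at-$p$ condition and of large residue fields, and checking that the Euler-characteristic inequality goes the right way without any auxiliary hypotheses on $\bar\chi$ or $\Sigma$. Granting this, combining the two cases yields $\dim\overline R=0$, and hence the finiteness of $R_{p,F}^\ps\to R_{F}^{\ps, \chi}$.
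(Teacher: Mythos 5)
Your Nakayama reduction and the identification of the functor pro-represented by the closed fibre $\overline R$ are exactly the same as the paper's, and your handling of the reducible locus via finiteness of the ray class group $\Delta$ is a correct, pleasantly elementary argument that the paper does not use. The gap is in the absolutely irreducible case, and it is not just a matter of filling in details: the only condition at $v\mid p$ that $\overline R$ imposes is that the \emph{pseudo-representation} (trace and determinant) be constant on $G_{F_v}$, which is strictly weaker than the ``trivial framed representation at $v\mid p$'' condition of \cite{Allen_2014}. A deformation whose restriction to $G_{F_v}$ is a non-split extension of two constant characters has constant trace and determinant without being trivial or unramified at $v$; indeed, by Brauer--Nesbitt, constancy of the pseudo-representation at $p$ amounts exactly to ordinarity at $p$ with constant semisimplification. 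Thus the local condition cuts out a much larger locus (codimension roughly $1$, not the full relative dimension of the local deformation ring), and the Poitou--Tate/Greenberg--Wiles count with this weaker condition simply does not force $\dim \overline R = 0$. This is a genuine obstacle: there is no purely cohomological repair, because a pseudo-deformation constant at $p$ really can support positive-dimensional ordinary families.

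The paper's proof turns this observation to its advantage instead of fighting it. Since the residual pseudo-representation is $1+\bar\chi$, a sum of two characters, constancy of $T(\kp)|_{G_{F_v}}$ means exactly that $T(\kp)$ is ordinary at each $v\mid p$. One therefore gets a surjection onto $R_F^{\ps,\chi}/\kp$ from the universal ordinary pseudo-deformation ring (either $R^{\ps,\ord}$ or Pan's refined $R_1^{\ps,\ord}$, depending on whether $\bar\chi|_{G_{F_v}}=\mathbf 1$), and invokes Pan's finiteness of that ring over the Iwasawa algebra $\Lambda_F$ (Theorems \ref{ord1} and \ref{ord2}). Constancy at $p$ then forces the composite $\Lambda_F\to R_F^{\ps,\chi}/\kp$ to kill the maximal ideal of $\Lambda_F$, contradicting $\dim R_F^{\ps,\chi}/\kp = 1$. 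This argument is uniform (no reducible/irreducible dichotomy) but not elementary: it imports automorphy input via Pan's patching theorems, and that is precisely what supplies the finiteness that your Euler-characteristic route cannot.
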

    
    \begin{proof}
    	Let $\mathfrak{m}_p$ be the maximal ideal of $R_{p, F}^\ps$. We show that $ R_{F}^{\ps, \chi}/(\mathfrak{m}_p)$ is an Artinian local ring. Assume it is not true. Then we can find a prime $\kp$ of $ R_{F}^{\ps, \chi}/(\mathfrak{m}_p)$ of dimension one and we deduce a contradiction.
    	
    	 Note that $\kp$ is of characteristic $p$. By the universal property, we know that the pseudo-representation $T(\kp)$ defined by $\kp$ satisfies that $T(\kp)|_{G_{F_v}} = 1+\bar{\chi}|_{G_{F_v}}$ for each $v|p$. In particular, $T(\kp)$ defines an ordinary pseudo-representation. As $\bar{\chi} $ can be extended to a character of $G_{\Q}$, we can divide the proof into the following two cases.
    	
    	If $\bar{\chi}|_{G_{F_v}} \ne \mathbf{1}$ for any $v|p$, let $R^{\ps, \ord}$ be the universal ordinary pseudo-deformation ring as defined in Subsection \ref{sec deformation ring}. Then we get a surjection $R^{\ps, \ord} \twoheadrightarrow R_{F}^{\ps, \chi}/\kp$ as both are quotients of $ R_{F}^{\ps, \chi}$. Thus, by Theorem \ref{ord1}, the composite map $\Lambda_F \to R^{\ps, \ord} \to R_{F}^{\ps, \chi}/\kp$ is finite. From our construction, the kernel of the map $ \Lambda_F \to R_{F}^{\ps, \chi}/\kp$ is the maximal ideal of the Iwasawa algebra $\Lambda_F $, and the map $\Lambda_F/\mathfrak{m}_{\Lambda_F} \to R_{F}^{\ps, \chi}/\kp$ is finite. This is impossible as $1=\dim R_{F}^{\ps, \chi}/\kp \le \dim \Lambda_F/\mathfrak{m}_{\Lambda_F} =0$.
    	
    	If $\bar{\chi}|_{G_{F_v}} = \mathbf{1}$ for any $v|p$, let $R_1^{\ps, \ord}$ be the universal ordinary pseudo-deformation ring as defined in Subsection \ref{sec deformation ring}. By Definition \ref{construction}, we can construct a Galois representation $\rho(\kp)$, and further by \cite[Lemma 5.3.2]{pan2022fontaine}, we may assume that $T(\kp)$ is $\psi_{v,1}$-ordinary (in the sense of \cite[Definition 5.3.1]{pan2022fontaine}) for each $v|p$ and some $\psi_{v,1}$ lifting the trivial character $\mathbf{1}$ of $G_{F_v}$. Thus, we get a surjection $R_1^{\ps, \ord} \to R_{F}^{\ps, \chi}/\kp $. By Theorem \ref{ord2}, a similar argument as in the previous paragraph leads to a contradiction as well. 
    	
    	Now we can conclude the proof by applying Nakayama's lemma.
    \end{proof}
    
    \begin{rem}
    	1) This result is not used in our main proof, but we may compare it with the finiteness result proved in the next subsection. See Remark \ref{rem on fin}.
    	
    	2) In the proof of Proposition \ref{finiteness1}, we use a similar strategy as in \cite{Allen_2014}. Also, we can prove a finiteness result about the unramified pseudo-deformation ring as in \cite[Theorem 1]{Allen_2014} using the following arguments.
    	
    	Keep all the assumptions on $F$ and $\bar{\chi}$, and further suppose that $\bar{\chi}$ is unramified at $v|p$. Let $\chi$ be a finite order (hence de Rham) character lifting $\bar{\chi}$, unramified at each $v|p$. Then we can define a universal unramified pseudo-deformation ring $R^{\ps, \operatorname{unr}}$ of the pseudo-representation $1+\bar{\chi}$ for $G_{F, \Sigma\setminus\Sigma_p}$ with determinant $\chi$. Then by Theorem \ref{ord1} (or Theorem \ref{ord2}), the finite map $\Lambda_F \to R^{\ps, \ord}$ (or $\Lambda_F \to R_1^{\ps, \ord}$) induces a finite map $\cO \to R^{\ps, \operatorname{unr}}$. Moreover, we may use such finiteness result to study the unramified Fontaine-Mazur conjecture in the residually reducible case.
    \end{rem}

    \subsection{A finiteness result (II)}\label{sec finite 2}
    In this subsection, we prove another finiteness result based on our local-global compatibility arguments.
    
    Throughout this subsection, we assume that $p$ splits completely in $F$, and let  $\chi: G_{F, \Sigma} \to \cO^\times$ be a continuous, totally odd character such that $\chi|_{G_{F_v}}$ is de Rham for any $v \in \Sigma_p$. 
    
    As in Subsection \ref{potential pro-modular}, by Grunwald-Wang's theorem \cite[Theorem 5, page 80]{artin}, we can find a finite extension $F^1/F$ satisfying the following assumptions:
    \begin{itemize}
    	\item $F^1$ is a totally real abelian extension of $\mathbb{Q}$ of even degree.
    	\item $p$ splits completely in $F^1$.
    	\item  For any $v \in \Sigma^1 \setminus \Sigma^1_p$, we have $p| \operatorname{Nm}(v)-1$.
    	\item $[F^1:\mathbb{Q}]>7|\Sigma^1 \setminus \Sigma^1_p|+3$.
    	\item  $\chi|_{G_{F^1}}$ is unramified outside $ \Sigma^1_p$, and $\chi(\operatorname{Frob}_v) \equiv 1~\operatorname{mod}~\varpi$ for any $v \in \Sigma^1 \setminus \Sigma^1_p$.
    \end{itemize}
    Here $ \Sigma^1$ (resp. $\Sigma^1_p $) is the finite set of finite places of $F^1$ lying above places in $\Sigma$ (resp. $\Sigma_p $). For simplicity, we sometimes write $\chi|_{G_{F^1}}$ for $\chi$ as well.
    
    Let $D$ be a quaternion algebra over $F^1$ ramified exactly at all infinite places. We fix an isomorphism between $(D \otimes_{F^1}\A_{F^1}^\infty )^\times$ and $\GL_2(\A_{F^1}^\infty)$. We write $\psi:=\chi\varepsilon$ and view it as a character of $(\A_{F^1}^\infty )^\times/(F^1)^\times_{+}$ via global class field theory. Define a tame level $U^p=\prod_{v\nmid p}U_v$ as follows: $U_v=\GL_2(\cO_{F^1_v})$ if $v\notin \Sigma^1$ and 
    \[U_v=\mathrm{Iw}_v:=\{g\in\GL_2(\cO_{F^1_v}),g\equiv \begin{pmatrix}*&*\\0&*\end{pmatrix}\mod \varpi_v\}\]
    otherwise. Then we can define a big Hecke algebra $\T:=\T_\psi(U^p)$. By the proof of Proposition \ref{potential R=T}, we know that $1+\bar{\chi}|_{G_{F^1, \Sigma^1}}$ defines a maximal ideal $\km$ of $\T $.
    
    Let $R_{F^1}^{\ps, \chi}$ be the universal pseudo-deformation ring of the pseudo-representation $1+\bar{\chi}|_{G_{F^1, \Sigma^1}}$ with determinant $\chi|_{G_{F^1, \Sigma^1}}$. Then by the universal property, we have a natural surjection $R_{F^1}^{\ps, \chi} \twoheadrightarrow \T_\km$. We say that a prime of $R_{F^1}^{\ps, \chi}$ is \textit{pro-modular}, if it comes from the big Hecke algebra $\T_\km$.
    
     \begin{rem}
    	In \cite{pan2022fontaine} and \cite{Zhang:2024aa}, a pro-modular prime is defined to be the one coming from some big Hecke algebra $\T_{\psi, \xi}(U^p)_\km$, where $\xi_v: k(v)^\times \to \cO^\times, v\in \Sigma^1\setminus\Sigma_p^1$ are some characters of $p$-power order and we view $\xi:={\xi_v}$ as some character of $U^p$ via class field theory. Under our setting, combining \cite[Proposition 3.1.5]{Zhang:2024ab} and classical local-global compatibility at $v$, we know that for any minimal prime $\kp$ of $\T_\km$, the natural surjection $\T_\km \twoheadrightarrow \T_\km/\kp  $ factors through $\T_{\psi, \xi}(U^p)_\km$ for some $\xi$. Hence, in this subsection, a pro-modular prime means that it comes from a prime of $\T_\km$.
    \end{rem}
    
    Note that by Corollary \ref{lower}, we have $\dim R_{F^1}^{\ps, \chi} \ge 2[F^1:\Q]+1 $. Recall that $R_{F^1,0}^{\ps, \chi}= R_{F^1}^{\ps, \chi}/I_{F^1}$, where $ I_{F^1}$ is the intersection of all minimal primes of $R_{F^1}^{\ps, \chi} $ of dimension at least $2[F^1:\Q]+1 $. Consider the natural composite map $f: R_{F^1}^{\ps, \chi} \twoheadrightarrow \T_\km \twoheadrightarrow (\T_\km)^{\operatorname{red}}$. By \cite[Corollary 3.3.10]{Zhang:2024aa}, every minimal prime of $(\T_\km)^{\operatorname{red}}$ is of dimension at least $1+2[F^1:\Q]$. Thus, the map $f$ factors through $R_{F^1,0}^{\ps, \chi}$, and we get a surjection $R_{F^1,0}^{\ps, \chi} \twoheadrightarrow (\T_\km)^{\operatorname{red}}$.
    
    \begin{prop}\label{R=T}
    	The surjection $R_{F^1,0}^{\ps, \chi} \twoheadrightarrow (\T_\km)^{\operatorname{red}}$ is an isomorphism.
    \end{prop}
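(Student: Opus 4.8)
The plan is to show that the kernel $I$ of the surjection $f\colon R_{F^1,0}^{\ps,\chi}\twoheadrightarrow(\T_\km)^{\operatorname{red}}$ vanishes, using that the source is reduced together with the potential pro-modularity statement of Proposition \ref{potential R=T}.

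First I would unwind what pro-modularity of a prime of $R_{F^1,0}^{\ps,\chi}$ means in terms of $I$. Since $R_{F^1}^{\ps,\chi}\twoheadrightarrow\T_\km$ is surjective and $\Spec\T_\km$ is homeomorphic to $\Spec(\T_\km)^{\operatorname{red}}$, the image of $\Spec\T_\km$ inside $\Spec R_{F^1}^{\ps,\chi}$ is precisely the vanishing locus of the kernel of $R_{F^1}^{\ps,\chi}\to(\T_\km)^{\operatorname{red}}$. Pulling this back along the surjection $R_{F^1}^{\ps,\chi}\twoheadrightarrow R_{F^1,0}^{\ps,\chi}$ — which is legitimate because the composite $R_{F^1}^{\ps,\chi}\to(\T_\km)^{\operatorname{red}}$ factors through $R_{F^1,0}^{\ps,\chi}$, as noted just before the statement using \cite[Corollary 3.3.10]{Zhang:2024aa} and Corollary \ref{structure of ps} — one finds that a prime of $R_{F^1,0}^{\ps,\chi}$ is pro-modular if and only if it contains $I$.

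Now I would invoke Proposition \ref{potential R=T}, which asserts that every prime of $R_{F^1,0}^{\ps,\chi}$ is pro-modular. By the previous paragraph, every prime of $R_{F^1,0}^{\ps,\chi}$ then contains $I$, so $I$ lies in the nilradical of $R_{F^1,0}^{\ps,\chi}$. Since $R_{F^1,0}^{\ps,\chi}$ is reduced by Proposition \ref{def of 0}, the nilradical is zero, whence $I=0$ and $f$ is an isomorphism. Equivalently and more directly: it suffices to check $I\subseteq\kp$ for every minimal prime $\kp$ of $R_{F^1,0}^{\ps,\chi}$; such a $\kp$ is the generic point of an irreducible component of dimension $\ge 1+2[F^1:\Q]$, which by Proposition \ref{potential R=T} is pro-modular and hence is pulled back from a prime of $(\T_\km)^{\operatorname{red}}$, so contains $I$, and reducedness of $R_{F^1,0}^{\ps,\chi}$ then forces $I=0$.

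I do not expect a genuine obstacle here: all the mathematical weight sits in Proposition \ref{potential R=T} (potential modularity plus the production of a nice prime in each component of large dimension), which has already been established, so the present argument is a soft commutative-algebra deduction. The only point requiring care is keeping track of which ring the various ideals live in when passing between $R_{F^1}^{\ps,\chi}$, $R_{F^1,0}^{\ps,\chi}$, $\T_\km$ and $(\T_\km)^{\operatorname{red}}$.
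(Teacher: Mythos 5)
Your proposal is correct and follows essentially the same route as the paper: the paper's proof is simply "both rings are reduced by Proposition \ref{def of 0}, so it suffices that every prime of $R_{F^1,0}^{\ps,\chi}$ is pro-modular, which is Proposition \ref{potential R=T}." Your write-up merely unpacks the definitional equivalence between pro-modularity and containing the kernel $I$, and makes explicit the step from "every prime contains $I$" to "$I$ lies in the nilradical, hence is zero."
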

    
   \begin{proof}
   	Note that by Proposition \ref{def of 0}, both $R_{F^1,0}^{\ps, \chi} $ and $(\T_\km)^{\operatorname{red}}$ are reduced. Thus, we only need to show that each prime ideal of $R_{F^1,0}^{\ps, \chi}$ is pro-modular, and it follows directly from Proposition \ref{potential R=T}.
   \end{proof}
    
    For each $v \in \Sigma_p^1$ (resp. $v \in  \Sigma_p$), let $R_v^\ps$ be the local universal pseudo-deformation ring of $1+\bar{\chi}|_{G_{F^1_v}}$ (resp. $1+\bar{\chi}|_{G_{F_v}}$)  for $ {G_{F^1_v}}$ (resp. $ {G_{F_v}}$) with determinant $\chi|_{G_{F^1_v}}$ (resp. $\chi|_{G_{F_v}}$). Choose a subgroup $U^p_1 \subseteq U^p$ such that  $U^p_1$ satisfies the assumptions in Lemma \ref{intersection}. For each $v \in \Sigma_p^1$, we get an ideal $I_v$ of $R_v^\ps$. Write $R_v = R_v^\ps/I_v $.  
    
    Consider a place $v \in \Sigma_p$, and choose a place $v_1 \in \Sigma_p^1$ lying above $v$. Then by the universal property, we have a natural map $f_1: R_{v_1}^\ps \to R_v^\ps$. This is actually an isomorphism via $\sigma_1: F_{v_1}^1 \cong F_v (\cong \Q_p)$. Let $I_v$ be the image of $I_{v_1}$ and write $R_v = R_v^\ps/I_v $.  
    
    Note that for any $v', v'' \in \Sigma_p^1$ lying above $v$, there exists an element $\sigma \in \Gal(F^1/F)$ such that $\sigma(v')=v''$ and there exists an isomorphism $ R_{v'}^\ps \cong R_{v''}^\ps$ induced by $\sigma$. As $\sigma(F^1) = F^1$, we know that for any regular de Rham prime $\kp_{v'}$ of $R_{v'}^\ps$, there exists a regular de Rham prime $\kp_{v''}$ of $R_{v''}^\ps$ such that $\sigma(\kp_{v'})=\kp_{v''}$. Combining Lemma \ref{intersection}, we know that $\sigma$ also induces an isomorphism $R_{v'} \cong R_{v''}$. Furthermore, the natural surjection $\widehat{\otimes}_{v'| v} R_{v'}^\ps \twoheadrightarrow R_{v}^\ps$ induces a surjection $ \widehat{\otimes}_{v'| v} R_{v'} \twoheadrightarrow R_v$.
    
    For simplicity, we write $R_{F^1,p}^\ps$ (resp. $R_{F,p}^\ps$) for $ \widehat{\otimes}_{v \in \Sigma^1_p} R_{v}^\ps$ (resp. $\widehat{\otimes}_{v \in \Sigma_p} R_{v}^\ps$).
    \begin{lem}\label{com dia1} 
    	1) We have the following commutative diagram:
    	\[\begin{tikzcd}
    		R_{F^1,p}^\ps  \arrow[r, two heads] \arrow[d, two heads]& \widehat{\otimes}_{v \in \Sigma^1_p} R_{v} \arrow[d, two heads]\\
    		R_{F,p}^\ps \arrow[r, two heads] & \widehat{\otimes}_{v \in \Sigma_p} R_{v}.
    	\end{tikzcd}\]
    	
    	2) We have $(\widehat{\otimes}_{v \in \Sigma^1_p} R_{v})\widehat{\otimes}_{R_{F^1,p}^\ps} R_{F,p}^\ps = \widehat{\otimes}_{v \in \Sigma_p} R_{v}$.
    \end{lem}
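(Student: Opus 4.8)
The plan is to reduce both assertions, via the functoriality of $\widehat{\otimes}_\cO$, to the corresponding statements ``one place at a time''. Writing $\Sigma_p^1=\coprod_{v\in\Sigma_p}\{v'\colon v'\mid v\}$, every arrow in the square of part 1) is obtained by applying $\widehat{\otimes}_{v\in\Sigma_p}(-)$ to a local arrow, so it suffices to check, for each fixed $v\in\Sigma_p$, that the square
\[\begin{tikzcd}
\widehat{\otimes}_{v'\mid v}R_{v'}^\ps \arrow[r, two heads] \arrow[d, two heads] & \widehat{\otimes}_{v'\mid v}R_{v'} \arrow[d, two heads]\\
R_v^\ps \arrow[r, two heads] & R_v
\end{tikzcd}\]
commutes. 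Here the horizontal maps are the quotients by $(I_{v'})_{v'\mid v}$ and by $I_v$ respectively, the left vertical map is the multiplication map whose restriction to the $v'$-factor is the canonical map $R_{v'}^\ps\to R_v^\ps$ furnished by the universal property, and the right vertical map is the surjection constructed just before the lemma. Since $p$ splits completely in $F^1$, for each $v'\mid v$ the local extension $F^1_{v'}/F_v$ is trivial, the canonical map $R_{v'}^\ps\to R_v^\ps$ is an isomorphism, and it carries $I_{v'}$ onto $I_v$; this is exactly the content of the paragraph preceding the statement. Hence the left vertical map carries $(I_{v'})_{v'\mid v}$ onto $I_v$, and $\widehat{\otimes}_{v'\mid v}R_{v'}\to R_v$ is by construction the map it induces on quotients, so commutativity of the local square --- and therefore of the square in 1) --- is immediate.

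For part 2), I would argue as follows. By Proposition \ref{R_v}(1) each of the rings $R_{v'}$ ($v'\in\Sigma_p^1$) and $R_v$ ($v\in\Sigma_p$) is $\cO$-flat, so Proposition \ref{com alg}(3) identifies the horizontal maps of the big square with the quotient maps $A\twoheadrightarrow A/J$ and $B\twoheadrightarrow B/J'$, where $A=R_{F^1,p}^\ps$, $B=R_{F,p}^\ps$, $J=(I_{v'})_{v'\in\Sigma_p^1}$ and $J'=(I_v)_{v\in\Sigma_p}$. The completed base change $(A/J)\widehat{\otimes}_A B$ equals $B/JB$ by its universal property (cf. \cite[Lemma 4.2.4]{pan2022fontaine}; no completion issue intervenes since $A$ is Noetherian and $A/J$ is finitely presented over $A$). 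It then remains to identify $JB$ with $J'$: for $v'\mid v$ the composite $R_{v'}^\ps\hookrightarrow A\twoheadrightarrow B$ is, by the description of the left vertical map in part 1), the structure embedding $R_{v'}^\ps\xrightarrow{\sim}R_v^\ps\hookrightarrow B$, under which $I_{v'}$ goes onto $I_v$; letting $v'$ and $v$ vary shows that the image of $J$ in $B$ generates exactly $J'$. Thus $(A/J)\widehat{\otimes}_A B=B/J'=\widehat{\otimes}_{v\in\Sigma_p}R_v$ by a further application of Proposition \ref{com alg}(3).

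The real content is the single local input that the canonical isomorphism $R_{v'}^\ps\cong R_v^\ps$ (coming from $F^1_{v'}=F_v$) matches $I_{v'}$ with $I_v$ --- this was set up in the discussion preceding the lemma and ultimately rests on $\sigma(F^1)=F^1$ for $\sigma\in\Gal(F^1/F)$, so that pullback along $\sigma$ preserves the classes of primes appearing in Lemma \ref{intersection}. Granting this, the only other point I would check carefully is the comparison $(A/J)\widehat{\otimes}_A B=B/JB$ between completed and ordinary base change, which follows from exactness of completion on finitely generated modules over the Noetherian ring $A$; all the remaining steps are bookkeeping with the definitions recorded just above the statement.
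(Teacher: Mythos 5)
Your proof is correct and follows essentially the same approach as the paper: both reduce to the single local fact, established in the paragraph preceding the lemma, that for $v'\mid v$ the canonical isomorphism $R_{v'}^{\ps}\cong R_v^{\ps}$ (coming from $F^1_{v'}=F_v$) carries $I_{v'}$ onto $I_v$, and then deduce the commutativity and the base-change identity by bookkeeping with Proposition~\ref{com alg}. Your treatment of part 2) is in fact a bit cleaner than the paper's, which invokes a somewhat opaque ``norm map'' remark to identify the image of $(I_{v'})_{v'\mid v}$ with $I_v$, whereas you pass directly through $(A/J)\widehat\otimes_A B = B/JB$ and the identification $JB=J'$; the content is the same.
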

    
    \begin{proof}
     	For the first part, it is direct from our discussions above.
     	
     	For the second part, consider a place $v \in \Sigma_p$ and let $\Sigma^1_v$ be the subset of $\Sigma_p^1$ consisting of places lying above $v$. We get a natural map $f_v: \widehat{\otimes}_{v' \in \Sigma^1_v} R_{v'}^\ps \twoheadrightarrow R_v^\ps$. By 3) of Proposition \ref{com alg} and Proposition \ref{R_v}, the kernel of the surjection $ \widehat{\otimes}_{v' \in \Sigma^1_v} R_{v'}^\ps \twoheadrightarrow \widehat{\otimes}_{v' \in \Sigma^1_v} R_{v'}$ is the ideal $(I_{v'}, \dots)$, $v' \in \Sigma_v^1$. We claim that the image of this ideal via the map $f_v$ is $I_v$. Note that $f_v$ is given by the norm map. In other words, for a non-zero element $x_{v'} \otimes x_{v''} \otimes \cdots \in \widehat{\otimes}_{v' \in \Sigma^1_v} R_{v'}^\ps$, the image of it via $f_v$ is non-zero if and only if for any $v', v'' \in \Sigma_v^1 $, $x_{v'}=x_{v''}^\sigma$, where $\sigma$ is an isomorphism between $F_{v'}^1$ and $F_{v''}^1$. As $I_{v'}\cong I_{v''}$ (induced by $\sigma$), we prove the claim. Now taking the completed tensor product (for all $v \in \Sigma_p$) and combining 3) of Proposition \ref{com alg}, we get the desired result.
    \end{proof}
    
    Similarly to $R_{F^1}^{\ps, \chi}$ (resp. $R_{F^1,0}^{\ps, \chi} $), we can also define the universal pseudo-deformation ring $R_{F}^{\ps, \chi} $ (resp. $R_{F, 0}^{\ps, \chi} $ using Corollary \ref{lower}). Then we have natural maps $R_{F,p}^\ps \to R_{F}^{\ps, \chi} \twoheadrightarrow R_{F, 0}^{\ps, \chi}$.
    
    \begin{lem}\label{com dia2}
    	We have the following diagram:
    	\[\begin{tikzcd}
    		R_{F^1,p}^\ps  \arrow[r] \arrow[d, two heads]& R_{F^1,0}^{\ps, \chi} \arrow[d]\\
    		R_{F,p}^\ps \arrow[r] & R_{F, 0}^{\ps, \chi}.
    	\end{tikzcd}\]
    \end{lem}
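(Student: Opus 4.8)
The plan is to reduce Lemma~\ref{com dia2} to Corollary~\ref{finite for 0} together with the commutativity of the analogous square for the ``full'' rings,
\[\begin{tikzcd}
	R_{F^1,p}^\ps  \arrow[r, "\beta^1"] \arrow[d, two heads, "\gamma"]& R_{F^1}^{\ps, \chi} \arrow[d, "\alpha"]\\
	R_{F,p}^\ps \arrow[r, "\beta"] & R_{F}^{\ps, \chi},
\end{tikzcd}\]
where $\alpha$ is the finite map of Proposition~\ref{finiteness of ps} (with $F_1=F^1$), $\beta^1$ and $\beta$ are the finite maps of Proposition~\ref{finiteness1}, and $\gamma$ is the surjection of Lemma~\ref{com dia1}. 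Granting that this square commutes, the statement of Lemma~\ref{com dia2} is formal: Corollary~\ref{finite for 0} (applied with $F_1=F^1$) provides a finite map $\delta\colon R_{F^1,0}^{\ps,\chi}\to R_{F,0}^{\ps,\chi}$ with $\delta\circ\pi^1=\pi\circ\alpha$, where $\pi^1\colon R_{F^1}^{\ps,\chi}\twoheadrightarrow R_{F^1,0}^{\ps,\chi}$ and $\pi\colon R_{F}^{\ps,\chi}\twoheadrightarrow R_{F,0}^{\ps,\chi}$ are the canonical surjections. The top arrow of Lemma~\ref{com dia2} is $\pi^1\circ\beta^1$ and the bottom arrow is $\pi\circ\beta$, so a two‑line chase gives $\delta\circ(\pi^1\circ\beta^1)=\pi\circ\alpha\circ\beta^1=\pi\circ\beta\circ\gamma=(\pi\circ\beta)\circ\gamma$, which is exactly the asserted commutativity; and $\delta$ (the right vertical arrow) is finite by Corollary~\ref{finite for 0}.

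The real content is thus the commutativity of the displayed square of full rings, which I would verify factorwise on the completed tensor products using the universal properties. Let $T^1\colon G_{F^1,\Sigma^1}\to R_{F^1}^{\ps,\chi}$ and $T\colon G_{F,\Sigma}\to R_{F}^{\ps,\chi}$ be the universal pseudo-deformations with the prescribed determinants. By definition $\alpha$ is characterized by $\alpha_*T^1=T|_{G_{F^1,\Sigma^1}}$; the map $\beta^1$ restricted to the factor $R_{v'}^\ps$ (for $v'\in\Sigma^1_p$) classifies $T^1|_{G_{F^1_{v'}}}$, and $\beta$ restricted to $R_v^\ps$ classifies $T|_{G_{F_v}}$; and $\gamma$ restricted to a factor $R_{v'}^\ps$ lying over $v\in\Sigma_p$ is the isomorphism $f_1\colon R_{v'}^\ps\xrightarrow{\sim}R_v^\ps$ from the paragraph preceding Lemma~\ref{com dia1}, coming from the canonical identification $F^1_{v'}=F_v=\Q_p$ (here we use that $p$ splits completely in both $F$ and $F^1$, so the relevant decomposition groups are literally equal to $G_{\Q_p}$). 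Since a homomorphism out of a completed tensor product of $\cO$-algebras is determined by its restrictions to the factors, it suffices to check, for each such pair $v'\mid v$, that the two composites $R_{v'}^\ps\to R_{F}^{\ps,\chi}$ agree. Unwinding definitions, $\alpha\circ\beta^1$ restricted to $R_{v'}^\ps$ classifies $(\alpha_*T^1)|_{G_{F^1_{v'}}}=T|_{G_{F^1_{v'}}}$, whereas $\beta\circ\gamma$ restricted to $R_{v'}^\ps$ equals $\bigl(\beta|_{R_v^\ps}\bigr)\circ f_1$ and classifies $(T|_{G_{F_v}})|_{G_{F^1_{v'}}}=T|_{G_{F^1_{v'}}}$, using $G_{F^1_{v'}}=G_{F_v}$. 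By the uniqueness clause in the universal property of $R_{v'}^\ps$ the two maps coincide, which proves the claim.

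The only point that needs care is the bookkeeping with the decomposition of $R_{F^1,p}^\ps$ over places $v\in\Sigma_p$: one must make sure that ``restrict $T$ to $G_{F^1_{v'}}$'' is genuinely the composite computed by $\beta\circ\gamma$ for \emph{every} place $v'$ of $F^1$ above a given $v$, not just for one chosen lift. This is handled exactly as in the proof of Lemma~\ref{com dia1}, where the surjection $\widehat{\otimes}_{v'\mid v}R_{v'}^\ps\twoheadrightarrow R_v^\ps$ is analyzed via the norm map; because $p$ splits completely, each $F^1_{v'}=\Q_p$ and no further input is needed beyond this identification. I do not expect any genuine obstacle here — everything beyond the identification of groups is a routine diagram chase once the maps are pinned down by their universal characterizations.
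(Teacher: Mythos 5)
Your proof is correct and follows essentially the same route as the paper's, which simply cites Corollary~\ref{finite for 0} for the right vertical arrow and then states that commutativity "follows from the universal property." You have merely spelled out what that universal‑property argument is, by reducing to the commutativity of the square of full rings $R_{F^1,p}^\ps\to R_{F^1}^{\ps,\chi}$, $R_{F,p}^\ps\to R_F^{\ps,\chi}$ and checking it factorwise on the completed tensor product.
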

    
    \begin{proof}
    	By Corollary \ref{finite for 0}, the map $R_{F^1,0}^{\ps, \chi} \to R_{F, 0}^{\ps, \chi} $ is well-defined and induced by the map $ R_{F^1}^{\ps, \chi} \to R_{F}^{\ps, \chi}$. Our result follows from the universal property.
    \end{proof}
    
    Now we can conclude our finiteness result in this subsection.
    
     \begin{prop}\label{finiteness 2}
    	There exists a finite map $\widehat{\otimes}_{v \in \Sigma_p} R_{v} \to R_{F, 0}^{\ps, \chi} $.
    \end{prop}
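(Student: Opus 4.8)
The strategy is the residually reducible analogue of Pan's argument for \cite[Theorem 5.1.1]{pan2022fontaine}: first produce the finite map over $F^1$, where potential pro-modularity is available, and then descend it to $F$ using the base-change compatibilities of Lemma \ref{com dia1} and Lemma \ref{com dia2}. Throughout, write $\phi\colon R_{F,p}^\ps\to R_{F,0}^{\ps,\chi}$ for the natural map $R_{F,p}^\ps\to R_{F}^{\ps,\chi}\twoheadrightarrow R_{F,0}^{\ps,\chi}$, and similarly $\phi^1\colon R_{F^1,p}^\ps\to R_{F^1,0}^{\ps,\chi}$.

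\textbf{Step 1 (the map over $F^1$).} First I would observe that $\phi^1$ factors through $\widehat{\otimes}_{v\in\Sigma_p^1}R_v$. Indeed, applying Corollary \ref{finiteness version 1} with the auxiliary tame level $U^p_1$ (so that the $I_v$, $v\in\Sigma_p^1$, are exactly those of Lemma \ref{intersection}), the map $R_{F^1,p}^\ps\to\T_\km$ induced by $T_\km$ factors through a finite map $\widehat{\otimes}_{v\in\Sigma_p^1}R_v\to\T_\km$; this map is compatible with the surjection $R_{F^1}^{\ps,\chi}\twoheadrightarrow\T_\km$ since both are classified by $T_\km$. Composing with $\T_\km\twoheadrightarrow(\T_\km)^{\operatorname{red}}$ and invoking the identification $R_{F^1,0}^{\ps,\chi}\cong(\T_\km)^{\operatorname{red}}$ of Proposition \ref{R=T}, we conclude that $\phi^1$ factors through $R_{F^1,p}^\ps/(I_v:v\in\Sigma_p^1)=\widehat{\otimes}_{v\in\Sigma_p^1}R_v$ (using 3) of Proposition \ref{com alg}); equivalently, the ideal $(I_v:v\in\Sigma_p^1)$ of $R_{F^1,p}^\ps$ lies in the kernel of $\phi^1$.

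\textbf{Step 2 (descent to $F$).} By Lemma \ref{com dia2} the composite $R_{F^1,p}^\ps\xrightarrow{\phi^1}R_{F^1,0}^{\ps,\chi}\to R_{F,0}^{\ps,\chi}$ equals $R_{F^1,p}^\ps\twoheadrightarrow R_{F,p}^\ps\xrightarrow{\phi}R_{F,0}^{\ps,\chi}$, so by Step 1 this composite kills $(I_v:v\in\Sigma_p^1)$. On the other hand, Lemma \ref{com dia1} identifies $\widehat{\otimes}_{v\in\Sigma_p}R_v$ with $(\widehat{\otimes}_{v\in\Sigma_p^1}R_v)\widehat{\otimes}_{R_{F^1,p}^\ps}R_{F,p}^\ps=(\widehat{\otimes}_{v\in\Sigma_p^1}R_v)/J$, where $J$ is the image of $\ker(R_{F^1,p}^\ps\twoheadrightarrow R_{F,p}^\ps)$; combined with $\widehat{\otimes}_{v\in\Sigma_p^1}R_v=R_{F^1,p}^\ps/(I_v:v\in\Sigma_p^1)$, this shows that the kernel of $R_{F,p}^\ps\twoheadrightarrow\widehat{\otimes}_{v\in\Sigma_p}R_v$ is precisely the image of the ideal $(I_v:v\in\Sigma_p^1)$ under $R_{F^1,p}^\ps\twoheadrightarrow R_{F,p}^\ps$ — this is where the norm-map description of the $f_v$ and 3) of Proposition \ref{com alg} are used, exactly as in the proof of Lemma \ref{com dia1}. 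Putting the two together, $\phi$ annihilates $\ker(R_{F,p}^\ps\twoheadrightarrow\widehat{\otimes}_{v\in\Sigma_p}R_v)$, hence factors as $R_{F,p}^\ps\twoheadrightarrow\widehat{\otimes}_{v\in\Sigma_p}R_v\xrightarrow{\bar\phi}R_{F,0}^{\ps,\chi}$ for a well-defined $\bar\phi$.

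\textbf{Step 3 (finiteness) and the main difficulty.} By Proposition \ref{finiteness1} the map $R_{F,p}^\ps\to R_{F}^{\ps,\chi}$ is finite, hence so is $\phi=(R_{F}^{\ps,\chi}\twoheadrightarrow R_{F,0}^{\ps,\chi})\circ(R_{F,p}^\ps\to R_{F}^{\ps,\chi})$. Since $\phi=\bar\phi\circ(R_{F,p}^\ps\twoheadrightarrow\widehat{\otimes}_{v\in\Sigma_p}R_v)$ and the first arrow is surjective, $\bar\phi$ is finite, which is the assertion. I expect the only genuinely delicate point to be the ideal bookkeeping in Step 2, namely that the change-of-field surjection $R_{F^1,p}^\ps\twoheadrightarrow R_{F,p}^\ps$ carries $(I_v:v\in\Sigma_p^1)$ exactly onto $(I_v:v\in\Sigma_p)$; this rests on the explicit (norm-map) description of $R_{F^1,p}^\ps\to R_{F,p}^\ps$ together with the $\cO$-flatness of the $R_v$ from Proposition \ref{R_v}, and once granted the rest is a formal diagram chase built on the potential pro-modularity input of Proposition \ref{R=T}. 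One should also check that the choice of auxiliary level $U^p_1$ is immaterial: it enters only through the surjection $\T_\psi(U^p_1)_\km\twoheadrightarrow\T_\km$, which is compatible with all the maps above.
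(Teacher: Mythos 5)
Your argument is correct and follows the paper's proof closely: Steps 1 and 2 reproduce the paper's commutative diagram, first using Corollary \ref{finiteness version 1} together with the identification $R_{F^1,0}^{\ps,\chi}\cong(\T_\km)^{\operatorname{red}}$ from Proposition \ref{R=T} to show that $\phi^1$ factors through $\widehat{\otimes}_{v\in\Sigma_p^1}R_v$, and then using Lemmas \ref{com dia1} and \ref{com dia2} to descend and show $\bar\phi$ is well-defined on $\widehat{\otimes}_{v\in\Sigma_p}R_v$ — your kernel bookkeeping in Step 2 is exactly the content packaged in part 2) of Lemma \ref{com dia1}. The one place you genuinely diverge is the finiteness in Step 3: you invoke Proposition \ref{finiteness1} (the Calegari--Allen-style finiteness of $R_{F,p}^\ps\to R_F^{\ps,\chi}$, proved via Theorems \ref{ord1} and \ref{ord2}) to conclude $\phi$, and hence $\bar\phi$, is finite. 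The paper deliberately avoids Proposition \ref{finiteness1}: it instead notes that the top-right composite $\widehat{\otimes}_{v\in\Sigma_p^1}R_v\to R_{F^1,0}^{\ps,\chi}\to R_{F,0}^{\ps,\chi}$ is finite (the first arrow by Corollary \ref{finiteness version 1}, the second by Corollary \ref{finite for 0}), and since the left vertical surjection commutes with this composite, finiteness of $\bar\phi$ follows at once. Both routes are valid, but the paper's has the advantage of keeping Proposition \ref{finiteness1} out of the logical chain leading to the main theorem, which is exactly the point made in the remark following Proposition \ref{finiteness1}; your version re-introduces it as a dependency. One further caution on your closing aside: the choice of auxiliary level $U^p_1$ (and of $F^1$) is not known to be immaterial for the rings $R_v$ themselves — Remark \ref{rem on fin} explicitly flags this as open — although it does not affect the validity of the diagram chase here.
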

    
    \begin{proof}
    	We consider the following diagram:
    		\[\begin{tikzcd}
    		R_{F^1,p}^\ps  \arrow[r, two heads] \arrow[d, two heads]& \widehat{\otimes}_{v \in \Sigma^1_p} R_{v} \arrow[d, two heads]\arrow[r, dashed] & R_{F^1,0}^{\ps, \chi} \arrow[d] \\
    		R_{F,p}^\ps \arrow[r, two heads] & \widehat{\otimes}_{v \in \Sigma_p} R_{v} \arrow[r, dashed] & R_{F, 0}^{\ps, \chi}.
    	\end{tikzcd}\]
    	
    	We first show that this diagram is commutative. The commutativity of the left square is from 1) of Lemma \ref{com dia1}. Using Proposition \ref{R=T}, we have an isomorphism $R_{F^1,0}^{\ps, \chi} \cong (\T_\km)^{\operatorname{red}}$. By the proof of Corollary \ref{finiteness version 1}, we know that the map $R_{F,p}^\ps \to \T_\km$ factors through $\widehat{\otimes}_{v \in \Sigma_p^1} R_v$. Therefore, the dashed map in the first horizontal line exists. The existence of the other dashed one and the commutativity follow from 2) of Lemma \ref{com dia1} and Lemma \ref{com dia2}.
    	
    	Note that by Corollary \ref{finiteness version 1} and Corollary \ref{finite for 0}, both maps $\widehat{\otimes}_{v \in \Sigma^1_p} R_{v} \to R_{F^1, 0}^{\ps, \chi} $ and $R_{F^1,0}^{\ps, \chi} \to R_{F, 0}^{\ps, \chi} $ are finite. Our result holds by using the commutative diagram and \cite[Lemma 10.36.15]{stacks-project}.
    \end{proof}
    
    Recall that we say the case $ \bar{\chi}|_{G_{F_v}}=\omega=\omega^{-1}$ for any $v \in \Sigma_p$ and $p=3$ is \textit{exceptional}.
    
    \begin{prop}\label{finite same dim}
    	Assume that we are not in the exceptional case.
    	
    	1) The ring $R_{F, 0}^{\ps, \chi}$ is equidimensional of dimension $1+2[F:\Q]$. In particular, $\dim R_{F}^{\ps, \chi}=1+2[F:\Q]$.
    	
    	2) Let $R_{F}^{\ps}$ be the universal pseudo-deformation ring of the pseudo-representation $1+\bar{\chi}$ for $G_{F, \Sigma}$ (without fixed determinant). Then we have $\dim R_{F}^{\ps}=2+2[F:\Q]$.
    	
    	3) For each $v|p$, we have $\dim R_v=3$.
    \end{prop}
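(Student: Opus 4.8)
The plan is to deduce all three assertions simultaneously from a single chain of dimension inequalities, the key inputs being the finite map $\widehat{\otimes}_{v\in\Sigma_p}R_v\to R_{F,0}^{\ps,\chi}$ of Proposition \ref{finiteness 2} and the local upper bound $\dim R_v\le 3$ of Proposition \ref{R_v}(3) (which is available precisely because we are outside the exceptional case).

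First I would establish
\[
1+2[F:\Q]\ \le\ \dim R_{F,0}^{\ps,\chi}\ \le\ \dim\big(\widehat{\otimes}_{v\in\Sigma_p}R_v\big)\ \le\ 1+\sum_{v\in\Sigma_p}\!\big(\dim R_v-1\big)\ \le\ 1+2[F:\Q].
\]
The leftmost inequality combines Proposition \ref{def of 0}(1) (every irreducible component of $R_{F,0}^{\ps,\chi}$ has dimension $\ge 1+2[F:\Q]$) with the non-emptiness of $\Spec R_{F,0}^{\ps,\chi}$ supplied by Corollary \ref{lower}. The second holds because a finite ring homomorphism cannot raise Krull dimension. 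For the third, since each $R_v$ is reduced and $\cO$-flat by Proposition \ref{R_v}(1), I would pass to the finitely many minimal primes $\kq_v$ of each $R_v$ (all avoiding $\varpi$, by $\cO$-flatness) and use Proposition \ref{com alg}(3) and (5) to embed $\widehat{\otimes}_{v\in\Sigma_p}R_v$ into the finite product of the module-finite quotients $\widehat{\otimes}_{v\in\Sigma_p}(R_v/\kq_v)$ over the tuples $(\kq_v)$; each factor is a completed tensor product of $\cO$-flat domains, so Proposition \ref{com alg}(2) gives its dimension as $1+\sum_v(\dim R_v/\kq_v-1)$, whence the bound — exactly as in the proof of Theorem \ref{dim of T}. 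The rightmost inequality uses $\dim R_v\le 3$ and $|\Sigma_p|=[F:\Q]$ (as $p$ splits completely in $F$). Forcing equality throughout yields $\dim R_{F,0}^{\ps,\chi}=1+2[F:\Q]$, hence (with Proposition \ref{def of 0}(1)) the equidimensionality claim in (1), and also $\dim R_v=3$ for every $v\mid p$, which is (3). For the rest of (1): by Corollary \ref{structure of ps} each irreducible component of $\Spec R_F^{\ps,\chi}$ has dimension either $\ge 1+2[F:\Q]$ --- then it is a component of $R_{F,0}^{\ps,\chi}$ and so has dimension exactly $1+2[F:\Q]$ --- or $\le 2<1+2[F:\Q]$; thus $\dim R_F^{\ps,\chi}=1+2[F:\Q]$.

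For (2) I would apply Proposition \ref{det}(1), which gives $R_F^{\ps}\cong R_F^{\ps,\chi}\widehat{\otimes}_\cO\cO[[\Gamma]]$ with $\Gamma$ the maximal pro-$p$ abelian quotient of $G_{F,\Sigma}$. Since $F$ is abelian and totally real, Leopoldt's conjecture holds for $F$, so $\Gamma$ has $\Z_p$-rank $1$; hence $\cO[[\Gamma]]$ is free of finite rank over a one-variable power series subring $\cO[[T]]$, and therefore $R_F^{\ps}$ is finite (indeed free) over $R_F^{\ps,\chi}\widehat{\otimes}_\cO\cO[[T]]=R_F^{\ps,\chi}[[T]]$. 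Consequently $\dim R_F^{\ps}=\dim R_F^{\ps,\chi}[[T]]=\dim R_F^{\ps,\chi}+1=2+2[F:\Q]$ by (1).

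Essentially all the real content sits in the results already established --- Proposition \ref{finiteness 2}, Proposition \ref{R_v}(3) (ultimately Pašk\=unas' dimension estimate) and Corollary \ref{lower} --- so the remaining task is assembly. The one step that requires a little care is the third inequality in the display: controlling $\dim\widehat{\otimes}_{v\in\Sigma_p}R_v$ when the $R_v$ are reduced but not integral domains, handled by the passage to minimal primes via Proposition \ref{com alg}(3) and (5) indicated above.
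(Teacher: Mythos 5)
Your proposal is correct and follows essentially the same route as the paper's proof: bounding $\dim R_{F,0}^{\ps,\chi}$ above by $\dim\widehat{\otimes}_{v\in\Sigma_p}R_v\le 1+2[F:\Q]$ via Propositions \ref{R_v}, \ref{com alg} and \ref{finiteness 2}, below by $1+2[F:\Q]$ via Proposition \ref{def of 0} and Corollary \ref{lower}, forcing equalities that yield all of (1) and (3), and deducing (2) from Proposition \ref{det} together with Leopoldt for abelian $F$. You simply spell out the reduction to minimal primes behind the completed-tensor-product dimension bound and the $\cO[[\Gamma]]\cong$ finite over $\cO[[T]]$ step, which the paper leaves implicit.
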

    
    \begin{proof}
    	As we are not in the exceptional case, by Proposition \ref{R_v}, we know that for each $v|p$, $R_v$ is $\cO$-flat of Krull dimension at most $3$. Further, by Proposition \ref{com alg}, the ring $\widehat{\otimes}_{v \in \Sigma_p} R_{v}$ is of Krull dimension at most $1+2[F: \Q]$. On the other hand, by Proposition \ref{def of 0} and Corollary \ref{lower}, each irreducible component of $ R_{F, 0}^{\ps, \chi}$ has Krull dimension at least $1+2[F:\Q]$. Combining Proposition \ref{finiteness 2}, we have $$
    	1+2[F:\Q] \le \dim R_{F, 0}^{\ps, \chi} \le \dim \widehat{\otimes}_{v \in \Sigma_p} R_{v} \le 1+2[F:\Q].$$
    	This shows both the first and the last assertions.
    	
    	For the second one, it is direct from the first one and Proposition \ref{det} as Leopoldt's conjecture holds in our case.
    	
    \end{proof}
    
    \begin{rem}\label{rem on fin}
    	So far, we have proved two independent but similar finiteness results between the completed tensor product of all local pseudo-deformation rings at $v|p$ and the global deformation ring, using different methods.
    	Both of them are analogous to Pan's ordinary finiteness results (Theorem \ref{ord1} and Theorem \ref{ord2}) in the non-ordinary case, substituting the Iwasawa algebra $\Lambda_F$ with some suitable local pseudo-deformation ring. Although Proposition \ref{finiteness1} applies for more totally real fields, Proposition \ref{finiteness 2} is much stronger so that we can control the Krull dimension of the global pseudo-deformation ring (as in Proposition \ref{finite same dim}) and further use the going-up property to find modular points in the global deformation space (see the following subsection). Note that by Proposition \ref{local pseudo}, $\widehat{\otimes}_{v \in \Sigma_p} R_{v}^\ps$ has Krull dimension $1+3[F:\Q]$, which is much larger than the Krull dimension of $R_{F}^{\ps, \chi}$ (expected to be $1+2[F:\Q]$). Thus, Proposition \ref{finiteness1} is insufficient for our purposes.
    	
    	 More precisely, we actually find a non-trivial local deformation problem defined by $I_v$ such that the natural map $R_v^\ps \to R_F^{\ps, \chi} $ factors through $R_v^\ps/I_v$ by using local-global compatibility results at $p$ and potential pro-modularity. In general, it is unclear whether the map $R_v \to R_F^{\ps, \chi} $ is well-defined. The ring $R_v$ seems to be quite mysterious, and we do not know whether it is equidimensional. Although the definition of $R_v$ comes from the big Hecke algebra for the totally real field $F^1$ in our context, we do not know whether it is actually independent of the choice of such abelian base change.
    	 
    	 In addition, if Pašk\=unas' result (\cite[Corollary 5.13]{Paskunas_2021}) holds in the exceptional case, then by our proof, Proposition \ref{finite same dim} also holds in that case without more arguments.
    	 \end{rem}
    
        \subsection{Zariski closure of modular points in the global deformation space}\label{5.3} In this subsection, we study the Zariski closure of modular points in the Galois deformation space.
        
        We keep all the assumptions on the number field $F$ and the character $\chi$ as in the previous subsection. Recall that $R_{F}^{\ps, \chi}$ is the universal pseudo-deformation ring of the pseudo-representation $1+\bar{\chi}$ for $G_{F, \Sigma}$ with determinant $\chi$, and let $I_{F}$ be the intersection of all minimal primes of dimension at least $1+2[F:\Q]$. Write $ R_{F, 0}^{\ps, \chi}= R_{F}^{\ps, \chi}/I_{F}$.
        
        Assume that we are not in the exceptional case. Recall that by Proposition \ref{finiteness 2}, there exist local deformation rings $R_v, v \in \Sigma_p$ (as quotients of the local pseudo-deformation rings $R_v^\ps, v \in \Sigma_p$) such that the map $\widehat{\otimes}_{v \in \Sigma_p} R_{v} \to R_{F, 0}^{\ps, \chi} $ is well-defined and finite.
        
        For a place $v \in \Sigma_p$ and a prime $\kp$ of $R_v^\ps$, we say that $\kp$ is \textit{ordinary} if the pseudo-representation defined by $\kp $ is ordinary, i.e., a sum of two characters. Otherwise, we say $\kp$ is \textit{non-ordinary}. Let $R_v^{\ps, \ord}$ be the local pseudo-deformation ring parametrizing all ordinary liftings of $1+\bar{\chi}|_{G_{F_v}}$ with determinant $\chi$.
        
        \begin{lem}\label{dim of v ord}
        	The ring $R_v^{\ps, \ord}$ is a power series ring over $\cO$ of relative dimension $2$. Moreover, there exists at most one minimal prime of $R_v$ which is ordinary of dimension $3$.
        \end{lem}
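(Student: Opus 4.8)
The plan is to prove the two assertions in turn, reducing the "moreover" part to a dimension count once the structure of $R_v^{\ps,\ord}$ is in hand.

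For the first assertion, I would identify $R_v^{\ps,\ord}$ with a character deformation ring. An ordinary pseudo-deformation of $1+\bar\chi|_{G_{F_v}}$ with determinant $\chi|_{G_{F_v}}$ is a sum $\psi_1+\psi_2$ of characters lifting $\mathbf 1$ and $\bar\chi|_{G_{F_v}}$, and the fixed determinant forces $\psi_2=\chi|_{G_{F_v}}\psi_1^{-1}$, so the datum amounts to a deformation $\psi_1$ of a character of $G_{F_v}$ (in the degenerate case $\bar\chi|_{G_{F_v}}=\mathbf 1$ one also records the ordering, exactly as for the ring $R^{\ps,\ord}_{v,\mathbf 1}$ used in the proof of Proposition~\ref{potential R=T}). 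Hence $R_v^{\ps,\ord}\cong\cO[[G_{F_v}^{\mathrm{ab}}(p)]]$. Since $p$ splits completely in $F$ we have $F_v\cong\Q_p$, and since $p$ is odd local class field theory gives $G_{\Q_p}^{\mathrm{ab}}(p)\cong\Z_p^2$ (equivalently $H^2(G_{\Q_p},\F)=0$, so the deformation functor of a character is unobstructed of relative dimension $h^1=[\Q_p:\Q_p]+1=2$). Therefore $R_v^{\ps,\ord}\cong\cO[[x,y]]$, a power series ring over $\cO$ of relative dimension $2$, as already recalled in the proof of Proposition~\ref{potential R=T}.

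For the second assertion, let $\alpha\colon R_v^\ps\to R_v^{\ps,\ord}$ be the map classifying the universal reducible pseudo-deformation and set $\ka:=\ker\alpha$. Each $\psi_1(g)$ is a root of $X^2-T(g)X+\chi|_{G_{F_v}}(g)$ over $\im\alpha$, so $R_v^{\ps,\ord}$ is integral over $\im\alpha=R_v^\ps/\ka$; combined with the first assertion, $R_v^\ps/\ka$ is a domain with $\dim R_v^\ps/\ka=3$, so $\ka$ is a prime. Now let $\kq$ be a minimal prime of $R_v=R_v^\ps/I_v$ that is ordinary of dimension $3$, and let $\widetilde{\kq}\supseteq I_v$ be its preimage in $R_v^\ps$. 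By definition $T\otimes_{R_v}k(\kq)$ is a sum of two characters; realizing this decomposition over the normalization of $R_v/\kq$ and labelling the factor lifting $\mathbf 1$ (in the degenerate case either labelling works, since the two differ by an automorphism of $R_v^{\ps,\ord}$ fixing the universal pseudo-deformation, hence fixing $\ka$), one gets an $\cO$-algebra map $\beta$ out of $R_v^{\ps,\ord}$ with $\beta\circ\alpha$ equal to the tautological composite $R_v^\ps\twoheadrightarrow R_v/\kq\hookrightarrow(\text{normalization of }R_v/\kq)$, both maps being determined by the image of the universal pseudo-deformation. Hence $\widetilde{\kq}=\ker(\beta\circ\alpha)\supseteq\ker\alpha=\ka$; but $\dim R_v^\ps/\widetilde{\kq}=\dim R_v/\kq=3=\dim R_v^\ps/\ka$, and a prime of a Noetherian domain whose quotient has the same dimension as the domain is the zero ideal, so $\widetilde{\kq}=\ka$. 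Thus $\kq=\ka/I_v$ is uniquely determined (and occurs only when $\ka\supseteq I_v$), giving the asserted bound; and since $\dim R_v\le 3$ by Proposition~\ref{R_v}, any prime of $R_v$ of dimension $3$ is automatically minimal, so nothing further is needed.

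I expect the main point requiring care to be the degenerate case $\bar\chi|_{G_{F_v}}=\mathbf 1$, where the decomposition $T=\psi_1+\psi_2$ is not canonical: one must either build the ordering into the definition of $R_v^{\ps,\ord}$ (as is done for $R^{\ps,\ord}_{v,\mathbf 1}$ in Section~\ref{sec deformation ring}) or note that swapping $\psi_1$ and $\psi_2$ induces the involution $\psi_1\mapsto\chi|_{G_{F_v}}\psi_1^{-1}$ of $R_v^{\ps,\ord}\cong\cO[[x,y]]$, which fixes $\alpha$ and hence $\ka$; and one must keep in mind that in this case $\im\alpha$ is only the $\Z/2$-invariant subring of $\cO[[x,y]]$ rather than a regular ring, though it is still a $3$-dimensional domain, which is all the dimension count uses. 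The small technical step of realizing the two characters over (the normalization of) $R_v/\kq$ rests on the fact that a generically reducible two-dimensional pseudo-representation over a normal domain is a genuine sum of characters (see \cite{bc09}). Everything else — the class field theory computation, the integrality of $R_v^{\ps,\ord}$ over $\im\alpha$, and the dimension count forcing $\widetilde{\kq}=\ka$ — is routine.
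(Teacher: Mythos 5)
Your proposal is correct in its conclusions, but it takes a genuinely different route for the first assertion from the paper. The paper obtains the structure of $R_v^{\ps,\ord}$ by citing Pa\v sk\=unas's explicit computations of the local pseudo-deformation ring, split into the three cases $\bar\chi|_{G_{F_v}}\neq\mathbf{1},\omega^{\pm1}$, $\bar\chi|_{G_{F_v}}=\mathbf{1}$, and $\bar\chi|_{G_{F_v}}=\omega$, while you argue directly via local class field theory and unobstructedness of the one-dimensional deformation functor ($H^2(G_{\Q_p},\F)=0$, $h^1=2$); your approach is more self-contained and conceptual. However, note a point where your conventions may drift from the paper's. You identify $R_v^{\ps,\ord}$ with the character deformation ring $\cO[[G_{F_v}^{\mathrm{ab}}(p)]]$ ``recording the ordering,'' but the paper's $R_v^{\ps,\ord}$ is defined as the ring pro-representing ordinary pseudo-deformations, i.e.\ it is a \emph{quotient} of $R_v^\ps$ (the paper later refers to ``the surjection $R_v^\ps\twoheadrightarrow R_v^{\ps,\ord}$''). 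In the multiplicity-free case $\bar\chi|_{G_{F_v}}\neq\mathbf{1}$ these two descriptions agree, since Hensel lifting makes the decomposition $T=\psi_1+\psi_2$ canonical, so $\alpha$ is surjective and $R_v^{\ps,\ord}\cong\cO[[G_{F_v}^{\mathrm{ab}}(p)]]$. In the degenerate case $\bar\chi|_{G_{F_v}}=\mathbf{1}$, the image of $\alpha$ is a proper subring of $\cO[[x,y]]$ (invariants under $\psi_1\mapsto\chi\psi_1^{-1}$), and whether the paper's quotient $R_v^{\ps,\ord}$ is a power series ring genuinely requires the Pa\v sk\=unas computation (Prop.\ 9.12/Cor.\ 9.13)---your CFT argument alone establishes that $\im\alpha$ is a $3$-dimensional domain but not that the quotient is regular. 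You correctly observe that the ``moreover'' part of the lemma only needs $\im\alpha$ (or $\ker\alpha$) to be a $3$-dimensional prime, so your dimension-theoretic argument for the uniqueness of the ordinary minimal prime---which is essentially the same as what the paper compresses into ``it is clear from the first one and 3) of Proposition \ref{finite same dim}''---goes through unchanged. So: route to the first claim is different and slightly weaker in the degenerate case (but you flag this correctly); the second claim's proof matches the paper in substance, with more detail supplied.
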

    
    \begin{proof}
    	We show the first assertion. For the case $\bar{\chi}|_{G_{F_v}} \ne \mathbf{1}, \omega^{\pm 1}$, one can see \cite[Remark B.28]{Pa_k_nas_2013}. For the case $\bar{\chi}|_{G_{F_v}} = \mathbf{1}$, we can prove it by \cite[Proposition 9.12 \& Corollary 9.13]{Pa_k_nas_2013}. For the case $\bar{\chi}|_{G_{F_v}}= \omega $ and $p \ge 5$, this is \cite[Corollary B.6]{Pa_k_nas_2013}.
    	
    	For the second sentence, it is clear from the first one and 3) of Proposition \ref{finite same dim}.
    \end{proof}
    
    \begin{rem}
    	The proof of this lemma relies on the explicit calculation of the local pseudo-deformation ring $R_v^{\ps}$, but such a result is not known in the exceptional case. However, we can also prove this result in that case in the following way.
    	
    	Let $(R_1, \psi_1^{\operatorname{univ}})$ and $(R_{\bar{\chi}}, \psi_{\bar{\chi}}^{\operatorname{univ}})$ be the universal deformation of the $1$-dimensional character $\mathbf{1}$ and $\bar{\chi}|_{G_{F_v}}$ respectively. As $F_v \cong \Q_p$, it is well known that both $R_1$ and $ R_{\bar{\chi}}$ are power series rings over $\cO$ of relative dimension $2$. On the one hand, by the universal property, we have natural maps $ R_v^{\ps, \ord} \to R_1 \to R_v^{\ps, \ord}$. The first one is defined by the pseudo-representation $ \psi_1^{\operatorname{univ}}+\chi|_{G_{F_v}}( \psi_1^{\operatorname{univ}})^{-1}$. If $T=\psi_1+\psi_{\bar{\chi}}$ is the universal ordinary pseudo-deformation with determinant $\chi|_{G_{F_v}}$, then the second one is defined by $\psi_1$ (lifting the trivial character). Hence, the composite map is an identity, which implies that the map $R_1 \to R_v^{\ps, \ord} $ is a surjection. Using similar discussions, we can also show that the map $R_v^{\ps, \ord} \to R_1 $ is a surjection as well. This shows that $R_v^{\ps, \ord} $ is isomorphic to $ R_1$, hence a power series ring over $\cO$ of relative dimension $2$.
    	
    	%On the other hand, let $R_v^{\ps, \operatorname{red}}$ be the universal ordinary pseudo-deformation ring (without fixed determinant). By \cite[Proposition 2.3.2]{Zhang:2024aa}, this is a power series ring over $\cO$ of relative dimension $4$. By definition, we have $R_v^{\ps, \ord}= R_v^{\ps, \operatorname{red}} \otimes_{R_{\bar{\chi}}} \cO$, where the map $R_{\bar{\chi}} \to \cO $ is defined by $\chi|_{G_{F_v}}$. Then by Krull's principal ideal theorem, we know that $R_v^{\ps, \ord}$ is of dimension at least $3$. Thus, $R_v^{\ps, \ord}$ is a power series ring over $\cO$ of relative dimension $2$.
        \end{rem}
    
        \begin{lem}\label{lem of Rv}
        	For a minimal prime $\kq$ of $R_v$ of dimension $3$, if $\kq$ is non-ordinary, then $\kq$ is the intersection of all non-ordinary regular de Rham primes containing $\kq$.
        \end{lem}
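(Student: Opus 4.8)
The plan is to partition the regular de Rham primes containing $\kq$ according to whether they are ordinary, and to show that the ordinary ones are too few to carve out $\kq$ on their own; the conclusion then falls out of the irreducibility of $\Spec R_v/\kq$.

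First I would record the basic structure. Since $R_v$ is reduced and $\cO$-flat (Proposition \ref{R_v}) and $\kq$ is a minimal prime of dimension $3$, the quotient $A := R_v/\kq$ is an $\cO$-flat domain with $\dim A = 3$, so $\Spec A = V(\kq)$ is irreducible of dimension $3$. By part (2) of Proposition \ref{R_v}, $\kq$ is the intersection of all regular de Rham primes of $R_v$ containing it; write $\mathfrak{r}_{\mathrm{o}}$ for the intersection of those which are ordinary and $\mathfrak{r}_{\mathrm{n}}$ for the intersection of those which are non-ordinary, so that $\kq = \mathfrak{r}_{\mathrm{o}} \cap \mathfrak{r}_{\mathrm{n}}$. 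Both $\mathfrak{r}_{\mathrm{o}}$ and $\mathfrak{r}_{\mathrm{n}}$ are radical, being intersections of primes, and the goal is to prove $\mathfrak{r}_{\mathrm{n}} = \kq$.

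The key step is the claim that $V(\mathfrak{r}_{\mathrm{o}})$ is a \emph{proper} closed subset of $V(\kq)$. Every ordinary prime of $R_v$ contains the ideal cutting out the closed ordinary locus $\Spec(R_v^{\ps,\ord} \otimes_{R_v^\ps} R_v)$, whose dimension is at most $\dim R_v^{\ps,\ord} = 3$ by Lemma \ref{dim of v ord}. By the second assertion of Lemma \ref{dim of v ord}, at most one $3$-dimensional component of this ordinary locus is a component of $\Spec R_v$; if such a component exists it is $V(\kq_0)$ for a minimal prime $\kq_0$ which is ordinary, and $\kq_0 \neq \kq$ since $\kq$ is non-ordinary. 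Hence the ordinary regular de Rham primes containing $\kq$ all lie in $(\text{ordinary locus}) \cap V(\kq)$, which is contained in the union of $V(\kq_0) \cap V(\kq)$ (absent if $\kq_0$ does not exist) with a closed set of dimension $\le 2$; as $V(\kq)$ and $V(\kq_0)$ are distinct irreducible closed subsets of dimension $3$, the first intersection is a proper closed subset of $V(\kq)$, of dimension $\le 2$. Therefore $V(\mathfrak{r}_{\mathrm{o}})$ has dimension $\le 2 < 3 = \dim V(\kq)$, so $V(\mathfrak{r}_{\mathrm{o}}) \subsetneq V(\kq)$.

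Since $V(\kq) = V(\mathfrak{r}_{\mathrm{o}}) \cup V(\mathfrak{r}_{\mathrm{n}})$ and $V(\kq)$ is irreducible with $V(\mathfrak{r}_{\mathrm{o}}) \subsetneq V(\kq)$, we must have $V(\mathfrak{r}_{\mathrm{n}}) = V(\kq)$; as $\mathfrak{r}_{\mathrm{n}}$ is radical this forces $\mathfrak{r}_{\mathrm{n}} = \kq$, which is exactly the assertion. The hard part is the identification used in the key step: that a prime of $R_v$ is ordinary precisely when it lies in the image of $\Spec R_v^{\ps,\ord}$, so that ``ordinary'' is a closed condition to which the dimension bound of Lemma \ref{dim of v ord} applies. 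This should follow from the universal property presenting $R_v^{\ps,\ord}$ as a quotient of $R_v^\ps$, together with the fact that, for the residual pseudo-characters in play, ordinarity of a pseudo-representation just means being a sum of two characters; one must also check that $R_v^\ps/\kp$ lies in $\cC_\cO$ so the universal property is available at such primes.
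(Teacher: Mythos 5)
Your proof is correct and begins from the same decomposition $\kq = \mathfrak{r}_{\mathrm{o}} \cap \mathfrak{r}_{\mathrm{n}}$ used in the paper, but it then diverges and is rather more elaborate than necessary. The paper's proof immediately invokes primality of $\kq$ (via \cite[Lemma 10.15.1]{stacks-project}): since $\mathfrak{r}_{\mathrm{o}}\mathfrak{r}_{\mathrm{n}}\subseteq\kq$ and $\kq$ is prime, one of $\mathfrak{r}_{\mathrm{o}},\mathfrak{r}_{\mathrm{n}}$ equals $\kq$. To rule out $\kq=\mathfrak{r}_{\mathrm{o}}$ it then observes that every ordinary prime contains the height-one prime $\kp^{\ord}=\ker(R_v^\ps\twoheadrightarrow R_v^{\ps,\ord})$, so $\mathfrak{r}_{\mathrm{o}}\supseteq\kp^{\ord}$; since both $\kp^{\ord}$ and (the preimage in $R_v^\ps$ of) $\kq$ are $3$-dimensional primes in the $4$-dimensional domain $R_v^\ps$, this would force $\kq=\kp^{\ord}$ and hence $\kq$ ordinary, a contradiction. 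Your argument replaces prime avoidance by the irreducibility of $V(\kq)$, which is fine, but the elaborate bound you give on the dimension of the ordinary locus inside $V(\kq)$ — invoking the second assertion of Lemma \ref{dim of v ord} and analyzing components $V(\kq_0)$ — is unnecessary overhead. You only need $V(\mathfrak{r}_{\mathrm{o}})\subsetneq V(\kq)$, and this follows directly: if $V(\mathfrak{r}_{\mathrm{o}})=V(\kq)$ then $\mathfrak{r}_{\mathrm{o}}=\kq$, but $\mathfrak{r}_{\mathrm{o}}\supseteq\kp^{\ord}$ and any prime containing $\kp^{\ord}$ is ordinary (its pseudo-representation factors through $R_v^{\ps,\ord}$ and so is a sum of two characters), contradicting that $\kq$ is non-ordinary — no appeal to ``at most one ordinary $3$-dimensional minimal prime'' is needed. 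The identification of ``ordinary'' with ``contains $\kp^{\ord}$,'' which you flag as the hard part, is exactly what the paper's $\kp^{\ord}$-based argument uses as well, so you are on the same footing there.
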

    
       \begin{proof}
       	By Proposition \ref{R_v}, we know that $\kq$ is the intersection of all regular de Rham primes containing $\kq$. Then we can write $\kq = \mathfrak{r}_1 \cap \mathfrak{r}_2$, where $\mathfrak{r}_1$ is the intersection of all non-ordinary regular de Rham primes and $\mathfrak{r}_2$ is the intersection of all ordinary ones. Then by \cite[Lemma 10.15.1]{stacks-project}, we have $\kq = \mathfrak{r}_1 $ or $\kq=\mathfrak{r}_2$.
       	
       	Let $\kp^{\ord}$ be the kernel of the surjection $R_v^\ps \twoheadrightarrow R_v^{\ps, \ord}$. By Proposition \ref{local pseudo} and Lemma \ref{dim of v ord}, it is a prime ideal of height $1$. If $\kq=\mathfrak{r}_2 $, then we have $\kq = \mathfrak{r}_2 \supset \kp^{\ord}$. This implies $ \kq = \kp^{\ord} $, and hence $\kq$ is ordinary, which is contrary to our assumption.
         \end{proof}
    
    Let $\kp$ be a minimal prime of $R_{F, 0}^{\ps, \chi} $, and let $T(\kp) $ be the pseudo-representation of $G_{F, \Sigma}$ defined by $\kp$. Recall that we have well-defined maps $R_v \to \widehat{\otimes}_{v |p} R_{v} \to R_{F, 0}^{\ps, \chi}$. Write $\kp_v$ for the inverse image of $\kp$ via the composite map.
    
    \begin{lem}\label{char 0}
    	1) The ring $R_{F, 0}^{\ps, \chi}/\kp $ is of characteristic $0$.
    	
    	2) For each $v|p$, $\kp_v$ is a minimal prime of dimension $3$.
    \end{lem}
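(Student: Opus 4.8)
The plan is to read off both parts from the finite homomorphism $A:=\widehat{\otimes}_{v\in\Sigma_p}R_v\to R_{F,0}^{\ps,\chi}$ of Proposition \ref{finiteness 2}, combined with the dimension information already in hand. We keep the assumption that we are not in the exceptional case, so that $\dim R_v\le 3$ for each $v|p$ (part (3) of Proposition \ref{R_v}), each $R_v$ and hence $A$ is $\cO$-flat (part (1) of Proposition \ref{R_v} and part (1) of Proposition \ref{com alg}), $\dim A\le 1+2[F:\Q]$ (Propositions \ref{com alg} and \ref{R_v}), and $R_{F,0}^{\ps,\chi}$ is equidimensional of dimension $1+2[F:\Q]$ (part (1) of Proposition \ref{finite same dim}); also $[F:\Q]=|\Sigma_p|$ since $p$ splits completely. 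For part 1 I would argue by contradiction: if $\varpi\in\kp$, then $R_{F,0}^{\ps,\chi}/\kp$ is a quotient of $R_{F,0}^{\ps,\chi}/\varpi$, which is module-finite over $A/\varpi A$; since $A$ is $\cO$-flat, $\varpi$ is a non-zerodivisor on $A$ and $\dim A/\varpi A=\dim A-1\le 2[F:\Q]$, so $\dim R_{F,0}^{\ps,\chi}/\kp\le 2[F:\Q]$. But $\kp$ is a minimal prime of the equidimensional ring $R_{F,0}^{\ps,\chi}$, so $\dim R_{F,0}^{\ps,\chi}/\kp=1+2[F:\Q]$, a contradiction; hence $\varpi\notin\kp$ and $R_{F,0}^{\ps,\chi}/\kp$ has characteristic $0$.

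For part 2, let $\kP$ be the contraction of $\kp$ along $A\to R_{F,0}^{\ps,\chi}$. Then $A/\kP\hookrightarrow R_{F,0}^{\ps,\chi}/\kp$ with the target module-finite over the source, so $\dim A/\kP=\dim R_{F,0}^{\ps,\chi}/\kp=1+2[F:\Q]$. Write $\kp_v$ for the contraction of $\kP$ (equivalently of $\kp$) to $R_v$; by part 1, $\varpi\notin\kP$, hence each $R_v/\kp_v$ is an $\cO$-flat domain. The ideal of $A$ generated by the $\kp_v$ is contained in $\kP$, so part (3) of Proposition \ref{com alg} gives a surjection $\widehat{\otimes}_{v\in\Sigma_p}(R_v/\kp_v)\twoheadrightarrow A/\kP$, and part (2) of Proposition \ref{com alg} together with $\dim R_v\le 3$ then yields
\[
1+2[F:\Q]=\dim A/\kP\le\dim\widehat{\otimes}_{v\in\Sigma_p}(R_v/\kp_v)=1+\sum_{v\in\Sigma_p}\bigl(\dim R_v/\kp_v-1\bigr)\le 1+2[F:\Q].
\]
Equality forces $\dim R_v/\kp_v=3$ for every $v|p$. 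Finally, since $R_v$ is a complete Noetherian local ring it is catenary, so $\dim R_v/\kp_v=3=\dim R_v$ forces $\kp_v$ to be minimal: if $\kq\subsetneq\kp_v$ were a minimal prime, then in the catenary local domain $R_v/\kq$ we would get $3=\dim R_v/\kp_v<\hht(\kp_v/\kq)+\dim R_v/\kp_v=\dim R_v/\kq\le 3$, which is absurd.

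The arguments above are purely formal once the inputs are granted; the substantive content lies entirely in the finiteness of Proposition \ref{finiteness 2}, the equidimensionality of Proposition \ref{finite same dim}, and the bound $\dim R_v\le3$ of Proposition \ref{R_v}. The only steps requiring genuine care — what I would regard as the ``main obstacle'' here — are checking the $\cO$-flatness hypotheses so that the completed-tensor-product identities of Proposition \ref{com alg} apply to the quotients $R_v/\kp_v$ and $R_{F,0}^{\ps,\chi}/\kp$, and then upgrading the numerical equality $\dim R_v/\kp_v=\dim R_v$ to the statement that $\kp_v$ is a minimal prime by way of catenariness of $R_v$; everything else is forced by the numerics.
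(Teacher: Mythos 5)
Your proof is correct and follows essentially the same route as the paper's: in both cases the content is squeezed out of the finite map $\widehat{\otimes}_{v|p}R_v\to R_{F,0}^{\ps,\chi}$ together with the upper bound $\dim R_v\le 3$ and the equidimensionality of $R_{F,0}^{\ps,\chi}$. For part 1, you pass directly through $A/\varpi A$ while the paper passes through $\widehat{\otimes}_{v|p}R_v/\kp_v$; both exploit $\cO$-flatness to see the dimension drop. For part 2, you establish $\dim R_v/\kp_v=3$ by a clean squeeze
\[
1+2[F:\Q]=\dim A/\kP\le\dim\widehat{\otimes}_{v|p}(R_v/\kp_v)=1+\textstyle\sum_v(\dim R_v/\kp_v-1)\le 1+2[F:\Q],
\]
and then deduce minimality; the paper's wording uses a height formula $1+2[F:\Q]-\sum\hht(\kp_v)$ for the tensor product which implicitly presupposes $\dim R_v=3$ and equidimensionality of $R_v$, so your version is, if anything, a bit more careful. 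One small simplification: the final step does not actually need catenariness. Once you know $\dim R_v/\kp_v=3\ge\dim R_v$, the inequality $\hht(\kp_v)+\dim R_v/\kp_v\le\dim R_v$, valid in any Noetherian local ring, already forces $\hht(\kp_v)=0$, i.e.\ $\kp_v$ minimal.
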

    
    \begin{proof}
    	We prove the first assertion. If not, then for each $v|p$,  we have $\varpi \in \kp_v$. By (1) of Proposition \ref{R_v}, we know that $\kp_v$ is not a minimal prime of $R_v$, and hence of dimension at most $2$. Consider the composite map $\widehat{\otimes}_{v |p} R_{v} \to R_{F, 0}^{\ps, \chi} \twoheadrightarrow R_{F, 0}^{\ps, \chi}/\kp$, which is finite by Proposition \ref{finiteness 2}. From our definition of $\kp_v$, we know that this finite map factors through $\widehat{\otimes}_{v |p} R_{v}/\kp_v$, which is of Krull dimension at most $2[F:\Q]$. By Proposition \ref{finite same dim}, $\kp$ is of dimension $1+2[F: \Q]$. We get a contradiction.
    	
    	From the first assertion, we know that $\kp_v$ is of characteristic $0$. Then by Proposition \ref{com alg}, the ring $\widehat{\otimes}_{v |p} R_{v}/\kp_v$ is of Krull dimension $1+ 2[F: \Q] - \sum_{v|p} \textnormal{ht}(\kp_v)$. This shows the second assertion.
    \end{proof}
    
    \begin{lem}\label{min is non-ord}
    	The pseudo-representation $T(\kp) $ is non-ordinary at each $v|p$.
    \end{lem}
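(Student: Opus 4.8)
The plan is to argue by contradiction, reducing an ordinary place to an impossible Krull-dimension estimate. Suppose that for some place $v_0\mid p$ the pseudo-representation $T(\kp)$ is ordinary at $v_0$, i.e.\ $T(\kp)|_{G_{F_{v_0}}}$ is a sum of two characters.

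First I would identify the local behaviour at $v_0$ using the lemmas already established. By Lemma~\ref{char 0}(2), the inverse image $\kp_{v_0}$ of $\kp$ under $R_{v_0}\to\widehat{\otimes}_{v\mid p}R_v\to R_{F,0}^{\ps,\chi}$ is a minimal prime of $R_{v_0}$ of dimension $3$; being ordinary, it is by Lemma~\ref{dim of v ord} the unique ordinary minimal prime of $R_{v_0}$ of dimension $3$, and running the dichotomy from the proof of Lemma~\ref{lem of Rv} forces $\kp_{v_0}=\kp^{\ord}_{v_0}$, the kernel of $R_{v_0}^\ps\twoheadrightarrow R_{v_0}^{\ps,\ord}$. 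Hence $R_{v_0}/\kp_{v_0}\cong R_{v_0}^{\ps,\ord}$ injects into $R_{F,0}^{\ps,\chi}/\kp$, and $T(\kp)|_{G_{F_{v_0}}}$ is the universal ordinary pseudo-representation $\psi_{v_0,1}+\psi_{v_0,2}$ with $\psi_{v_0,1}$ a lift of the trivial character; in particular (using \cite[Lemma~5.3.2]{pan2022fontaine} when $\bar{\chi}|_{G_{F_{v_0}}}=\mathbf{1}$) $T(\kp)$ factors through the universal pseudo-deformation ring of $1+\bar{\chi}$ with determinant $\chi$ classifying deformations ordinary at $v_0$, so $R_{F,0}^{\ps,\chi}/\kp$ is a quotient of that ring.

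The remaining, and main, step is to bound the Krull dimension of this ``$v_0$-ordinary'' deformation ring, and this is where I expect the real obstacle to lie. The guiding principle is that along an ordinary Hida-type family the unramified part of $\psi_{v_0,1}$ is governed by the $U_{v_0}$-operator and is not a free deformation parameter, so ordinariness at $v_0$ should cost one Krull dimension compared with the full local ring $R_{v_0}$. To make this precise I would base change to the auxiliary abelian extension $F^1$ of Subsection~\ref{sec finite 2}: by the commutative diagrams of Lemmas~\ref{com dia1} and~\ref{com dia2}, Proposition~\ref{R=T} and Proposition~\ref{potential R=T}, the prime $\widetilde{\kp}:=\kp\cap R_{F^1,0}^{\ps,\chi}$ is pro-modular, and (as $v_0$ splits completely in $F^1$, so $F^1_{v'}\cong\Q_p$ for $v'\mid v_0$) its images in $R_{v'}$ are the corresponding $\kp^{\ord}_{v'}$; thus the relevant big Hecke algebra over $F^1$ is ordinary at all places above $v_0$. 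Arguing as in the proof of \cite[Theorem~5.1.1]{pan2022fontaine} (resp.\ \cite[Theorem~6.1.1]{pan2022fontaine}) at those places, while keeping the full Pašk\=unas-theoretic $\GL_2$-action at the remaining $p$-adic places and transporting back to $F$ as in the proof of Proposition~\ref{finiteness 2}, one should obtain that $R_{F,0}^{\ps,\chi}/\kp$ is finite over $\cO[[\cO_{F_{v_0}}^\times(p)]]\,\widehat{\otimes}\,\widehat{\otimes}_{v\in\Sigma_p\setminus\{v_0\}}R_v$. By Proposition~\ref{com alg} and Proposition~\ref{R_v} (as in the proof of Theorem~\ref{dim of T}) this ring has Krull dimension at most $1+1+2\bigl([F:\Q]-1\bigr)=2[F:\Q]$, whence $\dim R_{F,0}^{\ps,\chi}/\kp\le 2[F:\Q]$, contradicting Proposition~\ref{finite same dim}, which gives $\dim R_{F,0}^{\ps,\chi}/\kp=1+2[F:\Q]$. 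The delicate point is precisely the construction of this ``mixed'' ordinary/non-ordinary finiteness map — in particular checking that, after imposing the global constraint encoded in $I_{v_0}$ (equivalently, pro-modularity), the ordinary local ring $R_{v_0}^{\ps,\ord}$ contributes only its one-dimensional weight factor $\cO[[\cO_{F_{v_0}}^\times(p)]]$ rather than its full relative dimension two.
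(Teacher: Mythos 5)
Your setup (identifying $\kp_{v_0}=\kp^{\ord}_{v_0}$ via Lemma~\ref{char 0}(2), Lemma~\ref{dim of v ord} and the dichotomy in Lemma~\ref{lem of Rv}, so that $R_{v_0}/\kp_{v_0}\cong R_{v_0}^{\ps,\ord}$) agrees with the paper, and the high-level shape of your plan — derive a Krull-dimension contradiction against $\dim R_{F,0}^{\ps,\chi}/\kp=1+2[F:\Q]$ by showing the ordinary-at-$v_0$ locus is too small — is also the paper's. But the crucial step, which you yourself flag as ``the delicate point,'' is a genuine gap, and the paper does not close it the way you expect. A ``mixed'' ordinary/non-ordinary finiteness statement over $\cO[[\cO_{F_{v_0}}^\times(p)]]\,\widehat{\otimes}\,\bigl(\widehat{\otimes}_{v\ne v_0}R_v\bigr)$ is nowhere established, and it is not easy to manufacture by interleaving Pan's ordinary argument at $v_0$ with Pašk\=unas theory elsewhere: in Pan's proof the suppression of the unramified character at an ordinary place comes from a global ordinarity constraint at \emph{every} $v|p$, and the mechanism (Hida theory) is structurally different from the non-ordinary local-global compatibility used at the other places. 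Moreover, the existing finiteness (Proposition~\ref{finiteness 2}) alone gives you no contradiction: specializing at $v_0$ to $R_{v_0}^{\ps,\ord}$ still contributes its full relative dimension~$2$, so the source retains dimension $1+2[F:\Q]$.

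The paper's proof sidesteps the mixed finiteness entirely. It takes the already-established finite injection $(\widehat{\otimes}_{v|p}R_v/\kp_v)/\kp'\hookrightarrow R_{F,0}^{\ps,\chi}/\kp$, then specializes much harder: it mods out by $\varpi$ and by the maximal ideals $\km_{v'}$ for \emph{all} $v'\ne v$, producing the dimension-$2$ quotient $R_v^{\ps,\ord}/(\varpi)$ of the source. By going-up there is a prime $\kq\supset\kp$ of $R_{F,0}^{\ps,\chi}$ lying above this, with $\dim R_{F,0}^{\ps,\chi}/\kq=2$, and the essential observation is that $T(\kq)$ is then \emph{globally} ordinary (trivial reduction at $v'\ne v$, ordinary at $v$). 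Hence $R_{F,0}^{\ps,\chi}/\kq$ is a quotient of the honest global ordinary pseudo-deformation ring, and Pan's global ordinary finiteness (Theorem~\ref{ord1} or~\ref{ord2}) applies off the shelf: $\Lambda_F\to R_{F,0}^{\ps,\chi}/\kq$ is finite. But by construction the kernel of this map contains $(\varpi,X_{v'}:v'\ne v)$, so the image of $\Lambda_F$ has dimension at most $1$, contradicting $\dim R_{F,0}^{\ps,\chi}/\kq=2$. In short, the paper never proves a new finiteness theorem: it reduces, by a clever closed-fibre specialization plus going-up, to a point where the known global ordinary finiteness already does the job. If you want to salvage your route, you would need to actually prove the mixed finiteness — which looks at least as hard as the results you would be re-proving — whereas the paper's specialization is a short and self-contained way to close the gap.
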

    
    \begin{proof}
    	The method of this lemma is similar to the one of Proposition \ref{finiteness1}. Assume that there exists a place $v\in \Sigma_p$ such that $T(\kp)|_{G_{F_v}}$ is ordinary, and we derive a contradiction.
    	
    	 By Lemma \ref{dim of v ord} and Lemma \ref{char 0}, we know that $R_v/\kp_v\cong R_v^{\ps, \ord}$. Write $\kp'$ for the kernel of the map $\widehat{\otimes}_{v |p} R_{v}/\kp_v \to R_{F, 0}^{\ps, \chi}/\kp$. By our previous discussions, $\kp'$ is also a minimal prime. Write $\km_v$ for the maximal ideal of $R_v$.
    	
    	 Recall that we have a finite map $$\widehat{\otimes}_{v |p} R_{v}/\kp_v= \widehat{\otimes}_{v' |p, v' \ne v} (R_{v'}/\kp_{v'}) \widehat{\otimes}_{\cO} R_v^{\ps, \ord} \to R_{F, 0}^{\ps, \chi}/\kp.$$
    	Consider the natural map $ \widehat{\otimes}_{v' |p, v' \ne v} (R_{v'}/\kp_{v'}) \to \widehat{\otimes}_{v' |p, v' \ne v} (R_{v'}/\kp_{v'}) \widehat{\otimes}_{\cO} R_v^{\ps, \ord}$, and let $\kp''$ be the inverse image of $\kp'$ via this map. Then we get a natural surjection $$(\widehat{\otimes}_{v' |p, v' \ne v} (R_{v'}/\kp_{v'}))/\kp'' \widehat{\otimes}_{\cO} R_v^{\ps, \ord} \twoheadrightarrow (\widehat{\otimes}_{v |p} R_{v}/\kp_v)/\kp'. $$
    	Combining Proposition \ref{com alg} and Lemma \ref{dim of v ord}, we know that both rings are integral domains (as $ R_v^{\ps, \ord}$ is a power series ring) with Krull dimension $1+2[F:\Q]$. Hence, it is an isomorphism. This implies that the ring $\widehat{\otimes}_{v' |p, v' \ne v} (R_{v'}/\km_{v'}) \widehat{\otimes}_{\cO} R_v^{\ps, \ord}/(\varpi)$, which is actually a power series ring over $\F$ of relative dimension two by Lemma \ref{dim of v ord}, is a quotient of $(\widehat{\otimes}_{v |p} R_{v}/\kp_v)/\kp' $, and we write $\kq'$ for its kernel. Consider the chain $\kp' \subset \kq'$. As $(\widehat{\otimes}_{v |p} R_{v}/\kp_v)/\kp' \to R_{F, 0}^{\ps, \chi}/\kp$ is a finite map between integral domains, by the going-up property (see \cite[Theorem 5.E]{matsumura80}), there exists a prime $\kq$ of $R_{F, 0}^{\ps, \chi}$ containing $ \kp$ lying above $\kq'$.
    	
    	We first consider the case $\bar{\chi}|_{G_{F_v}} \ne \mathbf{1}$ for any $v|p$. Let $T(\kq')$ be the pseudo-deformation corresponding to the prime $\kq'$. It has the property that for any $v' \in \Sigma_p, v' \ne v$, $T(\kq')|_{G_{F_{v'}}} $ is just the trivial lifting, and for the place $v$, $T(\kq')|_{G_{F_{v}}} $ corresponds to the local pseudo-representation defined by $ R_v^{\ps, \ord}/(\varpi)$. In particular, $T(\kq')$ is an ordinary pseudo-deformation lifting $1+\bar{\chi}$, and hence we get a surjection $R^{\ps, \ord} \twoheadrightarrow R_{F, 0}^{\ps, \chi}/\kq'$, where $R^{\ps, \ord}$ is the universal ordinary pseudo-deformation ring defined in Subsection \ref{sec deformation ring}. Then by Theorem \ref{ord1}, the composite map $\Lambda_F \to R^{\ps, \ord} \twoheadrightarrow R_{F, 0}^{\ps, \chi}/\kq'$ is finite. From our construction of the prime $\kq'$, the kernel of the composite map contains the ideal $(\varpi, X_{v'})$ for all $v' \ne v$, where $X_{v'} $ corresponds to the generator of $\cO_{F_{v'}}^\times(p)$. In other words, we get a finite map $\F[[X_v]] \to R_{F, 0}^{\ps, \chi}/\kq'$. We get a contradiction as $\dim \F[[X_v]] =1$ and $\dim R_{F, 0}^{\ps, \chi}/\kq' =\dim R_v^{\ps, \ord}/(\varpi) =2$.
    	
    	Now consider the case $\bar{\chi}|_{G_{F_v}} = \mathbf{1}$ for any $v|p$. By Definition \ref{construction}, the pseudo-deformation $T(\kq') \otimes k(\kq')$ defines a Galois representation $\rho(\kq') : G_{F, \Sigma} \to \GL_2(k(\kq'))$. Combining \cite[Lemma 5.3.2]{pan2022fontaine}, we get a surjection $R_1^{\ps, \ord} \twoheadrightarrow R_{F, 0}^{\ps, \chi}/\kq'$, where $R_1^{\ps, \ord}$ is the universal ordinary pseudo-deformation ring defined in Subsection \ref{sec deformation ring}. Then we can use the same arguments as in the previous paragraph and get a contradiction as well, replacing Theorem \ref{ord1} by Theorem \ref{ord2}.
    \end{proof}
    
    We say that a one-dimensional prime $\mathfrak{r}$ of $\widehat{\otimes}_{v |p} R_{v}/\kp_v$ is \textit{non-ordinary regular de Rham}, if for any $v|p$, the inverse image of $\mathfrak{r}$ via the composite map $R_v \to \widehat{\otimes}_{v |p} R_{v}/\kp_v \twoheadrightarrow (\widehat{\otimes}_{v |p} R_{v}/\kp_v)/\mathfrak{r}$ is non-ordinary and regular de Rham.
    
    Let $\kp'$ be the inverse image of the prime $\kp$ via the finite map $\widehat{\otimes}_{v |p} R_{v} \to R_{F, 0}^{\ps, \chi}$. Then $\kp'$ is a minimal prime of $\widehat{\otimes}_{v |p} R_{v}$, and also a minimal prime of the ideal $(\kp_{v_1}, \cdots, \kp_{v_d})$, where $\{v_i: 1 \le i \le d \} = \Sigma_p$ and $d=[F: \Q]$.
    
    \begin{lem}\label{non-ord regular de Rham}
    	The prime $\kp'$ is the intersection of all non-ordinary regular de Rham primes containing $\kp'$.
    \end{lem}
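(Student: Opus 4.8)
The plan is to work inside the ring $B:=\widehat{\otimes}_{v|p}(R_v/\kp_v)$, which by 3) of Proposition \ref{com alg} equals $(\widehat{\otimes}_{v|p}R_v)/(\kp_{v_1},\dots,\kp_{v_d})$, and in which $\kp'$ becomes a minimal prime (it is minimal over $(\kp_{v_1},\dots,\kp_{v_d})$ by the discussion preceding the lemma). By Lemma \ref{char 0} and Lemma \ref{min is non-ord}, each $\kp_v$ is a non-ordinary minimal prime of $R_v$ of dimension $3$, so Lemma \ref{lem of Rv} gives $\kp_v=\bigcap_{\kq^{(v)}}\kq^{(v)}$, the intersection running over all non-ordinary regular de Rham primes $\kq^{(v)}$ of $R_v$ with $\kq^{(v)}\supseteq\kp_v$; each quotient $R_v/\kq^{(v)}$ is a one-dimensional $\cO$-flat domain, hence finite over $\cO$.

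First I would globalize this decomposition. Iterating 5) of Proposition \ref{com alg} slot by slot (using also 3) of Proposition \ref{com alg}, and the fact, visible from the proof of 5) of Proposition \ref{com alg}, that $\widehat{\otimes}_\cO$ commutes with the infinite products that occur, since every ring involved is finite over $\cO$), I expect to obtain an injection
\[
B\;\hookrightarrow\;\prod_{\vec{\kq}}C_{\vec{\kq}},\qquad C_{\vec{\kq}}:=\widehat{\otimes}_{v|p}(R_v/\kq^{(v)}),
\]
where $\vec{\kq}=(\kq^{(v)})_{v|p}$ ranges over all tuples of non-ordinary regular de Rham primes with $\kq^{(v)}\supseteq\kp_v$. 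By 2) and 4) of Proposition \ref{com alg}, each $C_{\vec{\kq}}$ is reduced, $\cO$-flat of Krull dimension $1$, and equals $(\widehat{\otimes}_{v|p}R_v)/(\kq^{(v_1)},\dots,\kq^{(v_d)})$; in particular its minimal primes are precisely its one-dimensional primes, and their intersection is $(\kq^{(v_1)},\dots,\kq^{(v_d)})$. Since $B$ embeds in a product of reduced rings, $B$ is reduced.

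Next I would identify the non-ordinary regular de Rham primes. For a minimal prime $\mathfrak{s}$ of some $C_{\vec{\kq}}$, regarded as a one-dimensional prime of $\widehat{\otimes}_{v|p}R_v$, the composite $R_v/\kq^{(v)}\to C_{\vec{\kq}}\to C_{\vec{\kq}}/\mathfrak{s}$ becomes a homomorphism of finite field extensions of $K$ after inverting $p$, hence is injective, so the preimage of $\mathfrak{s}$ in $R_v$ is exactly $\kq^{(v)}$; thus $\mathfrak{s}$ is non-ordinary regular de Rham. Conversely, if $\mathfrak{s}$ is non-ordinary regular de Rham with preimages $\kq^{(v)}$ in the $R_v$, then $\mathfrak{s}\supseteq(\kq^{(v_1)},\dots,\kq^{(v_d)})=\ker(\widehat{\otimes}_{v|p}R_v\to C_{\vec{\kq}})$ and $\mathfrak{s}$ is a minimal prime of $C_{\vec{\kq}}$; in particular $\mathfrak{s}\supseteq(\kp_{v_1},\dots,\kp_{v_d})$. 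Combining this with the displayed injection and the radicality of each $\ker(B\to C_{\vec{\kq}})$, I get that the zero ideal of $B$ is the intersection of all non-ordinary regular de Rham primes of $B$.

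It remains to pass from this global statement to $\kp'$. Let $\mathfrak{P}_1,\dots,\mathfrak{P}_m$ be the minimal primes of the reduced Noetherian ring $B$, with $\kp'$ corresponding to, say, $\mathfrak{P}_1$, so $(0)=\bigcap_i\mathfrak{P}_i$. Every non-ordinary regular de Rham prime of $B$ contains some $\mathfrak{P}_i$, so setting $J_i:=\bigcap\{\mathfrak{s}\ \text{non-ordinary regular de Rham}:\mathfrak{s}\supseteq\mathfrak{P}_i\}$ we have $\mathfrak{P}_i\subseteq J_i$ and $\bigcap_i J_i=(0)$, whence $\Spec B=\bigcup_i V(J_i)$ with $V(J_i)\subseteq V(\mathfrak{P}_i)$; looking at the generic point of the component $V(\mathfrak{P}_i)$ then forces $J_i=\mathfrak{P}_i$. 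Taking $i=1$ gives $\kp'=\bigcap\{\mathfrak{s}\ \text{non-ordinary regular de Rham}:\mathfrak{s}\supseteq\kp'\}$, which is the lemma. I expect the main obstacle to be the first step — establishing the injection $B\hookrightarrow\prod_{\vec{\kq}}C_{\vec{\kq}}$ — since it requires iterating 5) of Proposition \ref{com alg} and checking that completed tensor products commute with the infinite products appearing along the way; this is harmless because all the relevant modules are finite over $\cO$, but it must be spelled out carefully.
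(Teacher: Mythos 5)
Your argument is correct and follows essentially the same route as the paper's own proof: both globalize the local decompositions $\kp_v=\bigcap\kq^{(v)}$ via parts 3), 4), and 5) of Proposition \ref{com alg} to exhibit the zero ideal of $\widehat{\otimes}_{v|p}(R_v/\kp_v)$ as an intersection of non-ordinary regular de Rham primes, and both then isolate $\kp'$ by the same minimal-prime incidence argument (your generic-point step is exactly the paper's appeal to \cite[Lemma 10.15.1]{stacks-project}). Your extra justification that the preimage of a minimal prime of $C_{\vec{\kq}}$ in $R_v$ is precisely $\kq^{(v)}$ fills in a detail the paper states only implicitly, but the strategy is the same.
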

    
    \begin{proof}
    	Write $\sqrt{(\kp_{v_1}, \cdots, \kp_{v_d})} = \kp_1' \cap \cdots \cap \kp_s'$, and we prove that for each $\kp_i', 1 \le i \le s$, it is the intersection of all non-ordinary regular de Rham primes containing itself.
    	
    	By Lemma \ref{lem of Rv} and Lemma  \ref{min is non-ord}, we know that for each $ \kp_{v_i}$, it is the intersection of all non-ordinary regular de Rham primes containing $ \kp_{v_i}$. Then by 3) and 5) of Proposition \ref{com alg}, we have $$
    	(\kp_{v_1}, \cdots, \kp_{v_d})= \cap (\kq_{v_1}, \cdots, \kq_{v_d}),$$
    	where $\kq_{v_i}$ ranges over all non-ordinary regular de Rham primes containing $ \kp_{v_i}$.
    	
    	As $R_v/\kq_v$ is $\cO$-flat and one-dimensional, by 4) of Proposition \ref{com alg}, we know that the ideal $(\kq_{v_1},  \cdots, \kq_{v_d})$ is radical. If we write $(\kq_{v_1},  \cdots, \kq_{v_d}) =\sqrt{(\kq_{v_1},  \cdots, \kq_{v_d})}= \mathfrak{r}_1 \cap \cdots \cap \mathfrak{r}_t$ for the primary decomposition, then by our definition, each $\mathfrak{r}_i$ is a non-ordinary regular de Rham prime. Then we have the following relations.
    	$$\kp_1' \cap \cdots \cap \kp_s'=\sqrt{(\kp_{v_1}, \cdots, \kp_{v_d})}=\sqrt{\cap (\kq_{v_1}, \cdots, \kq_{v_d})}= \sqrt{\cap \mathfrak{r}} = \cap \mathfrak{r},$$
        where $\mathfrak{r}$ ranges over all non-ordinary regular de Rham primes containing the ideal $(\kp_{v_1}, \cdots, \kp_{v_d})$.
        
        Write $\mathfrak{a}_i, 1 \le i \le s$ for the intersection of all non-ordinary regular de Rham primes containing the minimal prime $\kp_i'$. Then by \cite[Lemma 10.15.1]{stacks-project}, we know that there exists $j, 1 \le j \le s$, such that $\kp_i' \supset \mathfrak{a}_j \supset \kp_j'$. Therefore, we know that $i=j$, and $\kp_i'=\mathfrak{a}_i$. This shows the result.
    \end{proof}
    
    By a \textit{modular point} in the deformation space $\Spec R_{F}^{\ps, \chi}$, we mean that the corresponding Galois representation arises from a twist of a Hilbert modular form, or equivalently, a regular algebraic cuspidal automorphic representation of $\GL_2(\A_F)$.
    
    \begin{thm}\label{main thm}
    	The Zariski closure of the set of all modular points in the deformation space $\Spec R_{F}^{\ps, \chi}$ is $\Spec R_{F, 0}^{\ps, \chi}$, i.e. the union of all irreducible components of dimension $1+2[F: \Q]$. If further $\chi$ is crystalline at each $v|p$, then the Zariski closure of all modular points which are crystalline at each $v|p$ in the deformation space $\Spec R_{F}^{\ps, \chi}$ is $\Spec R_{F, 0}^{\ps, \chi}$
    \end{thm}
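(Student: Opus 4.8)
The plan is to prove the two inclusions separately, the substantive one being density of the modular locus. The easy inclusion I would get as follows: the Galois representation attached to a regular algebraic cuspidal automorphic representation of $\GL_2(\A_F)$ is irreducible, so every modular point of $\Spec R_F^{\ps,\chi}$ is an irreducible pseudo-deformation in the sense of Definition \ref{construction}; by 2) of Proposition \ref{def of 0} the corresponding quotient of $R_F^{\ps,\chi}$ factors through $R_{F,0}^{\ps,\chi}$, and since $\Spec R_{F,0}^{\ps,\chi}$ is closed in $\Spec R_F^{\ps,\chi}$, the Zariski closure of the modular locus lies inside it. Since $\Spec R_{F,0}^{\ps,\chi}$ is the finite union of the irreducible closed sets $V(\kp)$ with $\kp$ running over its minimal primes, each of dimension $1+2[F:\Q]$ by Proposition \ref{finite same dim} and 1) of Proposition \ref{def of 0}, the rest of the argument reduces to producing, for every such $\kp$, a Zariski dense set of modular points inside $V(\kp)$.

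Fixing $\kp$, I would first invoke Lemma \ref{char 0} and Lemma \ref{min is non-ord}: $R_{F,0}^{\ps,\chi}/\kp$ is a characteristic-zero domain, and if $\kp'$ denotes the preimage of $\kp$ under the finite map $\widehat{\otimes}_{v|p}R_v\to R_{F,0}^{\ps,\chi}$ of Proposition \ref{finiteness 2}, then Lemma \ref{non-ord regular de Rham} presents $\kp'$ as the intersection of the one-dimensional non-ordinary regular de Rham primes $\mathfrak{r}\supseteq\kp'$. For each such $\mathfrak{r}$ the finite injective map of domains $\widehat{\otimes}_{v|p}R_v/\kp'\hookrightarrow R_{F,0}^{\ps,\chi}/\kp$ admits, by lying-over, a prime $\mathfrak{R}\supseteq\kp$ over $\mathfrak{r}$, again one-dimensional and of residue characteristic $0$ (since $\varpi\notin\mathfrak{r}$), so the representation $\rho(\mathfrak{R}):G_{F,\Sigma}\to\GL_2(k(\mathfrak{R}))$ of Definition \ref{construction}, with $k(\mathfrak{R})$ finite over $K$, is defined. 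I would then verify the hypotheses of Theorem \ref{non-ord}: by the definition of $\mathfrak{r}$, the preimage of $\mathfrak{R}$ in each $R_v$ is a non-ordinary regular de Rham prime, so $\tr\rho(\mathfrak{R})|_{G_{F_v}}$ equals the trace of an irreducible de Rham representation of $G_{F_v}$ with distinct Hodge--Tate weights; by the Brauer--Nesbitt theorem $\rho(\mathfrak{R})|_{G_{F_v}}$ is then itself irreducible and de Rham of distinct Hodge--Tate weights, hence $\rho(\mathfrak{R})$ is globally irreducible with residual semisimplification $\mathbf{1}\oplus\bar\chi$. Thus $\rho(\mathfrak{R})$ comes from a twist of a Hilbert modular form by Theorem \ref{non-ord} (after enlarging $\cO$ if necessary), i.e. $\mathfrak{R}$ is a modular point.

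For density I would argue by dimension. Let $Z$ be the closure in $X:=\Spec(R_{F,0}^{\ps,\chi}/\kp)$ of the set of all such $\mathfrak{R}$, and let $g:X\to Y:=\Spec(\widehat{\otimes}_{v|p}R_v/\kp')$ be the finite morphism. Since $g$ is closed, $g(Z)$ is a closed subset of $Y$ containing every $\mathfrak{r}$, so $g(Z)=Y$ because those $\mathfrak{r}$ are dense in $Y$ by Lemma \ref{non-ord regular de Rham}; finiteness of $g$ then gives $\dim Z=\dim g(Z)=\dim Y=\dim X=1+2[F:\Q]$. As $R_{F,0}^{\ps,\chi}/\kp$ is a complete local Noetherian domain, hence catenary and equidimensional, a proper closed subset of $X$ has strictly smaller dimension, forcing $Z=X$. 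Running this over all minimal primes of $R_{F,0}^{\ps,\chi}$ gives the desired density, and $\Spec R_{F,0}^{\ps,\chi}$ is exactly the union of the irreducible components of $\Spec R_F^{\ps,\chi}$ of dimension $1+2[F:\Q]$ by Corollary \ref{structure of ps} and Proposition \ref{finite same dim}. For the crystalline refinement I would rerun the whole argument with ``crystalline'' in place of ``de Rham'': the crystalline analogues of Lemma \ref{intersection}, 2) of Proposition \ref{R_v}, Lemma \ref{lem of Rv} and Lemma \ref{non-ord regular de Rham} all hold, and by local--global compatibility the points $\mathfrak{R}$ so produced correspond to Hilbert modular forms crystalline at every $v|p$, i.e. to crystalline points.

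The hard part is essentially already packaged into Lemma \ref{non-ord regular de Rham}, and hence into Steps 2 and 3: the real content is that the non-ordinary regular de Rham primes---which are severely constrained, since they must arise as restrictions to decomposition groups of honest global Galois representations over the auxiliary field $F^1$, not merely as local points with prescribed $p$-adic Hodge type---are nonetheless abundant enough to cut out each minimal prime of $R_{F,0}^{\ps,\chi}$ on the nose. Within the argument above, the remaining delicate step is the verification in the second paragraph that the lifted prime $\mathfrak{R}$ retains irreducible de Rham restriction at every $v|p$ so that Theorem \ref{non-ord} applies, which is where the Brauer--Nesbitt theorem and the compatibility of the restriction maps recorded in Lemma \ref{com dia1} are used.
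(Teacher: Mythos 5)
Your proposal is correct and follows essentially the same route as the paper: reduce to density within each minimal prime $\kp$ of $R_{F,0}^{\ps,\chi}$, lift the non-ordinary regular de Rham primes of $(\widehat{\otimes}_{v|p}R_v)/\kp'$ along the finite injection via going-up (what you call lying-over, which in this setting delivers the same thing), verify the hypotheses of Theorem~\ref{non-ord} via Brauer--Nesbitt, and run the crystalline version in parallel. The only genuine variation is in the final density step: the paper takes $\mathfrak{I}=\bigcap\mathfrak{r}$, shows its preimage under the finite injection is $(0)$, primary-decomposes, and invokes incomparability to conclude $\mathfrak{I}=\kp$, whereas you argue via the closedness of the finite map $g$ (so $g(Z)=Y$) together with preservation of dimension under finite surjections and the fact that a proper closed subset of an irreducible Noetherian local spectrum has strictly smaller dimension --- a slightly more geometric packaging of the same content, and both are correct.
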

    
    \begin{proof}
    	Let $\mathcal{X} \subset \Spec R_{F}^{\ps, \chi}$ be the set consisting of all modular points and $\overline{\mathcal{X}}$ be its Zariski closure in $\Spec R_{F}^{\ps, \chi} $.
    	
        Let $C$ be an irreducible component of $\Spec R_{F, 0}^{\ps, \chi}$ and let $\kp$ be the corresponding generic point. Let $\kp'$ be the inverse image of $\kp$ via the finite map $\widehat{\otimes}_{v |p} R_{v} \to R_{F, 0}^{\ps, \chi}$. It is a minimal prime of $\widehat{\otimes}_{v |p} R_{v}$, and we get a finite injection $(\widehat{\otimes}_{v |p} R_{v})/\kp' \hookrightarrow R_{F, 0}^{\ps, \chi}/\kp$. By Lemma \ref{non-ord regular de Rham}, $\kp'$ is the intersection of all non-ordinary regular de Rham primes containing $\kp'$. For any non-ordinary regular de Rham prime $\mathfrak{r}' \supset \kp'$, by the going-up property, we can find a prime
    	$\mathfrak{r} \supset \kp$ lying above $\mathfrak{r}'$.
    	
    	We show that the Galois representation $\rho(\mathfrak{r})$ corresponding to $\mathfrak{r}$ (in the sense of Definition \ref{construction}) arises from a twist of a Hilbert modular form. First, from our construction on $\mathfrak{r}$, we know that $\rho(\mathfrak{r})|_{G_{F_v}}$ is absolutely irreducible for each $v|p$. In particular, $\rho(\mathfrak{r})$ is absolutely irreducible. Second, for each $v \in \Sigma_p$, by our definition of regular de Rham prime (see Definition \ref{r dR}), we know that there exists a big Hecke algebra (for some abelian extension $F^1$ of $F$) $\T_{\psi}(U^p)_\km$ and a maximal prime $\mathfrak{t}$ of $\T_{\psi}(U^p)_\km[1/p]$ such that the Galois representation $\rho(\mathfrak{t})|_{G_{F^1_{v'}}}$ for some $v'$ lying above $v$ is absolutely irreducible regular de Rham and $\tr \rho(\mathfrak{t})|_{G_{F^1_{v'}}} = \tr \rho(\mathfrak{r})|_{G_{F_v}}$ via the isomorphism $F^1_{v'} \cong F_v $. By Brauer-Nesbitt theorem (see \cite[Fact 2.5]{Gee_2022}) for example), we know that for each $v \in \Sigma_p$,  $\rho(\mathfrak{r})|_{G_{F_v}}$ is absolutely irreducible and de Rham with distinct Hodge-Tate weights. Then by the non-ordinary Fontaine-Mazur conjecture (Theorem \ref{non-ord}), $\mathfrak{r}$ is a modular point.
    	
    	Write $C^{\operatorname{non-ord}} \subset C$ for the set of all non-ordinary modular points obtained in the above way contained in $C$. We claim that $C^{\operatorname{non-ord}} $ is Zariski dense in $C$. Write $\mathfrak{I}=\cap \mathfrak{r}$, where $\mathfrak{r}$ ranges over all points in $C$ that we obtain in the above way. By Lemma \ref{non-ord regular de Rham}, the inverse image of $\mathfrak{I}$ via the finite injection $f: (\widehat{\otimes}_{v |p} R_{v})/\kp' \hookrightarrow R_{F, 0}^{\ps, \chi}/\kp$ is $0$. Write  $\mathfrak{I}= \kp_1 \cap \cdots \cap \kp_c $ as its primary decomposition, and we have $0= f^{-1} (\kp_1) \cap \cdots \cap f^{-1}(\kp_c)$. It implies $ f^{-1} (\kp_i)=0$ for some $1 \le i \le c$. By the incomparability property (see \cite[Lemma 10.36.20]{stacks-project}), we have $\mathfrak{I}=\kp_i=\kp$. This proves the claim.
    	 
    	 In conclusion, we know that the Zariski closure of $\cup C^{\operatorname{non-ord}}$ contains $\cup C =\Spec R_{F, 0}^{\ps, \chi}$. In particular, we have $\overline{\mathcal{X}} \supset \Spec R_{F, 0}^{\ps, \chi}$. On the other hand, as a modular point defines an irreducible Galois representation, by Proposition \ref{def of 0}, we have $ \mathcal{X} \subset \Spec R_{F, 0}^{\ps, \chi}$. Consequently, we have $\overline{\mathcal{X}} = \Spec R_{F, 0}^{\ps, \chi}$.
    	 
    	 If further $\chi$ is crystalline at each $v|p$, then combining Lemma \ref{intersection}, we can use the same proof.
    \end{proof}
    
    Let $R_F^\ps$ be the universal pseudo-deformation ring of the pseudo-representation $1+\bar{\chi}$ for $G_{F, \Sigma}$ (without fixed determinant). We can also study the Zariski closure of modular points in the deformation space $\Spec R_F^\ps$.
    
    \begin{thm}\label{main thm without det}
    	The Zariski closure of the set of all modular points which are crystalline at each $v|p$ in the deformation space $\Spec R_F^\ps$ is the union of all irreducible components of Krull dimension $2+2[F:\Q]$.
    \end{thm}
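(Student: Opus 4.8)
The plan is to reduce to Theorem~\ref{main thm} by separating off the determinant. I would begin by fixing a character $\chi\colon G_{F,\Sigma}\to\cO^\times$ lifting $\bar{\chi}$ that is crystalline at every $v|p$; such a $\chi$ exists because $\bar{\chi}|_{G_{F_v}}$ is automatically tamely ramified of cyclotomic type, so it has a crystalline lift locally, and these local lifts can be glued to a global one. By Proposition~\ref{det}(1), $R_F^\ps\cong R_F^{\ps,\chi}\widehat{\otimes}_\cO\cO[[\Gamma]]$, where $\Gamma$ is the maximal pro-$p$ abelian quotient of $G_{F,\Sigma}$; comparing this with Proposition~\ref{finite same dim}(2) forces $\dim\cO[[\Gamma]]=2$, so $\Gamma$ has $\Z_p$-rank $1$ and $\cO[[\Gamma]]$ is reduced, $\cO$-flat and equidimensional of dimension $2$. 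Since $R_{F,0}^{\ps,\chi}$ is reduced (Proposition~\ref{def of 0}(1)) and $\cO$-flat (Lemma~\ref{char 0}) and equidimensional of dimension $1+2[F:\Q]$ (Proposition~\ref{finite same dim}(1)), Proposition~\ref{com alg} shows that $\mathcal{Y}:=\Spec\big(R_{F,0}^{\ps,\chi}\widehat{\otimes}_\cO\cO[[\Gamma]]\big)$ is equidimensional of dimension $2+2[F:\Q]$. By Corollary~\ref{structure of ps}, $\Spec R_F^{\ps,\chi}$ is, up to a closed subset of dimension $\le 2$, equal to $\Spec R_{F,0}^{\ps,\chi}$; tensoring with $\cO[[\Gamma]]$ shows $\Spec R_F^\ps$ is $\mathcal{Y}$ up to a closed subset of dimension $\le 3<2+2[F:\Q]$. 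Hence the irreducible components of $\mathcal{Y}$ are precisely the $(2+2[F:\Q])$-dimensional components of $\Spec R_F^\ps$, and it is enough to show that crystalline modular points are Zariski dense in $\mathcal{Y}$.

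For the inclusion ``$\subseteq$'' I would argue as follows: a crystalline modular point of $\Spec R_F^\ps$ corresponds to an irreducible Galois representation $\rho$, and under the isomorphism above it is carried to a pair consisting of a character twist of $\rho$ (still irreducible) together with a point of $\Spec\cO[[\Gamma]]$; by Proposition~\ref{def of 0}(2) the first component factors through $R_{F,0}^{\ps,\chi}$, so the point lies in $\mathcal{Y}$.

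For the reverse inclusion the strategy is to propagate the density statement of Theorem~\ref{main thm} across the determinant directions. Given a crystalline modular point of $R_{F,0}^{\ps,\chi}$ and a character $\eta\colon G_{F,\Sigma}\to\cO^\times$ lifting $\mathbf{1}$ for which the twist $\tilde{\chi}\eta$ (with $\tilde\chi$ as in Proposition~\ref{det}) is crystalline at all $v|p$, twisting by $\tilde{\chi}\eta$ produces a crystalline modular point of $R_F^\ps$: modularity is preserved because $\tilde{\chi}\eta$ is de Rham, hence corresponds to an algebraic Hecke character, and twisting a regular algebraic cuspidal automorphic representation of $\GL_2(\A_F)$ by such a character stays in that class. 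Theorem~\ref{main thm} gives density of crystalline modular points in $\Spec R_{F,0}^{\ps,\chi}$, and the characters $\varepsilon^{(p-1)k}$ ($k\in\Z$) are crystalline, lift $\mathbf{1}$, and form a Zariski-dense set in $\Spec\cO[[\Gamma]]$ (an infinite family of closed points of the one-dimensional generic fibre). A formal argument --- density in the base together with density in the fibres over a dense set of the base implies density in the total space --- then yields density of crystalline modular points in $\mathcal{Y}$.

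The delicate points are all in the last paragraph. First, the auxiliary character $\tilde{\chi}$ of Proposition~\ref{det} has half-integral Hodge--Tate--Sen weights in general, so ``crystalline at $p$'' is not automatically preserved under twisting by $\tilde{\chi}\eta$; one must either exploit the fact that determinants of crystalline modular representations over an abelian totally real field have parallel weight to choose $\chi$ and $\eta$ compatibly, or first prove density of de Rham modular points and then descend to the crystalline ones. Second --- and I expect this to be the real obstacle --- one must ensure that \emph{every} $(2+2[F:\Q])$-dimensional component of $\Spec R_F^\ps$ is reached by this construction, equivalently that the torsion of $\Gamma$ responsible for these components is unramified above $p$, so that the corresponding minimal primes of $\cO[[\Gamma]]$ genuinely carry crystalline characters. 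This is precisely where the hypotheses that $F$ is abelian and totally real, that $p$ splits completely in $F$, and Leopoldt's conjecture, must be brought to bear on the local-at-$p$ structure of $\Gamma$. The remaining verifications --- reducedness, $\cO$-flatness, the dimension bounds, and the ``density spreads over the base'' lemma --- are routine given Propositions~\ref{com alg} and~\ref{finite same dim}.
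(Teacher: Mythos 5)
Your overall strategy -- factor $\Spec R_F^\ps$ as $\Spec R_{F}^{\ps,\chi}\widehat\otimes_\cO\cO[[\Gamma]]$ via Proposition~\ref{det}, isolate the $(2+2[F:\Q])$-dimensional locus as $\Spec\bigl(R_{F,0}^{\ps,\chi}\widehat\otimes_\cO\cO[[\Gamma]]\bigr)$, and then propagate Theorem~\ref{main thm} across the determinant direction -- is exactly the paper's route. The paper, however, collapses all of this into one sentence: it asserts that the set of crystalline liftings of $\bar{\chi}$ is Zariski dense in $\Spec\cO[[G_{F,\Sigma}^{\operatorname{ab}}(p)]]$ and then cites Proposition~\ref{det} and Theorem~\ref{main thm}. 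That single density assertion is precisely what your proof is missing, and you correctly flag this at the end.

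The specific flaw in your argument is the sentence claiming that ``the characters $\varepsilon^{(p-1)k}$ ($k\in\Z$) \dots form a Zariski-dense set in $\Spec\cO[[\Gamma]]$.'' These characters all factor through the torsion-free quotient of $\Gamma$, so they only trace out closed points of a \emph{single} irreducible component of $\Spec\cO[[\Gamma]]=\Spec\cO[\Delta][[T]]$ (the one on which the torsion subgroup $\Delta$ acts trivially). If $\Delta\ne 0$ there are other minimal primes, and to produce a crystalline character lying on a component indexed by a nontrivial character $\phi$ of $\Delta$ one needs a finite-order extension $\tilde\phi$ of $\phi$ to $\Gamma$ that is unramified at every $v\mid p$. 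For general $\Sigma$ this is nontrivial: $\Delta$ receives contributions from $\prod_{v\mid p}\cO_{F_v}^\times(p)/\overline{\cO_F^\times}$, and a priori that cokernel can have $p$-torsion that is genuinely ramified above $p$; such a component would carry de Rham but not crystalline characters. Your second ``real obstacle'' paragraph names this issue but neither proves the needed unramifiedness nor cites a replacement argument, so the proof as written establishes density only on the components of $\Spec R_F^\ps$ lying over the component of $\Spec\cO[[\Gamma]]$ with trivial $\Delta$-part. Until the density of crystalline characters on \emph{every} component of $\Spec\cO[[\Gamma]]$ is supplied (which is the content of the paper's unproved assertion), the argument is incomplete.

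As a minor point, your first flagged concern (half-integral weights of the auxiliary $\tilde\chi$ in Proposition~\ref{det}) can be sidestepped by not invoking $\tilde\chi$ at all: instead of twisting a single fixed-determinant space by $\tilde\chi\eta$, run Theorem~\ref{main thm} directly with each crystalline determinant $\chi'$ and use Proposition~\ref{det}(2) to compare the resulting loci inside $\Spec R_F^\ps$. This reduces the whole proof to the single density fact about $\Spec\cO[[\Gamma]]$, making the remaining gap cleaner to state.
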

    
    \begin{proof}
    	As $\bar{\chi}$ can be viewed as a character of $G_\Q$ and $F$ is an abelian totally real field in which $p$ splits completely, we know that the set of crystalline liftings of $\bar{\chi}$ is Zariski dense in its Galois deformation space $\Spec \cO[[G_{F, \Sigma}^{\operatorname{ab}}(p)]]$. Then our result follows from Proposition \ref{det} and Theorem \ref{main thm}.
    \end{proof}
    
    \begin{rem}
    	We observe that if Pašk\=unas' result (\cite[Corollary 5.13]{Paskunas_2021}) is also true in the exceptional case, then our theorems can be proved with the same proof.
    \end{rem}
    
    \section{Applications}
    In this section, we study some applications of our results.
    
    \subsection{Emerton's conjecture}\label{emerton's conj}
   In this subsection, we prove Emerton's conjecture about the Krull dimension of the big Hecke algebra for $p \ge 5$. Our main reference here is \cite{Emerton2011}.
   
   For notations, let $p \ge 5$ be an odd prime. We denote by $K$  a $p$-adic local field with its ring of integers $\cO$, a uniformizer $\varpi$ and its residue field $\F$. We fix a positive integer $N \ge 1$ such that $p$ does not divide $N$. Write $\Sigma$ for the set of primes of $\Q$ dividing $Np$.
   
    For an integer $k \ge 1 $, write $\mathcal{M}_k(N)$ for the space of modular forms of weight $k$ and level $\Gamma_1(N)$ (or level $N$ for simplicity). Recall that the map $\begin{pmatrix}
    	a & b\\
    	c& d
    \end{pmatrix}  \mapsto d~ \modd N$ induces an isomorphism $ \Gamma_0(N)/\Gamma_1(N) \cong (\Z/N\Z)^\times$. If $d \in (\Z/N\Z)^\times$, this map induces an automorphism of $\mathcal{M}_k(N)$ by $\langle d \rangle$, the \textit{diamond operator}.
    
    Let $l$ be a prime not dividing $N$. We define the automorphism $S_l$ of $\mathcal{M}_k(N)$ via the formula $S_l = \langle l \rangle l^{k-2}$. We also define the endomorphism $T_l$ of  $\mathcal{M}_k(N)$ via the formula 
    $$ (T_l f)(\tau)= \sum_{n=0}^{\infty} c_{nl}(f)q^n + \sum_{n=0}^{\infty} l c_n(S_lf)q^{nl},~~ \tau \in \mathcal{H},~~ q=\exp(2\pi i \tau),$$
    where $c_n(f)$ is the $n$-th Fourier coefficient of the modular form $f$.
    
    The \textit{Hecke algebra} $\T_k(N)$ (or just $\T_k$) is defined to be the $\Z$-subalgebra of $\End(\mathcal{M}_k(N))$ generated by the Hecke operators $lS_l$ and $T_l$ as $l$ ranges over primes not dividing $N$.
    
    Let $\T_{\le k}^{(p)} (N)$ (or just $\T_{\le k}^{(p)}$) be the $\Z$-algebra of endomorphisms of $\oplus_{i=1}^k \mathcal{M}_i(N)$ generated by the operators $lS_l$ and $T_l$ as $l$ ranges over primes not dividing $Np$. From the definition, we can find that if $k' > k$, then there is a surjection $\T_{\le k'}^{(p)} \twoheadrightarrow \T_{\le k}^{(p)}$. The $p$\textit{-adic Hecke algebra} $\T(N)$ (or just $\T$) is defined to be the projective limit
    $$ \T:= \varprojlim_k \Z_p \otimes_\Z \T_{\le k}^{(p)}$$. 
    
    \begin{thm}\label{decomposition}
      The ring $\T$ is $p$-adically complete, Noetherian $\Z_p$-algebra, and is the product of finitely many noetherian local $\Z_p$-algebras.
    \end{thm}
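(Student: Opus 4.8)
The plan is to deduce all three assertions from two inputs: at each finite weight bound the Hecke algebra is a finite $\Z_p$-algebra, and each local factor of the inverse limit is a continuous quotient of a universal pseudo-deformation ring, which is Noetherian.

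First I would observe that the operators $lS_l$ and $T_l$ preserve the finitely generated free $\Z$-module $L:=\bigoplus_{i=1}^{k}\mathcal M_i(N)$ of forms with integral $q$-expansion, and that $\T^{(p)}_{\le k}(N)$ acts faithfully on $L$ by construction; hence $\T^{(p)}_{\le k}(N)\hookrightarrow\End_\Z(L)$ is a finitely generated $\Z$-module. Consequently $\T_k':=\Z_p\otimes_\Z\T^{(p)}_{\le k}(N)$ is module-finite over $\Z_p$, so it is $p$-adically complete, Noetherian, and --- being semilocal and module-finite over the complete local ring $\Z_p$ --- a finite product of complete Noetherian local rings. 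The transition maps $\T_{k'}'\twoheadrightarrow\T_k'$ for $k'\ge k$ are surjective by definition (restriction of endomorphisms to the lower-weight summands, compatible with the chosen generators), so $\T=\varprojlim_k\T_k'$ is the inverse limit of a surjective system of profinite $\Z_p$-algebras. A standard Mittag-Leffler argument then gives that $\T$ is profinite and Hausdorff and that $\T/p^n\T\cong\varprojlim_k\T_k'/p^n$ for every $n$, so $\T$ is $p$-adically complete; in particular $p$ lies in every maximal ideal of $\T$.

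Next I would set up the idempotent decomposition. Each $\T_k'/p$ is a finite $\F_p$-algebra whose maximal ideals are in bijection with the residual systems of Hecke eigenvalues occurring in weights $\le k$ at tame level $N$, equivalently with the semisimple mod-$p$ two-dimensional pseudo-representations $\overline T\colon G_{\Q,\Sigma}\to\overline{\F_p}$ ($\Sigma=\{\ell\mid Np\}$) attached to them. There are only finitely many such in total, since their prime-to-$p$ conductor divides $N$ and, after twisting by a power of $\omega$, they occur in bounded weight. Hence the increasing chain of finite sets $\{\text{maximal ideals of }\T_k'/p\}$ stabilises, say at $\{\km_1,\dots,\km_r\}$ for $k\gg 0$, compatibly with the transition maps; passing to the limit, $\T/p\cong\prod_{j=1}^{r}\overline\T_j$ with each $\overline\T_j$ local. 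Lifting the associated orthogonal idempotents along $\T\twoheadrightarrow\T/p$ --- possible because $\T$ is $p$-adically complete --- yields $\T\cong\prod_{j=1}^{r}\T_j$ with $\T_j/p=\overline\T_j$; and $\T_j$, being $p$-adically complete with local reduction modulo $p$, is itself local.

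The remaining point --- and \textbf{the main obstacle} --- is to show each $\T_j$ is Noetherian, since an inverse limit of even Artinian rings can fail to be Noetherian, so genuine extra input is required. Here I would invoke Galois deformation theory. The compatible pseudo-representations $G_{\Q,\Sigma}\to\T_k'$, $\Frob_l\mapsto T_l$, with determinant $\Frob_l\mapsto lS_l$ (for $l\nmid Np$), assemble into a continuous pseudo-representation $T_j\colon G_{\Q,\Sigma}\to\T_j$ lifting the residual pseudo-representation $\overline T_j$ attached to $\km_j$. Since $G_{\Q,\Sigma}$ satisfies Mazur's finiteness condition $\Phi_p$, the universal pseudo-deformation ring $R_j:=R^{\ps}_{\overline T_j}$ of $\overline T_j$ is a complete Noetherian local $\Z_p$-algebra, and the universal property provides a continuous local $\Z_p$-algebra homomorphism $\varphi_j\colon R_j\to\T_j$. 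Its image is closed, being the continuous image of the compact ring $R_j$ inside the Hausdorff ring $\T_j$, and it contains all the $T_l$ (images of the universal traces) and all the $lS_l$ (images of the universal determinant, using $l\in\Z_p^\times$), which topologically generate $\T_j$ because $\T^{(p)}_{\le k}(N)$ is generated as a ring by the $T_l$ and $lS_l$. Therefore $\varphi_j$ is surjective, so $\T_j$ is a quotient of the Noetherian ring $R_j$ and hence a complete Noetherian local $\Z_p$-algebra. Putting the pieces together, $\T\cong\prod_{j=1}^{r}\T_j$ is $p$-adically complete, Noetherian, and a finite product of Noetherian local $\Z_p$-algebras, as claimed. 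Besides this last argument, the only step requiring genuine care is the interchange of $\varprojlim_k$ with reduction modulo $p^n$ in the construction of the $p$-adic topology on $\T$, which rests on surjectivity of the transition maps together with the compactness of each $\T_k'$.
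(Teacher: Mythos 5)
Your argument is a genuine reconstruction: the paper offers only the citation \cite[Theorem~2.7]{Emerton2011} with no proof. The ingredients you use---module-finiteness of each weight-bounded algebra $\T_k':=\Z_p\otimes_\Z\T^{(p)}_{\le k}(N)$ over $\Z_p$, the Jochnowitz--Serre finiteness of mod-$p$ Hecke eigensystems of fixed tame level $N$, idempotent lifting in the $p$-adically complete ring $\T$, and a surjection from a universal pseudo-deformation ring onto each local factor---are the standard ones, and I believe they coincide with the route Emerton takes. Your treatment of the commutation of $\varprojlim_k$ with reduction modulo $p^n$ (using compactness of each $\T_k'$ and surjectivity of transition maps to see $\varprojlim_k p^n\T_k'=p^n\T$) is correct and is indeed the one point people sometimes gloss over.

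One step is imprecise as written: you invoke the universal property of $R_j=R^{\ps}_{\overline T_j}$ to produce $\varphi_j\colon R_j\to\T_j$, but in the paper's setup (following Chenevier) the pseudo-deformation functor is represented on the category $\mathcal{C}_{\mathcal{O}}$ of complete \emph{Noetherian} local $\mathcal{O}$-algebras with residue field $\F$. At that stage $\T_j$ is only known to be a profinite local ring---its Noetherianity is exactly what you are trying to prove---so the universal property cannot be applied to it directly. The standard repair is to construct $\varphi_j$ as the inverse limit of the compatible maps $R_j\to(\T_k')_{\km_j}$ supplied by the universal property at each finite weight bound (each localization $(\T_k')_{\km_j}$ is module-finite over $\Z_p$, hence genuinely lies in $\mathcal{C}_{\mathcal{O}}$); equivalently, note that a pro-representable functor on $\mathcal{C}_{\mathcal{O}}$ extends canonically to the pro-category, where $\T_j$ lives. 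Once $\varphi_j$ exists, your surjectivity argument (closed image of a compact source, containing the topological generators $T_l$ and $lS_l=\det(T_j)(\Frob_l)$) goes through unchanged, and Noetherianity of $\T_j$ follows.
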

   
   \begin{proof}
   	See \cite[Theorem 2.7]{Emerton2011}.
   \end{proof}
   
   \begin{conj}\cite[Conjecture 2.9]{Emerton2011}\label{conj}
   	The ring $\T$ is equidimensional of Krull dimension $4$.
   \end{conj}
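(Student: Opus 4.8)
The plan is to deduce Conjecture \ref{conj}, under the standing assumption $p \ge 5$, from the dimension theory of the universal pseudo-deformation ring built in Sections 2--5, applied block by block to $\T$ and split according to whether the residual pseudo-representation is reducible. By Theorem \ref{decomposition} we may write $\T = \prod_{i=1}^{r} \T_{\km_i}$, so it suffices to treat a single $\T_\km$. Writing $\Sigma$ for the set of primes of $\Q$ dividing $Np$, one attaches to $\km$ (exactly as in the quaternionic case, Proposition \ref{pseudo of T}) a pseudo-representation $T_\km : G_{\Q,\Sigma} \to \T_\km$ with residual pseudo-representation $\overline{T}_\km$, and the universal property then furnishes a surjection $R_\Q^{\ps}(\overline{T}_\km) \twoheadrightarrow \T_\km$ from the \emph{unrestricted} pseudo-deformation ring (no fixed determinant, since both weight and nebentypus vary in $\T$).

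\textbf{Upper bound.} First I would bound $\dim\T_\km \le 4$ via this surjection. If $\overline{T}_\km$ is absolutely irreducible, $R_\Q^{\ps}(\overline{T}_\km)$ is the universal deformation ring of $\bar\rho_\km$, which by B\"ockle's work (under a mild hypothesis, and unconditionally via the variant argument mentioned below) is equidimensional of Krull dimension $4$; the global Euler characteristic formula over $\Q$ --- using $p$ odd and $\bar\rho_\km$ odd, so that $h^1(\ad\bar\rho_\km) - h^2(\ad\bar\rho_\km) = 3$ --- already yields the lower bound $\dim \ge 4$. If $\overline{T}_\km$ is reducible, write $\overline{T}_\km = \bar\psi_1+\bar\psi_2$; after the character twist in the proof of Proposition \ref{det} I may assume $\bar\psi_1 = \mathbf{1}$, and then $\bar\chi := \bar\psi_2$ is totally odd (the determinant of any modular pseudo-representation is odd) and trivially extends to $G_\Q$ since the base field is $\Q$, so we are in the setting of Section 5; as $p \ge 5$ we avoid the exceptional case, and Proposition \ref{finite same dim}(2) gives $\dim R_\Q^{\ps}(\overline{T}_\km) = 2+2[\Q:\Q] = 4$. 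Either way $\dim\T_\km \le 4$, hence $\dim\T \le 4$.

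\textbf{Lower bound and equidimensionality.} It remains to show that every irreducible component of $\Spec\T_\km$ has dimension exactly $4$. Every minimal prime of $\T_\km$ is pro-modular, hence corresponds to a cuspidal (in particular absolutely irreducible) Galois pseudo-representation, and so lies in the pro-modular locus of $\Spec R_\Q^{\ps}(\overline{T}_\km)$. In the reducible case, Theorem \ref{main thm}, combined with the pro-modularity of Section 4 and the big $R=\T$ statement for modular curves (Theorem \ref{big R=T modular curve}), identifies $(\T_\km)^{\operatorname{red}}$ with the union of the dimension-$4$ components of $\Spec R_\Q^{\ps}$ --- equivalently, with the reduction of the ring obtained from the equidimensional ring $R_{\Q,0}^{\ps,\chi}$ of Proposition \ref{finite same dim} by reinstating the one remaining unramified-twist parameter; the genuinely reducible (``Eisenstein'') points cut out a locus of dimension $\le 3$ and occur only inside the closure of cuspidal families, so they give no extra component. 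Thus $\T_\km$ is equidimensional of dimension $4$. In the irreducible case I would run the same four-step strategy of Sections 3--5, valid there by Appendix \ref{A} (potential pro-modularity replacing small $R=\T$ theorems), or else invoke Emerton's big $R=\T$ theorem \cite[Theorem 1.2.3]{emerton11} together with B\"ockle's equidimensionality, the lower bound ultimately resting on Gouv\^ea--Mazur's infinite fern rather than on the local--global compatibility of Section 3. Combining both cases, each $\T_{\km_i}$, hence $\T$, is equidimensional of Krull dimension $4$.

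\textbf{The main obstacle.} The hard part is twofold. First, treating the residually irreducible blocks uniformly demands a big $R=\T$ theorem for modular curves \emph{without} the Taylor--Wiles hypothesis, which is the content of Appendix \ref{A}: one must transport modularity down an auxiliary abelian base change while keeping the cuspidal/Eisenstein dichotomy under control, and one needs the non-ordinary finiteness input (the analogue of Proposition \ref{finiteness 2}) in the irreducible case. Second, in the reducible case one must work with the pseudo-deformation ring carrying \emph{no} fixed determinant --- restoring the unramified-twist parameter on top of the equidimensional $R_{\Q,0}^{\ps,\chi}$ --- and check that Emerton's $\T$, built out of all of $\bigoplus_k \mathcal{M}_k(N)$, acquires no lower-dimensional component supported on the Eisenstein locus; this is where one must use the precise definition of $\T$ and a Mazur-type fact that every Eisenstein Hecke system is a degeneration of a four-dimensional family of cuspidal ones.
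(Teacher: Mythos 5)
Your upper-bound argument is essentially the paper's: decompose $\T$ into its local factors $\T_\km$, attach a residual pseudo-representation $\overline T_\km$, obtain the surjection from the unrestricted pseudo-deformation ring $R_\Q^\ps$, and bound $\dim R_\Q^\ps = 4$ --- via Proposition~\ref{finite same dim}(2) (after a determinant twist) when $\overline T_\km$ is reducible, and via the Appendix~\ref{A} machinery (Corollary~\ref{dim in irr case}(2), not B\"ockle directly, which is what lets one avoid the Taylor--Wiles hypothesis) when $\overline T_\km$ is irreducible. This part is right, and it is the genuinely new content of the proof.

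The lower-bound/equidimensionality part of your proposal is where it goes off track. The paper's argument at this point is one line: Proposition~\ref{lower bound} (Emerton's Corollary~2.28, the infinite-fern bound) already says every irreducible component of $\Spec\T$ has dimension $\ge 4$; combined with the global upper bound $\dim\T_\km\le 4$, each component has dimension exactly $4$, done. Your proposal instead tries to \emph{identify} $(\T_\km)^{\operatorname{red}}$ with the dimension-$4$ locus of $\Spec R_\Q^\ps$ via Theorem~\ref{big R=T modular curve}. This introduces two problems. First, Theorem~\ref{big R=T modular curve} concerns the Hecke algebra of completed cohomology of modular curves, which is \emph{not} the same object as Emerton's $\T$ built from classical spaces $\bigoplus_k\mathcal M_k(N)$; you are implicitly conflating the two without providing the comparison. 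Second, even after such a comparison, Theorem~\ref{big R=T modular curve} only gives that $\ker(R_\Q^\ps\twoheadrightarrow\T_\km)\subseteq I_\Q$, i.e.\ $\T_\km$ is a quotient of the equidimensional ring $R_{\Q,0}^\ps$; a quotient of an equidimensional ring need not be equidimensional, so this does not give what you want. Relatedly, your claim that ``every minimal prime of $\T_\km$ is pro-modular, hence corresponds to a cuspidal (in particular absolutely irreducible) pseudo-representation'' is unjustified --- $\T$ acts on Eisenstein series as well as cusp forms, and ruling out an Eisenstein component of small dimension is precisely what Proposition~\ref{lower bound} is for. Your ``main obstacle'' about a Mazur-type degeneration of Eisenstein systems into cuspidal families is therefore not an obstacle that the paper meets; it dissolves once you use Proposition~\ref{lower bound} directly.

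In short: keep your upper-bound argument, drop the $R=\T$ detour, and replace the lower-bound discussion with the direct invocation of Proposition~\ref{lower bound}.
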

    
    Using the theory of eigencurves, we can obtain the lower bound of $\dim \T$.
    
    \begin{prop}\label{lower bound}
    	The Krull dimension of each irreducible component of $\Spec \T$ is at least $4$.
    \end{prop}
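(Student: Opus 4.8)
The plan is to prove the lower bound by the infinite fern argument of Gouv\^ea--Mazur, in the version adapted to two-dimensional pseudo-representations; the local-global machinery of the previous sections is not needed here.

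First I would reduce to the generic fibre. Each $\Z_p\otimes_\Z\T_{\le k}^{(p)}(N)$ is a subring of the endomorphism ring of a finite free $\Z_p$-module, hence $\Z_p$-torsion free, and an inverse limit of torsion-free modules is torsion-free, so $\T$ is $\Z_p$-flat. Consequently every irreducible component of $\Spec\T$ meets the generic fibre $\Spec\T[1/p]$ and has dimension exactly one more than its trace there. Using the decomposition $\T\cong\prod_i\T_{\km_i}$ of Theorem \ref{decomposition}, it suffices to fix a maximal ideal $\km$ and show that every irreducible component of $\Spec\T_\km[1/p]$ has dimension $\ge 3$.

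Next comes the fern itself. To a classical cuspidal eigenform of weight $k\ge 2$ and tame level $N$ I attach the point of $\Spec\T_\km[1/p]$ cut out by its prime-to-$Np$ Hecke eigenvalues; such a point is crystalline and regular at $p$, and I call these the \emph{classical points}. Fixing an irreducible component $C$ of $\Spec\T_\km[1/p]$, the classicality theorems of Coleman--Mazur--Hida theory show that $C$ contains a classical point $x$, which after passing to a non-critical-slope $p$-stabilization I may assume lies on a Coleman family of overconvergent eigenforms. Forgetting the $U_p$-eigenvalue and recording the prime-to-$Np$ Hecke eigenvalues carries this family to an irreducible curve $C^{(1)}\subseteq\Spec\T_\km[1/p]$ through $x$ on which the weight map is finite; classical points are Zariski dense on $C^{(1)}$, and through each of them the \emph{other} $p$-stabilization lies on a second Coleman family whose image, by a computation of Hodge--Tate--Sen weights, is transverse to $C^{(1)}$ there. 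Iterating this branching produces the infinite fern $\mathcal{F}_x$, a connected countable union of curves inside the Zariski closure of the classical points and, being connected through $x\in C$, inside $C$. The dimension count of Gouv\^ea--Mazur \cite{gouvea} and Chenevier \cite{chenevier2011} then gives $\dim\overline{\mathcal{F}_x}\ge 3$: along each fern-curve the weight map supplies one dimension, while the transverse branchings, propagated over the dense set of classical points, force two more independent directions. Hence $\dim C\ge 3$, and adding back the special fibre yields $\dim C\ge 4$ in $\Spec\T$.

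The hard part will be the dimension estimate $\dim\overline{\mathcal{F}_x}\ge 3$, in particular the transversality of successive branchings, which rests on the analyticity of Sen/Hodge--Tate weights along the eigencurve and on the combinatorial bookkeeping of the fern; I would import this from \cite{gouvea}, \cite{chenevier2011}, \cite{Emerton2011}, \cite{emerton11}, and, for the residually reducible $\km$ relevant here, from the pseudo-representation refinements in \cite{Hernandez:2022aa}. Two further points need care: identifying Emerton's Hecke algebra $\T$ with the Hecke algebra acting on the completed cohomology underlying the eigencurve, so that the fern really lands in $\Spec\T[1/p]$ (this is provided by the classicality and completed-cohomology formalism of \cite{Emerton2011}), and checking that the Eisenstein behaviour of the eigencurve at residually reducible points causes no trouble, which it does not since the fern is constructed only from classical cusp forms.
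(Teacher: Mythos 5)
Your approach — the infinite fern of Gouv\^ea--Mazur/Chenevier on the generic fibre, after reducing via $\Z_p$-flatness — is exactly what the paper invokes: the paper's proof is the one-line citation of \cite[Corollary 2.28]{Emerton2011}, which is precisely the fern result you are sketching. Your sketch elides a couple of points that the reference handles carefully (why every irreducible component of $\Spec\T_\km[1/p]$ contains a classical point to seed the fern, and why the fern through $x\in C$ — merely a connected set through a point that may lie on several components — is actually confined to $C$), but the route is the same and there is no genuine divergence from the paper.
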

    
    \begin{proof}
    	See \cite[Corollary 2.28]{Emerton2011}.
    \end{proof}
    
     If we replace $\Z_p$ by $\cO$ in all definitions above, we can define a $p$-adic Hecke algebra $\T'$ over $\cO$. It is clear that we have $\T' = \T \otimes_{\Z_p} \cO$. From now on, we assume that our Hecke algebra $\T$ is defined over $\cO$. 
    
    By Theorem \ref{decomposition}, we know that $\T$ is semi-local. If we assume $\km_1, \cdots, \km_r$ are all maximal ideals of $\T$, combining \cite[24.C]{matsumura80}, we have
    $$ \T \cong \T_{\km_1} \times \cdots \times \T_{\km_r},$$
    where each $ \T_{\km_i}$ is $\km_i$-adically complete.
    
    Let $\km$ be a maximal ideal of $\T$. It is classical that there exists a pseudo-representation $T_\km: G_{\Q, \Sigma} \to \T_\km$ such that $T_\km(\Frob_l)=T_l$ and $\det (\Frob_l)=lS_l$ (see \cite[Lemma 4.1]{Deo2023} for example). Let $R_{\Q}^\ps$ be the universal pseudo-deformation ring of $T_\km \modd \km$ for $G_{\Q, \Sigma}$. Then by the universal property, we get a surjection $R_{\Q}^\ps \twoheadrightarrow \T_\km$.
    
    \begin{thm}\label{main thm for emerton}
    	Conjecture \ref{conj} holds for $p \ge 5$.
    \end{thm}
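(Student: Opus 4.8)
The plan is to reduce Conjecture \ref{conj} to an upper bound $\dim \T_\km \le 4$ for every maximal ideal $\km$ of $\T$, and then to extract this bound from the universal pseudo-deformation ring surjecting onto $\T_\km$, exactly in the spirit of Steps 1--4 of the introduction. Since $\T \cong \T_{\km_1}\times\cdots\times\T_{\km_r}$ by Theorem \ref{decomposition}, the scheme $\Spec\T$ is the disjoint union of the $\Spec\T_{\km_i}$, so it suffices to show that each $\T_\km$ is equidimensional of Krull dimension $4$. Fix $\km$, let $\bar\rho_\km$ be the semisimple residual representation attached to $T_\km\bmod\km$ (in the sense of Definition \ref{construction}), and recall the surjection $R_\Q^\ps\twoheadrightarrow\T_\km$, so that $\dim\T_\km\le\dim R_\Q^\ps$. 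By Proposition \ref{lower bound} every irreducible component of $\Spec\T_\km$ has dimension at least $4$; hence it is enough to prove $\dim R_\Q^\ps\le 4$, after which equidimensionality of dimension $4$ is immediate. Note that since $p\ge 5$ none of the exceptional cases of Section 5 arise, $p$ splits completely in $\Q$, and the hypothesis that $\bar\chi$ extends to a character of $G_\Q$ is vacuous over $\Q$, so all the results of Sections 2--5 are available with $F=\Q$.

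It remains to bound $\dim R_\Q^\ps$, and I would split into two cases on $\bar\rho_\km$. If $\bar\rho_\km$ is reducible (the Eisenstein maximal ideals), write $\bar\rho_\km^{\mathrm{ss}}=\bar\chi_1\oplus\bar\chi_2$; since $\det T_\km$ is odd, $\bar\chi:=\bar\chi_2\bar\chi_1^{-1}$ is a totally odd character of $G_\Q$, and twisting $T_\km\bmod\km$ by the Teichm\"uller lift of $\bar\chi_1^{-1}$ (which is unramified outside $\Sigma$) induces an $\cO$-algebra isomorphism $R_\Q^\ps\cong R_\Q^{\ps}(1+\bar\chi)$, the universal pseudo-deformation ring of $1+\bar\chi$ for $G_{\Q,\Sigma}$. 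Choosing any de Rham lift $\chi$ of $\bar\chi$ (e.g. the Teichm\"uller lift), part~2) of Proposition \ref{finite same dim} with $F=\Q$ gives $\dim R_\Q^{\ps}(1+\bar\chi)=2+2[\Q:\Q]=4$, hence $\dim R_\Q^\ps=4$. If instead $\bar\rho_\km$ is absolutely irreducible (the non-Eisenstein maximal ideals), then $R_\Q^\ps$ is the universal deformation ring of $\bar\rho_\km$, and I would run the residually irreducible analogues of Step~2 (potential pro-modularity, built on \cite[Section 9]{pan2022fontaine}) and Step~3 (the non-ordinary finiteness result) as carried out in Appendix \ref{A} --- which in particular dispense with the Taylor--Wiles hypothesis --- together with the bound $\dim R_v\le 3$ at each $v\mid p$ (Proposition \ref{R_v}, or rather its analogue for an absolutely irreducible residual representation, and Proposition \ref{local pseudo}, valid since $p\ge 5$) and the variation-of-determinant argument of Proposition \ref{det}. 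This yields $\dim R_\Q^\ps\le 4$ in this case as well, using Corollary \ref{lower} for the nonemptiness input.

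Combining the two cases gives $\dim\T_\km\le\dim R_\Q^\ps\le 4$ for every $\km$, while Proposition \ref{lower bound} provides the reverse inequality on each irreducible component; therefore every $\T_\km$, and hence $\T\cong\prod_i\T_{\km_i}$, is equidimensional of Krull dimension $4$, which is Conjecture \ref{conj}. I expect the main obstacle to be the residually irreducible case: the Eisenstein case is essentially immediate from Proposition \ref{finite same dim}, whereas in the irreducible case the required dimension bound is not available from the classical literature (which depends on the Taylor--Wiles method) and rests on transporting the potential pro-modularity and non-ordinary finiteness arguments of Steps~2 and~3 to the residually irreducible setting, as developed in Appendix \ref{A}.
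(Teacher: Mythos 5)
Your proposal is correct and follows essentially the same route as the paper: split into Eisenstein and non-Eisenstein cases via the residual pseudo-representation, in the Eisenstein case twist and apply Proposition \ref{finite same dim} with $F=\Q$ to get $\dim R_\Q^\ps = 4$, in the non-Eisenstein case invoke the Appendix \ref{A} machinery (Corollary \ref{dim in irr case}), and combine with the lower bound from Proposition \ref{lower bound}. The only small slip is that you credit Corollary \ref{lower} for the lower bound in the irreducible case, whereas the paper's Corollary \ref{dim in irr case} gets it from the global characteristic (Euler--Poincar\'e) formula; this is inconsequential for the argument.
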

    
    \begin{proof}
    	Let $\km$ be a maximal ideal of $\T$. Combining Proposition \ref{lower bound}, we only need to show that $\dim \T_{\km}=4$. By Definition \ref{construction}, we can divide the proof into two cases, depending on the reducibility of $T_\km \modd \km$. 
    	
    	If $T_\km \modd \km$ is reducible ($\km$ is Eisenstein), combining Proposition \ref{finite same dim} and Proposition \ref{lower bound} (after a finite twist), we have $4 \le \dim \T_\km \le \dim R_{\Q}^\ps =4$.
    	
    	If $T_\km \modd \km$ is irreducible ($\km$ is non-Eisenstein), then the universal deformation ring and the universal pseudo-deformation ring are the same (see \cite[Theorem 2.4.1, page 44]{Berger_2013}). Combining Proposition \ref{lower bound} and Corollary \ref{dim in irr case}, we have $4 \le \dim \T_\km \le \dim R_{\Q}^\ps =4$.
    \end{proof}
    
    \subsection{Big $R=\T$ theorem (I)}
    In this subsection, we prove a big $R=\T$ theorem in the residually reducible case in the setting of completed cohomology of modular curves.
    
    For notations, let $p \ge 3$ be an odd prime. We denote by $K$  a $p$-adic local field with its ring of integers $\cO$, a uniformizer $\varpi$ and its residue field $\F$. Write $\Sigma$ for a set of primes of $\Q$ containing $p$ and write $\Sigma= \Sigma_0 \amalg \{p\}$. Let $\bar{\chi} : G_{\Q, \Sigma} \to \F^\times$ be a continuous, odd character. Assume that $\bar{\chi}|_{G_{\Q_p}} \ne \omega$ if $p=3$.
    
    We first recall the definition of completed cohomology of modular curves. Our main references are \cite[Section 5]{emerton11} and \cite[Section 2]{Nakamura2023}.
    
    For any compact open subgroup $K_f$ of $\GL_2(\A_\Q^\infty)$, we let $Y(K_f)$ denote the adelic modular curve
    $$Y(K_f) := \GL_2(\Q)\setminus ((\mathbb{C}\setminus \mathbb{R}) \times \GL_2(\A_\Q^\infty))/K_f.$$
    We may also view $Y(K_f)$ as an algebraic curve over $\Q$. We write $H^1(K_f)_A:= H_{\operatorname{et}}^1(Y(K_f)_{\bar{\Q}}, A)$, where $A$ denotes one of $K, \cO, $ or $\cO/\varpi^s\cO$ for some $s>0$.
    
    If $K^p$ is some fixed compact open subgroup of $\GL_2(\A_f^p)$, where $ \A_f^p = \Q \otimes \prod_{l \ne p} \Z_l$, we write $H^1(K^p)_A := \varinjlim_{K_p} H^1(K_pK^p)_A$. Here the direct limit is taken over all compact open subgroups $K_p$ of $\GL_2(\Q_p)$. Write $\widehat{H}^1(K^p)_\cO := \varprojlim_s H^1(K^p)_\cO/\varpi^sH^1(K^p)_\cO$ for the $\varpi$-adic completion of $H^1(K^p)_\cO $. Write $ \widehat{H}^1(K^p)_K := \widehat{H}^1(K^p)_\cO \otimes_\cO K$.
    
    Write $G_{\Sigma_0} := \prod_{l \in \Sigma_0} \GL_2(\Q_l)$ and $K_0^\Sigma := \prod_{l \notin \Sigma} \GL_2(\Z_l)$. If $K_{\Sigma_0}$ is an open subgroup of $G_{\Sigma_0}$, we write $\widehat{H}^1(K_{\Sigma_0})_A := \widehat{H}^1(K_{\Sigma_0}K_0^\Sigma)_A$ for $A=K$ or $\cO$. Also write $\widehat{H}^1_{A, \Sigma}:= \varinjlim_{K_{\Sigma_0}} \widehat{H}^1(K_{\Sigma_0})_A$. It is well-known that $\widehat{H}^1_{A, \Sigma} $ are equipped with a continuous action of $G_\Q \times \GL_2(\Q_p)\times G_{\Sigma_0}$.
    
    Let $K_\Sigma$ be a compact open subgroup of $G_\Sigma = \prod_{l \in \Sigma} \GL_2(\Q_l)$. For any prime $l \notin \Sigma$, we write $S_l$ and $T_l$ for the Hecke operators acting on $H^1(K_\Sigma K_0^\Sigma)_\cO$ corresponding to the double cosets of $K_l$ in $\GL_2(\Q_l)$ represented by $\begin{pmatrix}
    	l & 0\\
    	0& l
    \end{pmatrix} $ and $\begin{pmatrix}
    l & 0\\
    0& 1
    \end{pmatrix} $ respectively. We let $\T(K_\Sigma)_\cO$ denote the $\cO$-subalgebra of $\End_\cO(H^1(K_\Sigma K_0^\Sigma)_\cO)$ generated by $S_l$ and $T_l$ for all $\ \notin \Sigma$. If $K_{\Sigma_0}$ is an open subgroup of $G_{\Sigma_0}$, and $K_p' \subset K_p  $ are compact open subgroups of $\GL_2(\Q_p) $, then there is a natural surjection $\T(K_p'K_{\Sigma_0})_\cO \twoheadrightarrow \T(K_pK_{\Sigma_0})_\cO$. Define $\T(K_{\Sigma_0}) = \varprojlim_{K_p}\T(K_pK_{\Sigma_0})_\cO$, which is an $\cO$-algebra topologically generated by the Hecke operators $S_l$ and $T_l$ for all $l \notin \Sigma$ and acts faithfully on $ \widehat{H}^1(K_{\Sigma_0})_\cO$.
    
    We consider the pseudo-representation $\bar{T} = \bar{\chi}_1 + \bar{\chi}_2 : G_{\Q, \Sigma} \to \F$, where $\bar{\chi}_1, \bar{\chi}_2 : G_{\Q, \Sigma} \to \F^\times$ are continuous characters satisfying $\bar{\chi}_1\bar{\chi}_2^{-1}=\bar{\chi}$. It is classical that $\bar{T}$ is modular (see \cite[Lemma 2.23]{Deo2023} or \cite[Lemma 7.3]{Fakhruddin2022} for example). In other words, there exist a compact open subgroup $K_{\Sigma_0} \subset G_{\Sigma_0}$ and a maximal ideal $\km$ of $\T(K_{\Sigma_0})$ satisfying
    $$ \bar{T}(\Frob_l) \equiv T_l \modd \km,  ~~\det(\bar{T}(\Frob_l)) \equiv lS_l \modd \km$$
    for all $l \notin \Sigma$. We say such $K_{\Sigma_0}$ is an \textit{allowable level} for the pseudo-representation $\bar{T}$.
    
    For an allowable level $K_{\Sigma_0}$, there exists a pseudo-deformation $T_\km: G_{\Q, \Sigma} \to \T(K_{\Sigma_0})_\km$ satisfying 
    $$ T_\km(\Frob_l)=T_l ,  ~~\det(T_\km(\Frob_l)) = lS_l $$
    for all $l \notin \Sigma$. If $K_{\Sigma_0}' \subset K_{\Sigma_0}$ is an inclusion of allowable levels, then $\T(K_{\Sigma_0}')_\km \twoheadrightarrow \T(K_{\Sigma_0})_\km$ is a surjection. Define $\T_\km := \varprojlim_{K_{\Sigma_0}} \T(K_{\Sigma_0})_\km$, where $K_{\Sigma_0} $ ranges over all allowable levels.
    
    Let $R_\Q^\ps$ be the universal pseudo-deformation ring of $\bar{T}$.  Combining Proposition \ref{det} and Proposition \ref{finite same dim} (after a finite twist), we can define $R_{\Q, 0}^\ps := R_\Q^\ps/I_\Q$, where $I_\Q $ is the intersection of all minimal prime ideals of dimension $4$. Hence, $R_{\Q, 0}^\ps$ is equidimensional.
    
    Note that by the universal property, we get a natural surjection $R_\Q^\ps \twoheadrightarrow \T_\km $. The following result is an analogue to the big $R=\T$ theorem in the residually irreducible case proved by B\"ockle and Emerton (see \cite[Theorem 2.1]{Nakamura2023} for example).
    
    \begin{thm}\label{big R=T modular curve}
    	The kernel of the surjection $R_\Q^\ps \twoheadrightarrow \T_\km $ is contained in 
    	$I_\Q $. Moreover, every prime of $R_{\Q, 0}^\ps$ comes from some big Hecke algebra.
    \end{thm}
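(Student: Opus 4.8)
The plan is to deduce both statements from a Zariski density argument rather than from an explicit patching. Set $J := \ker(R_\Q^\ps \twoheadrightarrow \T_\km)$. Since $R_\Q^\ps \to \T_\km$ is surjective, $\Spec \T_\km$ is a closed subscheme of $\Spec R_\Q^\ps$ whose underlying closed subset is $V(J)$, and $I_\Q$ is an intersection of primes, hence radical; so the first assertion $J \subseteq I_\Q$ is equivalent to the inclusion of closed subsets $V(I_\Q) = \Spec R_{\Q,0}^\ps \subseteq V(J)$. It therefore suffices to show that $\Spec R_{\Q,0}^\ps$ is contained in the image of $\Spec \T_\km$ in $\Spec R_\Q^\ps$. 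Once this is known, $R_\Q^\ps \to R_{\Q,0}^\ps$ factors through $\T_\km$, so there is a surjection $\T_\km \twoheadrightarrow R_{\Q,0}^\ps$ and a closed immersion $\Spec R_{\Q,0}^\ps \hookrightarrow \Spec \T_\km$; then every prime of $R_{\Q,0}^\ps$ is a prime of $\T_\km$, i.e.\ comes from the big Hecke algebra of completed cohomology of modular curves, which is the second assertion.

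First I would check that every modular point of $\Spec R_\Q^\ps$ lies in $V(J)$. If $\kp$ is such a point, then $\rho(\kp)$ arises from a regular algebraic cuspidal automorphic representation of $\GL_2(\A_\Q)$, i.e.\ (up to twist) from a classical cuspidal eigenform; by the Eichler--Shimura relation together with Emerton's description of the locally algebraic vectors in completed cohomology (see \cite{emerton11} and \cite{Nakamura2023}), the corresponding system of Hecke eigenvalues occurs in $\widehat H^1(K_{\Sigma_0})_\cO \otimes_\cO K$ for some allowable level $K_{\Sigma_0}$, and since $\kp$ is a deformation of $\bar T = \bar\chi_1 + \bar\chi_2$ its residual eigensystem is precisely the one cutting out $\km$. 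Hence $\kp$ is the pullback, along the map $R_\Q^\ps \to \T_\km \to \T(K_{\Sigma_0})_\km$ realizing this eigensystem, of a maximal ideal of $\T(K_{\Sigma_0})_\km[1/p]$, so $\kp \supseteq J$.

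Next I would pass to the setting of Section 5 by a twist. Twisting $\bar T = \bar\chi_1 + \bar\chi_2$ by the Teichm\"uller lift of $\bar\chi_1^{-1}$ gives an isomorphism of $R_\Q^\ps$ with the universal pseudo-deformation ring of $1 + \bar\chi$, where $\bar\chi := \bar\chi_2\bar\chi_1^{-1}$; being an isomorphism it carries $I_\Q$ to the analogous ideal there (cutting out the dimension-$4$ components), and it carries modular points to modular points up to a fixed finite-order twist. Since $\bar\chi$ is a character of $G_\Q$ and, by hypothesis, we are not in the exceptional case, Theorem \ref{main thm without det} shows that the Zariski closure of the crystalline-at-$p$ modular points of the ring $R^\ps$ of $1 + \bar\chi$ is the union of its dimension-$4$ components. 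Transporting this back through the twist isomorphism, $\Spec R_{\Q,0}^\ps$ is the Zariski closure in $\Spec R_\Q^\ps$ of a set of modular points, and by the previous paragraph each of these lies in the closed subset $V(J)$. Hence $\Spec R_{\Q,0}^\ps = V(I_\Q) \subseteq V(J)$, as required.

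The main obstacle is the input of the second paragraph: making precise that a classical cuspidal eigenform whose residual pseudo-representation is $\bar T$ actually contributes to the $\km$-localized completed cohomology $\widehat H^1(K_{\Sigma_0})_\cO$ at a suitable level away from and at $p$, and that the map $R_\Q^\ps \to \T_\km$ coming from the universal property really does realize its Hecke eigensystem. This is the (standard) classicality and local-global compatibility statement for completed cohomology of modular curves, together with some bookkeeping of level structures; it is the step where the identification of modular points of $\Spec R_\Q^\ps$ with points of $\Spec \T_\km$ is carried out. The twist reduction in the third paragraph is routine but needs the evident analogue of Proposition \ref{det} for residual pseudo-representations of the form $\bar\chi_1 + \bar\chi_2$.
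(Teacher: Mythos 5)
Your proposal is correct and takes essentially the same route as the paper: show every modular point of $\Spec R_\Q^\ps$ factors through $\T_\km$, invoke Theorem \ref{main thm without det} (after a twist to the setting of Section 5, which the paper handles with the same appeal to Proposition \ref{det} just before stating the theorem) to identify the Zariski closure of those modular points with $V(I_\Q)$, and conclude $J\subseteq I_\Q$ and hence that $R_{\Q,0}^\ps$ is a quotient of $\T_\km$. The only difference is presentational: you unpack the classicality/local-global compatibility content behind the factorization of a modular point through $\T_\km$, which the paper compresses into the phrase ``by our definition.''
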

    
    \begin{proof}
    	For each modular point $\kp \in \Spec R_\Q^\ps$, by our definition, the natural surjection $R_\Q^\ps \twoheadrightarrow R_\Q^\ps/\kp$ factors through $\T_\km$. By Theorem \ref{main thm without det}, the kernel of $R_\Q^\ps \twoheadrightarrow \T_\km $ is contained in the intersection of all modular points, which is just $I_\Q $.
    \end{proof}
    
    \begin{rem}
    	1) If we fix a central character $\zeta : (\A_\Q^\infty)^\times/\Q_{+}^\times \to \cO^\times$ and study the subspace of  $ \widehat{H}^1(K_{\Sigma_0})_\km$ for some allowable level $K_{\Sigma_0} $ on which the centre  $(\A_\Q^\infty)^\times $ acts via $\zeta$, we can also define the universal pseudo-deformation ring $R_\Q^{\ps, \zeta}$ and the big Hecke algebra $\T_\zeta$, whose determinants are fixed corresponding to the central character $\zeta$. (See \cite[Section 6]{Pan_2022} for the details of such settings.) In this case, we can also prove a similar big $R=\T$ theorem using Theorem \ref{main thm}.
    	
    	%In that case, we may follow the proof the local-global compatibility results in the Eisenstein case (see \cite[Theorem 6.3.5]{Pan_2022}) and the method of \cite[Theorem 3.6.1]{pan2022fontaine} to obtain the lower bound of the Krull dimension of each irreducible component of $\T_\zeta$, and further show that $\Spec R_{\Q,0}^{\ps, \zeta} = \Spec \T_\zeta$ by using Theorem \ref{main thm}.
    	
    	2) We may also prove some similar results in the setting of completed cohomology of Shimura curves.
    \end{rem}
    
    \subsection{Big $R=\T$ theorem (II)}
    In this subsection, we prove a big $R=\T$ theorem in the residually reducible case in the setting of completed cohomology of quaternionic forms. 
    
    For notations, let $p \ge 3$ be an odd prime. We denote by $K$  a $p$-adic local field with its ring of integers $\cO$, a uniformizer $\varpi$ and its residue field $\F$. Let $F$ be an abelian totally real field of even degree such that $p$ splits completely in $F$. Write $\Sigma$ for a set of primes of $F$ containing $\Sigma_p$ (the set of places of $F$ lying above $p$). Let $\bar{\chi} : G_{F, \Sigma} \to \F^\times$ be a continuous, totally odd character such that it can be extended to a character of $G_\Q$. Assume that $\bar{\chi}|_{G_{\F_v}} \ne \omega$ for each $v|p$ if $p=3$. Let $\chi : G_{F, \Sigma} \to \cO^\times$ be a continuous, de Rham character lifting $\bar{\chi}$.
    
    Note that $F$ is a totally real field of even degree. There exists a quaternion algebra $D$ over $F$ ramified exactly at all infinite places. Fix an isomorphism between $(D \otimes_{F}\A_{F}^\infty )^\times$ and $\GL_2(\A_{F}^\infty)$. Write $\psi = \chi \varepsilon$ for a character of $(\A_F^\infty)^\times/F_{+}^\times $ via global class field theory.
    
    Let $U^p = \prod_{v \nmid p} U_v$ be a tame level satisfying for each $v \notin \Sigma$, $U_v = \GL_2(\cO_{F_v})$. As in Subsection \ref{sec for T}, we can define the big Hecke algebra $\T_{\psi}(U^p)$. 
    
    Consider the pseudo-representation $\bar{T} = \bar{\chi}_1 + \bar{\chi}_2 : G_{F, \Sigma} \to \F$, where $\bar{\chi}_1, \bar{\chi}_2 : G_{F, \Sigma} \to \F^\times$ are continuous characters satisfying $\bar{\chi}_1\bar{\chi}_2^{-1}=\bar{\chi}$. As $\bar{\chi}$ can be extended to a character of $G_\Q$, we know that $\bar{T}$ is modular. Hence, there exist a tame level $U^p$, a maximal ideal $\km$ of $\T_{\psi}(U^p)$ and a pseudo-representation $T_\km : G_{F, \Sigma} \to \T_{\psi}(U^p)_\km$ sending $\Frob_v$ to the Hecke operator $T_v$ for $v \notin \Sigma$ such that $ \bar{T} \equiv T_\km \modd \km$. We say that such a tame level $U^p$ is an \textit{allowable level}.
    
    Let $R_F^{\ps, \chi}$ be the universal pseudo-deformation ring of $\bar{T}$ with determinant $\chi$. For an allowable level $U^p$, by the universal property, we get a natural surjection $ R_F^{\ps, \chi} \twoheadrightarrow \T_{\psi}(U^p)_\km$. If ${U^p}' \subset U^p$ is another allowable level, then we have the following commutative diagram
    \[\begin{tikzcd}
    	R_F^{\ps, \chi} \arrow[r, two heads] \arrow[rd, two heads] & \T_{\psi}({U^p}')_\km \arrow[d, two heads]\\
      ~ &	\T_{\psi}(U^p)_\km.
    \end{tikzcd}\]
    Hence, if we write $\T_{\km} := \varprojlim_{U^p} \T_{\psi}(U^p)_\km$, where the inverse limit is taken over all allowable tame levels $U^p$, then we get a surjection $ R_F^{\ps, \chi} \twoheadrightarrow \T_{\km}$ induced by the universal property.
    
    Recall that $R_{F, 0}^{\ps, \chi} = R_F^{\ps, \chi}/I_F$, where $I_F$ is the intersection of all minimal primes of dimension $1+2[F:\Q]$. 
    
    \begin{thm}\label{big R=T quaternion}
    	The kernel of the surjection $ R_F^{\ps, \chi} \twoheadrightarrow \T_{\km}$ is contained in $I_F$. Moreover, every prime of $R_{F, 0}^{\ps, \chi}$ comes from some big Hecke algebra.
    \end{thm}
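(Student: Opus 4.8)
The plan is to mimic the proof of Theorem~\ref{big R=T modular curve}, with the completed cohomology of modular curves replaced by that of quaternionic forms. Write $J:=\ker\bigl(R_F^{\ps,\chi}\twoheadrightarrow\T_\km\bigr)$. It suffices to prove $J\subseteq I_F$: once this is known, the given surjection descends to a surjection $\T_\km=R_F^{\ps,\chi}/J\twoheadrightarrow R_F^{\ps,\chi}/I_F=R_{F,0}^{\ps,\chi}$, so every prime of $R_{F,0}^{\ps,\chi}$ is the contraction of a prime of $\T_\km$ and therefore comes from the big Hecke algebra $\T_\km$ (equivalently, from $\T_\psi(U^p)_\km$ at some allowable level $U^p$).

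First I would check that $J\subseteq\kp$ for every modular point $\kp\in\Spec R_F^{\ps,\chi}$. By definition the Galois representation $\rho(\kp)$ arises from a regular algebraic cuspidal automorphic representation $\pi$ of $\GL_2(\A_F)$. Since $[F:\Q]$ is even and $D$ ramifies exactly at the infinite places, Jacquet--Langlands transfers $\pi$ to an automorphic representation of $D^\times$ with central character $\psi$ contributing to $S_\psi(U^p_0)_\km$ for some allowable tame level $U^p_0$ (its residual pseudo-representation being $\bar{T}=1+\bar{\chi}$, the relevant maximal ideal is indeed $\km$). It therefore yields a maximal ideal of $\T_\psi(U^p_0)_\km[1/p]$ at which the universal pseudo-representation has the same trace of $\Frob_v$ as $T(\kp)$ for every $v\notin\Sigma$. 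Since a pseudo-representation valued in a field is determined by the traces of Frobenii (Chebotarev), the $\cO$-algebra homomorphism $R_F^{\ps,\chi}\to\overbar{\Q_p}$ with kernel $\kp$ and the one factoring through $\T_\psi(U^p_0)_\km$ coincide; hence $\ker\bigl(R_F^{\ps,\chi}\to\T_\psi(U^p_0)_\km\bigr)\subseteq\kp$. As $\T_\km=\varprojlim_{U^p}\T_\psi(U^p)_\km$, one has $J=\bigcap_{U^p}\ker\bigl(R_F^{\ps,\chi}\to\T_\psi(U^p)_\km\bigr)\subseteq\ker\bigl(R_F^{\ps,\chi}\to\T_\psi(U^p_0)_\km\bigr)\subseteq\kp$.

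Then I would conclude. By Theorem~\ref{main thm} the Zariski closure of the set of modular points of $\Spec R_F^{\ps,\chi}$ is $\Spec R_{F,0}^{\ps,\chi}=\Spec R_F^{\ps,\chi}/I_F$; since $\bigcap_{\kp~\mathrm{modular}}\kp$ is a radical ideal and $I_F$ is radical by Proposition~\ref{def of 0}, this forces $\bigcap_{\kp~\mathrm{modular}}\kp=I_F$. Combining with the previous step gives $J\subseteq\bigcap_{\kp~\mathrm{modular}}\kp=I_F$, which is the first assertion, and the ``moreover'' part then follows as explained above. If in addition $\chi$ is crystalline at each $v\mid p$, one restricts to modular points that are crystalline at $p$ and invokes the crystalline version of Theorem~\ref{main thm}, using Lemma~\ref{intersection}; the argument is otherwise identical.

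The substantive content is entirely packaged into Theorem~\ref{main thm} (Zariski density of modular points), which itself rests on the finiteness statement of Proposition~\ref{finiteness 2} and the non-ordinary Fontaine--Mazur conjecture, so nothing genuinely hard remains. The one point I expect to require care is the bookkeeping in the second paragraph: making the Jacquet--Langlands transfer and the choice of an allowable level $U^p_0$ precise, in particular verifying that the transferred representation is non-trivial at a level whose associated Hecke algebra has $\km$ among its maximal ideals. The passage to the inverse limit $\T_\km$ is then harmless, since a containment $J\subseteq\kp$ at a single allowable level already suffices.
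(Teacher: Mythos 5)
Your proposal is correct and takes essentially the same approach as the paper: reduce to showing that the kernel $J$ is contained in every modular prime (the paper handles this with the terse phrase ``from our construction'', which you unpack via Jacquet--Langlands), then invoke Theorem~\ref{main thm} to identify $\bigcap_{\kp\ \text{modular}}\kp$ with $I_F$. Your extra care with the radical-ideal bookkeeping and the crystalline aside goes slightly beyond what the paper records but introduces no divergence in method.
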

    
    \begin{proof}
    	For a modular point $\kp \in \Spec R_{F}^{\ps, \chi}$, from our construction, we know that the natural surjection $ R_{F}^{\ps, \chi} \twoheadrightarrow R_{F}^{\ps, \chi}/\kp$ factors through $ \T_{\km} $. Combining Theorem \ref{main thm}, the kernel of the surjection $ R_{F}^{\ps, \chi} \twoheadrightarrow ( \T_{\km})^{\operatorname{red}}$ is contained in $I_F$. This shows the result.
    \end{proof}
    
    \subsection{Serre's modularity conjecture in the residually reducible case}  In this subsection, we discuss the relation between our previous results and Serre's modularity conjecture in the residually reducible case.
    
    Serre's modularity conjecture states that for an odd, irreducible, continuous Galois representation $\bar{\rho}: G_{\Q} \to \GL_2(\F)$ with a finite field $\F$ of characteristic $p$ (including the case $p=2$), it arises from a newform. It has been proved in \cite{Khare_2009a} and \cite{Khare_2009}. If $\bar{\rho}$ is reducible, then we say it is \textit{modular} if there exist a Galois representation $\rho_{f}: G_{\Q} \to \GL_2(\cO_f)$ associated to some newform $f$ and a suitable choice of lattice over $\cO_f$ such that $\bar{\rho} \cong \rho_{f} \modd \mathfrak{m}_{\cO_f}$.
    
    To state it more precisely, we give some notations first. Let $p \ge 5$ be an odd prime, and let $F$ be an abelian totally real field in which $p$ splits completely. Let $\Sigma$ be a finite set of finite places containing $\Sigma_p$, the set of places $v$ of $F$ lying above $p$. We denote by $K$  a $p$-adic local field with its ring of integers $\cO$, a uniformizer $\varpi$ and its residue field $\F$. Let $\bar{\chi}: G_{F, \Sigma} \to \F^\times$ be a continuous, totally odd character, and suppose that $\bar{\chi} $ can be extended to a character of $G_{\Q}$.
    
    Let $x \in H^1(F_\Sigma/F, \F(\bar{\chi}^{-1}))$ be a non-zero cohomology class. It is easy to see that $x$ defines a two-dimensional representation (up to equivalence)
    $$ \bar{\rho}_x : G_{F, \Sigma} \to \GL_2(\F), ~~ g \mapsto \begin{pmatrix}
    	1 & x(g)\bar{\chi}(g)\\
    	0& \bar{\chi}(g)
    \end{pmatrix}. $$
    For simplicity, we choose a complex conjugation $c \in G_{F, \Sigma}$ and an element $\sigma \in G_{F, \Sigma}$, and assume $x(c)=0, x(\sigma)=1$ for the definition of $ \bar{\rho}_x$. Then it is conjectured that the Galois representation $ \bar{\rho}_x$ arises from some newform $f$. In \cite{Fakhruddin2022}, they solved most cases when $F=\Q$ (also including $x=0$) by a Galois theoretic method. 
    
     Let $\chi : G_{F, \Sigma} \to \cO^\times$ be a de Rham lifting of $\bar{\chi}$. As $x \ne 0$, we can define the universal deformation ring $R_x^{\chi}$ of the non-split Galois representation $ \bar{\rho}_x$ with determinant $\chi$. Let $ R_{F}^{\ps, \chi}$ be the universal pseudo-deformation ring parametrizing all pseudo-deformations of $1+\bar{\chi}$ with determinant $\chi$. By the universal property, we have a natural map $R_{F}^{\ps, \chi} \to R_x^{\chi} $.
     
     Let $\kp$ be a one-dimensional prime of $R_x^{\chi}$ such that the Galois representation $\rho(\kp)$ associated to $\kp$ is irreducible. Here we do not require
     that $R_x^{\chi}/\kp$ is of characteristic $0$.
     
     \begin{prop}\label{dim of x}
     	For any irreducible component of $\Spec R_x^{\chi}$ containing $\kp$, it is of Krull dimension $1+2[F:\Q]$.
     \end{prop}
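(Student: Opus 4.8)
The plan is to compare $R_x^{\chi}$ with the pseudo-deformation ring $R_{F}^{\ps,\chi}$ in a neighbourhood of $\kp$ and then transport to $R_x^{\chi}$ the connectedness-dimension estimates already available for $R_{F}^{\ps,\chi}$. First I would record the natural morphism $R_{F}^{\ps,\chi}\to R_x^{\chi}$ obtained by feeding $\tr\rho^{\mathrm{univ}}$ (where $\rho^{\mathrm{univ}}$ is the universal deformation of $\bar{\rho}_x$ with determinant $\chi$) into the universal property of $R_{F}^{\ps,\chi}$, and let $\kp^{\ps}\subset R_{F}^{\ps,\chi}$ be the contraction of $\kp$. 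Since $R_x^{\chi}/\kp$ is a quotient of $R_{F}^{\ps,\chi}/\kp^{\ps}$ into which the latter injects, the two agree; in particular $\kp^{\ps}$ is again a one-dimensional prime and its associated pseudo-representation is $\tr\rho(\kp)$, which is irreducible in the sense of Definition~\ref{construction}. Because $\bar{\chi}\neq\mathbf 1$ (it is totally odd and $p$ is odd), the residual representation $\bar{\rho}_x$ has scalar endomorphism ring, so $R_{F}^{\ps,\chi}\to R_x^{\chi}$ is surjective (a deformation ring of a residual representation with scalar endomorphisms is topologically generated by traces of Frobenii), and moreover, since $\rho(\kp)$ is absolutely irreducible, pseudo-deformations of $\tr\rho(\kp)$ and deformations of $\rho(\kp)$ carry the same data: the induced map $\widehat{(R_{F}^{\ps,\chi})_{\kp^{\ps}}}\to\widehat{(R_x^{\chi})_{\kp}}$ is an isomorphism, both sides being the universal fixed-determinant deformation ring of $\rho(\kp)$ over $k(\kp)$ (the genuine-deformation analogue of \cite[Corollary 2.2.2]{pan2022fontaine}, cf. \cite[Proposition 9.5]{kisin2003overconvergent}).

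Granting this comparison, the upper bound follows quickly: for a minimal prime $\mathfrak a$ of $R_x^{\chi}$ contained in $\kp$, excellence and catenarity of $R_x^{\chi}$ give $\dim R_x^{\chi}/\mathfrak a=\dim(R_x^{\chi}/\mathfrak a)_{\kp}+1\le\dim\widehat{(R_x^{\chi})_{\kp}}+1=\dim\widehat{(R_{F}^{\ps,\chi})_{\kp^{\ps}}}+1\le\dim R_{F}^{\ps,\chi}=1+2[F:\Q]$, where the last equality is Proposition~\ref{finite same dim}; alternatively one reads the inequality $\dim R_x^{\chi}\le 1+2[F:\Q]$ off the surjection $R_{F}^{\ps,\chi}\twoheadrightarrow R_x^{\chi}$. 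For the lower bound, I would observe that the proof of \cite[Lemma 7.4.6]{pan2022fontaine} in fact establishes $c\big(\widehat{(R_{F}^{\ps,\chi})_{\kp^{\ps}}}\big)\ge 2[F:\Q]-1$ by a Galois-cohomology computation on $\ad^0\rho(\kp)$ (the global Euler characteristic formula, together with $H^0(\ad^0\rho(\kp))=0$ and the computation of $\dim H^0(G_{F_v},\ad^0\rho(\kp))=1$ at the real places, since $\det\rho(\kp)(c)=-1$). This is a property of the deformation ring of the absolutely irreducible $\rho(\kp)$, hence passes to $\widehat{(R_x^{\chi})_{\kp}}$, and Proposition~\ref{connected dim} then yields $c\big((R_x^{\chi})_{\kp}\big)\ge 2[F:\Q]-1$. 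Arguing exactly as in \cite[Remark 3.2.8]{Zhang:2024ab}: if $C$ is an irreducible component of $\Spec R_x^{\chi}$ through $\kp$ of dimension $d$ and there is a second component through $\kp$, then $C$ meets their union in a closed subset of dimension $\le d-1$ globally, hence $\le d-2$ after localising at $\kp$, so $d-2\ge 2[F:\Q]-1$; if $C$ is the unique component through $\kp$, the quotient-of-power-series presentation of $\widehat{(R_x^{\chi})_{\kp}}$ by at most $\dim H^2(\ad^0\rho(\kp))$ relations gives $\dim\widehat{(R_x^{\chi})_{\kp}}\ge 2[F:\Q]$ directly. Either way $d\ge 1+2[F:\Q]$, and combining with the upper bound, $d=1+2[F:\Q]$.

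The step I expect to be the main obstacle is the comparison: making precise and rigorous that $\widehat{(R_x^{\chi})_{\kp}}$ is canonically the Galois deformation ring of $\rho(\kp)$ over $k(\kp)$ — including the degenerate possibility $\varpi\in\kp$, where $k(\kp)$ has positive characteristic — so that the Galois-cohomological lower bound for connectedness dimension underlying Proposition~\ref{irreducible point}(1) may be invoked verbatim. Once that identification is installed, the remainder is bookkeeping with catenarity, dimensions of completions, and the elementary properties of connectedness dimension from Subsection~2.1; note also, as in the proof of Corollary~\ref{structure of ps} via Proposition~\ref{def of 0}, that the one-dimensional irreducible point $\kp$ automatically sits inside the part of $R_x^{\chi}$ cut out by components of the expected large dimension, so no separate handling of a low-dimensional (reducible) locus is needed.
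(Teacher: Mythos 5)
Your proof is correct in substance, but it takes a noticeably different route from the paper, and the detour is, if anything, harder than necessary. For the lower bound, the paper does not pass through any completed local ring at $\kp$: it simply applies the global characteristic formula (\cite[Proposition 3.1.3]{Zhang:2024ab}, i.e.\ the Mazur presentation $R_x^{\chi}$ as a quotient of a power series ring in $h^1(\ad^0\bar{\rho}_x)$ variables by at most $h^2(\ad^0\bar{\rho}_x)$ relations) directly to the residual representation $\bar{\rho}_x$, which has scalar endomorphisms, giving that \emph{every} irreducible component of $\Spec R_x^{\chi}$ has dimension at least $1+2[F:\Q]$. This gives the lower bound in one stroke and completely sidesteps the comparison $\widehat{(R_x^{\chi})_{\kp}}\cong\widehat{(R_F^{\ps,\chi})_{\kp^{\ps}}}$ and the connectedness-dimension bookkeeping that you flag yourself as the main obstacle. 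Your alternative would still go through --- the identification of $\widehat{(R_x^{\chi})_{\kp}}$ with the fixed-determinant deformation ring of $\rho(\kp)$ over $k(\kp)$ is the standard Kisin-style fact, and the surjectivity of $R_F^{\ps,\chi}\to R_x^{\chi}$ (from scalar endomorphisms, as in \cite[Corollary 1.4.4]{kisin2009fontaine}) upgrades the map on completions to an isomorphism once the universal properties are matched --- but it is work the paper does not need to do, and it also forces you to handle the characteristic-$p$ case of $\kp$ separately. For the upper bound, your second suggestion (read $\dim R_x^{\chi}\le\dim R_F^{\ps,\chi}=1+2[F:\Q]$ off the surjection) is arguably cleaner than what the paper writes; the paper instead localizes at the generic point $\kq$ of the component, uses that $\rho(\kq)$ is irreducible (since it specializes to the irreducible $\rho(\kp)$) to invoke the dimension inequality $\dim R_x^{\chi}/\kq\le\dim R_F^{\ps,\chi}/\kq'$ of \cite[Corollary 2.3.6(2)]{pan2022fontaine}, and then applies Proposition~\ref{finite same dim}. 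In either case the hypothesis on $\kp$ is really doing its work only in the paper's formulation of the upper bound; in yours it is doing its work in the completion comparison.

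One small caution on your lower-bound write-up: in the ``unique component through $\kp$'' branch you need $\dim C=\hht(\kp)+1$, which uses that the unique minimal prime $\kq\subset\kp$ computes $\hht(\kp)=\hht(\kp/\kq)$ together with catenarity; the way you have phrased it this is implicit, and it is worth making explicit since the whole point of that branch is that the connectedness-dimension inequality degenerates there. This is a presentational issue, not a gap.
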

    
    \begin{proof}
    	Let $C$ be an irreducible component of $\Spec R_x^{\chi}$ with generic point $\kq$. Let $\kq'$ be the kernel of the composite map $R_{F}^{\ps, \chi} \to R_x^{\chi} \twoheadrightarrow R_x^{\chi}/\kq$. We show that $C$
        is of Krull dimension	$1+2[F:\Q]$.
    	
    	By the global characteristic formula (see \cite[Proposition 3.1.3]{Zhang:2024ab}), we know that each irreducible component of $ R_x^{\chi}$ has Krull dimension at least $1+2[F:\Q]$. In particular, $\dim C \ge 1+2[F:\Q]$.
    	
    	Note that $\rho(\kp)$ is irreducible, which implies that the Galois representation $\rho(\kq)$ associated to $\kp$ is also irreducible. By \cite[Corollary 2.3.6 (2)]{pan2022fontaine}, we have $\dim R_x^{\chi}/\kq \le \dim R_{F}^{\ps, \chi}/\kq' $. Combining Proposition \ref{finite same dim}, we prove our result.
    \end{proof}
    
    Keep notations in the proof of Proposition \ref{dim of x}. Combining Proposition \ref{finite same dim}, we can find that $\kq'$ defines an irreducible component $C'$ of $\Spec R_{F}^{\ps, \chi}$. By Theorem \ref{main thm}, the set of modular points contained in $C'$ is Zariski dense. Then it is natural to pose the following conjecture.
    
    \begin{conj}\label{conj for x}
    	For an irreducible component $C$ of $\Spec R_x^{\chi}$, if it contains an irreducible one-dimensional point (an irreducible lifting of $ \bar{\rho}_x$, not necessarily of characteristic $0$), then the set of modular points contained in $C$ is Zariski dense.
    \end{conj}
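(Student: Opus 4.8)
The plan is to try to deduce Conjecture \ref{conj for x} from Theorem \ref{main thm} by transporting Zariski density across the trace map $R_F^{\ps,\chi}\to R_x^{\chi}$. Fix an irreducible component $C$ of $\Spec R_x^{\chi}$ containing an irreducible one-dimensional prime $\kp$, and let $\kq$ be the generic point of $C$. Since the reducible locus is closed, irreducibility generizes, so $\rho(\kq)$ is again absolutely irreducible. Exactly as in the proof of Proposition \ref{dim of x}, set $\kq'$ to be the kernel of the composite $R_F^{\ps,\chi}\to R_x^{\chi}\twoheadrightarrow R_x^{\chi}/\kq$; then $R_F^{\ps,\chi}/\kq'\hookrightarrow R_x^{\chi}/\kq$, and combining the global characteristic formula with \cite[Corollary 2.3.6]{pan2022fontaine} and Proposition \ref{finite same dim} one gets $\dim R_F^{\ps,\chi}/\kq'=\dim R_x^{\chi}/\kq=1+2[F:\Q]$. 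Because $1+2[F:\Q]>2$, the prime $\kq'$ cannot lie in the (at most two-dimensional) reducible locus of $R_F^{\ps,\chi}$, so by Corollary \ref{structure of ps} and Proposition \ref{finite same dim} it is a minimal prime, and $C':=\Spec R_F^{\ps,\chi}/\kq'$ is one of the irreducible components of $\Spec R_{F,0}^{\ps,\chi}$; in particular it is of characteristic zero, and hence so is $\kq$.

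\textbf{Transporting density.} Next I would invoke Theorem \ref{main thm}, which gives that modular points are Zariski dense in each component of $\Spec R_{F,0}^{\ps,\chi}$, and in particular in $C'$. The injection $R_F^{\ps,\chi}/\kq'\hookrightarrow R_x^{\chi}/\kq$ exhibits $C\to C'$ as a dominant morphism of integral schemes of the same dimension, hence generically finite, so its image contains a dense open $U'\subseteq C'$ and the preimage of $U'$ is a dense open of $C$. For a modular point $\mathfrak t\in U'$, choose $\tilde{\mathfrak t}\in C$ above it: the pseudo-representation attached to $\tilde{\mathfrak t}$ equals that attached to $\mathfrak t$, which is the trace of the absolutely irreducible modular representation $\rho(\mathfrak t)$; since an absolutely irreducible pseudo-representation determines its (semisimple, hence irreducible) representation, $\rho(\tilde{\mathfrak t})\cong\rho(\mathfrak t)$ is modular, so $\tilde{\mathfrak t}$ is a modular point of $\Spec R_x^{\chi}$. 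Letting $S\subseteq C$ be the set of such $\tilde{\mathfrak t}$, its image contains the modular points of $U'$, which are dense in $C'$; therefore $\dim\overline{S}\ge\dim C'=\dim C$, and since $C$ is irreducible, $\overline{S}=C$, giving the desired density. (If $\chi$ is crystalline at each $v\mid p$ one would replace modular points by crystalline modular points throughout, as in Theorem \ref{main thm without det}.)

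\textbf{Main obstacle.} The crux — and, I expect, the reason the statement remains a conjecture — is making the passage $C\to C'$ rigorous at the reducible locus. The whole argument hinges on the dimension identity $\dim R_x^{\chi}/\kq=\dim R_F^{\ps,\chi}/\kq'$, and the needed inequality $\dim R_x^{\chi}/\kq\le\dim R_F^{\ps,\chi}/\kq'$ rests on the comparison between the full deformation ring of the \emph{reducible} representation $\bar{\rho}_x$ and the pseudo-deformation ring at a prime with irreducible attached representation; this comparison is delicate precisely because $\bar{\rho}_x$ is non-split, so $R_x^{\chi}$ is not known to be $\cO$-flat, reduced, or equidimensional, and one must verify that $\kq$ (and every modular point one produces) stays away from the reducible locus. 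A related, more bookkeeping difficulty is to guarantee that a modular point of $C'$ lifts to the chosen component $C$, rather than only to some other component of $\Spec R_x^{\chi}$ lying over $C'$; once dominance of $C\to C'$ is established this follows from going-up, but if one only had the weak bound $\dim R_x^{\chi}/\kq\ge 1+2[F:\Q]$ the morphism $C\to C'$ could acquire positive-dimensional fibers and the density argument would collapse. Establishing the full strength of Proposition \ref{dim of x} for an arbitrary (possibly characteristic $p$) irreducible one-dimensional point, together with these flatness and reducible-locus controls, is the hard part; everything downstream is formal commutative algebra plus Theorem \ref{main thm}.
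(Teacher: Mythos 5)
The statement you are asked to prove is not a theorem in the paper at all: it is posed as an open conjecture, and the remark immediately following it explains why the authors could not prove it in general. So your proposal cannot be ``compared against the paper's proof'' because no such proof exists. That said, your sketch is a natural first attempt, and it is worth pinpointing exactly where it breaks.

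The failure is in the ``Transporting density'' step, not in the ``Main obstacle'' you identified. The dimension equality $\dim R_x^{\chi}/\kq=\dim R_F^{\ps,\chi}/\kq'=1+2[F:\Q]$ that you flag as ``the hard part'' is in fact \emph{already established} by Proposition \ref{dim of x}: the inequality $\dim R_x^{\chi}/\kq\le\dim R_F^{\ps,\chi}/\kq'$ is \cite[Corollary 2.3.6 (2)]{pan2022fontaine} plus Proposition \ref{finite same dim}, and the reverse inequality is the global characteristic formula. What \emph{does} break is your inference
\[
\text{``injective ring map, same dimension''}\;\Longrightarrow\;\text{``$C\to C'$ generically finite, image contains a dense open.''}
\]
This implication is valid for morphisms of finite type (where dimension is controlled by transcendence degree and Chevalley's theorem applies), but it is simply false for local homomorphisms of complete Noetherian local domains that are not finite. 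Concretely, take the local injection $\cO[[x]]\hookrightarrow\cO[[y]]$, $x\mapsto\varpi y$; both rings have dimension $2$ and the ring map is injective, but no power of $y$ satisfies a monic relation over $\cO[[\varpi y]]$ (reduce mod $\varpi$ and compare with $\F[[y]]$), so the map is not finite, and one checks that the image of $\Spec\cO[[y]]\to\Spec\cO[[x]]$ is precisely the three primes $\{(0),(x),(\varpi,x)\}$ --- in particular it contains no dense open whatsoever, and is not even constructible. Exactly this phenomenon is what the paper's remark flags: ``the natural map $R_F^{\ps,\chi}\to R_x^{\chi}$ is not finite in general.'' Without finiteness you have no going-up, no image control, and you cannot lift the dense family of modular points of $C'$ into $C$.

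The paper's remark also identifies a second obstruction that your argument does not address. Even if one could lift modular points of $C'$ to irreducible one-dimensional primes of $R_F^{\ps,\chi}$, each such prime determines (via Definition \ref{construction} and \cite[Lemma 2.7]{Skinner_1999}) some cohomology class in $H^1(F_\Sigma/F,\F(\bar\chi^{-1}))$, and a priori different one-dimensional points on the \emph{same} component $C'$ of $\Spec R_F^{\ps,\chi}$ may correspond to different classes $x'$. If they do, the corresponding modular representations need not reduce to $\bar\rho_x$, hence need not give points of $\Spec R_x^{\chi}$ at all. When $\dim H^1(F_\Sigma/F,\F(\bar\chi^{-1}))=1$ and $F=\Q$, both obstructions disappear: there is only one class up to scaling, and Kisin's result \cite[Corollary 1.4.4]{kisin2009fontaine} shows the map $R_\Q^{\ps,\chi}\to R_x^\chi$ is \emph{surjective}, so $\Spec R_x^{\chi}$ is closed in $\Spec R_\Q^{\ps,\chi}$ and Theorem \ref{main thm} applies directly. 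This is the only case where the conjecture is known.
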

    
    In general, we do not know whether there exists an irreducible lifting of $\bar{\rho}_x$ unramified outside $\Sigma$. In \cite{Fakhruddin2022}, their method first requires to enlarge the finite set $\Sigma$ appropriately, and then finds a geometric lifting of the residual Galois representation. If our conjecture is true, then it is clear that the existence of an irreducible lifting implies Serre's modularity conjecture in this case.
    
    \begin{rem}
    	We may prove Conjecture \ref{conj for x} under the assumption $H^1(F_\Sigma/F, \F(\bar{\chi}^{-1}))=1$ and $F=\Q$. Note that by Definition \ref{construction}, for every one-dimensional irreducible point of $\Spec R_{\Q}^{\ps, \chi}$, it must be a lifting of $\bar{\rho}_x$ up to a possible conjugation. As the natural map $R_{\Q}^{\ps, \chi} \to R_x^{\chi} $ is a surjection (by \cite[Corollary 1.4.4]{kisin2009fontaine}), Theorem \ref{main thm} implies Conjecture \ref{conj for x} in this case. Here, we also give another proof of \cite[Theorem A]{Deo2023}.
    	
    	Note that by Definition \ref{construction} and \cite[Lemma 2.7]{Skinner_1999}, each one-dimensional irreducible point of $\Spec R_{F}^{\ps, \chi}$ determines a unique non-zero cohomology class in $H^1(F_\Sigma/F, \F(\bar{\chi}^{-1}))$. In general, it is unclear whether the one-dimensional irreducible points lying on a given irreducible component of $\Spec R_{F}^{\ps, \chi}$ all correspond to the same cohomology class, or how to distinguish points associated to different classes in $\Spec R_{F}^{\ps, \chi}$. Also, the natural map $R_{F}^{\ps, \chi} \to R_x^{\chi} $ is not finite in general. Hence, we need some more ideas for Conjecture \ref{conj for x}.
    \end{rem}

    \appendix
    \section{A finiteness result in the residually irreducible case}\label{A}
    In this appendix, we prove a finiteness result in the residually irreducible case. It is prepared for Subsection \ref{emerton's conj}. An advantage of our result here is that we do not need the Taylor-Wiles hypothesis.
    
    Let $p \ge 5$ be an odd prime. Let $\Sigma$ be a finite set of finite places of $\Q$ containing $p$. We denote by $K$  a $p$-adic local field with its ring of integers $\cO$, a uniformizer $\varpi$ and its residue field $\F$.
    
    Let $\bar{\rho} : G_{\Q, \Sigma} \to \GL_2(\F)$ be a continuous, odd, irreducible Galois representation . By Serre's modularity conjecture (see \cite{Khare_2009a} and \cite{Khare_2009}), we know that $\bar{\rho}$ arises from a regular algebraic cuspidal automorphic representation $\pi_0$ of $\GL_2(\A_\Q)$.
    
    Let $\bar{\chi}=\det \bar{\rho}$ and let $\chi$ be the lifting of $\bar{\chi}$ corresponding to the central character of $\pi_0$. By \cite[Fact 4.27]{Gee_2022}, we can find a finite extension $F^1/\Q$ satisfying the following conditions:
     \begin{itemize}
    	\item $F^1$ is a totally real soluble extension of $\mathbb{Q}$ of even degree such that $p$ splits completely.
    	\item  For any $v \in \Sigma^1 \setminus \Sigma^1_p$, we have $p| \operatorname{Nm}(v)-1$, and  $\bar{\rho}|_{G_{F^1_v}}$ is trivial.
    	\item $[F^1:\mathbb{Q}] \ge 4|\Sigma^1 \setminus \Sigma^1_p|+2$.
    	\item  $\bar{\rho}|_{G_{F^1, \Sigma^1}}$ is irreducible.
    	\item  $\pi_0$ is unramified everywhere except places lying above $p$ (as an automorphic representation of $\GL_2(\A_{F^1})$),
    	
    \end{itemize}
    Here $ \Sigma^1$ (resp. $\Sigma^1_p $) is the finite set of finite places of $F^1$ lying above places in $\Sigma$ (resp. $\Sigma_p $).
    
    Let $R_{F^1}^{\operatorname{univ}}$ be the universal deformation ring parametrizing all deformations of $\bar{\rho}|_{G_{F^1, \Sigma^1}}$ with determinant $\chi|_{G_{F^1, \Sigma^1}}$. Note that from our assumptions on $F^1$, for any $v \in \Sigma^1 \setminus \Sigma^1_p$, $\chi|_{G_{F^1, \Sigma^1}}$ is unramified at $v$ and $ \chi|_{G_{F^1, \Sigma^1}}(\operatorname{Frob}_v) \equiv 1 \modd \varpi$.
    
     Write $\psi=\chi\varepsilon$ . Let $D$ be the quaternion algebra over $F^1$ which is ramified exactly at all infinite places. Fix an isomorphism between $(D \otimes_{F^1}\A_{F^1}^\infty )^\times$ and $\GL_2(\A_{F^1}^\infty)$. We define a tame level $U^p=\prod_{v\nmid p}U_v$ as follows: $U_v=\GL_2(\cO_{F^1_v})$ if $v\notin \Sigma^1$ and 
    \[U_v=\mathrm{Iw}_v:=\{g\in\GL_2(\cO_{F^1_v}),g\equiv \begin{pmatrix}*&*\\0&*\end{pmatrix}\mod \varpi_v\}\]
    otherwise. Then we can define a big Hecke algebra $\T:=\T_\psi(U^p)$, and the existence of $\pi_0$ implies that $\bar{\rho}|_{G_{F^1, \Sigma^1}}$ defines a maximal ideal $\km$ of $\T$. As $\bar{\rho}|_{G_{F^1, \Sigma^1}}$ is irreducible, there exists a two-dimensional representation $\rho_\km: G_{F^1, \Sigma^1} \to \GL_2(\T_\km)$. By the universal property, we have a surjection $R_{F^1}^{\operatorname{univ}} \twoheadrightarrow \T_\km $, which induces a surjection $ (R_{F^1}^{\operatorname{univ}})^{\operatorname{red}} \twoheadrightarrow (\T_\km)^{\operatorname{red}}$.
    
    \begin{thm}[potential pro-modularity]\label{potential pro-modularity irr}
    	The surjection $ (R_{F^1}^{\operatorname{univ}})^{\operatorname{red}} \twoheadrightarrow (\T_\km)^{\operatorname{red}}$ is an isomorphism.
    \end{thm}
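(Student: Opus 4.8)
The plan is to follow closely the proof of Proposition \ref{potential R=T}, transporting it from the pseudo-deformation ring to the genuine deformation ring $R_{F^1}^{\operatorname{univ}}$; this is in fact somewhat simpler here, since $\bar\rho|_{G_{F^1,\Sigma^1}}$ is irreducible, so every prime of $R_{F^1}^{\operatorname{univ}}$ is automatically ``irreducible'' in the sense relevant for the connectedness estimates. First I would reduce to a statement about irreducible components: both $(R_{F^1}^{\operatorname{univ}})^{\operatorname{red}}$ and $(\T_\km)^{\operatorname{red}}$ are reduced, and $\Spec\T_\km$ is closed in $\Spec R_{F^1}^{\operatorname{univ}}$, so the displayed surjection is an isomorphism if and only if every irreducible component of $\Spec R_{F^1}^{\operatorname{univ}}$ is pro-modular (equivalently, has generic point, equivalently all of its points, in $\Spec\T_\km$). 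Two global inputs will be needed: $\Spec R_{F^1}^{\operatorname{univ}}$ is connected, being $\Spf$ of a complete local ring; and, by the deformation-ring analogues of \cite[Proposition 3.1.3, Remark 3.2.8]{Zhang:2024ab} and of Proposition \ref{irreducible point} (available since $\bar\rho|_{G_{F^1,\Sigma^1}}$ is irreducible, via the global Euler characteristic formula for $\mathrm{ad}^0\bar\rho$), every irreducible component of $\Spec R_{F^1}^{\operatorname{univ}}$ has dimension at least $1+2[F^1:\Q]$ and the local connectedness dimension at each point is at least $2[F^1:\Q]-1$.

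Next I would show that at least one component is pro-modular. By Serre's conjecture \cite{Khare_2009a,Khare_2009}, $\bar\rho$ arises from $\pi_0$; since $\pi_0$ is unramified outside $p$ over $F^1$ and the tame level $U^p$ is hyperspecial away from $\Sigma^1\setminus\Sigma^1_p$ and Iwahori at the places $v\in\Sigma^1\setminus\Sigma^1_p$ (where $\bar\rho|_{G_{F^1_v}}$ is trivial), the Jacquet--Langlands transfer of $\pi_0$ contributes $U^p$-invariant vectors, so $\km$ is a genuine maximal ideal and $\T_\km\ne 0$. Combining the density of classical points on the eigenvariety (the analogues of \cite[Theorem 3.6.1]{pan2022fontaine} and \cite[Corollary 3.3.10]{Zhang:2024aa}) with the patched-module construction of \cite[Section 9]{pan2022fontaine}---whose Cohen--Macaulayness input is available without any Taylor--Wiles hypothesis, thanks to the local-global compatibility at $p$ coming from Pašk\=unas' theory---one sees that the support of the patched module, and hence the pro-modular locus, contains a component of full dimension $1+2[F^1:\Q]$. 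So, writing $Z=Z_1\amalg Z_2$ with $Z_1$ the set of pro-modular components, $Z_1\ne\emptyset$.

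Finally I would run the connectedness-propagation argument to get $Z_2=\emptyset$. If $Z_2\ne\emptyset$, connectedness of $\Spec R_{F^1}^{\operatorname{univ}}$ forces $\bigl(\bigcup_{C\in Z_1}C\bigr)\cap\bigl(\bigcup_{D\in Z_2}D\bigr)\ne\emptyset$, and a minimal prime $\mathfrak r'$ of the corresponding sum ideal lies on some $Y\in Z_2$ and on a pro-modular $C\in Z_1$, hence is pro-modular, with $\dim R_{F^1}^{\operatorname{univ}}/\mathfrak r'\ge 2[F^1:\Q]-1$ by the connectedness-dimension bound. Exactly as in \cite[Proposition 4.1.5]{Zhang:2024ab} and the proof of Proposition \ref{potential R=T}, the inequality $[F^1:\Q]\ge 4|\Sigma^1\setminus\Sigma^1_p|+2$ together with the triviality of $\bar\rho|_{G_{F^1_v}}$ and $p\mid\operatorname{Nm}(v)-1$ at the auxiliary places lets one choose $\mathfrak r''\subseteq\mathfrak r'$ on $Y$ with $\varpi\in\mathfrak r''$, $\dim R_{F^1}^{\operatorname{univ}}/\mathfrak r''\ge[F^1:\Q]$, and such that every one-dimensional specialization of $\mathfrak r''$ is a nice prime in the sense of \cite[Definition 4.1.3]{Zhang:2024aa}. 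Picking a one-dimensional $\mathfrak t$ with $\mathfrak r''\subseteq\mathfrak r'\subseteq\mathfrak t$ and $\mathfrak t$ lying on $C$, the prime $\mathfrak t$ is a pro-modular nice prime, and Pan's patching at a nice prime (\cite[Theorem 4.1.4]{pan2022fontaine}, equivalently the residually irreducible patching of \cite[Section 9]{pan2022fontaine}, together with \cite[Theorem 4.1.1]{Zhang:2024aa}) propagates pro-modularity to every component through $\mathfrak r''$, in particular to $Y$---contradicting $Y\in Z_2$. Hence $Z=Z_1$, and the surjection is an isomorphism.

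The main obstacle I anticipate is the patching input itself: that Pan's Taylor--Wiles-free patching from \cite[Section 9]{pan2022fontaine} transports to the quaternionic setting over $F^1$ and, at a nice prime, yields a patched module with the Cohen--Macaulayness / ``big support'' property one needs; everything else is the component bookkeeping already carried out in the residually reducible case, only easier because every prime of $R_{F^1}^{\operatorname{univ}}$ is automatically irreducible. A secondary point to verify carefully is that Grunwald--Wang together with Serre's conjecture and \cite[Fact 4.27]{Gee_2022} really do produce a field $F^1$ simultaneously satisfying all the listed hypotheses.
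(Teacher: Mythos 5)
The paper's proof of this theorem is a single sentence: it cites \cite[Proposition 4.3.3]{Zhang:2024aa} and \cite[Proposition 3.1.5]{Zhang:2024ab} and declares the statement a direct consequence. What you have written is essentially an attempt to reconstruct the content of those two cited propositions from the template of Proposition~\ref{potential R=T} in the residually reducible case. That is a legitimate thing to do, and the overall mechanism you describe — reduce to showing every component of $\Spec R_{F^1}^{\operatorname{univ}}$ is pro-modular, use connectedness of a complete local ring together with a connectedness-dimension bound to propagate pro-modularity across components via nice primes, and use Serre's conjecture plus the dimension bound on $\T_\km$ to seed the induction with at least one pro-modular component — is the right shape of argument and is surely close to what the cited propositions establish. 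In that sense your route is the same as the paper's, just carried out by hand rather than by citation.

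Two places are worth flagging as imprecise. First, to get $Z_1\neq\emptyset$ you invoke the patched-module construction, but that is not what is needed there and makes the argument look circular: the simpler point is that the existence of $\pi_0$ makes $\T_\km\neq 0$, and then the upper bound $\dim\T_\km\le 1+2[F^1:\Q]$ (Theorem~\ref{dim of T}) together with the lower bound $\ge 1+2[F^1:\Q]$ on every component of $R_{F^1}^{\operatorname{univ}}$ forces any minimal prime of $\T_\km$ to pull back to a minimal prime of $R_{F^1}^{\operatorname{univ}}$, giving a pro-modular component directly; the patching is only needed in the propagation step. Second, you quote the connectedness-dimension bound $2[F^1:\Q]-1$ from the pseudo-deformation setting, but in the residually irreducible case $R_{F^1}^{\operatorname{univ}}$ is a quotient of a power series ring by an explicitly bounded number of relations, and the bound coming from Grothendieck's theorem is in fact slightly stronger; this does not hurt you, but you should be aware the bound is not literally the one proved for $R_{F^1}^{\ps,\chi}$. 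The obstacle you yourself identify — whether Pan's Taylor--Wiles-free patching, at a nice prime, transports to the quaternionic setting over $F^1$ with the needed support property — is real; it is precisely the content of the propositions the paper leans on, and a rigorous version of your sketch would amount to re-proving them.
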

    
    \begin{proof}
    	It is a direct consequence of \cite[Proposition 4.3.3]{Zhang:2024aa} and \cite[Proposition 3.1.5]{Zhang:2024ab}.
    \end{proof}
    
    For each $v \in \Sigma_p^1$ (resp. $v=p$), let $R_v^\ps$ (resp. $R_p^\ps$) be the local universal pseudo-deformation ring of the pseudo-representation $\operatorname{tr} \bar{\rho}|_{G_{F^1_v}}$ (resp. $\operatorname{tr} \bar{\rho}|_{G_{\Q_p}}$)  for $ {G_{F^1_v}}$ (resp. $ {G_{\Q_p}}$) with determinant $\chi|_{G_{F^1_v}}$ (resp. $\chi|_{G_{\Q_p}}$). Choose a subgroup $U^p_1 \subseteq U^p$ such that  $U^p_1$ satisfies the assumptions in Lemma \ref{intersection}, and for each $v \in \Sigma_p^1$, we get an ideal $I_v$ of $R_v^\ps$. Write $R_v = R_v^\ps/I_v $.  
    
    Similarly to the discussions in Subsection \ref{sec finite 2}, we get an ideal $I_p$ of $R_p^\ps$ such that for any $v \in \Sigma_p^1$, we have an isomorphism $R_v \cong R_p^\ps/I_p$ (defined as $R_p$) induced by the isomorphism $F^1_v \cong \Q_p$. Then we get the following lemma.
    
    \begin{lem}\label{com dia irr2} Write $R_{F^1, p}^\ps$ for the completed tensor product $ \widehat{\otimes}_{v \in \Sigma^1_p} R_{v}^\ps$.
    	
    	1) We have the following commutative diagram:
    	\[\begin{tikzcd}
    		R_{F^1, p}^\ps  \arrow[r, two heads] \arrow[d, two heads]& \widehat{\otimes}_{v \in \Sigma^1_p} R_{v} \arrow[d, two heads]\\
    		 R_{p}^\ps \arrow[r, two heads] &  R_{p}.
    	\end{tikzcd}\]
    	
    	2) We have $ (\widehat{\otimes}_{v \in \Sigma^1_p} R_{v})\widehat{\otimes}_{R_{F^1, p}^\ps} R_{p}^\ps = R_{p}. $
    \end{lem}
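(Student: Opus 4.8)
The plan is to transcribe the proof of Lemma~\ref{com dia1} to the present situation, which is nothing but the case $F=\Q$ of that lemma: then $\Sigma_p$ has the single element $p$, the ring $\widehat{\otimes}_{v\in\Sigma_p}R_v^\ps$ is $R_p^\ps$, and $\widehat{\otimes}_{v\in\Sigma_p}R_v$ is $R_p$. For part~1) I would identify the four maps as follows. The top horizontal arrow is the surjection $R_{F^1,p}^\ps=\widehat{\otimes}_{v\in\Sigma^1_p}R_v^\ps\twoheadrightarrow\widehat{\otimes}_{v\in\Sigma^1_p}R_v$; by 3) of Proposition~\ref{com alg}, using that each $R_v=R_v^\ps/I_v$ is $\cO$-flat by Proposition~\ref{R_v}(1), its kernel is the ideal $J:=(I_v:v\in\Sigma^1_p)=\sum_v R_{F^1,p}^\ps\,\iota_v(I_v)$, where $\iota_v\colon R_v^\ps\hookrightarrow R_{F^1,p}^\ps$ is the $v$-th coordinate inclusion. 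The bottom horizontal arrow is $R_p^\ps\twoheadrightarrow R_p=R_p^\ps/I_p$. The left vertical arrow $f\colon R_{F^1,p}^\ps\twoheadrightarrow R_p^\ps$ is the norm map obtained from the isomorphisms $F^1_v\cong\Q_p$ ($v\in\Sigma^1_p$) via the universal property; concretely $f\circ\iota_v=g_v$, where $g_v\colon R_v^\ps\to R_p^\ps$ is the isomorphism induced by $F^1_v\cong\Q_p$. The right vertical arrow is the surjection $\widehat{\otimes}_{v\in\Sigma^1_p}R_v\twoheadrightarrow R_p$ constructed in the paragraph preceding the lemma (the one induced by $f$ on passing to quotients). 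Commutativity of the square is then checked on a generator $\bigotimes_v x_v$.

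For part~2) the argument is purely formal once one knows that $f$ carries $J$ onto $I_p$. Each $g_v$ carries $I_v$ to $I_p$ --- this is exactly the compatibility recorded just before the lemma, which comes from the fact, established in Subsection~\ref{sec finite 2}, that $\Gal(F^1/\Q)$ permutes the local rings $R_v^\ps$ compatibly with the ideals $I_v$ (itself a consequence of the description of $I_v$ as an intersection of regular de Rham primes in Lemma~\ref{intersection}). Hence $f(\iota_v(I_v))=g_v(I_v)=I_p$, and since $J=\sum_v R_{F^1,p}^\ps\,\iota_v(I_v)$ and $f$ is a surjective ring homomorphism, $f(J)=I_p$. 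Combining 3) of Proposition~\ref{com alg} (to identify $\widehat{\otimes}_{v\in\Sigma^1_p}R_v=R_{F^1,p}^\ps/J$) with base change along $f$, we get
$$\Big(\widehat{\otimes}_{v\in\Sigma^1_p}R_v\Big)\widehat{\otimes}_{R_{F^1,p}^\ps}R_p^\ps=(R_{F^1,p}^\ps/J)\widehat{\otimes}_{R_{F^1,p}^\ps}R_p^\ps=R_p^\ps/f(J)R_p^\ps=R_p^\ps/I_p=R_p,$$
which is the assertion of part~2) (and which also re-derives the right vertical surjection of part~1)).

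I do not expect any real obstacle: the statement is diagrammatic and follows from the commutative-algebra package in Proposition~\ref{com alg} together with the local-global description of $I_v$ in Lemma~\ref{intersection}. The only non-automatic point is the matching $g_v(I_v)=I_p$ --- that the condition ``arises as the restriction to $G_{F^1_v}$ of a global (pseudo-)representation'' at a place $v\mid p$ of $F^1$ corresponds, under every choice of such $v$, to the analogous condition at $p$ --- and this was already settled in Subsection~\ref{sec finite 2}. In particular, unlike in the main body, nothing here requires a bound on $\dim R_v$, so the lemma goes through in the residually irreducible case verbatim.
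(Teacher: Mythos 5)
Your proof is correct and follows the approach the paper implicitly takes: the paper does not give an explicit proof for Lemma~\ref{com dia irr2}, instead deferring to the discussion in Subsection~\ref{sec finite 2} and the sister Lemma~\ref{com dia1}, and your write-up is exactly a transcription of that argument to the case $F=\Q$. The observation that part~2) re-derives the right vertical surjection, and that no dimension bound on $R_v$ enters, is accurate.
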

    
    Let $R_\Q^{\operatorname{univ}}$ be the universal deformation ring of $\bar{\rho}$ with determinant $\chi$. Then by the universal property, we get a natural map $f: R_{F^1}^{\operatorname{univ}} \to R_\Q^{\operatorname{univ}} $.
    
    \begin{prop}\label{finiteness 3}
    	The map $f$ induces a well-defined finite map $(R_{F^1}^{\operatorname{univ}})^{\operatorname{red}} \to (R_\Q^{\operatorname{univ}})^{\operatorname{red}}$.
    \end{prop}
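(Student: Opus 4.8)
The statement that $f$ induces a map $(R_{F^1}^{\operatorname{univ}})^{\operatorname{red}}\to(R_\Q^{\operatorname{univ}})^{\operatorname{red}}$ costs nothing: since $(R_\Q^{\operatorname{univ}})^{\operatorname{red}}$ is reduced, the composite $R_{F^1}^{\operatorname{univ}}\xrightarrow{f}R_\Q^{\operatorname{univ}}\twoheadrightarrow(R_\Q^{\operatorname{univ}})^{\operatorname{red}}$ annihilates the nilradical of $R_{F^1}^{\operatorname{univ}}$ and hence factors through $(R_{F^1}^{\operatorname{univ}})^{\operatorname{red}}$. So the whole content is the finiteness, and the plan is to mimic the proof of Proposition~\ref{finiteness 2}. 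First I would record the two commuting squares I need. Since $\bar{\rho}$ and $\bar{\rho}|_{G_{F^1,\Sigma^1}}$ are absolutely irreducible, $R_\Q^{\operatorname{univ}}$ (resp.\ $R_{F^1}^{\operatorname{univ}}$) is the universal pseudo-deformation ring with determinant $\chi$; restricting the universal representations to the decomposition groups above $p$ gives maps $R_p^\ps\to R_\Q^{\operatorname{univ}}$ and $R_{F^1,p}^\ps\to R_{F^1}^{\operatorname{univ}}$, and since for each $v\in\Sigma_p^1$ the group $G_{F^1_v}$ is conjugate in $G_{\Q,\Sigma}$ to $G_{\Q_p}$, these are $f$-compatible with the norm map $R_{F^1,p}^\ps\twoheadrightarrow R_p^\ps$ of Lemma~\ref{com dia irr2}.

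Next I would build the sides of the square that carry the finiteness. By Theorem~\ref{potential pro-modularity irr} we have $(R_{F^1}^{\operatorname{univ}})^{\operatorname{red}}\cong(\T_\km)^{\operatorname{red}}$, and by Corollary~\ref{finiteness version 1} and its proof (applied to the tame levels $U^p_1\subseteq U^p$) the restriction map $R_{F^1,p}^\ps\to\T_\km$ factors through the finite map $\widehat{\otimes}_{v\in\Sigma_p^1}R_v\to\T_\km$; composing with $\T_\km\twoheadrightarrow(\T_\km)^{\operatorname{red}}$ yields a finite map $g\colon\widehat{\otimes}_{v\in\Sigma_p^1}R_v\to(R_{F^1}^{\operatorname{univ}})^{\operatorname{red}}$. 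For the parallel lower side I would run the same computation as in the proof of Proposition~\ref{finiteness 2}: using the description of $I_p$ as the image of the ideal $(I_v)_{v\in\Sigma_p^1}$ under the norm map (Lemma~\ref{com dia irr2} and the discussion preceding it), together with the commutativity just recorded, one checks that $R_p^\ps\to R_\Q^{\operatorname{univ}}\twoheadrightarrow(R_\Q^{\operatorname{univ}})^{\operatorname{red}}$ kills $I_p$, hence descends to a map $l\colon R_p\to(R_\Q^{\operatorname{univ}})^{\operatorname{red}}$ that, with $g$, the surjection $\widehat{\otimes}_{v\in\Sigma_p^1}R_v\twoheadrightarrow R_p$ and $f^{\operatorname{red}}$, fits into a commutative square.

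It then remains to see $l$ is finite and to conclude. For $l$ to be finite it suffices that the restriction map $R_p^\ps\to R_\Q^{\operatorname{univ}}$ be finite, $R_p$ being a quotient of $R_p^\ps$; as $\Sigma_p=\{p\}$, this is exactly Calegari--Allen's finiteness theorem between the local framed deformation ring at $p$ and the global deformation ring (\cite[Theorem~1]{Allen_2014}; it is the residually irreducible counterpart of Proposition~\ref{finiteness1} and, crucially for this appendix, uses no Taylor--Wiles hypothesis), after unframing both sides along the matching $\GL_2$-framing variables. Given this, $(R_\Q^{\operatorname{univ}})^{\operatorname{red}}$ is module-finite over $R_p$, hence over the ring $\widehat{\otimes}_{v\in\Sigma_p^1}R_v$ mapping onto it, and since by commutativity that module structure factors through $g$ followed by $f^{\operatorname{red}}$, it is module-finite over $(R_{F^1}^{\operatorname{univ}})^{\operatorname{red}}$; that is, $f^{\operatorname{red}}$ is finite. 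The main obstacle is precisely the local-to-global finiteness at $p$: the naive rigidity argument (a deformation of $\bar{\rho}$ that is constant at $p$ is forced to be close to $\bar{\rho}$ globally) breaks down when $\bar{\rho}|_{G_{\Q_p}}$ is reducible, since such a deformation need only have constant semisimplification at $p$, so one cannot avoid \cite{Allen_2014}; the rest — the norm-map bookkeeping and the verification that $I_p$ is annihilated — runs parallel to Section~\ref{sec finite 2}.
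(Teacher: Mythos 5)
The paper's own proof is a one-line citation: the map $f\colon R_{F^1}^{\operatorname{univ}}\to R_\Q^{\operatorname{univ}}$ is already finite by \cite[Proposition~3.26]{Gee_2022} (a purely Galois-cohomological finiteness of the base-change map for deformation rings of an absolutely irreducible residual representation, the exact analogue of Proposition~\ref{finiteness of ps} in the residually reducible setting), and then passing to reduced quotients is immediate. Your proposal discards this and instead rebuilds the whole apparatus of Proposition~\ref{finiteness 2} — potential pro-modularity, the local rings $R_v$, and the norm map — which is far heavier than what is needed for \emph{this} statement (as opposed to the next one, Proposition~\ref{finiteness irr}, where that apparatus genuinely is the point).

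Beyond being a detour, the proposal has a logical problem. To close the diagram you invoke \cite[Theorem~1]{Allen_2014} to get finiteness of $R_p^\ps\to R_\Q^{\operatorname{univ}}$. But that statement (in its unframed form) is precisely Proposition~\ref{finiteness irr}, which the paper \emph{derives from} Proposition~\ref{finiteness 3} using the same commutative square you draw. So you are using the conclusion of the appendix as an input to its first step; the appendix's entire purpose (to obtain the local-to-global finiteness at $p$ by the pro-modularity route, as a residually irreducible counterpart of Section~\ref{sec finite 2}, rather than by citing Calegari--Allen) is thereby hollowed out. Moreover, the ``after unframing both sides along the matching $\GL_2$-framing variables'' step is not free: $R_p^\square$ is a power series ring over $R_p^\ps=R_p^{\operatorname{univ}}$ only when $\bar{\rho}|_{G_{\Q_p}}$ is absolutely irreducible. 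When $\bar{\rho}|_{G_{\Q_p}}$ is reducible — which is precisely the case of interest throughout the paper — the relation between $R_p^\square$ and $R_p^\ps$ is more involved, and finiteness of $R_p^\square\to R_\Q^\square$ does not obviously descend to finiteness of $R_p^\ps\to R_\Q^{\operatorname{univ}}$; the map $R_p^\ps\to R_p^\square$ is not finite, so the composite is not automatically finite over $R_p^\ps$. This step would need a genuine argument, not a parenthetical. In short: the correct (and much shorter) route is the one the paper takes — cite the base-change finiteness directly — and reserve the pro-modularity diagram for Proposition~\ref{finiteness irr}, where it actually earns its keep.
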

    
    \begin{proof}
    	By \cite[Proposition 3.26]{Gee_2022}, the map $f$ is finite. Then our result is clear.
    \end{proof}
    
    Similarly to Lemma \ref{com dia2}, we also have the following lemma using the universal property.
    
    \begin{lem}\label{com dia irr}
    	We have the following diagram:
    	\[\begin{tikzcd}
    		R_{F^1,p}^\ps  \arrow[r] \arrow[d, two heads]& (R_{F^1}^{\operatorname{univ}})^{\operatorname{red}} \arrow[d]\\
    		 R_{p}^\ps \arrow[r] & (R_\Q^{\operatorname{univ}})^{\operatorname{red}} .
    	\end{tikzcd}\]
    \end{lem}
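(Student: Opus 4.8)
The plan is to follow verbatim the template of Lemma~\ref{com dia2}: all four arrows in the square will be produced, via universal properties, from restrictions of the relevant universal (pseudo-)deformations to local Galois groups, and the commutativity will then be a formal consequence.

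First I would pin down the two horizontal maps. Over $R_{F^1}^{\operatorname{univ}}$ we have the universal deformation $\rho^{\operatorname{univ}}\colon G_{F^1,\Sigma^1}\to\GL_2(R_{F^1}^{\operatorname{univ}})$ of $\bar\rho|_{G_{F^1,\Sigma^1}}$ with determinant $\chi|_{G_{F^1,\Sigma^1}}$ (it exists as an honest representation since $\bar\rho|_{G_{F^1,\Sigma^1}}$ is irreducible). For each $v\in\Sigma_p^1$ the function $\operatorname{tr}(\rho^{\operatorname{univ}}|_{G_{F^1_v}})$ is a pseudo-representation of $G_{F^1_v}$ lifting $\operatorname{tr}\bar\rho|_{G_{F^1_v}}$ with determinant $\chi|_{G_{F^1_v}}$, so it induces a map $R_v^\ps\to R_{F^1}^{\operatorname{univ}}$; taking the completed tensor product over $v\in\Sigma_p^1$ and composing with $R_{F^1}^{\operatorname{univ}}\twoheadrightarrow(R_{F^1}^{\operatorname{univ}})^{\operatorname{red}}$ gives the top arrow $R_{F^1,p}^\ps\to(R_{F^1}^{\operatorname{univ}})^{\operatorname{red}}$. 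The bottom arrow $R_p^\ps\to(R_\Q^{\operatorname{univ}})^{\operatorname{red}}$ is defined in exactly the same way from $\operatorname{tr}(\rho_\Q^{\operatorname{univ}}|_{G_{\Q_p}})$, where $\rho_\Q^{\operatorname{univ}}$ is the universal deformation of $\bar\rho$ with determinant $\chi$. The left vertical arrow is the surjection $R_{F^1,p}^\ps\twoheadrightarrow R_p^\ps$ of Lemma~\ref{com dia irr2}, and the right vertical arrow is the finite map $(R_{F^1}^{\operatorname{univ}})^{\operatorname{red}}\to(R_\Q^{\operatorname{univ}})^{\operatorname{red}}$ of Proposition~\ref{finiteness 3}, induced by $f\colon R_{F^1}^{\operatorname{univ}}\to R_\Q^{\operatorname{univ}}$, which is characterized by $\rho_\Q^{\operatorname{univ}}|_{G_{F^1,\Sigma^1}}\cong f_*\rho^{\operatorname{univ}}$ up to conjugation.

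It then remains to check commutativity, which I would do componentwise: fix $v\in\Sigma_p^1$ and chase the composite $R_v^\ps\to R_{F^1}^{\operatorname{univ}}\xrightarrow{f}R_\Q^{\operatorname{univ}}\twoheadrightarrow(R_\Q^{\operatorname{univ}})^{\operatorname{red}}$. By the defining property of $f$ this is the map induced by $\operatorname{tr}(f_*\rho^{\operatorname{univ}}|_{G_{F^1_v}})=\operatorname{tr}(\rho_\Q^{\operatorname{univ}}|_{G_{F^1_v}})$; and since $p$ splits completely in $F^1$, for the chosen compatible embeddings the decomposition subgroup $G_{F^1_v}$ coincides with $G_{\Q_p}$, so this equals $\operatorname{tr}(\rho_\Q^{\operatorname{univ}}|_{G_{\Q_p}})$, independently of $v$. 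Hence each $R_v^\ps\to(R_\Q^{\operatorname{univ}})^{\operatorname{red}}$ factors through $R_p^\ps$, and the amalgamated map $R_{F^1,p}^\ps\to(R_\Q^{\operatorname{univ}})^{\operatorname{red}}$ agrees with $R_{F^1,p}^\ps\twoheadrightarrow R_p^\ps\to(R_\Q^{\operatorname{univ}})^{\operatorname{red}}$ --- this being precisely the concrete (``norm map'') description of the left vertical arrow recorded in the proof of Lemma~\ref{com dia irr2}. This yields the asserted commutative square.

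I expect the one point requiring a little care to be this last identification: matching the norm-map presentation of $R_{F^1,p}^\ps\twoheadrightarrow R_p^\ps$ with the fact that the restriction of $\rho_\Q^{\operatorname{univ}}$ to $G_{F^1_v}=G_{\Q_p}$ is literally the same for every $v\mid p$. Once the identifications $G_{F^1_v}=G_{\Q_p}$ used throughout the paper are granted, this is bookkeeping, and everything else is a formal consequence of universal properties, exactly as in Lemma~\ref{com dia2}.
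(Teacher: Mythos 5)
Your proposal is correct and follows essentially the same route as the paper, which simply asserts the lemma "using the universal property" by analogy with Lemma~\ref{com dia2}; you have merely unwound what that universal-property argument amounts to, identifying each arrow via traces of restrictions of universal deformations and checking commutativity componentwise on the tensor factors $R_v^\ps$.
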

    
    Combining these results, we can prove a finiteness result in the residually irreducible case.
    
    \begin{prop}\label{finiteness irr}
    	There exists a finite map $ R_{p} \to (R_\Q^{\operatorname{univ}})^{\operatorname{red}}$.
    \end{prop}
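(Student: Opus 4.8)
The plan is to run the same argument as in the proof of Proposition \ref{finiteness 2}, now over the auxiliary soluble field $F^1$ fixed above, with the universal deformation rings $R_{F^1}^{\operatorname{univ}}$ and $R_\Q^{\operatorname{univ}}$ playing the roles of $R_{F^1,0}^{\ps,\chi}$ and $R_{F,0}^{\ps,\chi}$. Concretely, I would assemble the diagram
\[\begin{tikzcd}
R_{F^1, p}^\ps \arrow[r, two heads] \arrow[d, two heads] & \widehat{\otimes}_{v \in \Sigma^1_p} R_{v} \arrow[d, two heads] \arrow[r, dashed] & (R_{F^1}^{\operatorname{univ}})^{\operatorname{red}} \arrow[d] \\
R_{p}^\ps \arrow[r, two heads] & R_{p} \arrow[r, dashed] & (R_\Q^{\operatorname{univ}})^{\operatorname{red}},
\end{tikzcd}\]
in which the left square is Lemma \ref{com dia irr2}(1), the right vertical arrow is the finite map of Proposition \ref{finiteness 3}, and the two dashed arrows remain to be produced. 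Granting that the diagram commutes with all four horizontal maps in place, the proposition follows formally: the top dashed arrow will be finite (see below), hence so is the composite $\widehat{\otimes}_{v \in \Sigma^1_p} R_{v} \to (R_{F^1}^{\operatorname{univ}})^{\operatorname{red}} \to (R_\Q^{\operatorname{univ}})^{\operatorname{red}}$ by Proposition \ref{finiteness 3}; by commutativity this composite factors through the surjection $\widehat{\otimes}_{v \in \Sigma^1_p} R_{v} \twoheadrightarrow R_{p}$, so $R_{p} \to (R_\Q^{\operatorname{univ}})^{\operatorname{red}}$ is finite by \cite[Lemma 10.36.15]{stacks-project}, exactly as at the end of the proof of Proposition \ref{finiteness 2}.

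It then remains to build the dashed arrows. For the top one, apply Corollary \ref{finiteness version 1} to the tame level $U^p$ of the appendix: it gives a finite map $\widehat{\otimes}_{v \in \Sigma^1_p} R_{v} \to \T_\km$, and in particular the Galois-side map $R_{F^1, p}^\ps \to \T_\km$ factors through $\widehat{\otimes}_{v \in \Sigma^1_p} R_{v}$. Composing with the potential pro-modularity isomorphism $(\T_\km)^{\operatorname{red}} \cong (R_{F^1}^{\operatorname{univ}})^{\operatorname{red}}$ of Theorem \ref{potential pro-modularity irr} yields the top dashed arrow, which is finite because $\widehat{\otimes}_{v \in \Sigma^1_p} R_{v} \to \T_\km$ is. For the bottom one, by Lemma \ref{com dia irr} the two composites $R_{F^1, p}^\ps \to R_p^\ps \to (R_\Q^{\operatorname{univ}})^{\operatorname{red}}$ and $R_{F^1, p}^\ps \to (R_{F^1}^{\operatorname{univ}})^{\operatorname{red}} \to (R_\Q^{\operatorname{univ}})^{\operatorname{red}}$ coincide, and we have just seen the latter factors through $\widehat{\otimes}_{v \in \Sigma^1_p} R_{v}$; since $R_p = (\widehat{\otimes}_{v \in \Sigma^1_p} R_{v}) \widehat{\otimes}_{R_{F^1, p}^\ps} R_p^\ps$ by Lemma \ref{com dia irr2}(2), the universal property of the completed tensor product produces the map $R_p \to (R_\Q^{\operatorname{univ}})^{\operatorname{red}}$ and forces the whole diagram to commute. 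This completes the plan.

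I expect the real content to lie not in this diagram chase but in the two ingredients it rests on: the potential pro-modularity isomorphism $(R_{F^1}^{\operatorname{univ}})^{\operatorname{red}} \cong (\T_\km)^{\operatorname{red}}$ over $F^1$ (Theorem \ref{potential pro-modularity irr}), where the careful choice of $F^1$ — totally real soluble, $p$ split completely, controlled ramification outside $p$, large degree, irreducible restriction — is spent, and the Pašk\=unas-theoretic finiteness $\widehat{\otimes}_{v \in \Sigma^1_p} R_{v} \to \T_\km$ coming from the local-global compatibility of Section 3 (Lemma \ref{intersection}, Corollary \ref{finiteness version 1}), which is what allows one to transport the factorization back down to $\Q$. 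In fact these are precisely the facts that make the bottom dashed arrow well-defined at all: a priori there is no reason for the map $R_p^\ps \to (R_\Q^{\operatorname{univ}})^{\operatorname{red}}$ to annihilate the ideal $I_p$, and only the existence of the finite map through $\widehat{\otimes}_{v \in \Sigma^1_p} R_{v}$ over $F^1$, combined with Lemma \ref{com dia irr2}(2), forces this. Everything beyond that — commutativity of the squares and finiteness of composites of finite maps — is formal.
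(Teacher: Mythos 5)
Your proposal is correct and follows essentially the same route as the paper's proof: the same commutative diagram, the same use of Corollary \ref{finiteness version 1} and Theorem \ref{potential pro-modularity irr} to produce the top dashed arrow, Lemma \ref{com dia irr2}(2) together with Lemma \ref{com dia irr} for the bottom one, and Proposition \ref{finiteness 3} with \cite[Lemma 10.36.15]{stacks-project} to conclude. You have in fact spelled out the diagram chase in more detail than the paper does, but the underlying argument is identical.
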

    
    \begin{proof}
    	We consider the following diagram:
    	\[\begin{tikzcd}
    		R_{F^1,p}^\ps  \arrow[r, two heads] \arrow[d, two heads]& \widehat{\otimes}_{v \in \Sigma^1_p} R_{v} \arrow[d, two heads]\arrow[r, dashed] & (R_{F^1}^{\operatorname{univ}})^{\operatorname{red}} \arrow[d] \\
    		 R_{p}^\ps \arrow[r, two heads] & R_{p} \arrow[r, dashed] & (R_\Q^{\operatorname{univ}})^{\operatorname{red}}.
    	\end{tikzcd}\]
    	
    	We show that this diagram is commutative. The commutativity of the left square is just 1) of Lemma \ref{com dia irr2}. The existence of the dashed map in the first horizontal line follows from Theorem \ref{potential pro-modularity irr} and Corollary \ref{finiteness version 1}. The existence of the other dashed one and the commutativity follow from 2) of Lemma \ref{com dia irr2} and Lemma \ref{com dia irr}.
    	
    	Similarly to the proof of Proposition \ref{finiteness 2}, our result holds by using the commutative diagram, Corollary \ref{finiteness version 1}, Proposition \ref{finiteness 3} and \cite[Lemma 10.36.15]{stacks-project}.
    \end{proof}
    
    \begin{cor}\label{dim in irr case}
    	1) The ring $R_\Q^{\operatorname{univ}}$ is a local complete intersection ring of dimension $3$. 
    	
    	2) Let $R_{\Q}$ be the universal deformation ring of $\bar{\rho}$ for $G_{\Q, \Sigma}$ (without fixed determinant). Then we have $\dim R_{\Q}=4$.
    \end{cor}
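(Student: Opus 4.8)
The plan is to deduce both statements formally from the finiteness result just proved, Proposition \ref{finiteness irr}, together with the classical presentation of the deformation functor by Galois cohomology; no new hard input is needed. For 1), I would first establish the upper bound. Proposition \ref{finiteness irr} provides a finite (module-finite) ring homomorphism $R_p \to (R_\Q^{\operatorname{univ}})^{\operatorname{red}}$, and a finite ring map does not increase Krull dimension, so $\dim R_\Q^{\operatorname{univ}} = \dim (R_\Q^{\operatorname{univ}})^{\operatorname{red}} \le \dim R_p$. Since $R_p\cong R_v$ for any $v\in\Sigma^1_p$, and since $p\ge 5$ forces $\tr\bar\rho|_{G_{F^1_v}}=\mathbf{1}+\mathbf{1}$ not to be of the form $\eta+\eta\omega$ (because $\omega\neq\mathbf{1}$), part (3) of Proposition \ref{R_v} --- or part (3) of Proposition \ref{finite same dim} applied to the field $F^1$ --- gives $\dim R_p\le 3$. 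Hence $\dim R_\Q^{\operatorname{univ}}\le 3$.

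Next I would invoke Mazur's presentation $R_\Q^{\operatorname{univ}}\cong\cO[[x_1,\dots,x_g]]/(f_1,\dots,f_r)$, where $g=\dim_\F H^1(G_{\Q,\Sigma},\ad^0\bar\rho)$ and $r\le \dim_\F H^2(G_{\Q,\Sigma},\ad^0\bar\rho)$. The global Euler characteristic formula, together with the absolute irreducibility of $\bar\rho$ (so that $H^0(G_{\Q,\Sigma},\ad^0\bar\rho)=0$) and its oddness (so that $\dim_\F H^0(G_\R,\ad^0\bar\rho)=1$), gives $g-\dim_\F H^2(G_{\Q,\Sigma},\ad^0\bar\rho)=2$, hence $g-r\ge 2$. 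Since $\cO[[x_1,\dots,x_g]]$ is regular --- in particular Cohen--Macaulay --- of dimension $1+g$, cutting by $r$ elements yields $\dim R_\Q^{\operatorname{univ}}\ge 1+g-r\ge 3$. Comparing with the upper bound forces $\dim R_\Q^{\operatorname{univ}}=3=1+g-r$; in particular $r=\dim_\F H^2(G_{\Q,\Sigma},\ad^0\bar\rho)$, the sequence $f_1,\dots,f_r$ cuts the Krull dimension of the Cohen--Macaulay ring $\cO[[x_1,\dots,x_g]]$ down by exactly $r$, so it is a regular sequence and $R_\Q^{\operatorname{univ}}$ is a local complete intersection of dimension $3$. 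This proves 1).

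For 2), I would reduce to 1) by the standard twisting argument: since $p$ is odd, every deformation of $\bar\rho$ is obtained from one with determinant $\chi$ by twisting with a character lifting $\mathbf{1}$, and this yields (the analogue of part 1) of Proposition \ref{det}) an isomorphism $R_\Q\cong R_\Q^{\operatorname{univ}}\widehat{\otimes}_\cO\cO[[\Gamma]]$, where $\Gamma$ is the maximal pro-$p$ abelian quotient of $G_{\Q,\Sigma}$. By Leopoldt's conjecture for $\Q$, which is trivial, $\Gamma$ has $\Z_p$-rank $1$, so passing from $R_\Q^{\operatorname{univ}}$ to $R_\Q$ adds exactly $1$ to the Krull dimension, giving $\dim R_\Q=3+1=4$. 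The lower bound $\dim R_\Q\ge 4$ is also visible directly from the presentation of the unrestricted deformation functor, with $\ad\bar\rho$ in place of $\ad^0\bar\rho$.

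The genuinely substantial content --- the bound $\dim R_p\le 3$ and the existence of the finite map $R_p\to (R_\Q^{\operatorname{univ}})^{\operatorname{red}}$, which rest on potential pro-modularity (Theorem \ref{potential pro-modularity irr}), the local-global compatibility of Section 3, and Pašk\=unas' dimension bound --- has already been carried out, so the present corollary is essentially bookkeeping. The one delicate point is the numerical complete-intersection step: one needs $g-r$ to equal exactly $2$, and it is only the combination of the precise Euler characteristic count with the finiteness upper bound that forces both $r=\dim_\F H^2(G_{\Q,\Sigma},\ad^0\bar\rho)$ and the regularity of the sequence $f_1,\dots,f_r$.
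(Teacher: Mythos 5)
Your proof is correct and follows the same route as the paper, which simply cites Proposition~\ref{R_v}, Proposition~\ref{finiteness irr} and the global characteristic formula for part~1, and the twisting isomorphism (via \cite[Lemma 6.1.2]{Allen_2019}, the analogue of Proposition~\ref{det}) for part~2; your argument just unwinds these references. One small slip: your parenthetical ``forces $\tr\bar\rho|_{G_{F^1_v}}=\mathbf{1}+\mathbf{1}$'' is not justified --- the construction of $F^1$ in the appendix only makes $\bar\rho|_{G_{F^1_v}}$ trivial for $v\in\Sigma^1\setminus\Sigma^1_p$, not for $v\mid p$ --- but the conclusion $\dim R_p\le 3$ still holds because the exceptional hypothesis in Proposition~\ref{R_v}(3) is vacuous once $p\ge 5$, independently of what $\tr\bar\rho|_{G_{F^1_v}}$ is.
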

    
    \begin{proof}
    	For the first assertion, it follows from Proposition \ref{R_v}, Proposition \ref{finiteness irr} and the global characteristic formula.
    	
    	For the second one, it is a direct consequence of the first one and \cite[Lemma 6.1.2]{Allen_2019}.
    \end{proof}
    
    \begin{rem}\label{rem irr}
    	It seems likely that a variant of the strategy of Subsection \ref{5.3} also works in the residually irreducible case, without imposing the Taylor–Wiles hypothesis, to show the Zariski density of non-ordinary modular points in  $\Spec R_\Q^{\operatorname{univ}}$ and $\Spec R_{\Q}$. The only missing input is an analogue of the finiteness statement used in Lemma \ref{min is non-ord}: in the residually reducible case we invoke Pan’s finiteness results from the Iwasawa algebra to the universal ordinary pseudo-deformation ring (see Theorem \ref{ord1} and Theorem \ref{ord2}). 
    	By combining our arguments with the methods of \cite{skinnerwiles01}, one should be able to establish corresponding finiteness results in the residually irreducible setting, and hence obtain an analogue of Lemma \ref{min is non-ord}. We do not pursue all the details here.
    \end{rem}

\bibliography{non-ordinary}
\bibliographystyle{alpha}~
\end{document}